\newtheorem{theorem}{Theorem}[section]
\newtheorem{proposition}[theorem]{Proposition}
\newtheorem{corollary}[theorem]{Corollary}
\newtheorem{lemma}[theorem]{Lemma}
\newtheorem*{conjecture*}{Conjecture}
\newcommand{\triple}[1]{{\left\vert\kern-0.25ex\left\vert\kern-0.25ex\left\vert #1
        \right\vert\kern-0.25ex\right\vert\kern-0.25ex\right\vert}}
\theoremstyle{definition}
\newtheorem{remark}[theorem]{Remark}
\newcommand{\al}{\alpha}
\newcommand{\ep}{\varepsilon}
\newcommand{\la}{\lambda}
\newcommand{\om}{\omega}
\newcommand{\te}{\theta}
\newcommand{\vp}{\varphi}
\newcommand{\De}{\Delta}
\newcommand{\Ga}{\Gamma}
\newcommand{\Om}{\Omega}
\newcommand{\opsi}{\overline{\psi}}
\newcommand{\of}{\overline{f}}
\newcommand{\tv}{\widetilde{v}}
\newcommand{\fL}{\mathfrak{L}}
\def\NN{\mathbb{N}}
\def\RR{\mathbb{R}}
\def\ZZ{\mathbb{Z}}
\def\TT{\mathbb{T}}
\def\bD{\mathbb{D}}
\newcommand\bsharp{b^{\sharp}}
\newcommand\Bsharp{B^{\flat}}
\newcommand{\dds}{\frac{{\rm d}}{{\rm d}s}}
\newcommand{\cB}{{\mathcal B}}
\newcommand{\cC}{{\mathcal C}}
\newcommand{\cE}{{\mathcal E}}
\newcommand{\cF}{{\mathcal F}}
\newcommand{\cG}{{\mathcal G}}
\newcommand{\cK}{{\mathcal K}}
\newcommand{\cL}{{\mathcal L}}
\newcommand{\cO}{{\mathbf O}}
\newcommand{\cV}{{\mathbf V}}
\newcommand{\cQ}{{\mathcal Q}}
\newcommand{\cR}{{\mathcal R}}
\newcommand{\cS}{{\mathcal S}}
\newcommand{\cT}{{\mathcal T}}
\newcommand{\cX}{{\mathbf X}}
\newcommand{\cY}{{\mathbf Y}}
\newcommand{\pd}{\partial}
\newcommand\minus\backslash
\newcommand\lan\langle
\newcommand\ran\rangle
\DeclareMathOperator\Div{div}
\DeclareMathOperator\dist{dist}
\renewcommand\leq\leqslant
\renewcommand\geq\geqslant
\newcommand\BOm{\overline\Om}
\newcommand\m{(m-1)(m-2)}
\newcommand\erre{R+a-4}
\newcommand{\Omg}{\Om^{{\rm far}}}
\newcommand\tOm{\widetilde{\Om}}
\newcommand\trho{\tilde{\rho}}
\numberwithin{equation}{section}
\newcommand\tcT{\widetilde\cT}
\newcommand{\tvp}{\widetilde\vp}
\newcommand\tu{\widetilde u}
\DeclareMathOperator{\ind}{ind}
\newcommand{\hcG}{\widehat\cG}
\begin{document}

\title[Smooth nonradial stationary Euler flows with compact support]{Smooth nonradial stationary Euler flows\\ on the plane  with compact support}

\author[A. Enciso]{Alberto Enciso}
\address{   
\newline
\textbf{{\small Alberto Enciso}} 
\vspace{0.15cm}
\newline \indent Instituto de Ciencias Matem\'aticas, Consejo Superior de Investigaciones Cient\'\i ficas, 28049 Madrid, Spain}
\email{aenciso@icmat.es}

 \author[A. J. Fern\'andez]{Antonio J.\ Fern\'andez}
 \address{ \vspace{-0.4cm}
\newline 
\textbf{{\small Antonio J. Fern\'andez}} 
\vspace{0.15cm}
\newline \indent Departamento de Matem\'aticas, Universidad Aut\'onoma de Madrid, 28049 Madrid, Spain}
 \email{antonioj.fernandez@uam.es}

 \author[D. Ruiz]{David Ruiz}
 \address{ \vspace{-0.4cm}
\newline 
\textbf{{\small David Ruiz}} 
\vspace{0.15cm}
\newline \indent  
 IMAG, Departamento de An\'alisis Matem\'atico, Universidad de Granada, 18071 Granada, Spain}
 \email{daruiz@ugr.es}

\keywords{stationary flows, 2D Euler, semilinear elliptic equations, bifurcation theory}

\subjclass[2020]{35Q31, 35Q35, 35B32.}

%
%
\begin{abstract}  \vspace{-0.2cm}
We prove the existence of nonradial classical solutions to the 2D incompressible Euler equations with compact support. More precisely, for any positive integer~$k$, we construct compactly supported stationary Euler flows of class $C^k(\RR^2)$ which are not locally radial. The proof uses a degree-theory-based bifurcation argument which hinges on three key ingredients: a novel approach to stationary Euler flows through elliptic equations with non-autonomous nonlinearities; a set of sharp regularity estimates for the linearized operator, which involves a potential that blows up as the inverse square of the distance to the boundary of the support; and overcoming a serious problem of loss of derivatives by the introduction of anisotropic weighted functional spaces between which the linearized operator is Fredholm.
\end{abstract}

\maketitle

\section{Introduction}

Let us consider stationary solutions to the incompressible Euler equations on the plane
\begin{equation}\label{E.Euler} 
v\cdot \nabla v+\nabla p=0\quad\textup{ and } \quad \Div v=0 \quad \textup{ in } \RR^2\,,
\end{equation}
which describe the steady flows of an ideal fluid. Since the velocity field $v$ is solenoidal, one can write it as the perpendicular gradient of the stream function $\psi$, i.e., $v=\nabla^\perp\psi:=(\pd_{x_2}\psi,-\pd_{x_1}\psi)$. In terms of the stream function, the stationary Euler equations can be equivalently written as
\begin{equation} \label{stream} \nabla^\perp\psi\cdot\nabla\Delta\psi=0\quad \textup{ in } \RR^2\,. 
\end{equation}

In this paper we are concerned with compactly supported stationary solutions to the Euler equations. In the three-dimensional case, the existence of such solutions was a long-standing open problem, and stationary Euler flows with compact support were obtained only recently~\cite{Gavrilov,CV}. In contrast, in $\RR^2$, the construction of compactly supported solutions to \eqref{E.Euler} is elementary: it suffices to pick any radially symmetric, compactly supported stream function. More generally, one can take~$\psi$ as a linear combination of radially symmetric functions with disjoint compact supports, possibly centered at distinct points. The corresponding stationary flows are then {\em locally radial}\/, and their support is a union of disjoint balls and annuli.

Although in the last few years there has been an emergence of rigidity results for steady two-dimensional fluids, the existence of nonradial classical solutions to the stationary Euler equations with compact support remains a well-known open problem. There do exist smooth nonradial stationary solutions with finite energy, which are not compactly supported, as a byproduct of the results  in \cite{Musso}. Moreover, in the context of wild solutions, which are only $L^\infty$, a wealth of compactly supported solutions can be constructed   using convex integration~\cite{Chof}. 

From the point of view of the regularity of nonradial compactly supported solutions, the best results to date only give Lipschitz velocities. Specifically, by means of a hard proof involving several clever observations and a Nash--Moser iteration scheme, G\'omez-Serrano, Park and Shi~\cite{JGS} constructed nonradial solutions of vortex patch type with compactly supported velocity. More precisely, the vorticity $-\Delta\psi$ is a linear combination of three indicator functions, so~$v$ is piecewise smooth but not~$C^1$. Another family of nonradial compactly supported solutions of the same regularity, which are not of vortex patch type, was subsequently obtained by the authors and Sicbaldi in~\cite{EFRS} as a byproduct of a result (somewhat related to the so-called Schiffer conjecture~\cite[Problem 80]{Yau}) on nontrivial Neumann eigenfunctions that are locally constant on the boundary. Here, the stream function~$\psi$ satisfies an equation of the form
\begin{equation}\label{E.semilinear}
\Delta\psi + f(\psi)=0\,,
\end{equation}
in the support of the velocity, and in fact the function $f$ is linear.


Our objective in this paper is to show that there are smooth nonradial stationary flows with compact support.

\begin{theorem}\label{T.main}
For any positive integer $k $, there exist compactly supported stationary Euler flows of class $C^k(\RR^2)$ that are not locally radial.
\end{theorem}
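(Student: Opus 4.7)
I would seek $\psi$ compactly supported in a domain $\Omega \subset \RR^2$ that is a small nonradial perturbation of a disk and extend $v = \nabla^\perp\psi$ by zero outside $\Omega$. Because $\nabla^\perp\psi\cdot\nabla\Delta\psi = 0$, on each connected component of $\{|\nabla\psi|\ne 0\}$ the stream function solves a semilinear equation $\Delta\psi + f(\psi) = 0$ for some~$f$, and $v$ is of class $C^k(\RR^2)$ as soon as $\psi$ vanishes to order $k+1$ at $\partial\Omega$. To build in this flatness, I would take $f$ of power type, $f(t)=c\,t^{(m-2)/m}$ with $m>k$, so that a radial profile on a ball behaves exactly like $d(x)^m$ near the boundary, where $d(x):=\dist(x,\partial\Omega)$. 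After flattening $\Omega$ to a fixed disk the equation becomes non-autonomous, matching the framing announced in the abstract.

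\textbf{Radial family, linearization and Fourier decomposition.} First I would exhibit a one-parameter family of radial compactly supported solutions $\psi_\la$ on balls $B_\la$, depending on a scaling or shape parameter $\la$. Linearizing the coupled system (equation together with $C^k$ matching of $k+1$ derivatives across the free boundary) around $\psi_\la$ produces an operator
\begin{equation*}
L_\la w = -\Delta w - f'(\psi_\la)\,w ,
\end{equation*}
whose potential is precisely the Hardy-type singular weight $f'(\psi_\la(x)) \sim C/d(x)^{2}$ flagged in the abstract, because $f'(t)\sim t^{-2/m}$ and $\psi_\la\sim d^{m}$. Expanding perturbations in angular modes $w=\sum_{n\geq 2}w_n(r)\cos(n\theta)$ (the modes $n=0,1$ correspond to scaling and translation and can be factored out by normalization), the study of $L_\la$ decouples into a family of one-dimensional singular Sturm--Liouville problems on $(0,1)$ with an inverse-square endpoint singularity.

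\textbf{Bifurcation and main analytic obstacle.} I would then tune $\la$ so that $L_\la$ develops a simple kernel along some angular mode $n\geq 2$ with a transversal crossing of eigenvalues, while remaining invertible on all other modes, and apply a Leray--Schauder degree argument (as the authors advertise) to obtain a branch of genuinely nonradial solutions bifurcating from $\psi_{\la_*}$. The step I expect to be the main obstacle is making the linear theory work despite two essentially coupled difficulties: (a) the critical Hardy potential $1/d^2$ destroys the usual $H^1$ coercivity and forces one to work in spaces adapted to the precise $d^m$ boundary decay of admissible perturbations; and (b) a genuine loss of derivatives, since the map sending the free boundary $\partial\Omega$ to the interior solution shifts regularity and no naive Banach-space inverse function theorem closes. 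The cure is to introduce anisotropic weighted H\"older or Sobolev spaces $\cX$, $\cY$ in which tangential and normal derivatives carry different powers of $d$, calibrated so that $L_\la:\cX\to\cY$ is Fredholm of index zero and satisfies sharp Schauder-type estimates. Setting up this functional-analytic framework and proving the corresponding regularity theorem for $L_\la$ is the heart of the argument; once it is in place, the degree-theoretic bifurcation is comparatively standard and yields the nonradial branch required by Theorem~\ref{T.main}.
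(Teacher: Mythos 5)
There is a genuine gap, and it is fatal to the construction as proposed: you take the nonlinearity to be \emph{autonomous}, $f(t)=c\,t^{(m-2)/m}$, on a perturbed disk, and only let non-autonomy appear as an artifact of flattening the domain. But the whole point of the paper is that this route cannot work. If $\psi\in C^{k+1}(\RR^2)$ with $k\geq 2$ is compactly supported and solves $\Delta\psi+f(\psi)=0$ in its support with $f$ continuous and independent of $x$, then by Proposition~\ref{P.support} it solves the same autonomous equation on all of $\RR^2$, and Theorem~\ref{T.nogoA} forces $\psi$ to be locally radial. Equivalently, in your setup the vorticity $-\Delta\psi=f(\psi)=c\,\psi^{(m-2)/m}$ is single-signed, so the rigidity theorem of G\'omez-Serrano--Park--Shi--Yao~\cite{gomez} already rules out nonradial compactly supported solutions. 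So the object your bifurcation scheme is designed to produce does not exist; in practice the failure would surface in the transversality/degree step or in the overdetermined boundary matching, but the obstruction is structural, not technical.

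The missing idea is the one the paper is built around: a nonlinearity that is genuinely non-autonomous in the physical radial variable, $f_a(|x|,t)=\chi(|x|-a)f_-(t)+[1-\chi(|x|-a)]f_+(t)$, with two \emph{different} profiles $f_\pm$ tailored to the inner and outer boundary components and glued by a cutoff in $|x|$ in a region where the stream function is large and $f_-=f_+$ (Lemma~\ref{L.f}), so that the Euler equation is still satisfied by nearby solutions while the no-go theorem is circumvented. This forces the support to be a (perturbed) annulus with two boundary components rather than a disk, and the radial family is parametrized by the annulus location $a$, with the eigenvalue crossing obtained in a $\ZZ_\ell$-symmetric sector rather than by ``factoring out'' the modes $n=0,1$. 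Your discussion of the critical $1/d^2$ potential, the loss of derivatives, and the need for anisotropic weighted spaces in which the linearization is Fredholm of index zero does match the paper's analytic core (there the spaces are $L^2$-based, not H\"older, and Krasnoselskii's theorem replaces a transversal Crandall--Rabinowitz crossing), but without the non-autonomous annular ansatz none of that machinery can deliver Theorem~\ref{T.main}.
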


\subsection{Strategy of the proof}

Theorem~\ref{T.main} relies on a bifurcation argument: nonradial stationary flows with compact support branch out from a suitably chosen family of radially symmetric, compactly supported flows. 
These radial flows are described by a one-parameter family of radial stream functions $\psi_a$ which are supported on certain annuli $\Omega_a$ and which vanishes on~$\pd\Omega_a$ to a high order~$m\geq1$. In a nutshell, the way one aims to implement a bifurcation argument is by ensuring that, inside their support, $\psi_a$ satisfies certain differential equation. The gist of the  argument is to show that, for some value of the parameter~$a$, one can consider a smooth small nonradial deformation~$\psi$ of~$\psi_a$ which satisfies the same equation on a slightly deformed domain~$\tOm$. It is then easy to see that if~$\psi$ also vanishes on the boundary of the deformed domain  to order~$m$, then the vector field defined by $v:=\nabla^\perp\psi$ on the domain and $v:=0$ outside is of class~$C^{m-1}(\RR^2)$. The equation satisfied by~$\psi$ must therefore ensure that its perpendicular gradient $v:=\nabla^\perp \psi$ is a stationary solution to the Euler equations~\eqref{E.semilinear}.

For a bifurcation argument, it is known that one cannot directly use the Euler equation~\eqref{stream}, since its linerization is a completely unmanageable operator with an infinite-dimensional kernel. Vortex patch solutions are not~$C^1$, so it is not clear how one could adapt the strategy of~\cite{JGS}. Also, a variation of Gavrilov's construction can only give locally radial solutions of compact support \cite{Shara}.  One would naively think that the elliptic equation~\eqref{E.semilinear} should be the way to go, but in fact this is not true: in Theorem~\ref{T.nogoA} we show that any compactly supported stationary flow of class~$C^2$ whose stream function satisfies a semilinear equation of the form~\eqref{E.semilinear} must be locally radial. 

Hence, in this problem, even the choice of the equation one should consider is rather nontrivial. For us, the starting point of the paper is the construction of a {\em non-autonomous}\/ nonlinearity~$f_a$ enabling us to effectively use the equation
\begin{equation}\label{E.nonradialsemi}
\Delta\psi_a+f_a(|x|,\psi_a)=0	\,,
\end{equation}
to construct compactly supported solutions. To 
our best knowledge, this is the first time that non-autonomous elliptic equations have been used for a similar purpose. 

Still, passing from this rough idea to an actual proof is remarkably hard. This is because the above outline does not address the three essential difficulties that the problem entails:
\begin{enumerate}
\item The radial stream function~$\psi_a$ that we consider is a positive solution to an equation of the form~\eqref{E.nonradialsemi} on an annulus~$\Om_a$, which vanishes to $m$-th order on~$\pd\Om_a$. The deformation $\psi$ will satisfy the same equation on the deformed domain~$\tOm$ and tend to zero as $\trho^m$ on~$\pd\tOm$. However, even with our choice of the nonlinearity, if~$\psi$ is a nonradial solution to~\eqref{E.nonradialsemi} in~$\tOm$, $\nabla^\perp\psi$ does {\em not}\/ satisfy the stationary Euler equations: this is only true if $\psi$ is close to~$\psi_a$ in a certain sense (see Lemma~\ref{L.f}).\smallskip

	\item Suppose that the function $\psi$ vanishes to order~$m\geq3$ on~$\pd\tOm$, where $\tOm$ can be thought of as a slightly deformed annulus. For concreteness, we can think that $\psi=\trho^m u$, where $u$ is a smooth function that does not vanish on~$\pd\tOm$ and where~$\trho$ is a boundary defining function, that is, a positive function on~$\tOm$ that vanishes on~$\pd\tOm$ exactly to first order. Since $\Delta\psi$ goes like $\trho^{m-2}$ near~$\pd\tOm$, if $\psi$ satisfies a semilinear equation like~\eqref{E.nonradialsemi}, the nonlinearity $f(|x|,t)$ can only be H\"older continuous and must behave like $|t|^{1-\frac2 m}$ near $0$. Thus, the linearization of this equation, which one expects to encounter in any bifurcation argument, will be controlled by an operator of the form
$	L=-\Delta + \frac c{\trho^2}$	for some nonzero constant~$c$ (modulo terms that are less singular). The potential term is then critically singular (i.e., it scales like the Laplacian), so it cannot be treated as a perturbation of~$\Delta$: a new set of estimates is necessary.\smallskip

\item To control the deformation of the domain, one would naively parametrize the deformed domain, say in polar coordinates, as $\tOm:=\{a_-+b(\theta) < r< a_++B(\theta)\}$, where $b,B$ are small functions on the circle $\TT:=\RR/2\pi\ZZ$. However, this approach is known to lead to a serious loss of derivatives \cite{EFRS,Fall-Minlend-Weth-main} which one does not know how to compensate using a Nash--Moser iteration scheme. 
\end{enumerate}

Furthermore, the complexity of the problem resides on the fact that these difficulties are strongly interrelated. As we have already mentioned, the starting point of our approach is the construction of a non-autonomous nonlinearity $f_a(r,t)$ such that any solution to~\eqref{E.nonradialsemi} that is close enough to our radial solution~$\psi_a$ does define a stationary Euler flow. This has to be done carefully, as typical non-autonomous nonlinearities do not have this crucial property. Roughly speaking, the key idea is to construct the non-autonomous nonlinearity~$f_a$ and the radial function~$\psi_a$ such that $\psi_a$ solves an autonomous equation of the form $\Delta\psi_a + f_{a,p}(\psi_a)=0$ in a small neighborhood of each point~$p$. This local property suffices to show that the vector field $\nabla^\perp\psi$ satisfies the stationary Euler equation, and we can effectively control~$\psi$ in the arguments using the global equation~\eqref{E.nonradialsemi}. 

Near $t=0$, the non-autonomous nonlinearity $f_a(r,t)$ has the asymptotic behavior described in item~(i) above, and  the most singular part of the linearized operators we need to consider in the bifurcation argument is indeed like the aforementioned operator~$L$. In general, the theory of uniformly degenerate elliptic operators~\cite{Mazzeo,GrahamLee, Lee} (which is essentially a sophisticated PDE analog of the Frobenius theory for ODEs with regular singular points) is well suited to the study of operators such as $ L$. The key concept here is that of the {\em indicial roots}\/ of the operator, defined as the constants $\nu$ for which $L (\trho^\nu)=O(\trho^{\nu-1})$ in a neighborhood of a boundary component. Denoting by $\underline\nu,\overline\nu$ the smallest and largest indicial roots of~$L$, the rule of thumb for well behaved operators of this kind is that $\trho^2 L$ defines a Fredholm map between spaces of functions that are of order $\trho^\nu$ near~$\pd\tOm$, provided that~$\nu$ is not an indicial root and $\underline\nu<\nu<\overline\nu$. These function spaces are defined using the scale-natural H\"older or Sobolev norms
\begin{equation}\label{E.badnorm}
\begin{aligned}
\|\psi\|_{C^{j,\alpha}_\nu}&:= \sum_{l=0}^j\|\trho^{-\nu+l}\nabla^l \psi\|_{L^\infty(\tOm)}+ \sup_{x,x'\in\tOm} \trho(x)^{-\nu+j+\alpha}\frac{|\nabla^j \psi(x)-\nabla^j \psi(x')|}{|x-x'|^\alpha}\,,\\
\|\psi\|_{H^j_\nu}&:=\sum_{l=0}^j\|\trho^{-\nu+l-1}\nabla^l \psi\|_{L^2(\tOm)}\,.
\end{aligned}
\end{equation}

However, in our problem we find two major complications that prevent us from basing our approach on off-the-shelf estimates and spaces. Firstly, we find that our linearized operator has exactly two indicial roots, which are completely determined by the decay rate~$m$ as $\underline\nu= 2-m$ and $\overline\nu=m-1$, and we crucially need to control functions that are critical in that they behave like $\trho^{\overline\nu}$. Secondly, the fact that the function~$\trho$ vanishes on~$\pd\tOm$ makes the norms~\eqref{E.badnorm} strongly unsuitable to control the nonlinearities that arise in our problem. Hence we need to develop from scratch a sharp regularity theory for this kind of operators that is adapted to the situation at hand.

This has a major impact in the way we address the loss of derivatives. In the approach introduced in \cite{Fall-Minlend-Weth-main} and used in the papers~\cite{Fall-Minlend-Weth-main,EFRS} about analogs of the Schiffer problem, the loss of derivatives is overcome by the use of anisotropic H\"older spaces where the functions are one derivative smoother in the radial variable than they are in the angular variable, since in this setting the linearized map turns out to be a Fredholm operator of index $0$. In our case,  finding a functional setting where one can eliminate the loss of derivatives and where the linearized operators are Fredholm operators of index 0 turns out to be rather subtle, as illustrated by the fact that we eventually consider nonlinear maps from the Hilbert space
\[
 \cX^j:= \Big\{
 u\in H^{j}(\Om):   \rho\, \pd_R u \in H^{j} (\Om) \Big\}\,,
\]
into the Banach space
\[
 \cY^j:= \Big\{\rho u_1 +  u_2: u_1\in\cX^{j-2}\,,\; u_2\in H^{j-1}(\Om)\Big\}\,,
\]
both spaces being equipped with their natural norms.
Here $(R,\theta)$ are polar coordinates on the annulus $\Om:=\{1<R<7\}$, the weight $\rho(R,\theta):=\frac16(R-1)(7-R)$ vanishes on~$\pd\Om$, and $j$ is a large integer. The switch from H\"older to Sobolev spaces is not incidental. Roughly speaking, an essential ingredient to establish Fredholmness are sharp regularity estimates for the linearized map, which are standard when this map is uniformly elliptic with smooth coefficients as in~\cite{EFRS,Fall-Minlend-Weth-main} but certainly not in the present setting. In our context, $L^2$-based Sobolev norms are much better suited than H\"older norms to effectively capture the interplay between the singular potential and the high frequencies which underlies the regularity of solutions.

Once these issues have been settled, one can indeed prove the bifurcation result that translates into Theorem~\ref{T.main}. In view of the fairly delicate analytic setting in which we need to work, proving the technical transversality conditions necessary to use an implicit-theorem-based bifurcation theorem such as Crandall--Rabinowitz seems to be highly impractical. Nevertheless,  we can resort to Krasnoselskii's degree-theory-based global bifurcation theorem, where in fact the construction of the nonlinearity~$f_a$ and the radial solution~$\psi_a$ readily enables us to ensure that the crossing number condition of this theorem is satisfied, provided that we additionally restrict our functional setting to functions that are invariant under a certain discrete group of rotations.

\subsection{Related results} \label{Sub.RR}

There is an extensive literature on stationary solutions to the incompressible Euler equations on the plane. In particular, much is known about rigidity conditions, that is, hypotheses which determine the geometry of the possible solutions in certain cases. Among these so-called Liouville theorems, one can mention the results of Hamel and Nadirashvili, which ensure that smooth stationary Euler flows without stagnation points (i.e., with $\nabla\psi\neq0$)  on a bounded planar domain must inherit the symmetry of the domain under suitable assumptions~\cite{Hamel1,Hamel2,Hamel}, particularly in the case of disks and annuli. In the case of compactly supported stationary flows on~$\RR^2$ which we consider in this paper, sufficient conditions for (local) radial symmetry are the absence of stagnation points~\cite{R} or the fact that the vorticity~$-\Delta\psi$ does not change sign~\cite{gomez}. Other results about the rigidity and flexibility of planar stationary fluids in various geometric settings include \cite{ConstantinDrivas,CZ, Shv, Gui}~for smooth solutions to the Euler equations, \cite{Fraenkel, JGS,gomez,gomez21,gomez22}~for vortex patches and sheets, and \cite{Koch, YaoCarrillo}~for Navier--Stokes and for aggregration equations.


\subsection{Organization of the paper}

In Section~\ref{S.MainResult} we present the proof of Theorem \ref{T.main}. The proofs of most of the results stated there will be postponed to further sections. Specifically, Section~\ref{S.L.f} is devoted to the construction of the non-autonomous semilinear equation we will use to obtain stationary Euler flows. In Section \ref{S.FS} we elaborate on the functional setting introduced in Section \ref{S.MainResult}, and in Section~\ref{S.Ga} we analyze the basic properties of the nonlinear map~$\cG_a$ used to prove the main theorem. Sections ~\ref{S.Regularity} and \ref{S.keyest} are devoted to the proof of sharp regularity estimates in our weighted spaces for the linearization of the operator ~$\cG_a$. Finally, the analysis of the spectral properties of the linearized operator is performed in Section~\ref{S.bifurcation}. The paper concludes with an appendix where we show that stationary Euler flows of class~$C^2$ with compact support cannot be obtained via an autonomous semilinear equation for the stream function.

\section{Proof of the main result} \label{S.MainResult}

In this section we present the proof Theorem~\ref{T.main}. To streamline the presentation, we will state several key auxiliary results whose proofs are relegated to later sections. Throughout, we consider that $m \in \RR \setminus \frac12\ZZ$ is a fixed constant, $j \geq 4$ is a fixed integer, and $a \geq 4$ is a real constant. We also assume that $m \geq j+2$.

\subsection*{Step 1: Stationary Euler flows via non-autonomous nonlinearities}

The proof of Theorem \ref{T.main} starts with an elementary but crucial remark connecting the 2D stationary Euler equations to a non-autonomous semilinear elliptic PDE. The results of this section will be proved in Section \ref{S.L.f}.

For this, let $\chi: \RR \to \RR$ be a smooth function such that
$$
\chi = 0 \quad \textup{on } (-\infty,-1] \quad\textup{and}\quad \chi = 1 \quad \textup{on } [1,+\infty)\,.
$$
Suppose that $\psi \in C_c^{2}(\RR^2)$ is a solution to a non-autonomous semilinear equation of the form
\begin{equation}\label{E.psi}
\Delta \psi + f_a(|x|,\, \psi)  = 0\qquad \text{in }\RR^2\,,
\end{equation}
where 
$$
f_a(r,t) := \chi(r -a) f_-(t) + [1-\chi(r-a)] f_+(t)\,,
$$
for some continuous functions $f_\pm$. A straightforward calculation shows that the velocity field given by
\begin{equation} \label{E.velocityPressure}
v:= \nabla^{\perp} \psi 
\end{equation}
defines a compactly supported solution to \eqref{E.Euler} on the plane if and only if
\begin{equation} \label{E.refEuler}
 \chi'(|x|-a) \big[f_-(\psi)-f_+(\psi) \big]\, \nabla^{\perp}  \psi \cdot e_r = 0 \quad \textup{in } \RR^2\,,
\end{equation}
where $e_r$ is the unit radial vector. 
In this case, the pressure can be recovered from~$v$ via the formula
\[
p=-\Delta^{-1}\Div (v\cdot\nabla v)\,.
\]

The take away message here is that, if for some functions $f_\pm$ we are able to find a solution $\psi \in C_c^{k+1}(\RR^2)$ to~\eqref{E.psi} which is not locally radial and satisfies $f_-(\psi) = f_+(\psi)$ in $\bD_{a+1} \setminus \overline{\bD}_{a-1}$, then the vector field~$v$ defined in \eqref{E.velocityPressure} is a nonradial, compactly supported solution to \eqref{E.Euler} of class $C^{k}(\RR^2)$. In what follows we shall show how to implement this strategy to prove Theorem~\ref{T.main}.

Specifically, we will eventually construct nonradial solutions whose support is a nonradial small perturbation of an annulus that we can describe in polar coordinates $(r,\theta)\in\RR^+\times\TT$ as
\begin{equation*} 
\Omega_*:= \{(r,\te) \in (a_-^*,a_+^*) \times \TT\,\}\,.
\end{equation*}
Here $a^*$ is some large constant that we will choose later on and $\TT:=\RR/2\pi\ZZ$. In this formula and in what follows, for any real constant~$a>3$, we use the notation $a_\pm:= a\pm 3$. 

The nonlinearities that we will consider in Equation~\eqref{E.psi} are
\[
f_\pm(t) := |t|^{1-\frac{2}{m}}  g_\pm(t)\,,
\]
where $g_-,g_+$ are smooth functions that we will specify later on. This function depends smoothly on the parameter~$a \geq { 4}$, but we will not reflect this dependence notationally. We shall also use the notation
\begin{equation} \label{E.deff}
f_a(r,t):= \chi(r-a)\, |t|^{1-\frac{2}{m}} g_-(t) + [1-\chi(r-a)]\, |t|^{1-\frac{2}{m}} g_+(t) \,.
\end{equation}
Although the one-variable functions $\chi$ and $g_\pm$ will be smooth, the nonlinearity $f_a$ is only a H\"older continuous function on its second argument (with exponent $1-\tfrac2m$). 

The first key ingredient in the proof of the main theorem is the following lemma. To state it we need to make precise the rate at which functions vanish on the boundary of the annulus
\begin{equation} \label{E.DomainNonPerturbed}
\Omega_a:= \{(r,\te) \in (a_-,a_+) \times \TT\,\}\,.
\end{equation}
To this end, let us introduce the function
\[
\rho_a(r):=\frac16(a_+-r)(r-a_-)\,,
\]
which is a convenient choice of a radial function that is positive in~$\Om_a$ and vanishes linearly on~$\pd\Om_a$. Also, note that $|\rho'_a(a_\iota)|=1$, where here and in what follows $\iota\in\{-,+\}$.

\begin{lemma}\label{L.f}
There exist a constant $\ep>0$ and functions $\Psi_a, g_\pm\in C^\infty(\RR)$ such that:  
	\begin{enumerate}
		\item The function $\psi_a:=\rho_a^m  \, \Psi_a\in C^m(\BOm_a)$ is a radial solution to \eqref{E.psi} in~$\Om_a$.
		\item $\inf_{a_-<r<a_+}\Psi_a(r)>0$ and $g_\iota(a_\iota)\neq0$ for $\iota\in\{-,+\}$.
		\item If $\Om\subset\{a_--\ep < r<a_++\ep\}$ is a planar domain with $C^{2,\alpha}$~boundary and $\psi\in C^{2,\alpha}(\BOm)$ satisfies the semilinear equation~\eqref{E.psi} in~$\Om$ and the bound 
		\[
		\|\psi-\psi_a\|_{L^\infty(\Om_a)}+ \|\psi\|_{L^\infty(\Om\backslash\Om_a)}<\ep\,,
		\]
then the vector field $v:=\nabla^\perp\psi\in C^{1,\alpha}(\Om)$ satisfies the stationary Euler equation~\eqref{E.Euler} in~$\Om$ (with some $C^{2,\alpha}$ pressure).
	\end{enumerate}
\end{lemma}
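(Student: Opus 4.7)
The plan is to build $\psi_a$ as a smooth radial function that satisfies \emph{two different} autonomous semilinear PDEs on the two ``halves'' $\{|x|\leq a-1\}$ and $\{|x|\geq a+1\}$ of~$\Om_a$, glued smoothly across the transition annulus $\{a-1<|x|<a+1\}$. By~\eqref{E.refEuler}, $v = \nabla^\perp\psi$ solves~\eqref{E.Euler} iff $\chi'(|x|-a)[f_-(\psi)-f_+(\psi)]\pa_\te\psi \equiv 0$, so since $\chi'$ is supported on the transition annulus, it suffices to arrange that $g_+\equiv g_-$ on an open neighborhood $U$ of the range of $\psi_a$ on the closed transition annulus; every small perturbation $\psi$ then takes values in $U$ throughout the transition annulus, and the Euler condition holds automatically. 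The freedom to take $g_+\not\equiv g_-$ globally (on $\RR\setminus U$) will be crucial for the subsequent bifurcation argument, ensuring that $f_a$ is genuinely non-autonomous and so the obstruction of Theorem~\ref{T.nogoA} is avoided.

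Concretely, I would pick smooth $G\in C^\infty(\RR)$ with $G(0)<0$ and $G(\be)>0$ for some target $\be>0$ and solve the singular autonomous BVP
\begin{equation*}
 \psi_a'' + \tfrac{1}{r}\psi_a' + |\psi_a|^{1-\frac{2}{m}}G(\psi_a) = 0 \text{ on }[a_-,a+1],\quad \psi_a(a_-)=0,\ \psi_a'(a+1)=0,
\end{equation*}
with the $\rho_a^m$-asymptotic at $a_-$. Using the ansatz $\psi_a = \rho_a^m \Psi_a^L$ this reduces to a regular ODE for $\Psi_a^L$, and the indicial equation at $a_-$ forces $\Psi_a^L(a_-) = (-G(0)/[m(m-1)])^{m/2}>0$; a shooting parameter (effectively one scalar in a one-parameter family of admissible $G$'s) is tuned to realize $\psi_a'(a+1)=0$, yielding $\Psi_a^L$ smooth and strictly positive on $[a_-,a+1]$ with $\psi_a$ monotone increasing from $0$ to $\be:=\psi_a(a+1)$. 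Set $t_0:=\psi_a(a-1)\in(0,\be)$. Next I would pick smooth $G_R\in C^\infty(\RR)$ with $G_R\equiv G$ on an open neighborhood $U$ of $[t_0,\be]$ and solve the Cauchy problem
\begin{equation*}
 \psi_a'' + \tfrac{1}{r}\psi_a' + |\psi_a|^{1-\frac{2}{m}}G_R(\psi_a) = 0 \text{ on }[a+1,a_+],\quad (\psi_a(a+1),\psi_a'(a+1)) = (\be,0),
\end{equation*}
adjusting the (ample) freedom in $G_R$ on $\RR\setminus U$ via shooting so that the resulting solution satisfies $\psi_a(a_+)=0$ with the correct $\rho_a^m\Psi_a^R$-asymptotic; the matching at $r=a+1$ is automatically $C^\infty$ since $G_R=G$ near $\be$, so $\Psi_a$ glues to a smooth strictly positive function on $[a_-,a_+]$. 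Finally set $g_+:=G$ and $g_-:=G_R$.

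Condition~(i) is then checked regime-by-regime: on $\{|x|\leq a-1\}$, $\chi=0$ and $f_a=|t|^{1-\frac{2}{m}}G$, matching the left ODE; on $\{|x|\geq a+1\}$, $\chi=1$ and $f_a=|t|^{1-\frac{2}{m}}G_R$, matching the right ODE; on the transition annulus, $\psi_a(r)\in[t_0,\be]\subset U$ so the convex combination $\chi g_-+(1-\chi) g_+$ collapses to $G$ and matches either ODE (which coincide on this range). Condition~(ii) is immediate: $\Psi_a>0$ on $[a_-,a_+]$ by construction, and $g_\iota(a_\iota)\ne 0$ (read via the indicial values $\Psi_a(a_\iota)=(-g_\iota(0)/[m(m-1)])^{m/2}>0$) holds because $G(0)$ and $G_R(0)$ are strictly negative by choice. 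For~(iii), pick $\ep:=\tfrac12\min\{t_0,\,\dist(\overline{\psi_a([a-1,a+1])},\,\RR\setminus U)\}$; any $x\in\Om$ with $|x|\in(a-1,a+1)$ lies in $\Om_a$ (the transition annulus sits strictly inside $\Om_a$ since $a\ge 4$), so $|\psi(x)-\psi_a(x)|<\ep$ forces $\psi(x)\in U$, whence $f_+(\psi(x))=f_-(\psi(x))$, and~\eqref{E.refEuler} yields the stationary Euler equation for $v=\nabla^\perp\psi$. The main technical obstacle is the simultaneous solvability of the two singular BVPs---notably arranging $\psi_a'(a+1)=0$ in the first step (so the maximum of $\psi_a$ falls exactly on the boundary of the transition annulus) and solving the shooting problem for $G_R$ on $\RR\setminus U$ in the second (so that the Cauchy-evolved solution hits the regular manifold at $r=a_+$)---both made delicate by the $1/r$-asymmetry of the radial Laplacian and the degenerate boundary behavior at $r=a_\pm$; these are handled by working with the regular ODE for $\Psi_a$ via the ansatz $\psi_a=\rho_a^m\Psi_a$.
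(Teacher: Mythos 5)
Your mechanism for part (iii) is exactly the one the paper uses: arrange that $f_+$ and $f_-$ coincide on a neighbourhood of the values that $\psi_a$ takes on the transition annulus $\{a-1<|x|<a+1\}$, so that for any nearby solution the bracket in \eqref{E.refEuler} vanishes where $\chi'\neq 0$ and $v=\nabla^\perp\psi$ is automatically Euler. (In the paper this neighbourhood is simply $[3,\infty)$, where $f_\pm\equiv 1$, and one checks $\psi_a>3$ on $[a-1,a+1]$ for large $a$.) Where you diverge is in how the pair $(\psi_a,g_\pm)$ is produced: you prescribe the nonlinearities first and try to solve two singular boundary value problems by shooting, whereas the paper prescribes the \emph{profile} first --- gluing $\tilde\rho_a^{\,m}$ near $\partial\Om_a$ to the torsion function $\tau_a$ of \eqref{E.Torsion} as in \eqref{psi_a} --- and then \emph{defines} $f_\pm$ by inverting $\psi_a$ on each of its two monotone pieces, which yields explicit formulas near $t=0$, smoothness and positivity of $\Psi_a$ for free (near $\partial\Om_a$ one has $\psi_a=\tilde\rho_a^{\,m}$ exactly), and the uniform-in-$a$ asymptotics later needed in Lemma~\ref{L.f2}(iv) and Lemma~\ref{L.f'}.

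The genuine gap is that in your route the existence of $\psi_a$ with all the stated properties is asserted, not proved, and this is precisely the hard content of the lemma. First, the substitution $\psi_a=\rho_a^m\Psi_a^L$ does \emph{not} give a regular ODE: the equation for $\Psi_a^L$ is still degenerate (Fuchsian-type) at $r=a_-$, and the existence of a solution smooth and positive up to $r=a_-$ with the prescribed indicial value requires its own argument; only then can one speak of a continuous shooting map and invoke an intermediate-value argument to achieve $\psi_a'(a+1)=0$ (together with positivity and monotonicity up to $a+1$, which you also need). Second, and more seriously, the outward step requires the Cauchy solution launched from $(\psi_a,\psi_a')=(\be,0)$ at $r=a+1$ to reach a \emph{degenerate} touchdown $\psi_a=\psi_a'=0$ exactly at the prescribed radius $a_+$, with vanishing of order exactly $m$ and a quotient $\Psi_a^R$ smooth and positive up to $r=a_+$. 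That is a two-condition matching (degeneracy of the touchdown plus its location), and ``ample freedom in $G_R$ on $\RR\setminus U$'' is not an argument: one would need a priori control (no blow-up, no premature zero with nonzero slope, no loss of positivity), continuity of the touchdown data in the chosen parameters, and a degree or nested IVT argument, plus a boundary regularity analysis to upgrade the touchdown to the full $\rho_a^m\Psi_a$ structure required in item (i). None of this is supplied, and it is exactly what the paper's construction is designed to avoid: by choosing the profile explicitly and reading off $f_\pm$ from it (with $f_\pm\equiv 1$ on $[3,\infty)$, cf.\ \eqref{this}), no matching or shooting is ever needed, and the claims of items (i)--(iii) reduce to elementary verifications about $\tau_a$ and the monotonicity of $\psi_a$.
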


\medbreak

\subsection*{Step 2: Setting up the problem}  Taking into account Step 1 and Lemma \ref{L.f}, we aim to find non radial solutions to \eqref{E.psi} bifurcating from $\psi_a$.  The technical results of this step are developed in Section \ref{S.FS}, where we deal with the functional setting, and Section \ref{S.Ga}. Given a constant $a \geq { 4}$ and functions $b,B\in C^{j-2}(\TT)$ bounded e.g. as
\[
\|b\|_{L^\infty(\TT)}+\|B\|_{L^\infty(\TT)}<\tfrac{1}{ 10}\,,
\]
we consider bounded domains defined in polar coordinates by
\begin{equation}\label{E.defOm}
\Om_a^{b,B}:=\{(r,\te)\in\RR^+\times\TT: a_-+b(\te)< r< a_++B(\te)\}\,.
\end{equation}

\noindent Then, as explained in Step 1, Theorem \ref{T.main} follows from the following result:

\begin{theorem} \label{T.main2} There exist a number $a^* >4$ and sequences $(a_n)_{n=1}^\infty \subset [4,+\infty)$, $(b_n)_{n=1}^\infty, (B_n)_{n=1}^\infty \subset C^{j-2}(\TT)$, and $(\psi_n)_{n=1}^\infty \subset C^{j-2}(\overline{\Omega}_{a_n}^{\,b_n,B_n})$ such that:
\begin{enumerate}
	\item $a_n\to a^*$ as $n\to\infty$.
	\item $b_n$ and $B_n$ are nonconstant functions which tend to~$0$ in the $C^{j-2}$-norm.
	\item $\psi_n$ are positive solutions to	
\begin{equation}\label{Ecuacion}
	\Delta \psi_n + f_{a_n}(|x|,\, \psi_n)  = 0\quad \text{ in }\Om_{a_n}^{\,b_n,B_n} \,,
\end{equation}
such that $\psi_n(x)  \dist(x, \partial \Omega_{a_n}^{b_n,B_n})^{-m}$ is uniformly bounded in $\overline{\Om}_{a_n}^{\,b_n,B_n}$. Moreover, if we extend $\psi_n$ by $0$ outside the domains $\Omega_{a_n}^{\,b_n,B_n}$, we have that $\psi_n \to \psi_a$ in  $C^{j-2}$.
\end{enumerate} 
\end{theorem}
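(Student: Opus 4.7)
The plan is to deduce Theorem \ref{T.main2} from Krasnoselskii's degree-theory based global bifurcation theorem applied to a carefully chosen nonlinear map whose trivial branch corresponds to the radial solutions $\psi_a$ supplied by Lemma \ref{L.f}. First I pull back the free-boundary problem to the fixed annulus $\Om:=\{1<R<7\}$. Given admissible boundary perturbations $b,B\in C^{j-2}(\TT)$ and a parameter $a\geq 4$, I construct a diffeomorphism $\Phi_{a,b,B}:\Om\to\Om_a^{b,B}$ sending $\{R=1\}$ to $\{r=a_-+b(\te)\}$ and $\{R=7\}$ to $\{r=a_++B(\te)\}$, and write the unknown stream function as
\[
\psi \;=\; \bigl(\rho^{m}(u_a+v)\bigr)\circ\Phi_{a,b,B}^{-1},
\]
where $\rho(R,\te):=\tfrac16(R-1)(7-R)$ is the natural boundary defining function on $\Om$, $u_a\in C^\infty(\BOm)$ is the positive radial profile obtained from $\psi_a=\rho_a^m\Psi_a$ after pullback, and $v$ is a small unknown. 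Substituting into \eqref{Ecuacion} and factoring out the leading singular power $\rho^{m-2}$ yields an equation
\[
\cG_a(v,b,B)\;=\;0,
\]
with $\cG_a$ mapping a neighborhood of the origin in $\cX^j\times C^{j-2}(\TT)^2$ into $\cY^j$, and $(v,b,B)=(0,0,0)$ is a trivial solution for every $a$.

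Next I would analyze the linearization $L_a:=D\cG_a(0,0,0)$. Its principal part is the pulled-back Laplacian perturbed by a potential which, because of the asymptotics $f_a(r,t)\sim |t|^{1-2/m}$ at $t=0$, behaves like $c/\rho^2$ near $\pd\Om$, with indicial roots $\underline\nu=2-m$ and $\overline\nu=m-1$. Using the sharp regularity estimates of Sections \ref{S.Regularity}--\ref{S.keyest}, one would show that $L_a:\cX^j\times C^{j-2}(\TT)^2\to\cY^j$ is Fredholm of index $0$ and depends continuously on $a$. Decomposing in angular Fourier modes $e^{in\te}$ then reduces the analysis of $\Ker L_a$ to a one-parameter family of singular ODE eigenvalue problems; the flexibility built into the non-autonomous nonlinearity $f_a$ of Lemma \ref{L.f} can be arranged so that the $n$-th eigenvalue $\la_n(a)$ depends monotonically on $a$ and vanishes at some value $a_n^*$, with $a_n^*\to+\infty$ as $n\to\infty$.

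To obtain the odd crossing number required by Krasnoselskii's theorem, I would restrict $\cG_a$ to the subspace of functions invariant under the rotation group $\ZZ/N\ZZ$ for a suitable $N\geq 2$, so that only Fourier modes that are integer multiples of $N$ survive and the zero mode (which parametrizes the trivial radial family in $a$) is projected out. Choosing $N$ so that $a^*:=a_N^*$ is a simple crossing of $\la_N$ in the $N$-equivariant class, the crossing number equals $\pm 1$ and the theorem produces a continuum of nontrivial solutions $(a_n,v_n,b_n,B_n)\to(a^*,0,0,0)$ in $\RR\times\cX^j\times C^{j-2}(\TT)^2$. The stream functions $\psi_n:=(\rho^m(u_{a_n}+v_n))\circ\Phi_{a_n,b_n,B_n}^{-1}$, extended by $0$ outside $\Om_{a_n}^{b_n,B_n}$, then satisfy all the conclusions of Theorem \ref{T.main2}: positivity and the bound $\psi_n\,\dist(\cdot,\pd\Om_{a_n}^{b_n,B_n})^{-m}\in L^\infty$ follow from Lemma \ref{L.f}(ii) together with the smallness of $v_n$; the $C^{j-2}$ convergence $\psi_n\to\psi_a$ follows from the embedding $\cX^j\hookrightarrow C^{j-2}(\BOm)$ (valid thanks to $m\geq j+2$); and the nonconstancy of $b_n,B_n$ is forced by the $\ZZ/N\ZZ$-equivariant symmetry breaking, which rules out a return to the trivial radial branch.

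The main obstacle is the middle step: proving the Fredholm property of $L_a$ with precise enough control of its kernel to pin down the crossing value $a^*$. Standard elliptic theory does not apply both because of the critically singular coefficient $c/\rho^2$ and because the anisotropic spaces $\cX^j,\cY^j$ are custom-built to absorb the radial loss of derivatives in the domain parameters $(b,B)$. Matching the indicial root structure to this anisotropic scaling so that the full map $L_a$ ends up Fredholm of index $0$ is precisely what Sections \ref{S.Regularity} and \ref{S.keyest} are designed to accomplish, and without those estimates the bifurcation argument could not even be formulated.
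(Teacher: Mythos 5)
Your proposal reproduces the broad skeleton of the paper's argument (pull back to the fixed annulus, linearize, use the singular-operator estimates of Sections~\ref{S.Regularity}--\ref{S.keyest}, apply Krasnoselskii in a $\ZZ_\ell$-symmetric class), but it misses the one device that makes the scheme work. You take $(v,b,B)$ as \emph{independent} unknowns in $\cX^j\times C^{j-2}(\TT)^2$; this is precisely the naive parametrization that the paper identifies as the source of the unrecoverable loss of derivatives, and moreover with three unknowns and a single scalar equation the linearization cannot be Fredholm of index $0$ — the two free boundary functions create an infinite-dimensional excess in the kernel, so no choice of anisotropic norms will give you the index-$0$ map your crossing argument needs. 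The paper's key idea is the parametrization~\eqref{E.wto}: $b_w$ and $B_w$ are \emph{slaved to the boundary trace} of a single unknown $w\in\cX^j$, and $\Theta_w=w+$ (trace lifting), so that $\cG_a$ is a map from an open set of the single space $\cX^j$ into $\cY^j$, and $D\cG_a(0)w=\rho^{2-m}\fL_a(\rho^{m-1}w)$ acts on all of $\cX^j$ (not just $\cX^j_{\rm D}$, thanks to the cancellation of the $(m-1)(m-2)w/\rho$ term, Remark~\ref{R.cancellation}), where it is an isomorphism plus a compact operator (Theorem~\ref{T.regularity}). Relatedly, the first-order effect of moving the boundary is of size $\rho^{m-1}$ (it is $\psi_a'\circ\Phi_{a,1}$ times the displacement), so your interior ansatz $\rho^{m}(u_a+v)$ is one power of $\rho$ too small to capture the modes that actually bifurcate; the paper perturbs by $\rho^{m-1}\Theta$ and the crossing eigenfunction behaves like $\rho^{m-1}$.

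The spectral step is also asserted rather than proved, and two of your claims are incorrect. Restricting to $\ZZ_\ell$-symmetric functions does \emph{not} project out the radial modes (radial functions are invariant under every rotation), so the trivial radial directions remain in the equivariant class; the paper has to track $\lambda_1^{\rm rad},\lambda_2^{\rm rad}$ carefully — in particular $\lambda_2^{\rm rad}(a)$ tends to $0$ from below as $a\to\infty$ — and the zero crossing is produced not by ``arranging'' $f_a$ or by monotonicity in $a$, but by the large-$a$ asymptotics of Lemma~\ref{L.f2}~(iv) combined with the angular term $\ell^2/(R+a-4)^2$ for a suitably chosen $\ell$, together with simplicity of $\lambda_3^\ell$ at the crossing and $\lambda_4^\ell>0$ (Proposition~\ref{eigen-crossing}), which is what gives the odd change of index. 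In addition, Krasnoselskii requires an identity-minus-compact structure on a single space, which $\cG_a:\cX^j\to\cY^j$ does not have; the paper obtains it by inverting $\cS_a(w)=\ep\rho\,w-\cG_a(w)$ and using the compact embedding $\cX^j\hookrightarrow\cY^j$ (Lemma~\ref{L.CompactEmbedding}). Finally, nonconstancy of $b_n,B_n$ does not follow from equivariant symmetry breaking alone — a nontrivial $w_n$ could a priori have constant boundary trace; the paper proves it (Proposition~\ref{P.nonradial}) by showing that $w_n/\|w_n\|_{\cX^j}$ converges to the crossing eigenfunction, of the form $\widetilde\phi(R)\cos(\ell\te)$ with $\widetilde\phi$ bounded below on $\BOm$, whose boundary values are nonconstant.
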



The rest of this section is devoted to the proof of this result. It is convenient to transform the equations and to work on a fixed domain. For this, we map the fixed annulus  
$$
\Om := \Om_4 = \big\{(R,\te) \in (1,7) \times \TT \big\}
$$
into $\Omega_a^{b,B}$ through the diffeomorfism 
\begin{equation} \label{diffeo} 
\Phi_a^{b,B}: \Om \to \Om_a^{b,B}
\end{equation}
defined in polar coordinates as $\Om\ni(R,\te)\mapsto (r,\te)\in \Om_a^{b,B}$, with
	\begin{equation}\label{E.rchange}
	r(R,\te) := \frac16 \Big[ [a_++B^{\flat}(R,\te)](R-1)+[a_- +b^{\sharp}(R,\te)](7-R) \Big]\,.
	\end{equation}
Here, $b^{\sharp}$ and $B^{\flat}$ are defined as follows: we fix   an even cutoff function $\widetilde{\chi} \in C^{\infty}(\RR)$ such that
\begin{equation} \label{E.chibar}
\widetilde{\chi}(s) = 1 \quad \textup{if } |s| < \tfrac12\,, \quad \textup{ and } \quad \widetilde{\chi}(s) = 0 \quad \textup{ if } |s| > 1\,,
\end{equation}
and then set
\begin{equation} \label{E.TraceLifting}
b^{\sharp}(R,\te) := \sum_{n \in \ZZ} b_n e^{in\te} \widetilde{\chi}\big( (1+|n|)(R-1) \big)\,, \quad   B^{\flat} (R,\te) := \sum_{n \in \ZZ} B_n e^{in\te} \widetilde{\chi}\big( (1+|n|)(R-7) \big) \,,
\end{equation}
in terms of the Fourier coefficients
$$
b_n := \frac{1}{2\pi} \int_0^{2\pi} b(\te) e^{-in\te} d\te\,, \qquad  \qquad B_n:= \frac{1}{2\pi} \int_0^{2\pi} B(\te) e^{-in\te} d\te\,.
$$
Note that the diffeomorphism is as smooth (say in the scale of H\"older norms) as $b,B$ are. In general, throughout the paper we will regard the sharp and flat operators maps from functions on~$\TT$ to functions on~$\Om$. Let us point out that $b^{\sharp},B^{\flat}$ can be understood as trace liftings of the functions $b,B$, respectively. For future reference, let us record here the following bound. Since~$\Om$ does not contain the origin, for convenience we will define the $L^2(\Om)$-norm using the measure $dR d\theta$ on $(1,7)\times\TT$ unless specified otherwise, that is
\[
\|u\|_{L^2(\Om)}^2:=\int_\Om u^2\,dR \, d\theta\,. 
\]

\bigskip

The proof of this result will be given in Section \ref{S.FS}, where we actually provide a more precise statement.

\begin{lemma} \label{L.Liftings}
For $b^{\sharp}$ and $B^{\flat}$ as in \eqref{E.TraceLifting}, it follows that
$$
 \|b^{\sharp}\|_{H^j(\Om)} + \| \rho\, \pd_R b^{\sharp}\|_{H^j(\Om)} \leq C \|b\|_{H^{j-\frac12}(\TT)} \,, \qquad  \|B^{\flat}\|_{H^j(\Om)} + \| \rho\, \pd_R B^{\flat}\|_{H^j(\Om)} \leq C \|B\|_{H^{j-\frac12}(\TT)}\,.
$$
\end{lemma}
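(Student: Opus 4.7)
The plan is to prove both estimates by direct Fourier analysis of the explicit series defining $b^{\sharp}$ and $B^{\flat}$. By the reflection symmetry of the construction (with the roles of $R=1$ and $R=7$ exchanged), it suffices to deal with $b^{\sharp}$. The guiding principle is that the $n$-th term of the series is supported in a boundary layer of width $(1+|n|)^{-1}$ near $R=1$, which is exactly the right scale to make the trace inequality sharp. Throughout, I will use that $\|b\|_{H^{j-1/2}(\TT)}^2\sim\sum_n(1+|n|)^{2j-1}|b_n|^2$ by Plancherel on $\TT$.

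For the unweighted estimate, I differentiate the series for $b^{\sharp}$ termwise:
\[
\pd_R^\alpha \pd_\theta^\beta b^{\sharp}(R,\te)=\sum_{n\in\ZZ}b_n\,(in)^\beta (1+|n|)^\alpha e^{in\te}\,\widetilde\chi^{(\alpha)}\bigl((1+|n|)(R-1)\bigr).
\]
Applying Plancherel in $\te$ and then the substitution $s=(1+|n|)(R-1)$ in the radial integral (whose Jacobian produces a factor $(1+|n|)^{-1}$), the square of the $L^2(\Om)$ norm equals
\[
2\pi\sum_n|b_n|^2\,|n|^{2\beta}(1+|n|)^{2\alpha-1}\int_0^{6(1+|n|)}|\widetilde\chi^{(\alpha)}(s)|^2\,ds.
\]
Since $\widetilde\chi$ has compact support, the last integral is bounded uniformly in $n$, and for $\alpha+\beta\leq j$ the total is controlled by $C\sum_n|b_n|^2(1+|n|)^{2j-1}\sim C\|b\|_{H^{j-1/2}(\TT)}^2$. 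Summing over $\alpha+\beta\leq j$ gives the desired bound for $\|b^\sharp\|_{H^j(\Om)}$.

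For the weighted estimate, the crucial observation is that on the support of the $n$-th term one has $|R-1|\leq(1+|n|)^{-1}$, and hence $\rho(R)\leq C(1+|n|)^{-1}$. Writing explicitly $h_n(s):=s\bigl(1-\tfrac{s}{6(1+|n|)}\bigr)$ — a polynomial whose coefficients are bounded uniformly in $n$ — one checks that $\rho(R)(1+|n|)=h_n((1+|n|)(R-1))$, so
\[
\rho\,\pd_R b^{\sharp}=\sum_n b_n\,e^{in\te}\,F_n\bigl((1+|n|)(R-1)\bigr),\qquad F_n:=h_n\,\widetilde\chi',
\]
where each $F_n$ is smooth, compactly supported, and has derivatives bounded uniformly in~$n$. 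Applying $\pd_R^\alpha \pd_\theta^\beta$ and repeating the Plancherel/change-of-variables argument produces the very same sum $\sum_n|b_n|^2(1+|n|)^{2(\alpha+\beta)-1}$, again bounded by $C\|b\|_{H^{j-1/2}(\TT)}^2$ for $\alpha+\beta\leq j$. Intuitively, the weight $\rho$ contributes exactly one extra factor of $(1+|n|)^{-1}$ that neutralises the singular factor $(1+|n|)$ produced by $\pd_R$, so the inclusion of $\rho\pd_R b^\sharp$ costs nothing in regularity.

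No genuine obstacle is anticipated; the argument is essentially bookkeeping of scales. The only subtlety worth verifying carefully is that $h_n$ and all its derivatives remain bounded uniformly in $n$ on the support of $\widetilde\chi'$ (which follows from $|s|\leq 1$ there), and that the change-of-variables step gives the correct Jacobian $(1+|n|)^{-1}$, since this is precisely the numerology that converts a full derivative in the $H^j(\Om)$ norm into the half-derivative loss in the $H^{j-1/2}(\TT)$ norm.
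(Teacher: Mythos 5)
Your proof is correct and follows essentially the same route as the paper's (Lemma~\ref{L.LiftingsFS}): Plancherel in $\theta$, the rescaling $s=(1+|n|)(R-1)$ with its Jacobian $(1+|n|)^{-1}$, and the fact that $\rho\lesssim(1+|n|)^{-1}$ on the support of the $n$-th term, which is exactly how the weight compensates the factor $(1+|n|)$ from $\pd_R$. The only cosmetic difference is that you absorb $\rho(R)(1+|n|)$ exactly into the rescaled profile $h_n\widetilde\chi'$, whereas the paper simply bounds $\rho\lesssim R-1$ before rescaling; the numerology is identical.
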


For later purposes, we also introduce the shorthand notation 
\[
\Phi_a := \Phi_a^{0,0}\,,
\]
and denote  by $\Phi_{a,1}$ the nontrivial component of this diffeomorphism, which only depends on the radial variable on~$\Om$. One can thus write  $\Phi_a(R,\te) = (\Phi_{a,1}(R),\te)$. 

Let us now use the function $\psi_a$ introduced in Lemma \ref{L.f} (i) to set
\begin{equation} \label{E.PsiPullBack}
\widetilde{\psi}_a = \psi_a \circ \Phi_{a,1}\,.
\end{equation}
With 
\[
\rho(R) := \rho_4(R) = \frac16(R-1)(7-R)\,,
\]
note that $\widetilde{\psi}_a$ is a radial function on~$\Om$ which vanishes like $\rho^m$ on~$\pd\Om$, and that
\begin{equation*} 
\widetilde{\Psi}_a(R) := \frac{\widetilde{\psi}_a(R)}{\rho(R)^m}
\end{equation*}
is a strictly positive function belonging to $C^\infty(\BOm)$.

Using this change of variables, one can rewrite Equation~\eqref{E.psi} in terms of the function 
$$
u := \psi \circ \Phi_a^{b,B} \,,
$$
which is defined on the fixed domain~$\Om$,
as
\begin{equation}\label{E.Pull-Back}
L_a^{b,B}u + f_a(\,|\cdot|,u \circ (\Phi_a^{b,B})^{-1}) \circ \Phi_a^{b,B} = 0 \quad \textup{in } \Om\,.
\end{equation}
Here the differential operator 
$$
L_a^{b,B} u :=  [\De (u\circ (\Phi_a^{b,B})^{-1})]\circ\Phi_a^{b,B} 
$$
is simply the Laplacian $\De$ written in the coordinates $(R,\te)$, and one should note that the regularity of~$u$ is determined by that of~$\psi,b$ and $B$.

A tedious but straightforward computation yields
\begin{align*}
& L_a^{b,B} u =  \frac{1}{(\pd_R r)^2} \pd_R^2 u - \frac{\pd_R^2 r}{(\pd_R r)^3} \pd_R u + \frac{1}{r} \frac{1}{\pd_R r }\pd_R u \\
& \quad  + \frac{1}{r^2} \left[ \pd_{\te}^2 u + \frac{(\pd_{\te} r)^2}{(\pd_R r)^2} \pd_R^2 u - 2 \frac{\pd_\te r}{\pd_R r }\pd_{R}\pd_{\te} u - \frac{\pd_R^2 r (\pd_\te r)^2 - 2 \pd_{R}r  \pd_{\te} r (\pd_{R}\pd_{\te}r)+(\pd_R r)^2 \pd_{\te}^2 r }{(\pd_R r)^3} \pd_R u \right]\,,
\end{align*}
where $r(R,\theta)$ is given by~\eqref{E.rchange}, so that
\begin{align*}
\pd_R r(R,\te) & = \frac{1}{6} \Big( 6 + \Bsharp(R,\te) - \bsharp(R,\te) + \pd_R \Bsharp(R,\te)(R-1)+ \pd_R \bsharp(R,\te)(7-R) \Big)\,, \\
\pd_R^2 r(R,\te) & = \frac16 \Big( 2  \pd_R \Bsharp(R,\te) - 2 \pd_R \bsharp(R,\te) + \pd_R^2 \Bsharp(R,\te)(R-1)+ \pd_R^2 \bsharp(R,\te)(7-R) \Big)\,, \\
\pd_\te r(R,\te) & = \frac16 \Big( \pd_{\te} \Bsharp(R,\te)(R-1) + \pd_{\te} \bsharp(R,\te)(7-R) \Big)\,,\\
\pd_\te^2 r(R,\te) & =  \frac16 \Big( \pd_{\te}^2 \Bsharp(R,\te)(R-1) + \pd_{\te}^2 \bsharp(R,\te)(7-R) \Big)\,,  \\
\pd_{R} \pd_{\te} r(R,\te) & = \frac16 \Big( \pd_{\te}\Bsharp(R,\te) - \pd_{\te} \bsharp(R,\te) + \pd_{R}\pd_{\te} \Bsharp(R,\te)(R-1)+ \pd_{R} \pd_{\te} \bsharp(R,\te)(7-R) \Big)\,.
\end{align*}
In particular, 
\begin{equation} \label{E.La00}
L_a^{0,0} u = \partial_R^2 u + \frac{1}{R+a-4} \partial_R u + \frac{1}{(R+a-4)^2} \partial_\theta^2 u \,.
\end{equation}


Let us introduce the functional setting that we will use. Even though it is inspired by the anisotropic H\"older spaces used in~\cite{Fall-Minlend-Weth-main,EFRS}, the setting we use needs to be substantially different because it crucially depends both on the mapping properties of the nonlinear function we consider and on the sharp regularity properties of its linearized operator, which we summarize in Theorem~\ref{T.regularity}. 

To incorporate these ingredients, let us introduce the scale of ``anisotropic'' Hilbert spaces
\begin{equation} \label{E.Xj}
\cX^j  := \big\{ w \in H^j(\Om) : \rho \, \partial_R  w \in H^j(\Om) \big\}\,,
\end{equation}
endowed with the norm
\[
\|w\|_{\cX^j} := \|w\|_{H^j(\Om)} + \| \rho\, \pd_R w\|_{H^j(\Om)}\,.
\]
In the proof of Theorem~\ref{T.main2}, $\cX^j$ (or rather an open subset of this space) is the domain of the nonlinear map~$\cG_a$ which we will consider in the bifurcation argument.
Throughout, we assume that $j\geq4$ is an integer, so in particular $\cX^j\subset C^2(\BOm)$. Let us also define the closed subset of the elements with zero boundary trace,
$$
\cX^j_{\rm{D}} := \big\{ w \in \cX^j : w|_{\pd \Om} = 0 \big\}\,.
$$
The codomain of the map~$\cG_a$ will be
\begin{equation} \label{E.Yj}
\cY^{j} := \rho \, \cX^{j-2} + H^{j-1}(\Om)\,.
\end{equation}
This is a Banach space, topologized by the canonical norm for the sum of two embedded Banach spaces:
\[
\|w\|_{\cY^{j}} := \inf\Big\{\|{w_1} \|_{\cX^{j-2}} + \|w_2\|_{H^{j-1}(\Om)} :  w = \rho w_1 + w_2  \textup{ with }  {w_1} \in \cX^{j-2},\ w_2 \in H^{j-1}(\Om)\Big\}\,.
\]

To analyze Equation~\eqref{E.Pull-Back}, we shall start by writing the unknown~$u:\Om\to\RR$ as
\begin{equation}\label{E.AnsatzPull-Back}
u := \widetilde{\psi}_a + \rho^{m-1}\, \Theta \,,
\end{equation}
in terms of the radial function~$\widetilde{\psi}_a(R)$ which we introduced in~\eqref{E.PsiPullBack} and another unknown function~$\Theta \in \cX^j_{\rm{D}}$.

Let us introduce the set
$$
\widetilde{\cO}^j := \Big\{ (\Theta,b,B) \in \cX_{\rm{D}}^j \times [H^{j-\frac12}(\TT)]^2  : { \Big\|\frac{\Theta}{\rho}\Big\|_{L^{\infty}(\Om)} } < \tfrac1{10}\, \inf_{\Om} \widetilde{\Psi}_a\  \textup{ and }\ \|b\|_{L^{\infty}(\TT)} + \|B\|_{L^{\infty}(\TT)} < \tfrac1{10}\Big\}\,.
$$
Note that  $\inf_{\Om} \widetilde{\Psi}_a>0$ by Lemma~\ref{L.f}, and that using Lemma \ref{L.Hardy} one can prove that $\widetilde{\cO}^j$ is open. Having at hand this set, we consider the operator arising when we multiply \eqref{E.Pull-Back} by $\rho^{2-m}$. Its specific definition and properties are gathered in the following lemma.  Here and in what follows $f_a'(r,t):=\pd_t f_a(r,t)$ denotes the derivative of the function~$f_a$ with respect to its second argument.

\begin{lemma}\label{L.mapping}
The following assertions hold true:
\begin{enumerate}
\item The function 
\begin{equation}\label{E.preGa}
\widetilde\cG_a(\Theta ,b,B):= \rho^{2-m} \Big[L_a^{b,B} (\widetilde{\psi}_a + \rho^{m-1} \Theta  ) +  f_a (\Phi_{a,1}^{b,B},\ \widetilde{\psi}_a + \rho^{m-1} \Theta  ) \Big]
\end{equation}
maps $\widetilde{\cO}^j\to \cY^j$. \smallbreak
\item The linear operator
$$
\phi\mapsto \rho^{2-m} \Big[ \big[ L_a^{0,0}(\rho^{m-1} \phi  ) + f_a'(\Phi_{a,1},\, \widetilde{\psi}_a)  \big)  \rho^{m-1} \phi   \Big] 
$$
maps $\cX^j \to \cY^j$. 
\end{enumerate}
\end{lemma}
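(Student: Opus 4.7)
The plan is to exploit that $\widetilde\psi_a$ solves the unperturbed equation in order to produce a pointwise cancellation of the most singular, $\rho^{-1}$-order contributions of $\widetilde\cG_a$; after this cancellation, each remaining piece naturally fits into one of the two summands of $\cY^j = \rho\cX^{j-2} + H^{j-1}(\Om)$. As preparation I would record that $\cX^j$ is closed under multiplication by $C^\infty(\bar\Om)$ and under $\partial_\theta$; that for $\Theta \in \cX^j_{\rm D}$ the Hardy-type quotient $\Theta/\rho$ is bounded; and that, by Lemma~\ref{L.Liftings}, $b^\sharp,B^\flat$ and $\rho\pd_R b^\sharp, \rho\pd_R B^\flat$ belong to $H^j(\Om)$, which gives control of the coefficients of $L_a^{b,B} - L_a^{0,0}$.

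For part (i), I would split $\widetilde\cG_a(\Theta, b, B) = I + II + III$, with
\begin{align*}
I &:= \rho^{2-m}\bigl[L_a^{0,0}\widetilde\psi_a + f_a(\Phi_{a,1}, \widetilde\psi_a)\bigr], \\
II &:= \rho^{2-m}\bigl[(L_a^{b,B} - L_a^{0,0})\widetilde\psi_a + f_a(\Phi_{a,1}^{b,B}, \widetilde\psi_a) - f_a(\Phi_{a,1}, \widetilde\psi_a)\bigr], \\
III &:= \rho^{2-m}\bigl[L_a^{b,B}(\rho^{m-1}\Theta) + f_a(\Phi_{a,1}^{b,B}, \widetilde\psi_a + \rho^{m-1}\Theta) - f_a(\Phi_{a,1}^{b,B}, \widetilde\psi_a)\bigr].
\end{align*}
Term $I$ vanishes identically by Lemma~\ref{L.f}(i). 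For $II$ I would use that $\widetilde\psi_a = \rho^m \widetilde\Psi_a$ is smooth and radial, that the coefficients of $L_a^{b,B} - L_a^{0,0}$ are polynomial expressions in $b^\sharp,B^\flat$ and their $(R,\theta)$-derivatives, and that the nonlinearity difference factors as $\rho^{m-2}\widetilde\Psi_a^{1-2/m}\chi'(\cdot-a)[g_-(\widetilde\psi_a) - g_+(\widetilde\psi_a)](\Phi_{a,1}^{b,B} - \Phi_{a,1})$ modulo smoother remainders; multiplying by $\rho^{2-m}$ then places each contribution into $\rho\cX^{j-2} + H^{j-1}$.

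The heart of the argument is $III$. Expanding $L_a^{b,B}(\rho^{m-1}\Theta)$ and multiplying by $\rho^{2-m}$, the only genuinely singular contribution is $(m-1)(m-2)(\rho')^2\rho^{-1}\Theta$ coming from $\pd_R^2(\rho^{m-1}\Theta)$; all other pieces are of the form $\rho$ times derivatives of $\Theta$ or of $b^\sharp, B^\flat$, and hence lie in $\rho\cX^{j-2}$. On the nonlinear side, writing $u = \rho^m(\widetilde\Psi_a + \Theta/\rho)$ (positive by the constraint defining $\widetilde\cO^j$) and using $f_a(r,t) = |t|^{1-2/m} g(r,t)$ with $g := \chi(\cdot-a)g_- + [1-\chi(\cdot-a)]g_+ \in C^\infty$, one obtains
\[
\rho^{2-m}\bigl[f_a(r,u) - f_a(r,\widetilde\psi_a)\bigr] = \bigl(\widetilde\Psi_a + \Theta/\rho\bigr)^{1-2/m} g(r,u) - \widetilde\Psi_a^{1-2/m} g(r,\widetilde\psi_a).
\]
A first-order Taylor expansion of $(\widetilde\Psi_a + s)^{1-2/m}$ in $s = \Theta/\rho$ isolates the principal piece $(1-2/m)\widetilde\Psi_a^{-2/m} g(r,\widetilde\psi_a)\cdot(\Theta/\rho)$, which is precisely of order $\rho^{-1}\Theta$. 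The key observation is that evaluating Lemma~\ref{L.f}(i) to leading order at $\pd\Om$ yields $\widetilde\Psi_a^{-2/m} g(\Phi_{a,1}, \widetilde\psi_a)|_{\pd\Om} = -m(m-1)$ (since $(\rho')^2 = 1$ on $\pd\Om$), so that the combined coefficient of $\rho^{-1}\Theta$ reduces on $\pd\Om$ to $(m-1)(m-2) + (1-2/m)\bigl(-m(m-1)\bigr) = 0$. This combined coefficient therefore vanishes at $\pd\Om$; dividing by $\rho$ produces a smooth bounded function on $\bar\Om$, and multiplying by $\Theta \in \cX^j \subset H^j$ lands in $H^{j-1} \subset \cY^j$. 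The higher-order Taylor remainder and all domain-perturbation corrections are then handled by the Banach algebra structure of $H^j(\Om)$ in two dimensions (valid for $j \geq 4$).

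Part (ii) is exactly the linearization of $\widetilde\cG_a$ at $(0,0,0)$ in the $\Theta$ variable, so the same cancellation shows the linear operator maps $\cX^j \to \cY^j$; here no zero-trace assumption on $\phi$ is needed since the combined coefficient of $\phi/\rho$ already vanishes on $\pd\Om$. The principal obstacle in both parts is precisely the algebraic cancellation of the $\rho^{-1}$-singular terms and the subsequent bookkeeping of regularity for the cancelled residual, which is the very reason the asymmetric Hilbert/Banach spaces $\cX^j$ and $\cY^j$ were introduced.
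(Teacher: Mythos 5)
Your treatment of part (ii) coincides with the paper's: there $b=B=0$, and the exact cancellation between $(m-1)(m-2)(\rho')^2\,\phi/\rho$ and $\rho^2 f_a'(\Phi_{a,1},\widetilde\psi_a)\,\phi/\rho$ (this is Lemma~\ref{L.f'} together with Remark~\ref{R.cancellation}) is precisely what lets $\phi$ range over all of $\cX^j$ rather than $\cX^j_{\rm D}$. Your boundary identity $\widetilde\Psi_a^{-2/m}g(\Phi_{a,1},\widetilde\psi_a)|_{\pd\Om}=-m(m-1)$ is a correct restatement of that lemma, and the algebra $(m-1)(m-2)+(1-\tfrac2m)(-m(m-1))=0$ checks out.

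For part (i), however, the step you call the heart of the argument is wrong: the claimed vanishing at $\pd\Om$ of the combined coefficient of $\Theta/\rho$ fails as soon as $(b,B)\neq(0,0)$. The coefficient of $\pd_R^2$ in $L_a^{b,B}$ is $\frac{1}{(\pd_R r)^2}\bigl(1+\frac{(\pd_\theta r)^2}{r^2}\bigr)$, and from \eqref{E.rchange} and \eqref{E.TraceLifting} one gets at $R=1$ that $\pd_R r(1,\theta)=1-b(\theta)/6$, $\pd_\theta r(1,\theta)=b'(\theta)$, $r(1,\theta)=a_-+b(\theta)$, so the singular contribution of the operator is $(m-1)(m-2)\,A(1,\theta)\,\Theta/\rho$ with $A(1,\theta)=\bigl(1+b'(\theta)^2/(a_-+b(\theta))^2\bigr)/(1-b(\theta)/6)^2\neq 1$ in general; meanwhile the linear term of your Taylor expansion of the nonlinearity keeps the unperturbed boundary coefficient $-(m-1)(m-2)$, since near $\pd\Om$ the nonlinearity is autonomous and composing with $\Phi_{a,1}^{b,B}$ changes nothing. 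The combined coefficient is $(m-1)(m-2)\bigl[A(1,\theta)-1\bigr]$, not zero, so "dividing by $\rho$" does not produce a bounded function and that step collapses. The lemma is nevertheless true, and the repair is exactly the paper's route, which you had set up but did not use: in part (i) the datum is $\Theta\in\cX^j_{\rm D}$, so Lemma~\ref{L.Hardy} gives $\Theta/\rho\in\cX^{j-1}\subset H^{j-1}(\Om)$, and every order-$\Theta/\rho$ term (whose coefficients, built from $b^\sharp,B^\flat$ and their first derivatives, lie in the algebra $H^{j-1}$) together with the full nonlinear term goes directly into the $H^{j-1}$ summand of $\cY^j$, while the terms carrying second derivatives of $b^\sharp,B^\flat$ retain an explicit factor of $\rho$ and are placed in $\rho\,\cX^{j-2}$; no cancellation is needed in (i), and it is reserved for (ii), where it is both exact and indispensable. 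A minor additional slip: not all remaining pieces of $\rho^{2-m}L_a^{b,B}(\rho^{m-1}\Theta)$ carry a factor $\rho$ (the terms where one derivative falls on $\rho^{m-1}$ are merely bounded), but these sit in $H^{j-1}(\Om)$, so this is harmless.
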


A crucial aspect of this lemma is that the above linear operator maps $\cX^j \to \cY^j$, not only $\cX_{\rm{D}}^j \to \cY^j$ as one would have naively expected. This is because the equation that $\widetilde{\psi}_a$ satisfies leads to a nontrivial cancellation (see Remark \ref{R.cancellation}).

Using the functional setting we just presented, and inspired by \cite[Sect. 3]{EFRS} (see also \cite{Fall-Minlend-Weth-main}), we use functions $w\in\cX^j$ to parametrize $(\Theta,b,B)\in \widetilde{\cO}^j$. More precisely, we define
\begin{equation}\label{E.wto}
\begin{aligned}
 b_w(\te) & := - \big( m \widetilde{\Psi}_a(1) \big)^{-1} \,w(1,\te)\,,\\
 B_w(\te) & := \big( m \widetilde{\Psi}_a(7) \big)^{-1} \,w(7,\te)\,,\\
\Theta_w(R,\te) & := w(R,\te) + \frac{1}{6} \Big(m\rho'(R)\, \widetilde{\Psi}_a(R) + \rho(R) \widetilde{\Psi}_a'(R) \Big) \Big[\Bsharp_w(R,\te) (R-1) + \bsharp_w(R,\te)(7-R) \Big]
\end{aligned}
\end{equation}
for each function~$w$ in the open subset
$$
\cO^{j} := \left\{ w \in \cX^j: { \Big\| \frac{\Theta_w}{\rho} \Big\|_{L^{\infty}(\Om)} } < \frac1{10}\, \inf_{\Om} \widetilde{\Psi}_a \ \textup{ and } \ \|b_w\|_{L^{\infty}(\TT)} + \|B_w\|_{L^{\infty}(\TT)} < \frac1{10} \right\}\,.
$$
Here, $\bsharp_w$ and $\Bsharp_w$ are trace liftings of $b_w$ and $B_w$,  defined as in \eqref{E.TraceLifting} with $b := b_w$ and $B := B_w$, respectively. It is not hard to see that $\cO^{j}$ contains a small ball 
\[
\{w \in \cX^j:\|w\|_{\cX^j}<c_0\}\,,
\]
for some $c_0>0$, which is actually locally uniform for $a\geq4$. 

In view of~\eqref{E.Pull-Back} and~\eqref{E.preGa}, let us then define the map
\begin{equation} \label{E.Ga}
\cG_a : \cO^{j} \to  \cY^{j}\,, \qquad w \mapsto \rho^{2-m} \Big[ L_a^{b_w,B_w} \big( \widetilde{\psi}_a + \rho^{m-1} \Theta_w \big) + f_a \big(\Phi_{a,1}^{b_w,B_w}, \,\widetilde{\psi}_a + \rho^{m-1} \Theta_w \big) \Big]\,.
\end{equation}
If $w \in \cO^{j}$ with $j \geq k+3$ satisfies $\cG_a(w) = 0$ for some $a \geq 4$  and $m\geq j+2$, then $u := \widetilde{\psi}_a + \rho^{m-1} \Theta_w$ satisfies Equation~\eqref{E.Pull-Back} with $b := b_w$ and $B:= B_w$. Thus, our objective is to find small nontrivial zeros of the operator $\cG_{a_n}$ for some sequence $a_n$. 

\begin{remark} \label{R.ansatz}
Let us provide an intrinsic motivation for the definitions in \eqref{E.wto}. The underlying idea is easy to understand, as they are essentially obtained through a formal Taylor expansion of the solution one is trying to construct. Recall that we want to construct our solution by bifurcating from the function $\psi_a$ introduced in Lemma \ref{L.f}. The choice of the functions $b_w$, $B_w$ and $\Theta_w$ in terms of $w$ is precisely motivated by the first order expansion of 
$$
\widehat{\psi}_{a}^{\,sb,\,sB} := \psi_a \circ \Phi_a^{\,sb,\,sB}\,,
$$
at $s = 0$. Here, by abuse of notation, we are writing $\psi_{a}(r,\te) \equiv \psi_{a}(r)$. Observe that
$$
\widehat{\psi}_a^{\, 0,0} = \psi_{a} \circ \Phi_{a,1} = \widetilde{\psi}_a\,,
$$
and that
$$
 \dds\,  \widehat{\psi}_{a}^{\,sb, sB} \Big|_{s=0}   =  \Big[\Bsharp(R,\te) (R-1) + \bsharp(R,\te)(7-R) \Big] \big(\psi_{a}' \circ \Phi_{a,1} \big) = w_a^{b, B}\,,
$$
with 
$$
w_{a}^{b, B}(R,\te) := \frac{1}{6} \Big(m\rho'(R)\, \widetilde{\Psi}_a(R) + \rho(R) \widetilde{\Psi}_a'(R) \Big) \Big[\Bsharp(R,\te) (R-1) + \bsharp(R,\te)(7-R) \Big]\,.
$$
Hence, it is natural to look for a solution to \eqref{E.Pull-Back} of the form
$$
\widetilde{w} = \widetilde{\psi}_a + w_a^{b,B} + w\,,
$$
with $w$, $b$ and $B$ small. Our choice of $b_w$ and $B_w$ in terms of the function $w$ is then done to ensure that, for any $w \in \cX^j$, the function $w_a^{b_w,B_w} + w$ is in the space~$\cX^j_{\rm D}$. 
\end{remark}

\subsection*{Step 3: The linearized operator} 

To find nontrivial zeros of the operator $\cG_a$, we will use a local bifurcation argument. The Fr\'echet derivative of $\cG_a$ will play a major role in the argument. In this step we compute the Fr\'echet derivative of $\cG_a$ and study its properties.

In the first lemma, which will be proved in Section \ref{S.Ga}, we compute the differential of $\cG_a$ at $0$:

\begin{lemma} \label{L.DGa}
The map $\cG_a: \cO^{j} \to  \cY^{j}$ is of class $C^1$. Furthermore, $D\cG_a(0): \cX^j\to \cY^j$ is the linear operator given by
\begin{equation} \label{E.DGa}
D\cG_a(0)w = \rho^{2-m} \fL_a(\rho^{m-1} w)\,, 
\end{equation}
where
\begin{equation} \label{fL}
\fL_a(v):= L_a^{0,0}\,v + f_a'(\Phi_{a,1},\, \widetilde{\psi}_a) \, v\, .
\end{equation}
\end{lemma}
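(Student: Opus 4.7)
The plan is to compute $D\cG_a(0)$ via the chain rule applied to the definition \eqref{E.Ga} of $\cG_a$, exploiting a cancellation identity that reflects the fact that $\widetilde{\psi}_a$ solves the unperturbed equation; the $C^1$ statement will then follow from the same chain rule combined with standard composition estimates in $\cX^j$ and $\cY^j$. Setting $u_w:=\widetilde{\psi}_a+\rho^{m-1}\Theta_w$, and noting that $u_0=\widetilde{\psi}_a$ and the maps $w\mapsto(b_w,B_w,\Theta_w)$ are linear, the chain rule at $w=0$ gives
\begin{align*}
D\cG_a(0)h = \rho^{2-m}\Big[&D_{(b,B)}L_a^{b,B}(\widetilde{\psi}_a)\big|_{0}(b_h,B_h) + L_a^{0,0}(\rho^{m-1}\Theta_h) \\
&{}+\pd_r f_a(\Phi_{a,1},\widetilde{\psi}_a)\, D\Phi_{a,1}^{b,B}\big|_{0}(b_h,B_h) + f_a'(\Phi_{a,1},\widetilde{\psi}_a)\rho^{m-1}\Theta_h\Big].
\end{align*}
The key observation is that, extending $\psi_a$ by zero outside $\Om_a$ (which yields a $C^{m-1}(\RR^2)$ function, since $\psi_a$ vanishes to order $m$ on $\pd\Om_a$ and $f_a(\cdot,0)\equiv 0$), the equation $\Delta\psi_a+f_a(|x|,\psi_a)=0$ holds pointwise on all of $\RR^2$. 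Pulling back by $\Phi_a^{b,B}$ yields, for every small $(b,B)$, the identity
$$L_a^{b,B}\widehat{\psi}_a^{\,b,B}+f_a(\Phi_{a,1}^{b,B},\widehat{\psi}_a^{\,b,B})=0 \quad\text{in }\Om, \qquad \widehat{\psi}_a^{\,b,B}:=\psi_a\circ\Phi_a^{b,B}.$$
Differentiating in $(b,B)$ at $(0,0)$ and writing $X_{b,B}:=\tfrac{d}{ds}\widehat{\psi}_a^{\,sb,sB}|_{s=0}$ produces an identity with exactly the same bracket as above but with $\rho^{m-1}\Theta_h$ replaced by $X_{b_h,B_h}$.

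Next, using the algebraic identity $\rho^{m-1}(m\rho'\widetilde{\Psi}_a+\rho\widetilde{\Psi}_a')=\widetilde{\psi}_a'$, together with the explicit form of $X_{b,B}$ from Remark~\ref{R.ansatz} and the definition \eqref{E.wto} of $\Theta_h$, one verifies directly that $\rho^{m-1}\Theta_h=\rho^{m-1}h+X_{b_h,B_h}$ — this is precisely why the correction term in \eqref{E.wto} was included in the ansatz. Subtracting the cancellation identity from the chain-rule expression then yields $D\cG_a(0)h=\rho^{2-m}[L_a^{0,0}(\rho^{m-1}h)+f_a'(\Phi_{a,1},\widetilde{\psi}_a)\rho^{m-1}h]=\rho^{2-m}\fL_a(\rho^{m-1}h)$, and its boundedness as a map $\cX^j\to\cY^j$ is precisely Lemma~\ref{L.mapping}(ii).

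For $C^1$ regularity at a general $w\in\cO^j$, the same chain rule produces an analogous expression with $\widetilde{\psi}_a$ replaced by $u_w$ and the base point $(0,0)$ replaced by $(b_w,B_w)$, and continuity of $D\cG_a$ in $w$ reduces to continuity of composition with the smooth-in-parameters diffeomorphism $\Phi_a^{b_w,B_w}$ and of the substitution operators $u\mapsto f_a(r,u)$, $u\mapsto f_a'(r,u)$. The main obstacle here is the singular behaviour $f_a'(r,t)\sim|t|^{-2/m}$ at $t=0$; however, the constraint $\|\Theta_w/\rho\|_{L^\infty}<\tfrac1{10}\inf\widetilde{\Psi}_a$ built into $\cO^j$ forces $u_w\geq c\rho^m>0$ throughout $\Om$, so $f_a(r,\cdot)$ acts smoothly on the range of $u_w$ and continuity follows from Lemma~\ref{L.mapping} together with standard composition estimates in the anisotropic spaces.
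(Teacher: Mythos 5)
Your proposal is correct and follows essentially the same route as the paper: the formula for $D\cG_a(0)$ is obtained, just as in the paper's proof, by differentiating at $s=0$ the pulled-back identity satisfied by $\psi_a\circ\Phi_a^{sb_h,sB_h}$ (your zero-extension of $\psi_a$ is simply a careful justification of that identity) and subtracting it from the chain-rule expression, using $\rho^{m-1}\Theta_h=\rho^{m-1}h+X_{b_h,B_h}$, which is exactly the cancellation the ansatz \eqref{E.wto} is designed to produce, with boundedness on $\cX^j$ supplied by Lemma~\ref{L.mapping}(ii). The only loose phrase is that ``$f_a(r,\cdot)$ acts smoothly on the range of $u_w$'' (it does not, since $u_w$ vanishes on $\pd\Om$); what is actually used, and what the paper writes, is the factorization $\rho^{2-m}f_a\big(\Phi_{a,1}^{b_w,B_w},u_w\big)=\big(\widetilde{\Psi}_a+\Theta_w/\rho\big)^{1-\frac2m}g_\pm(u_w)$ near the boundary, where the bracket is bounded below precisely by the $\cO^j$ constraint you invoke, so your $C^1$ argument coincides in substance with the paper's.
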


Our next objective is to understand the structure of~$D\cG_a(0)$. A first observation is that the function $f_a'(r,\widetilde{\psi}_a)$ appearing in Lemma~\ref{L.DGa} is radial, diverges as the inverse square of the distance to the boundary of~$\Om$, and is smooth in the interior. More precisely, we have the following result, which we will prove in Section~\ref{S.L.f}:

\begin{lemma}\label{L.f'}
	The function $R \mapsto \rho^2\,f_a'(\Phi_{a,1}, \widetilde{\psi}_a)$ is of class $C^\infty([1,7])$. Furthermore, 
	\[
	\rho^2 f_a'(\Phi_{a,1},\widetilde{\psi}_a)= -(m-1)(m-2)+ O(\rho)\,.
	\]
\end{lemma}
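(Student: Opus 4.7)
\medskip

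\noindent\textbf{Proof plan for Lemma~\ref{L.f'}.} The plan is to extract the singular behaviour of $f_a'$ explicitly from the defining formula~\eqref{E.deff} and then determine the leading coefficient by using the radial equation that $\widetilde\psi_a$ satisfies. Since $\widetilde\psi_a=\rho^m\widetilde\Psi_a$ with $\widetilde\Psi_a\in C^\infty([1,7])$ strictly positive (Lemma~\ref{L.f} and~\eqref{E.PsiPullBack}), we have $\widetilde\psi_a>0$ in $\Om$, so we may drop the absolute value and write
\[
f_a(r,t)=t^{1-\frac{2}{m}}G_a(r,t), \qquad G_a(r,t):=\chi(r-a)g_-(t)+[1-\chi(r-a)]g_+(t),
\]
with $G_a\in C^\infty(\RR\times(0,\infty))$. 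Differentiating in the second argument,
\[
f_a'(r,t)=\tfrac{m-2}{m}\,t^{-\frac{2}{m}}G_a(r,t)+t^{1-\frac{2}{m}}\partial_t G_a(r,t).
\]
Substituting $t=\widetilde\psi_a=\rho^m\widetilde\Psi_a$ turns $t^{-2/m}$ into $\rho^{-2}\widetilde\Psi_a^{-2/m}$ and $t^{1-2/m}$ into $\rho^{m-2}\widetilde\Psi_a^{1-2/m}$, so multiplying by $\rho^2$ cancels the singular factor and gives
\begin{equation*}
\rho^2 f_a'(\Phi_{a,1},\widetilde\psi_a)=-\,\tfrac{m-2}{m}\,\widetilde\Psi_a^{-\frac{2}{m}}\,\bigl[-G_a(\Phi_{a,1},\widetilde\psi_a)\bigr]+\rho^{m}\,\widetilde\Psi_a^{1-\frac{2}{m}}\,\partial_t G_a(\Phi_{a,1},\widetilde\psi_a).
\end{equation*}
I would first note that this formula already proves the $C^\infty$ regularity statement: the maps $\Phi_{a,1}$, $\widetilde\Psi_a$ and $R\mapsto G_a(\Phi_{a,1}(R),\widetilde\psi_a(R))$ are smooth on $[1,7]$ (here $\widetilde\Psi_a>0$ lets us invoke smoothness of $s\mapsto s^{-2/m}$), and $\rho^m\in C^\infty([1,7])$ as well.

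For the leading-order statement, I would identify $G_a(\Phi_{a,1},\widetilde\psi_a)$ at the boundary $\{\rho=0\}$ by using the ODE satisfied by~$\widetilde\psi_a$. Since $\psi_a$ solves~\eqref{E.psi} on $\Om_a$ and $\Phi_{a,1}$ is a radial diffeomorphism, $\widetilde\psi_a$ satisfies
\[
L_a^{0,0}\widetilde\psi_a+f_a(\Phi_{a,1},\widetilde\psi_a)=0\qquad\text{in }\Om,
\]
where $L_a^{0,0}=\partial_R^2+\tfrac{1}{R+a-4}\partial_R$ in the radial case (cf.~\eqref{E.La00}). Plugging in $\widetilde\psi_a=\rho^m\widetilde\Psi_a$ and using $\rho'(1)^2=\rho'(7)^2=1$, a direct expansion shows
\[
L_a^{0,0}\widetilde\psi_a=m(m-1)\,\rho^{m-2}\,\widetilde\Psi_a(R_\iota)+O(\rho^{m-1})
\]
near $R=R_\iota\in\{1,7\}$, while $f_a(\Phi_{a,1},\widetilde\psi_a)=\rho^{m-2}\,\widetilde\Psi_a^{1-\frac{2}{m}}G_a(\Phi_{a,1},\widetilde\psi_a)$. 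Matching the $\rho^{m-2}$ coefficients yields
\[
G_a(\Phi_{a,1}(R_\iota),0)=-m(m-1)\,\widetilde\Psi_a(R_\iota)^{\frac{2}{m}}.
\]
Feeding this back into the displayed formula for $\rho^2 f_a'$ gives
\[
\rho^2 f_a'(\Phi_{a,1},\widetilde\psi_a)\big|_{\rho=0}=\tfrac{m-2}{m}\,\widetilde\Psi_a(R_\iota)^{-\frac{2}{m}}\cdot\bigl(-m(m-1)\widetilde\Psi_a(R_\iota)^{\frac{2}{m}}\bigr)=-(m-1)(m-2),
\]
at both boundary components, and the $O(\rho)$ remainder follows from smoothness.

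The computation is entirely mechanical; the only mildly delicate point is verifying that the expansion of $L_a^{0,0}\widetilde\psi_a$ near $\rho=0$ is indeed controlled by the $\partial_R^2$ term (the first derivative term is $O(\rho^{m-1})$) and that $\rho'(R_\iota)^2=1$, so that the leading constant is independent of the boundary component. I do not anticipate any genuine obstacle.
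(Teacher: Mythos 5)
Your identification of the boundary constant is correct and proceeds by a genuinely different route from the paper: you match the $\rho^{m-2}$ coefficients in the equation $L_a^{0,0}\widetilde\psi_a+f_a(\Phi_{a,1},\widetilde\psi_a)=0$ (using $\rho'(1)^2=\rho'(7)^2=1$), whereas the paper simply reads off both assertions of the lemma from the explicit formulas \eqref{fmenosprima}--\eqref{fmasprima} for $f_\pm'$ obtained in the construction of Lemma~\ref{L.f2}, together with the fact that $\psi_a\equiv\tilde{\rho}_a^{\,m}$ exactly in a neighborhood of $\pd\Om_a$. Your indicial-type matching is a clean way to see why the constant must be $-(m-1)(m-2)$ without ever writing $f_\pm$ down.

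However, the $C^\infty([1,7])$ part of your argument has a genuine gap. It rests on (a) the claim that $\rho^m\in C^\infty([1,7])$, which is false: since $m\in\RR\setminus\frac12\ZZ$ is not an integer, $\rho^m$ is only finitely differentiable at the endpoints; and (b) the unproved assertion that $R\mapsto G_a(\Phi_{a,1}(R),\widetilde\psi_a(R))$ is smooth up to the boundary and that the term involving $\partial_t G_a(\Phi_{a,1},\widetilde\psi_a)$ is harmless. Under your own (correctly cautious) hypothesis $G_a\in C^\infty(\RR\times(0,\infty))$, neither composition is controlled at $\{\rho=0\}$, where $\widetilde\psi_a$ vanishes; in fact, for the nonlinearity actually constructed in the paper one has, near $t=0$, $g_-(t)=1+\frac{t^{1/m}}{(m-1)(t^{1/m}+a_-)}$ (cf.\ \eqref{fmenos}), so $g_-$ is not even $C^1$ at $t=0$ and $\partial_t G_a(\Phi_{a,1},\widetilde\psi_a)\sim\rho^{1-m}$ is unbounded, so your second term cannot be dismissed as ``$\rho^m$ smooth times smooth''. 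The compositions are nevertheless smooth, but only because $g_\pm$ are smooth functions of $t^{1/m}$ while $\widetilde\psi_a^{1/m}=\rho\,\widetilde\Psi_a^{1/m}$ is smooth (equivalently, because $\psi_a=\tilde{\rho}_a^{\,m}$ exactly near the boundary and $f'_\pm$ has the explicit form \eqref{fmenosprima}--\eqref{fmasprima}); this structural input from Section~\ref{S.L.f} is exactly what your proof omits. Note also that even taking the statement ``$g_\pm\in C^\infty(\RR)$'' at face value would only yield $C^{\lfloor m\rfloor}$ regularity of $g_\pm(\rho^m\widetilde\Psi_a)$ (derivatives of order $N>m$ produce factors $\rho^{m-N}$), not $C^\infty$. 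The same missing structure is needed, strictly speaking, to upgrade your boundary evaluation to the stated $O(\rho)$ remainder, since that requires (at least Lipschitz) control of $\rho^2 f_a'(\Phi_{a,1},\widetilde\psi_a)$ up to $\pd\Om$.
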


In view of Lemma~\ref{L.DGa} and Lemma~\ref{L.f'}, let us decompose the Fr\'echet derivative of~$\cG_a$ at~0 as
\[
D\cG_a(0)=\cT_a+\cK_a\,,
\]
where
\begin{align}
	\cT_aw&:= \rho\left[\rho^{1-m}  L_a^{0,0}(\rho^{m-1} w) -\frac{(m-1)(m-2)}{\rho^2}w\right]\,,\label{E.cTa}\\[1mm]
	\cK_aw&:= \rho\left[ f_a'(\Phi_{a,1},\widetilde{\psi}_a)+\frac{(m-1)(m-2)}{\rho^2}\right] w\,.\label{E.cKa}
\end{align}
One should think of $\cT_a$ as a positive-definite essentially self-adjoint operator of second order with a critically singular potential (in the sense that $\rho^{-2}$ scales like $\pd_R^2$). This intuition is made precise in the following theorem. In the proof of this result, which we present in Sections \ref{S.Regularity} and \ref{S.keyest}, the key step is to develop from scratch a sharp regularity theory for the operator~$\cT_a$.

\begin{theorem}\label{T.regularity}
The operator $\cT_a:\cX^j\to\cY^j$ is an isomorphism, and $\cK_a:\cX^j\to\cY^j$ is compact.
\end{theorem}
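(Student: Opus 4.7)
The plan is to handle the two assertions independently. Compactness of $\cK_a$ is an easy Rellich argument built on Lemma~\ref{L.f'}, while the isomorphism of $\cT_a$ is the substantive part and is where the anisotropic setting $\cX^j,\cY^j$ proves essential.

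For $\cK_a$: Lemma~\ref{L.f'} implies that
\[
V(R) := \rho\Big[f_a'(\Phi_{a,1},\widetilde{\psi}_a) + \tfrac{(m-1)(m-2)}{\rho^2}\Big]
\]
is a smooth bounded radial function on $\BOm$. Hence $\cK_a$ is multiplication by~$V$, which preserves the anisotropic structure of~$\cX^j$ (because $\partial_R(Vw) = V'w + V\partial_R w$ with $V$ smooth) and so maps $\cX^j\to\cX^j$ boundedly. Composing with the compact Rellich embedding $\cX^j\hookrightarrow H^{j-1}(\Om)\hookrightarrow \cY^j$ yields compactness of $\cK_a:\cX^j\to\cY^j$.

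For $\cT_a$, I would first make the operator explicit. Using the identities $(\rho')^2 = 1 - 2\rho/3$ and $\rho'' = -1/3$, which are specific to $\rho(R) = (R-1)(7-R)/6$, the $1/\rho^2$ singularity of $\rho^{1-m}L_a^{0,0}(\rho^{m-1}\,\cdot)$ exactly cancels the subtracted Hardy term, leaving
\[
\cT_a w = \rho\,\partial_R^2 w + \frac{\rho\,\partial_\theta^2 w}{(R+a-4)^2} + 2(m-1)\rho'\partial_R w + \frac{\rho\,\partial_R w}{R+a-4} + V_0(R)\,w
\]
with $V_0 \in C^\infty([1,7])$, from which boundedness $\cX^j\to\cY^j$ is immediate term by term. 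The plan for invertibility is then (a) injectivity, (b) the sharp a priori estimate $\|w\|_{\cX^j}\leq C\|\cT_a w\|_{\cY^j} + C\|w\|_{L^2(\Om)}$, and (c) Fredholm index zero. For (a), the substitution $v:=\rho^{m-1}w$ turns $\cT_a w = 0$ into the critical Hardy equation $\Delta v = (m-1)(m-2)\rho^{-2}v$ on $\Om$ with $v$ vanishing at the subordinate rate $\rho^{m-1}$ on $\partial\Om$. A Fourier decomposition in $\theta$ reduces this to a family of linear ODEs with regular singular points at $R=1,7$ and indicial roots $\{m-1,\,2-m\}$; a Wronskian/shooting argument, together with the monotone dependence of the ODE on~$n^2$, excludes nontrivial solutions of subordinate type at both endpoints. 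Step (c) then follows from (a), (b), and the compact embedding $\cX^j\hookrightarrow L^2(\Om)$: closed range and finite-dimensional kernel are standard consequences of the semi-Fredholm inequality in (b), and the index is pinned at~$0$ by a continuous homotopy to an explicit model operator (for instance, the pure Bessel-type operator obtained by freezing $(R+a-4)^{-2}$ to a constant and diagonalizing in the angular Fourier variable).

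The main obstacle is step (b), the sharp a priori estimate, which is the content of Sections~\ref{S.Regularity}--\ref{S.keyest}. The principal symbol of $\cT_a$ vanishes on $\partial\Om$, so classical uniformly elliptic theory is unavailable, and the critical Hardy potential cannot be absorbed as a perturbation of~$\Delta$; nor does the H\"older framework of \cite{Fall-Minlend-Weth-main,EFRS} carry over here. I would approach the estimate by decomposing in the tangential variable via Fourier/Littlewood--Paley, proving frequency-uniform bounds for the reduced one-dimensional operators through integration-by-parts identities weighted by appropriate powers of~$\rho$ (encoding the indicial-root structure), and reassembling the pieces via Plancherel in the $L^2$-based Hilbert space~$\cX^j$. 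The anisotropy of $\cX^j$, demanding extra regularity on $\rho\partial_R w$ rather than on $\partial_R w$ alone, is exactly calibrated to what the singular ODE delivers on the radial side and to what $\cG_a$ will need for the nonlinear iteration to close without loss of derivatives.
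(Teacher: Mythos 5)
Your treatment of $\cK_a$ is correct and essentially the paper's argument (Lemma~\ref{L.compactness}): multiplication by a function that Lemma~\ref{L.f'} makes smooth up to $\pd\Om$, composed with a compact embedding as in Lemma~\ref{L.CompactEmbedding}. Your explicit formula for $\cT_a$ (the cancellation of the $\rho^{-2}$ terms via $(\rho')^2=1-\tfrac23\rho$, $\rho''=-\tfrac13$, leaving smooth coefficients) and the resulting boundedness $\cX^j\to\cY^j$ are also correct, and your injectivity step works, though it is heavier than needed: since the Hardy potential $(m-1)(m-2)\rho^{-2}$ enters with a favorable sign, injectivity is immediate from coercivity of the associated quadratic form (Proposition~\ref{P.lowreg} and Corollary~\ref{C.injective}), with no Fourier/shooting analysis required.

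The genuine gap is in surjectivity. In your plan, existence is produced only through step (c), the claim that the index is zero ``by a continuous homotopy to an explicit model operator''. But determining the index of that frozen-coefficient Bessel-type operator between the anisotropic spaces $\cX^j$ and $\cY^j$ --- and keeping the family Fredholm along the homotopy --- is precisely the sharp mode-by-mode analysis you have deferred; no soft argument pins it down, because the operator degenerates on $\pd\Om$ and the spaces are nonstandard. So the existence half of the theorem rests on an assertion whose justification is the hard core of the paper: Lemma~\ref{L.keyest}, proved in Section~\ref{S.keyest} through explicit representation formulas in terms of the modified Bessel functions $I_{m-\frac32},K_{m-\frac32}$ combined with weighted Hardy inequalities tracking the rescaled frequency sharply (Lemma~\ref{L.keyLemmaODE}). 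Relatedly, your step (b), which you acknowledge is the main obstacle, is only sketched, and the method you propose (weighted integration-by-parts identities summed via Plancherel) is not obviously sufficient: the codomain $\cY^j=\rho\,\cX^{j-2}+H^{j-1}$ encodes a gain of two angular derivatives for data of the form $\ep z H$ (estimates \eqref{E.keyestR3}--\eqref{E.keyestR4}), and this fine structure is obtained in the paper from the representation formula together with the splitting of the datum into its boundary Taylor part and a remainder --- something plain energy identities do not visibly deliver. Note finally that the paper avoids index theory altogether: coercivity gives existence and uniqueness of $w\in\rho^{1-m}H^1_0(\Om)$ for every $F\in L^2(\Om)$ (Proposition~\ref{P.lowreg}), so the whole issue reduces to the regularity statement that $F\in\cY^j$ forces $w\in\cX^j$ (Lemma~\ref{L.Omgepbound}, Lemma~\ref{L.keyest}, Theorem~\ref{T.iso}); supplying an existence mechanism of this kind, or an actual index computation for your model operator, is what is missing from your proposal.
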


\subsection*{Step 4: The bifurcation argument} The proof of Theorem \ref{T.main2} will follow from Kranoselskii's local bifurcation theorem. The details of this step are gathered in Section \ref{S.bifurcation}. For the benefit of the reader, we start by providing a precise statement of Kranoselskii's theorem\footnote{This is in fact a slightly refined version of the Krasnoselskii bifurcation theorem, as  one typically assumes the existence of an isolated point where the differential $D_wG(0, a)$ is degenerate. However, the usual proof of the theorem, as presented in \cite[Theorem~II.3.2]{Kielhofer}, works equally well in this case; see e.g.~\cite[Remark 6.3]{RRS20}.}.

\begin{theorem}[Kranoselskii]\label{T.Kr}
	Let $U\subset Y$  be an open subset of a Banach space~$Y$, which we assume to contain~$0$, and let  $A\subset\mathbb{R}$ be an open interval. Let $\hcG:U\times A\rightarrow Y$ be a $C^{1}$ map such that:
	\begin{itemize}
		\item[(i)] $\hcG(0,a)=0$ for all $a\in A$.
		\item[(ii)] The map $w\mapsto \hcG(w,a)-w$ is compact for each $a\in A$.
		\item[(iii)] Denoting by $\ind(a) $ the index of $D_{w}\hcG(0,a)$ (that is, the sum of the algebraic multiplicities of all negative eigenvalues of $D_{w}\hcG(0,a)$), we assume that there exist values of the parameter $a_1<a_2$ in~$A$ such that:
		\begin{itemize}
			\item[(a)] The differential $D_{w}\hcG(0,a_i)$ is non degenerate for $i=1,2$.
			\item[(b)] $\ind(a_1)$ and $\ind(a_2)$ have different parity.
		\end{itemize}
\end{itemize}
Then there exists a bifurcation point $a_{*} \in (a_1,a_2)$, in the sense that $(0,a_{*})\in Y\times A$ is an accumulation point of the set of nontrivial solutions $\{(w,a)\in(Y\backslash\{0\})\times A: \hcG(a,w)=0\}$.
\end{theorem}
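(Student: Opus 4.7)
The plan is an argument by contradiction using Leray--Schauder degree theory. Hypothesis (ii) means that $\hcG(w,a)=w+K(w,a)$ with $K(\cdot,a):=\hcG(\cdot,a)-\mathrm{id}$ compact, so $\hcG(\cdot,a)$ is a compact perturbation of the identity and the Leray--Schauder degree $\deg(\hcG(\cdot,a),B_r,0)$ is well defined on any open ball $B_r\subset U$ on whose boundary $\hcG(\cdot,a)$ does not vanish. Moreover, this degree is invariant under homotopies of the parameter $a$ along which the only zero of $\hcG(\cdot,a)$ on $\overline{B_r}$ is the trivial one.

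First I would suppose for contradiction that no $a_*\in(a_1,a_2)$ is a bifurcation point. By hypothesis (iii)(a) the implicit function theorem furnishes, for each of $a=a_1$ and $a=a_2$, a punctured neighborhood of the trivial solution that contains no other zero of $\hcG(\cdot,a)$. Combining this with the contradiction hypothesis and using the compactness of $[a_1,a_2]$, a standard limiting argument (extracting a sequence of putative nontrivial zeros tending to $(0,a)$ for some $a\in[a_1,a_2]$) would produce a uniform radius $r_0>0$ such that $\hcG(w,a)\neq 0$ whenever $0<\|w\|_Y\leq r_0$ and $a\in[a_1,a_2]$. Homotopy invariance then yields
\[
\deg\bigl(\hcG(\cdot,a_1),\,B_{r_0},\,0\bigr)=\deg\bigl(\hcG(\cdot,a_2),\,B_{r_0},\,0\bigr).
\]

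Next I would compute each side via the linearization at the trivial solution. The classical formula for the Leray--Schauder degree at an isolated non-degenerate zero of a compact perturbation of the identity reads
\[
\deg\bigl(\hcG(\cdot,a_i),\,B_{r_0},\,0\bigr)=(-1)^{\ind(a_i)},
\]
where $\ind(a_i)$ counts, with algebraic multiplicity, the negative real eigenvalues of $D_w\hcG(0,a_i)=I+D_wK(0,a_i)$; one proves it by a finite-dimensional reduction to the generalized eigenspace of $D_wK(0,a_i)$ associated with the spectrum outside a small neighborhood of $0$, where the sign is extracted from an elementary determinant computation. Since $D_wK(0,a_i)$ is compact, its spectrum away from $0$ consists of isolated eigenvalues of finite algebraic multiplicity, so $\ind(a_i)$ is a well-defined finite integer. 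Hypothesis (iii)(b) then forces opposite signs on the two sides of the displayed equality, which is the desired contradiction.

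The main subtlety is not the degree computation itself, which is classical, but the fact that hypothesis (iii) weakens the standard Krasnoselskii setup: one is not given a single isolated degenerate value of $a$ separating two regimes, but merely two non-degenerate endpoints with indices of differing parity, with no information on what happens in between. The argument still works because homotopy invariance only requires the absence of nontrivial zeros on the boundary of $B_{r_0}$ throughout the homotopy, which is exactly what the contradiction hypothesis (together with the endpoint non-degeneracies, upgraded by the implicit function theorem to zero-free punctured neighborhoods) provides. The only genuinely analytic step is the uniformity of $r_0$ in $a$, and this reduces to an elementary compactness argument on $[a_1,a_2]$ once the degree machinery is in place.
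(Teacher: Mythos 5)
Your proof is correct and is essentially the argument the paper itself relies on: the paper does not prove Theorem~\ref{T.Kr} but defers (see its footnote) to the degree-theoretic proof in Kielh\"ofer, Theorem~II.3.2, and to Remark~6.3 of~\cite{RRS20}, which is exactly the Leray--Schauder argument you give --- contradiction hypothesis plus nondegeneracy at the endpoints yielding a radius $r_0$ uniform in $a\in[a_1,a_2]$, homotopy invariance of the degree on $B_{r_0}$, and the index formula $\deg=(-1)^{\ind(a_i)}$ contradicting the parity assumption. Your handling of the refined hypothesis (two nondegenerate parameter values with indices of different parity, rather than an isolated degenerate crossing) is precisely the reason the paper asserts that the standard proof ``works equally well'' in this setting.
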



Our aim now is to rewrite the equation $\cG_a(w)=0$ in an equivalent way, which is well adapted to the use of Krasnoselskii's bifurcation theorem. On this purpose, we first note that the operator
$$
\fL_a: H_0^1(\Omega) \to H^{-1}(\Omega)\,, \quad v \mapsto L_a^{0,0} v + f_a'(\Phi_{a,1}, \widetilde{\psi}_a)\, v\,,
$$
defines a positive symmetric bilinear form $ \cB:H^1_0(\Omega) \times H^1_0(\Omega)  \to \RR$ as
\begin{align*}
\cB(v,w)& :=  -\int_{\Omega} \big( L_a^{0,0} v + f_a'(\Phi_{a,1}, \widetilde{\psi}_a) v \big) w\, (R+a-4)\, d R \, d \theta \\
& =  \int_{\Om} \left[ \pd_R v\, \pd_R w+ \frac{\pd_\theta v\,\pd_\theta w}{(\erre)^2}+\frac\m{\rho(R)^2} v w  - P_a(R) v w\right](\erre) \, dR \, d\theta\,.
\end{align*}
Here $P_a$ is the radial function
$$
P_a(R) := f_a'(\Phi_{a,1}, \widetilde{\psi}_a) + \frac{(m-1)(m-2) }{\rho(R)^2}\,.
$$
Note that this bilinear form and the operator $\fL_a:H_0^1(\Omega) \to H^{-1}(\Omega)$  are well defined thanks to the Hardy inequality. Moreover, by the properties of $f_a'$ proved in Lemma~\ref{L.f'},  $\rho(R)\, P_a(R)$ is a smooth function on~$\BOm$.


It is standard that there exists a sequence of eigenvalues of~$-\fL_a$ with finite multiplicity, which we denote by $\lambda_k \equiv \lambda_k(a)$, such that
$$ 
\lambda_1 < \lambda_2 \leq \lambda_3 \leq \cdots 
$$
Of course, $\la_k$ tends to infinity as $k \to \infty$. There exists a orthonormal basis of $L^2(\Om)$ consisting of eigenfunctions of~$-\fL_a$, which we denote by $\phi_k\in H_0^1(\Omega)$. A priori, the eigenvalue equation must be understood in a weak sense, that is,
$$ 
\cB(\phi_k, v) = \lambda_k \int_{\Omega} \phi_k\,  v\, dx \qquad \forall\ v \in H_0^1(\Omega)\,.
$$ 
By the regularity properties of the linearized operator (Proposition~\ref{P.regDGa}), the eigenfunctions $\phi_k$ actually belong to $\rho^{m-1}\, \cX^j$. Writing $\phi_k = \rho^{m-1} w_k$ with $w_k\in\cX^j$, one then has
\begin{equation} \label{equivalence} 
-\fL_a\, \phi_k = \lambda_k \phi_k \ \ \Longleftrightarrow \ \ -D\cG_a(0)\, w_k = \lambda_k \rho \, w_k\,. 
\end{equation}

In order to apply Kranoselskii's bifurcation theorem, we need to study the sign of the eigenvalues $\lambda_k$ and to find some degeneracies. On that purpose, we need to take an integer $\ell\geq3$ and to restrict our attention to the space of $\ZZ_\ell\,$-$\,$symmetric functions, i.e., functions that are invariant under the action of the isometry group of an $\ell$-sided regular polygon. To incorporate this restriction in our functional setting, we define 
\begin{equation} \label{symmetry}
\begin{aligned}
	H^j_\ell(\Om) & := \Big\{ w \in H^j(\Om) : w(R,\te) = w(R,-\te)\,, \ u(R,\te) = u\Big(R,\frac{2\pi}{\ell}\Big) \Big\}\,,  \\
	H^j_{0,\ell}(\Om) & :=  \big\{ w \in H^j_\ell(\Om) : w|_{\pd \Om} = 0 \big\}\,,\\
	\cX^j_\ell & := \Big\{ w \in \cX^j : w(R,\te) = w(R,-\te)\,, \ u(R,\te) = u\Big(R,\frac{2\pi}{\ell}\Big) \Big\}\,, \\
	\cX^j_{{\rm{D}},\ell} & :=  \big\{ w \in \cX^j_\ell : w|_{\pd \Om} = 0 \big\}\,, \\
	\cY^j_\ell & :=  \Big\{ w \in \cY^j : w(R,\te) = w(R,-\te)\,, \ u(R,\te) = u\Big(R,\frac{2\pi}{\ell}\Big) \Big\}\,.
\end{aligned}
\end{equation}
Of course, the eigenvalues depend on the symmetry chosen and on the parameter $a$, that is, $\lambda_k\equiv\lambda_{k}^{\ell}(a)$. For the sake of clarity, we shall omit this dependence notationally if there is no ambiguity. 

In the following proposition we establish the degeneracy property of eigenvalues that is required to apply the Kranoselskii's bifurcation theorem. More precisely, we have the following:

\begin{proposition}\label{eigen-crossing} Given $\ep>0$, there exist $a_1>a_0 \geq 4$, $\ell \in \NN$, $\ell \geq 3$, such that:
	\begin{enumerate}
		\item For any $ a \in [a_0, a_1]$, $\lambda_1^{\ell}(a)< \lambda_2^{\ell}(a) <- \ep < \lambda_3^{\ell}(a)$.
		
		\item $\lambda_3^{\ell}(a_0)>0$ and $\lambda_3^{\ell}(a_1)<0$.
		
		\item If $a \in [a_0, a_1]$ is such that $\lambda_3^{\ell}(a) \leq 0$, then $\lambda_3^{\ell}(a)$ is simple and the corresponding eigenfunction is $\phi_3(R,\te) = \phi(R) \cos (\ell \theta)$, where $\phi \geq C \rho^{m-1}$ for some $C > 0$. 
		\item For any $ a \in [a_0, a_1]$, $\lambda_4^{\ell}(a)>0$.

	\end{enumerate}
\end{proposition}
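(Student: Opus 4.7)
The plan is to exploit the rotational symmetry of $\fL_a$ via Fourier decomposition in $\te$, reducing the 2D eigenvalue problem to a family of 1D Sturm--Liouville problems. Any $\phi \in H^1_{0,\ell}(\Om)$ expands as $\phi(R,\te) = \sum_{n \geq 0} \phi_n(R) \cos(n\ell\te)$, and since $\fL_a$ commutes with rotations and with $\te \mapsto -\te$, it preserves each Fourier mode. For the $n$-th mode one obtains the 1D operator
\[
\fL_a^{(n\ell)} \phi := -\phi'' - \frac{\phi'}{R+a-4} + \Big(\frac{n^2\ell^2}{(R+a-4)^2} - f_a'(\Phi_{a,1},\widetilde\psi_a)\Big)\phi
\]
on $R \in (1,7)$ acting on functions vanishing like $\rho^{m-1}$ at the endpoints. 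Denoting its eigenvalues in increasing order by $\mu_k^{(n\ell)}(a)$, classical Sturm--Liouville theory gives that each $\mu_k^{(n\ell)}(a)$ is simple, depends continuously on $a$, and its ground state is strictly positive in $(1,7)$. Combining with the regularity theory (Theorem \ref{T.regularity}) and the equivalence \eqref{equivalence}, this ground state factors as $\rho^{m-1} w$ with $w \in \cX^j \subset C^2(\BOm)$, and a Hopf-type argument for the degenerate operator yields $w \geq C > 0$ on $\BOm$, so $\phi \geq C\rho^{m-1}$.

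To control the ordering, I first rule out the higher angular modes by a Rayleigh-quotient comparison. Since $(R+a-4)^{-2} \geq (a+3)^{-2}$ on $[1,7]$, one obtains
\[
\mu_k^{(n\ell)}(a) \geq \mu_k^{(0)}(a) + \frac{n^2\ell^2}{(a+3)^2},
\]
so on any bounded interval of $a$ and any lower bound on $\mu_k^{(0)}(a)$, taking $\ell$ large enough ensures $\mu_1^{(n\ell)}(a) > 0$ for every $n \geq 2$, as well as $\mu_2^{(\ell)}(a) > 0$. In parallel, a careful analysis of the radial problem, combined with the specific form of $\psi_a$ from Lemma \ref{L.f}, produces a window $[a_0, a_1]$ on which exactly two radial eigenvalues lie below $-\ep$ while $\mu_3^{(0)}(a) > \ep$. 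On this window the bottom of the spectrum of $-\fL_a$ on $\ZZ_\ell$-symmetric functions is $\mu_1^{(0)}(a) < \mu_2^{(0)}(a) < -\ep$, the only candidate for $\lambda_3^\ell(a)$ is $\mu_1^{(\ell)}(a)$, and $\lambda_4^\ell(a) \geq \min\{\mu_3^{(0)}(a), \mu_2^{(\ell)}(a), \mu_1^{(2\ell)}(a)\} > 0$.

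The heart of the argument is to establish that $\mu_1^{(\ell)}(a)$ actually crosses zero within $[a_0,a_1]$. This is where the non-autonomous nonlinearity $f_a$ constructed in Lemma \ref{L.f} enters decisively: the potential $-f_a'(\Phi_{a,1},\widetilde\psi_a)$ depends on $a$ both explicitly, through the radial shift $r = R + a - 4$ inside the cutoff $\chi(r-a)$, and implicitly through the radial profile $\widetilde\psi_a$, and the freedom available in choosing $g_\pm$ provides enough flexibility to arrange $\mu_1^{(\ell)}(a_0) > 0$ and $\mu_1^{(\ell)}(a_1) < 0$. Continuity of a simple isolated eigenvalue under continuous perturbation of the coefficients then forces a zero crossing in $(a_0, a_1)$. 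The main obstacle is precisely this last step: translating the qualitative flexibility of the construction into explicit Rayleigh-quotient estimates at the two endpoints that pin down the sign of $\mu_1^{(\ell)}(a)$. I expect this to require judicious test functions built from $\widetilde\psi_a$ and its angular perturbations, together with sharp use of the $\rho^{m-1}$ boundary decay to control the contribution of the critically singular potential $\m/\rho^2$ near $\pd\Om$.
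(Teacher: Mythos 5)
Your framework (Fourier decomposition into $1$D Sturm--Liouville problems, simplicity and positivity of the mode-$\ell$ ground state, the Rayleigh comparison $\mu_k^{(n\ell)}(a)\geq\mu_k^{(0)}(a)+n^2\ell^2/(a+3)^2$, and the regularity bound $\phi\geq C\rho^{m-1}$) matches the paper's in spirit, and would handle items (iii)--(iv). But the core of the proposition, the sign change in item (ii), is exactly the step you leave open, and the mechanism you propose for it is not available: at this stage the nonlinearity $f_a$ (hence $g_\pm$) is already fixed by Lemma~\ref{L.f}, so there is no ``freedom in choosing $g_\pm$'' left to arrange $\mu_1^{(\ell)}(a_0)>0$ and $\mu_1^{(\ell)}(a_1)<0$. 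The actual mechanism is purely geometric in $a$: with $\ell$ \emph{fixed}, the angular penalty $\ell^2/(R+a-4)^2$ decays like $\ell^2/a^2$ as the annulus recedes from the origin, while the first radial eigenvalue stays negative and bounded away from zero. Concretely, the paper first shows $\lambda_1^{\mathrm{rad}}(a)<\lambda_2^{\mathrm{rad}}(a)<0$ for every $a$ by testing the radial quadratic form with $\partial_r\psi_a$ truncated on either side of its maximum (two negative directions with disjoint supports), and then shows $\lambda_k^{\mathrm{rad}}(a)\to\overline{\lambda}_k$ as $a\to\infty$, where the limit $1$D problem has $\overline{\lambda}_1<0$, $\overline{\lambda}_2=0$ (eigenfunction $\overline{\psi}{}'$, one sign change) and $\overline{\lambda}_3>0$. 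One then fixes $\ell$ so large that $\ell^2/(a_0-3)^2+\overline{\lambda}_1-2c>0$ (forcing $\lambda_3^{\ell}(a_0)>0$), and takes $a_2\gg a_0$ with $\ell^2/(a_2-3)^2+\overline{\lambda}_1+2c<0$, so that the explicit test function $\phi_1(R)\cos(\ell\theta)$ (orthogonal to the two radial eigenfunctions) gives $\lambda_3^{\ell}(a_2)<0$; continuity yields the crossing, and $a_1$ is taken just past it. Without this (or an equivalent) argument, item (ii) is unproved.

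A secondary issue is a quantifier tension in your window construction: you propose to take $\ell$ large ``on any bounded interval of $a$'' to push all modes $n\geq2$ (and the second $\ell$-mode eigenvalue) above zero, but the same largeness of $\ell$ relative to a prescribed window would also keep $\mu_1^{(\ell)}$ positive there and destroy the crossing. The resolution, as above, is to fix $\ell$ in terms of $a_0$ and let the window extend to the crossing point $\alpha$; near $\alpha$ one has $\ell^2/(a+3)^2\approx-\lambda_1^{\mathrm{rad}}$, so the $2\ell$-mode penalty is roughly four times that and your comparison does give positivity there, while the paper's item (iv) avoids mode-by-mode bookkeeping altogether: on $[a_0,\alpha]$ one has $\lambda_4^{\ell}\geq\lambda_3^{\ell}\geq0$, and simplicity of $\lambda_3^{\ell}$ when it vanishes forces $\lambda_4^{\ell}>0$. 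Finally, your claim that ``a careful analysis of the radial problem'' produces the window with two radial eigenvalues below $-\ep$ and $\mu_3^{(0)}>\ep$ is precisely the content of the two lemmas sketched above (the $\partial_r\psi_a$ test functions and the $a\to\infty$ limit), which your proposal asserts but does not supply.
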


Having this result at hand , we can now reformulate the problem of solving the equation $\cG_a(w) = 0$ and conclude the proof of Theorem \ref{T.main2} using Krasnoselskii's bifurcation theorem (Theorem \ref{T.Kr}).

First of all, recall that 
\[
D\cG_a(0)=\cT_a+\cK_a\,,
\]
where $\cT_a: \cX^j \to \cY^j$ is an isomorphism and $\cK_a: \cX^j \to \cY^j$ is compact. In particular, $D\cG_a(0)$ is a Fredholm operator of index $0$. Since this class of operators is open, we can choose $\ep>0$ such that, for $a_1 > a_0 \geq 4$ as in Proposition \ref{eigen-crossing}, and
$$
 \cS:  \cO^{j} \times [a_0, a_1] \to \cY^j\,, \quad  (w,a) \mapsto \ep \rho \, w - \cG_a(w) \,,
$$
$\cS(\cdot,a)$ is also a Fredholm operator of index $0$ for all $a \in [a_0, a_1]$. We will  use the notation $\cS_a= \cS(\cdot,a)$. Next, observe that
$$
D \cS_a(0)w = \ep \rho \, w  - D \cG_a(0) w \,. 
$$
By the eigenvalue properties of Proposition \ref{eigen-crossing} (i) and the equivalence \eqref{equivalence}, $D\cS_a(0)$ is injective for all $a \in [a_0,a_1]$, so we conclude that it is an isomorphism. By the Inverse Function Theorem, we can take a smaller neighborhood of $0$ in $ \cX_{\ell}^j$ (still denoted by $\cO^j$) and a neighborhood $\cV^j$ of $0$ in  $\cY_{\ell}^j$ so that $\cS_a : \cO^j \to \cV^j$ is invertible for all $a \in [a_0, a_1]$. Here and in the rest of the section we fix $\ell \in \NN$, $\ell \geq 3$, as in Proposition \ref{eigen-crossing}. 

Now, we define
$$ 
\widehat{\cG}: \cV^j \times [a_0,a_1] \to  \cV^j, \quad (v,a) \mapsto   v - \ep \, \rho \, \cS_a^{-1}(v)\,,
$$
and stress that, since the image of $\cS_a^{-1}$ is included in $\cX_{\ell}^j$, which is compactly embedded in $\cY_{\ell}^j$ by Lemma \ref{L.CompactEmbedding}, the operator $\widehat{\cG}$ has the form of identity minus a compact operator. We also use the notation $\widehat{\cG}_a = \widehat{\cG}(\cdot,a)$. Clearly, $\widehat{\cG}(v,a)=0$ if and only if $\cG_a(w)=0$, where $ \cS_a(w)=v$.  Moreover, it follows that
$$
\begin{aligned}
D \widehat{\cG}_a(0)v = \mu v &\ \Longleftrightarrow\ (1- \mu) D \cS_a(0)w= \ep \rho \, w\,, \ \ \textup{with }\ D \cS_a(0)w =v\,, \\ &\ 
\Longleftrightarrow\ -D \cG_a(0)w= \frac{\ep \mu}{1-\mu} \rho \, w\\
& \ \Longleftrightarrow\ -\fL_a(\phi)= \frac{\ep \mu}{1-\mu}  \, \phi\,, \ \mbox{ with }\ \phi= \rho^{m-1}w\,.
\end{aligned}
$$
Therefore, the eigenvalues of $D\widehat{\cG}_a (0)$ are given by $\mu_k$, where
$$
\frac{\mu_k}{1-\mu_k} = \ep^{-1}\lambda_k\,.
$$
The same correspondence above shows that the eigefunctions $v$ do not belong to the range of $D \widehat{\cG}_a(0) - \mu \, {\rm Id}$. By Proposition \ref{eigen-crossing}  (i), we know that $\lambda_1\, \ep^{-1}<\lambda_2\, \ep^{-1}< -1$. As a consequence, $\mu_1>1$, $\mu_2>1$. On the other hand, $\lambda_k>0$, for all $k\geq 4$, which implies that $\mu_k \in (0,1)$, for all $k \geq 4$.  Finally, Proposition \ref{eigen-crossing} (i) also shows that $\ep^{-1} \lambda_3 > -1$, which implies that $\mu_3$ has the same sign than $\lambda_3$. Moreover, if $\lambda_3=0$ then $\mu_3=0$ and its algebraic multiplicity is equal to one.

Then, taking into account Proposition \ref{eigen-crossing} (ii) and (iii), we can apply Theorem \ref{T.Kr} and conclude that there exists a sequence $(v_n, a_n)_{n=1}^{\infty} \subset  \cV^j \times [a_0, a_1]$ such that 
$$ 
\widehat{\cG}(v_n,a_n)=0\,, \quad  a_n \to a^*\,, \quad \textup{and} \quad v_n \to 0  \mbox{ in }  \cY_{\ell}^j\,.
$$
As a consequence, we get a sequence $(w_n)_{n=1}^{\infty} \subset \cX_\ell^j$ such that
\begin{equation} \label{E.sequencewn}
\cG_{a_n}(w_n) =0\,,   \quad \textup{and} \quad \ w_n \to 0 \mbox{ in }  \cX_{\ell}^j \,, \ \mbox{ with }\  \cS_{a_n}(w_n)=v_n\,.
\end{equation}
In particular, it follows that $\lambda_3(a^*)=0$. 

\begin{proposition} \label{P.nonradial}
For all $n \in \NN$ sufficiently large, the functions $b_{w_n}$ and $B_{w_n}$ given in \eqref{E.wto} with $w:= w_n$ are nonconstant. 
\end{proposition}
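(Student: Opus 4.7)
I plan to argue by contradiction. Suppose that along a subsequence $b_{w_n}$ is a constant function of $\te$; by \eqref{E.wto} this amounts to $w_n(1,\te)$ being $\te$-independent, and the case of $B_{w_n}$ is identical. Set $t_n:=\|w_n\|_{\cX^j}>0$ and $\tilde w_n:=w_n/t_n$. The plan is to show that, along a further subsequence, $\tilde w_n\to\tilde w$ strongly in $\cX^j$ with $\|\tilde w\|_{\cX^j}=1$ and $\tilde w\in\ker D\cG_{a^*}(0)\cap\cX^j_\ell$, and then to invoke Proposition~\ref{eigen-crossing}(iii) to pin down $\tilde w$ explicitly and observe that its trace on each of $\{R=1\}$ and $\{R=7\}$ is a nonzero multiple of $\cos(\ell\te)$, hence nonconstant. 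Trace continuity will then contradict the assumed constancy.

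For the extraction, $\cG_{a_n}(w_n)=0$ together with the $C^1$ regularity of $\cG_a$ give $D\cG_{a_n}(0)\tilde w_n\to0$ in $\cY^j$. By Theorem~\ref{T.regularity}, $D\cG_a(0)=\cT_a+\cK_a$ with $\cT_a$ an isomorphism and $\cK_a$ compact, and the formulas \eqref{E.La00}, \eqref{E.cTa}, \eqref{E.cKa} combined with Lemma~\ref{L.f'} show that both $a\mapsto\cT_a$ and $a\mapsto\cK_a$ are continuous in operator norm. Since $\|\tilde w_n\|_{\cX^j}=1$, I pass to a weakly convergent subsequence $\tilde w_n\rightharpoonup\tilde w$ in $\cX^j$. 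Compactness of $\cK_{a^*}$ together with operator continuity then yields $\cK_{a_n}\tilde w_n\to\cK_{a^*}\tilde w$ strongly in $\cY^j$, so $\cT_{a^*}\tilde w_n\to -\cK_{a^*}\tilde w$ strongly (absorbing $o(1)$ contributions from $\cT_{a_n}-\cT_{a^*}$ and the residual), and applying the bounded inverse $\cT_{a^*}^{-1}$ upgrades weak to strong convergence $\tilde w_n\to\tilde w$ in $\cX^j$. In particular $\|\tilde w\|_{\cX^j}=1$, $D\cG_{a^*}(0)\tilde w=0$, and $\tilde w\in\cX^j_\ell$ since this subspace is weakly closed.

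To identify $\tilde w$, the equivalence \eqref{equivalence} shows that $\phi_*:=\rho^{m-1}\tilde w$ is a nontrivial $\ZZ_\ell$-symmetric eigenfunction of $-\fL_{a^*}$ with eigenvalue $0$. Continuity in $a$ of the eigenvalues $\lambda_k^\ell$ combined with Proposition~\ref{eigen-crossing}(i) and (iv) forces this eigenvalue to be $\lambda_3^\ell(a^*)$; by Proposition~\ref{eigen-crossing}(iii) it is simple with eigenfunction $\phi(R)\cos(\ell\te)$ satisfying $\phi\geq C\rho^{m-1}$. Hence $\tilde w=c\,\rho^{1-m}\phi(R)\cos(\ell\te)$ for some $c\neq 0$. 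The embedding $\cX^j\hookrightarrow C(\BOm)$, valid because $j\geq4$, implies that $\phi/\rho^{m-1}$ extends continuously to $[1,7]$ with boundary values $d_1,d_7\geq C>0$; therefore $\tilde w(1,\te)=cd_1\cos(\ell\te)$ and $\tilde w(7,\te)=cd_7\cos(\ell\te)$ are both nonconstant because $\ell\geq3$. Continuity of the trace $\cX^j\to H^{j-1/2}(\TT)\hookrightarrow C(\TT)$ then propagates the assumed constancy of $\tilde w_n(1,\te)$ to $\tilde w(1,\te)$, contradicting the explicit form just derived. The main obstacle is the Fredholm/compactness step producing the strong limit $\tilde w$, which hinges critically on Theorem~\ref{T.regularity} and on operator-norm continuity of $D\cG_a(0)$ in $a$; once these are in hand, the kernel identification via Proposition~\ref{eigen-crossing}(iii) and the boundary trace argument are routine.
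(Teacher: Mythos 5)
Your argument is correct and follows the same overall strategy as the paper: normalize $w_n$, pass to a limit along a subsequence, use the linearized equation at $a^*$ together with the equivalence \eqref{equivalence} and Proposition~\ref{eigen-crossing}(i), (iii), (iv) to identify the limit with (a multiple of) the simple eigenfunction $\phi(R)\cos(\ell\te)$, and conclude nonconstancy of the boundary traces. The one substantive difference is how the nontriviality of the limit is obtained. The paper first derives, via the mean value identity \eqref{E.meanvaluewn} and the a priori estimate of Proposition~\ref{P.regDGa}, the norm equivalence \eqref{E.equivalencewn}, and then combines weak convergence in $\cX^j$ with strong convergence in $\cY^j$ (compact embedding, Lemma~\ref{L.CompactEmbedding}) to see that the weak limit $u_0$ is nonzero; it never claims strong $\cX^j$ convergence. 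You instead exploit the decomposition $D\cG_a(0)=\cT_a+\cK_a$ of Theorem~\ref{T.regularity}, with $\cT_a$ an isomorphism and $\cK_a$ compact, plus operator-norm continuity in $a$, to upgrade weak to strong convergence in $\cX^j$, which yields nontriviality and makes the trace step immediate. Both routes are legitimate: yours gives convergence in the top norm and a cleaner endgame (which you also spell out via a contradiction, whereas the paper leaves the final trace argument implicit), at the price of invoking continuity of $a\mapsto\cT_a,\cK_a$ in operator norm — a fact the paper also uses implicitly when it appeals to the continuous dependence of $D\cG_a(0)$ on $a$ — while the paper's route leans only on results it has already proved verbatim (Proposition~\ref{P.regDGa} and Lemma~\ref{L.CompactEmbedding}).
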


In short, we have obtained a solution to the problem
\begin{equation} \label{final}
L_a^{b_{n},B_{n}} \big( \widetilde{\psi}_a + \rho^{m-1} \Theta_{w_n} \big) + f_a \big(\Phi_{a,1}^{b_{n},B_{n}}, \,\widetilde{\psi}_a + \rho^{m-1} \Theta_{w_n} \big)=0 \quad \textup{in } \Om\,,
\end{equation}
where we are using the shortened notation $b_n := b_{w_n}$ and $B_n:= B_{w_n}$ for $b_{w_n}$ and $B_{w_n}$ as in \eqref{E.wto}.
We now compose with $\Phi_{a_n}^{b_n, B_n}$ to obtain positive solutions to 
$$
\Delta \psi_n + f_{a_n} (|x|, \psi_n(x))=0 \quad \mbox{in }\Omega_{a_n}^{b_n,B_n}\,.
$$
We know that this (positive) solutions are of the form
$$ \psi_n(x) = d_n(x)^m \, \Psi_n(x)\,,$$
where $d_n(x):= \dist(x, \partial \Omega_{a_n}^{b_n,B_n})$ and $\Psi_n \in H^j(\Omega_{a_n}^{b_n,B_n})$. This shows the validity of Theorem \ref{T.main2}, and of Theorem \ref{T.main} too. Indeed, if we set 
$$
\psi_n^*(x) = \left \{
\begin{aligned}
\ & \psi_n(x) \quad && \textup{if } x \in \Om_{a_n}^{b_n,B_n}\,,\\
& 0 && \textup{in } x \not \in  \Om_{a_n}^{b_n,B_n}\,,
\end{aligned}
\right.
$$
we have that, for all $n \in \NN$ sufficiently large and all $k \leq j-3$, 
$$
v_n := \nabla^{\perp} \psi_n^*\,,
$$
is a compactly supported stationary Euler flow of class $C^k(\RR^2)$ which is not locally radial. 
 

\section{Stationary Euler flows via \\ elliptic equations with non-autonomous nonlinearities}
\label{S.L.f}

In this section we analyze the radial solutions to the elliptic equation with a non-autonomous nonlinearity which we will consider throughout the paper.  Lemmas \ref{L.f} and \ref{L.f'} immediately follow from the following result, where we actually find a rather explicit expression for $f_{\pm}$.

\begin{lemma}\label{L.f2}
	There exist constants $a_0>4$, $\ep>0$, such that, for any $a>a_0$, there are positive radial functions $\Psi_a \in C^{\infty}(\overline{\Omega}_a)$, $g_\pm\in C^\infty(\RR^+)$ such that:  
	\begin{enumerate}
		\item The function $\psi_a:=\rho_a^m\, \Psi_a$ is a solution to \eqref{E.psi} in $\Om_a$ with 
		\[
		f_\pm(t) := -m (m-1)|t|^{1-\frac{2}{m}}  g_\pm(t).
		\]
		\item $\inf_{a_-<r<a_+}\Psi_a(r)>0$ and $g_\iota(a_\iota) =1$ for $\iota\in\{-,+\}$.
		\item If $\Om\subset\{a_--\ep < r<a_++\ep\}$ is a planar domain with $C^{2,\alpha}$~boundary and $\psi\in C^{2,\alpha}(\BOm)$ satisfies the semilinear equation~\eqref{E.psi} in~$\Om$ and the bound 
		\[
		\|\psi-\psi_a\|_{L^\infty(\Om_a)}+ \|\psi\|_{L^\infty(\Om\backslash\Om_a)}<\ep\,,
		\]
		then the vector field $v:=\nabla^\perp\psi\in C^{1,\alpha}(\Om)$ satisfies the stationary Euler equation~\eqref{E.Euler} in~$\Om$ (with some $C^{2,\alpha}$ pressure).
		\item $\Psi_a(\cdot +a-4) \to \overline{\Psi}$ in the $C^{\ell}$ sense, for any fixed $\ell \in \NN$, and $f_\pm \to \overline{f}$ uniformly as $a \to + \infty$. Here,
$$
\overline{\Psi}:=\rho^m\, \overline{\psi}, \quad \textup{ and }  \quad  \overline{\psi}''(r) + \overline{f}(\overline{\psi}(r))=0\,,\ \ r \in (1,7)\,.
$$
Moreover,
$$
f'_{\pm}(\psi_a(\cdot+a-4)) \mp \frac{m-1}{a_{\pm}} \rho^{-1} - \overline{f}'(\overline{\psi}) \to 0\,, \ \ \mbox{ as } a \to  + \infty\,,
$$ 
uniformly.
	\end{enumerate}
\end{lemma}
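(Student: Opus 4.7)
Plan. I will proceed by a two-stage construction: first a limit profile in the rescaled variable $R = r - a + 4 \in (1,7)$, corresponding formally to $a = \infty$; then a perturbation to large finite $a$ via an implicit function theorem. The key design principle is that $g_+$ and $g_-$ will coincide on the range of values that $\psi_a$ takes on the intermediate annulus $\{|r-a|\leq 1\}$. This will automatically enforce the Euler identity \eqref{E.refEuler} for nearby nonradial solutions, which handles (iii); the rest is essentially ODE analysis.

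\textbf{Limit profile.} In the rescaled coordinate, the $r^{-1}\psi'$ term drops out and the radial equation reduces to the autonomous ODE $\bar\psi''(R) + \bar f(\bar\psi(R)) = 0$ on $(1,7)$ with $\bar f(t) = -m(m-1)t^{1-2/m}\bar g(t)$. To avoid inversion issues stemming from the $t^{1/m}$ behavior of $\bar\psi^{-1}$, I fix $\bar g$ a priori and solve for $\bar\psi$: pick $\bar g \in C^\infty(\RR)$ with $\bar g(0) > 0$ within a one-parameter family (e.g. $\bar g = \bar g_0 + \alpha h$ for a suitable smooth profile $h$), and seek $\bar\psi = \rho^m \bar\Psi$ with $\bar\Psi \in C^\infty([1,7])$ positive, where $\rho(R) = \tfrac{1}{6}(R-1)(7-R)$. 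Dividing the ODE by $\rho^{m-2}$ recasts it as a Fuchsian equation for $\bar\Psi$ with regular singular points at $R=1,7$; the indicial analysis forces $\bar\Psi(1)^{2/m} = \bar\Psi(7)^{2/m} = \bar g(0)$, and Frobenius theory produces a smooth one-parameter family of local solutions issuing from each endpoint. Combining this with energy conservation $\tfrac{1}{2}(\bar\psi')^2 + \bar F(\bar\psi) = E$ and a shooting argument, and tuning the parameter $\alpha$ so that the half-period equals $3$, one obtains a symmetric $\bar\psi$ vanishing exactly to order $m$ at both endpoints.

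\textbf{Perturbation, choice of $g_\pm$, and Euler identity.} The full radial ODE on $\Om_a$ differs from the limit equation only by the lower-order term $r^{-1}\psi_a' = O(a^{-1})\psi_a'$. Writing $\psi_a = \rho_a^m \Psi_a$ and dividing by $\rho_a^{m-2}$ yields a regular second-order ODE for $\Psi_a$ whose linearization at $(a^{-1},\Psi) = (0,\bar\Psi)$ is a self-adjoint Sturm--Liouville operator; after adjusting $\alpha$ (or another parameter of $\bar g$) to avoid zero as an eigenvalue, the IFT produces $\Psi_a \in C^\infty([a_-,a_+])$, positive, depending smoothly on $a^{-1}$ and converging to $\bar\Psi$. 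Setting $\delta := \tfrac{1}{2}\inf_{|r-a|\leq 1}\psi_a > 0$, I then define smooth $g_\pm$ on $\RR^+$ that (a) coincide with $\bar g$ on $[\delta,\infty)$, (b) have Taylor series at $t = 0$ determined by Frobenius matching of the full radial ODE near $r = a_\iota$, and (c) are rescaled by a multiplicative constant to enforce $g_\iota(a_\iota) = 1$. Since $\psi_a \geq 2\delta$ on $\{|r-a|\leq 1\}$, the non-autonomous $f_a$ reduces to the autonomous $\bar f$ throughout that region, so the globally defined $\psi_a$ solves \eqref{E.psi} in $\Om_a$. For (iii), if $\|\psi-\psi_a\|_{L^\infty(\Om_a)} + \|\psi\|_{L^\infty(\Om\setminus\Om_a)} < \ep < \delta/2$, then $\psi > \delta$ throughout the support of $\chi'(|x|-a) \subset \{a-1 \leq |x| \leq a+1\} \subset \Om_a$, so $f_+(\psi) = f_-(\psi)$ pointwise there and \eqref{E.refEuler} holds. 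The correction $\mp (m-1)/a_\pm \rho^{-1}$ in part (iv) arises at the second order of the Frobenius expansion near each boundary: the $r^{-1}\psi_a'$ term forces a specific $O(a_\iota^{-1})$ shift of $\Psi_a'(a_\iota)$ relative to $\bar\Psi'(1)$ and $\bar\Psi'(7)$, which propagates into a $\rho^{-1}$-singular correction in $f_\pm'(\psi_a) - \bar f'(\bar\psi)$ with the explicit coefficient claimed.

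\textbf{Main obstacle.} The principal technical difficulty is the simultaneous control of (a) smoothness of $g_\pm$ at $t = 0$ despite the fractional-power behavior of $\bar\psi$, and (b) invertibility of the Sturm--Liouville linearization in the IFT step. Both are handled by fixing $g_\pm$ (or $\bar g$) a priori as smooth functions and letting the ODE generate $\Psi_a$ (which is automatically smooth by Frobenius theory applied to the Fuchsian equation), and by using the freedom in the scalar parameter $\alpha$ to avoid the discrete set of resonant values of the linearization.
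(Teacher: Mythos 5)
The mechanism you propose for part (iii) — arranging $g_+$ and $g_-$ (hence $f_+$ and $f_-$) to coincide on all values that $\psi_a$ takes where $\chi'(|x|-a)\neq 0$, with an $\ep$-margin so that nearby solutions $\psi$ stay in that range and \eqref{E.refEuler} holds — is exactly the paper's mechanism (there $f_\pm\equiv 1$ on $[3,\infty)$), and that part of your argument is sound. The gap is in the construction of the pair $(\psi_a,f_\pm)$ itself. Your perturbation step is either circular or impossible as stated: you let the implicit function theorem "produce $\Psi_a$" from the full radial ODE at finite $a$, but the Taylor data of $g_\pm$ at $t=0$ — which enters that very ODE through $f_a$ — is only defined afterwards by "Frobenius matching" to the solution. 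If, on the other hand, the IFT is run with the limit autonomous nonlinearity $\bar f$ held fixed, then there is provably nothing for it to produce: for an autonomous nonlinearity the quantity $\tfrac12\psi'^2+F(\psi)$ is strictly decreasing in $r$ along any nonconstant radial solution, so no radial solution on an annulus can vanish to second (let alone $m$-th) order at both boundary circles; this is precisely the Remark following the paper's proof and the content of Appendix~\ref{A.nogo}, and it is the reason the nonlinearity must be co-constructed with the solution rather than fixed first. Relatedly, the linearization you invoke is not a regular Sturm--Liouville operator: dividing by $\rho^{m-2}$ leaves a critically singular potential $\sim(m-1)(m-2)/\rho^2$, so "adjust $\alpha$ to avoid a zero eigenvalue" hides the functional-analytic work (choice of spaces, Fredholmness) that the paper only carries out, for a different purpose, in Sections~\ref{S.Regularity}--\ref{S.keyest}. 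A further minor flaw: enforcing $g_\iota(a_\iota)=1$ "by a multiplicative rescaling" at the end changes the equation (and the indicial relation ties $g_\pm(0)$ to the boundary value of $\Psi_a$), so that normalization cannot be appended a posteriori.

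The paper's actual route avoids all of this and is far more elementary: it first builds the radial solution by hand, gluing $\tilde\rho_a(r)^m$ near $\pd\Om_a$ to the explicit torsion function $\tau_a$ (solving $\De\tau_a+1=0$) in the interior via a cutoff; it then checks that $\psi_a$ is strictly monotone on each side of its unique maximum $m_a$, and \emph{defines} $f_\pm$ by transporting $-\De\psi_a$ through $\psi_a$ along each monotone piece. With this reverse-engineering, the formulas \eqref{fmenos}--\eqref{fmasprima} for $f_\pm$ and $f_\pm'$ near $t=0$ are explicit, so (i), (ii) and the precise asymptotics in (iv) (including the $\mp\frac{m-1}{a_\pm}\rho^{-1}$ correction) are direct computations rather than consequences of a Frobenius expansion, and (iii) follows because $f_\pm\equiv1$ for $t\geq3$. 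If you want to keep your "prescribe the nonlinearity first" philosophy, you would need to solve simultaneously for $(\Psi_a,g_\pm)$ with the order-$m$ vanishing at both endpoints built into the unknowns, which is a genuinely harder problem than the lemma requires.
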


\begin{proof}

Let $\tau_a \in C^{\infty}(\BOm_a)$ be the unique solution to
\begin{equation}\label{E.Torsion}
\De \tau_a +1 =0 \quad \textup{in } \Om_a\,, \qquad \tau_a = 0 \quad \textup{on } \pd \Om_a\,.
\end{equation}
The solution is positive and radially symmetric, so we will simply write $\tau_a(r)$. Note that, as function of $r$, it satisfies the ODE
\begin{equation}\label{E.taua}
\tau_a''+\frac{\tau_a'}{r} + 1=0\,,\qquad \tau_a(a_-)=\tau_a(a_+)=0\,,
\end{equation}
and has an explicit expression, namely
\begin{equation} \label{explicit} \tau_a(r)= \frac{(a_+^2-a_-^2) \log(r) - (r^2-a_-^2) \log(a_+) - (a_+^2-r^2) \log(a_-)}{4 \log (a_+) - 4 \log(a_-)}\,.
\end{equation}
By the maximum principle,  $\tau_a:[a_-,a_+]\to[0,\infty)$ is nonnegative and only vanishes at the endpoints. Moreover, as a consequence of the classical moving plane method (see \cite{gnn}), its unique maximum is attained at a unique point $m_a\in(a_-,a)$. Also, it is easy to show that, as $a\to\infty$,  we have that $\tau_a(\cdot + a-4 ) \to \overline{\tau}$ in $C^\ell([1,7])$ for any fixed $\ell \geq 0$. Here, $\overline{\tau}$ is a solution to
$$
\overline{\tau}'' + 1 = 0 \quad \textup{in } (1,7)\,, \qquad \overline{\tau}(1)=\overline{\tau}(7) = 0\,.
$$
Clearly, $\overline{\tau}(r)= \tfrac12(9-(r-4)^2)$. 

We now truncate $\tau_a$ appropriately. Let us define the distance function $\tilde{\rho}_a: \Omega_a \to \RR$, which is also radially symmetric, as
$
\tilde{\rho}_a(r) := \min \{ r-a_-, a_+-r\}\,.
$
Then, we take a cut-off $\widetilde{\chi} : \RR \to \RR$ as in \eqref{E.chibar}, and define
\begin{equation} \label{psi_a} 
\psi_a(r):= \widetilde{\chi} (\tilde{\rho}_a(r)) \tilde{\rho}_a(r)^m  + (1- \widetilde{\chi}(\tilde{\rho}_a(r))) \tau_a(r) \,. 
\end{equation}
Note that $\psi_a \in C^{\infty}(\overline{\Om}_a)$, and that $\psi_a(r)=\tilde{\rho}_a(r)^m$, if $\tilde{\rho}_a(r) <1/2$, but $\psi_a(r)= \tau_a(r)$, if $\tilde{\rho}_a(r) >1$.

\noindent \textbf{Claim:} \textit{There exists $a_0 > 4$ such that, for all $a > a_0$:}
\begin{equation} \label{this}
\begin{aligned}
\psi_a(r)&  > 3\ \Longrightarrow\ \psi_a(r) = \tau_a(r)\,, \\
\psi_a(r)& < 2^{-m} \ \Longrightarrow\ \psi_a(r) = \tilde{\rho}_a(r)^{m}\,.
\end{aligned}
\end{equation}

\noindent \textit{Proof of the claim:} To prove the first assertion, we just have to prove that $\tilde{\rho}_a(r) \geq 1$ whenever $\psi_a(r) > 3$. We argue by contradiction and assume that $\psi_a(r) > 3$ and  that $\tilde{\rho}_a(r) < 1$.  Then, observe that $\overline\tau(r) \geq 5/2$ if and only if $r \in [2,6]$, and so that, for $a$ sufficiently large, $\tilde{\rho}_a(r)<1$ implies $\tau_a(r) < 3$. Hence, taking into account the definition of $\psi_a$, namely \eqref{psi_a}, we get that
\begin{equation} \label{def psia}
\psi_a(r)= \widetilde{\chi} (\tilde{\rho}_a(r)) \tilde{\rho}_a(r)^m  + (1- \widetilde{\chi}(\tilde{\rho}_a(r))) \tau_a(r) \leq \widetilde{\chi} (\tilde{\rho}_a(r)) + 3(1- \widetilde{\chi}(\tilde{\rho}_a(r)))  \leq 3\,,
\end{equation}
reaching a contradiction.

To prove the second assertion, we show that $\tilde{\rho}_a(r)\leq 1/2$ whenever $\psi_a(r) < 2^{-m}$. We argue again by contradiction and assume that $\psi_a(r) < 2^{-m}$ and that $\tilde{\rho}_a(r) > 1/2$. Then, observe that $\overline\tau(r) \leq 11/8$ if and only if $r \in (3/2,13/2)$, and so that, for $a$ sufficiently large, $\tau_a(r) > 1$ whenever $\tilde{\rho}_a(r)>1/2$. Hence, taking into account the definition of $\psi_a$, i.e. \eqref{psi_a}, we get that 
$$
\psi_a(r)= \widetilde{\chi} (\tilde{\rho}_a(r)) \tilde{\rho}_a(r)^m  + (1- \widetilde{\chi}(\tilde{\rho}_a(r))) \tau_a(r) \geq \widetilde{\chi} (\tilde{\rho}_a(r)) 2^{-m}  + (1- \widetilde{\chi}(\tilde{\rho}_a(r)))  \geq  2^{-m}\,,
$$
reaching again a contradiction. The claim is proved.

Next, we study the monotonicity properties of $\psi_a$. Clearly, see \eqref{psi_a}, $\psi_a$ is increasing in $(a_-, \,a_-+1/2)$ and in $(a_-+1,\, m_a)$. Moreover, we can show that
\begin{equation} \label{E.monotonicityPsia}
\psi_a'(r) > \ep >0\,, \quad  \mbox{ for all } r \in (a_-+ 1/2,\, a_-+1)\,,
\end{equation}
with $\ep > 0$ independent of $a \geq a_0$. Indeed, for all $r (a_-+ 1/2,\, a_-+1)$, it follows that
$$ 
\psi_a'(r)=  \widetilde{\chi}' (r-a_-) (r-a_-)^m  + \widetilde{\chi} (r-a_-) m (r-a_-)^{m-1} + (1- \widetilde{\chi}(r-a_-)) \tau_a'(r) - \widetilde{\chi}'(r-a_-) \tau_a(r)\,.
$$
Moreover, observe that, for $a$ sufficiently large,
$$
\widetilde{\chi} (r-a_-) m (r-a_-)^{m-1} + (1- \widetilde{\chi}(r-a_-)) \tau_a'(r) > \min \{\tau_a'(r): r \in (a_-+ 1/2, a_-+1)\}>0\,,
$$
Thus, since $\tilde{\chi}'(r-a_-)$ is negative for $r \in (a_-+1/2,\, a_-+1)$, to prove \eqref{E.monotonicityPsia}, we only need to show that
$$ 
 \tau_a(r) -(r-a_-)^m \geq 0\,, \quad \mbox{ for all } r \in (a_-+ 1/2, a_-+1)\,.
$$
However, this inequality is immediate, at least, for large values of $a$.

Similarly, it is immediate to see that $\psi_a$ is decreasing in $(m_a,a_+-1)$ and in $(a_+-1/2,\, a_+)$. Moreover, arguing as in the proof of \eqref{E.monotonicityPsia}, we get that
$$
\psi_a'(r) < -\ep <0\,, \quad \mbox{ for all } r \in (a_+ - 1, a_+ -1/2)\,,
$$
with $\ep > 0$ independent of $a \geq a_0$.

These monotocicity properties allow us to determine functions $f_-(t)$ and $f_+ (t)$ such that 
$$ 
\psi_a''(r) + \frac{\psi_a'(r)}{r} + f_{\pm}(\psi_a(r))=0\,, \quad \mbox{ if}\ \pm (r-m_a)>0\,.
$$
Moreover, by the first implication in \eqref{this}, 
$$ 
f_{\pm}(t)=1\,, \quad \mbox{ if } t \geq 3\,.
$$

Let us now compute the explicit expression of $f_{\pm}(t)$ for $t\in [0, 2^{-m}]$. We consider $f_-(t)$, the other case being completely analogous. For $r \in (a_-, a_-+1/2)$, we have that $ \psi_a(r)= (r-a_-)^m$, and so that
$$
\psi_a''(r) + \frac{\psi_a'(r)}{r} = m(m-1) (r-a_-)^{m-2} + \frac{m}{r} (r-a_-)^{m-1}\,.
 $$
Hence, setting $r=: t^{1/m}+a_-$, we obtain that
\begin{equation} \label{fmenos} f_{-}(t)= -m(m-1)\, t^{\frac{m-2}{m}} - \frac{m}{\, t^{1/m}+a_-} t^{\frac{m-1}{m}}\,, \quad \textup{ for } t \in [0, 2^{-m}]\,.
\end{equation}
Similarly, we have that
\begin{equation} \label{fmas}f_{+}(t)= -m(m-1) t^{\frac{m-2}{m}} - \frac{m}{\, t^{1/m}- a_+} t^{\frac{m-1}{m}}\,,  \quad \textup{ for }t \in [0, 2^{-m}]\,.
\end{equation}

\begin{figure}[h]
	\centering 
	\begin{minipage}[c]{120mm}
		\centering
		\resizebox{120mm}{80mm}{\includegraphics{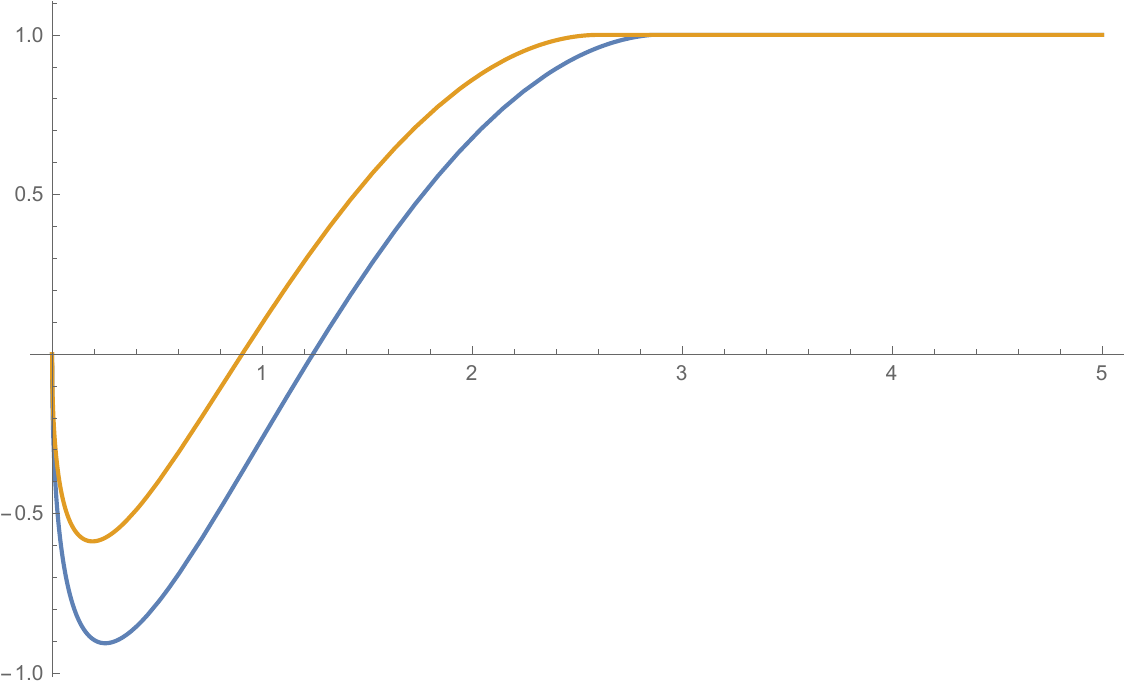}}
	\end{minipage}
	\caption{The graphs of the functions $f_-$ (blue) and $f_+$ (orange).}
\end{figure}

These explicit expressions of $f_{\pm}$ immediately implies that
\begin{align} \label{fmenosprima} f_{-}'(t) & = -(m-1)(m-2)\,t^{-2/m} - \frac{m-1}{t^{1/m} + a_{-}} t^{-1/m} + \frac{1}{(t^{1/m} + a_{-})^2}\,, \quad \textup{ for } t \in [0, 2^{-m}]\,, \\ \label{fmasprima}
 f_{+}'(t) & = -(m-1)(m-2)t^{-2/m} - \frac{m-1}{t^{1/m} - a_{+}} t^{-1/m} + \frac{1}{(t^{1/m} - a_{+})^2}\,,\quad \textup{ for } t \in [0, 2^{-m}]\,. 
\end{align}
Finally, we point out for later use that there exists $C > 0$ independent of $a \geq a_0$ such that
$$
|f'(t)| \leq C\,, \quad  \mbox{ for } t \in (2^{-m}, 3)\,.
$$

Observe now that, since $\overline{\tau}(r) \geq 4$ if $r \in (3,5)$, for $a$ sufficiently large, $\psi_a(r) > 3$ whenever $r \in [a-1,a+1]$. Hence, using again the first implication in \eqref{this}, we get that
\begin{equation} \label{semilineal2}
\begin{aligned}
 \Delta \psi_a(x) +  f_+(\psi_a(x))&  =0\,, \quad \mbox{ for } |x| > a-1\,, \\ \Delta \psi_a(x) +  f_-(\psi_a(x)) & =0\,, \quad \mbox{ for } |x| < a+1\,,
\end{aligned}
\end{equation} 
and so that $\psi_a$ is a solution of the problem \eqref{E.psi}. 

\begin{figure}[h]
	\centering 
	\begin{minipage}[c]{120mm}
		\centering
		\resizebox{120mm}{80mm}{\includegraphics{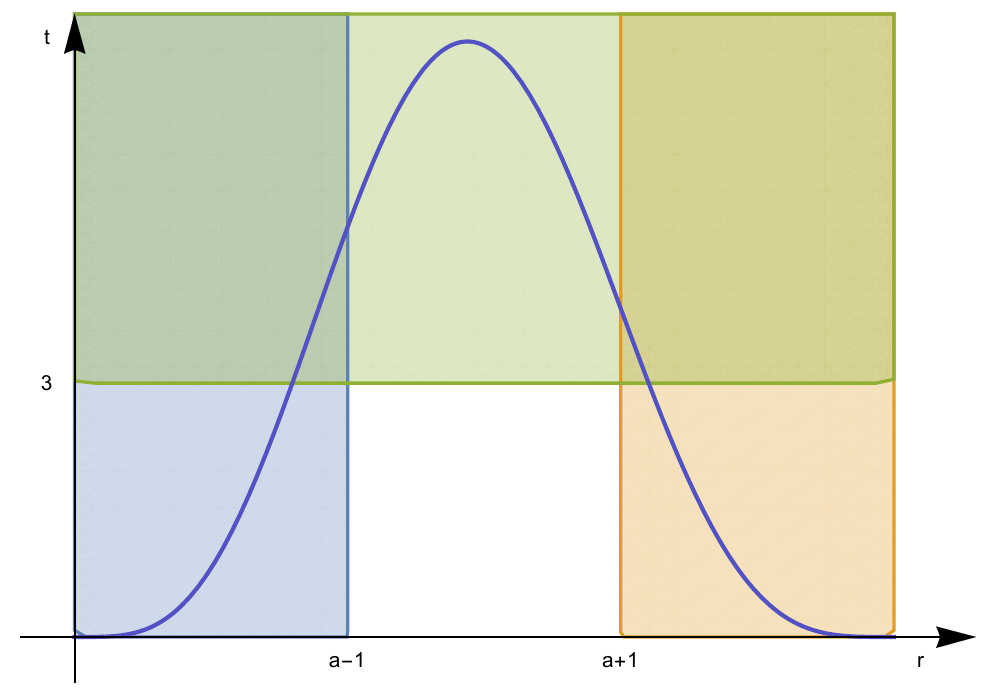}}
	\end{minipage}
	\caption{In this picture we illustrate the definition of $f(r,t)$: \\	
$\circ$   On the left blue rectangle, $f(r,t)= f_-(t)$. \\
$\circ$		On the right orange rectangle, $f(r,t)= f_+(t)$. \\
$\circ$	On the upper green rectangle, $f(r,t)=f_+(t)=f_-(t)=1$. \\
$\circ$ $\partial_r f(r,t) \neq 0$ only in the lower white rectangle, which does not intersect the graph of $\psi_a$, here represented in blue.}
\end{figure}

At this point we only need to verify (iii) and (iv). We start by proving (iii). Assume that $\Omega$ and $\psi$ are as in the statement of the lemma. We are going to show that \eqref{semilineal2} is also satisfied by $\psi$. By the definition of $\widetilde{\chi}$, we only need to check these identities in the annulus $(r,\te) \in [a-1, a+1] \times \TT$. However, since $\| \psi - \psi_a \|_{L^{\infty}} < \ep$, we immediately infer  that 
$$
\psi(r,\te) >3 \quad \mbox{ for } r \in (r,\te) \in [a-1, a+1] \times \TT\,,
$$
and so we can conclude by recalling that $f_{\pm}(t)=1 $ for all $t \geq 3$. 

We now deal with (iv), namely we interested in the asymptotics as $a$ tends to infinity. Clearly, we have that $\psi_a(\cdot +a-4) \to \overline{\psi}$ in the $C^{\ell}$ sense for all $\ell \in \NN$. Here, $\overline{\psi}$ is defined as
\begin{equation} \label{psibar} 
\overline{\psi}(r)= \widetilde{\chi} (\tilde{\rho}(r)) \tilde{\rho}(r)^m  + (1- \widetilde{\chi}(\tilde{\rho}((r))) \overline{\tau}(r)\,, \quad  \textup{ with } \tilde{\rho}(r):= \min\{r-1,7-r\}\,,
\end{equation}
and it is a solution to 
$$ 
\overline{\psi}''(r) + \overline{f}(\overline{\psi}(r))=0\,, \ \ r \in (1,7)\,,
$$
with
\begin{equation} \label{fbar} \overline{f}(t)=1 \ \mbox{ if } t \geq 3\,, \quad \textup{ and } \quad    \overline{f}(t)= -m(m-1)\, t^{\frac{m-2}{m}}\ \mbox{ if } t \in [0, 2^{-m}]\,.
\end{equation}

Also, note that, by the $C^{\ell}$ convergence of $\psi_a(\cdot +a-4)$ to $\overline{\psi}$ as $a$ to infinity, it follows that
$$ 
f_{\pm} \to \overline{f}\,, \quad \mbox{ in the $C^1$ sense, when restricted to } [1/2^m, 3]\,.
$$
Hence, $ f_{\pm} $ converges uniformly to $\overline{f}$, and moreover, the following uniform convergence holds:
$$
f'_{\pm}(t) \mp \frac{m-1}{a_{\pm}} t^{-1/m} - \overline{f}'(t) \to 0\,, \ \ \mbox{ as } a \to  + \infty\,.
$$ 
\end{proof}

\begin{remark} Such example cannot be built for a semilinear problem of the form 
$$
\Delta \psi + f(\psi)=0\,,
$$ 
for any continuous function $f$. Indeed, if $\psi$ is a nonconstant radial solution to this problem, it follows that
$$ 
\psi''(r) + \psi'(r)/r +f (\psi(r))=0\,.
$$
Multiplying by $\psi'(r)$ and integrating we then obtain that
$$ 
\Big ( \frac 1 2 \psi'(r)^2 + F(\psi(r))  \Big )'= -\psi'(r)^2/r \leq 0\,,
$$ 
with equality holding only on the critical points of $\psi$. Here, $F$ denotes a primitive of $f$. Hence,  in this case, the function $P(r) := \frac 1 2 \, \psi'(r)^2 + F(\psi(r)) $ is strictly decreasing in $r$. 
 
If we now choose $0 < a_- < a_+ < \infty$ and impose that $\psi(a_+)=\psi'(a_+)= \psi(a_-)=0$, we would have that
$$ 
\frac 1 2 \, \psi'(a_-)^2 + F(0)= \frac 1 2 \psi'(a_-)^2 + F(\psi(a_-))= P(a_-) > P(a_+) = \frac 1 2 \psi'(a_+)^2 + F(\psi(a_+)) = F(0)\,,
$$ 
and so that $\psi'(a_-) \neq 0$.

\end{remark}

\section{The functional setting} \label{S.FS}

In this section, we establish some basic properties of the spaces $\cX^j$, $\cX^j_{\rm{D}}$, and $\cY^j$ which we introduced in Section \ref{S.MainResult}. 

More precisely, given an open set $\Om' \subseteq \Om := \{(R,\te) \in (1,7) \times \TT\}$ with smooth boundary, we set
\begin{equation} \label{E.XjOm'}
\cX^j(\Om'):= \Big\{ w \in H^j(\Om'): \rho\, \pd_R w \in H^j(\Om')\Big\}\,,
\end{equation}
endowed with the norm
\begin{equation} \label{E.normXjOm'}
\|w\|_{\cX^j(\Om')} := \|w\|_{H^j(\Om')} + \|\rho \pd_R w \|_{H^j(\Om')} \,.
\end{equation}
We recall that the function $ \rho(R) := \tfrac16(R-1)(7-R)$ is positive in~$\pd\Om$ and vanishes on~$\pd\Om$ to first order. Here, $j \geq 2$ is an integer.

We also define the closed subset of elements with zero boundary trace, 
$$
\cX^j_{\rm{D}}(\Om') := \big\{ w \in \cX^j(\Om') : w|_{\pd \Om'} = 0 \big\}
$$
and the Banach space
\begin{equation} \label{E.YjOm'}
\cY^j(\Om') := \rho\, \cX^{j-2}(\Om') + H^{j-1}(\Om')\,,
\end{equation}
endowed with the canonical norm for the sum of two embedded Banach spaces:
\[
\|w\|_{\cY^{j}(\Om')} := \inf\Big\{\|{w_1} \|_{\cX^{j-2}(\Om')} + \|w_2\|_{H^{j-1}(\Om')} :  w = \rho w_1 + w_2  \textup{ with }  {w_1} \in \cX^{j-2}(\Om'),\ w_2 \in H^{j-1}(\Om')\Big\}\,.
\]
When the underlying set is the whole annulus~$\Om$, we omit this fact notationally and  write  $\cX^j := \cX^j(\Om)$, $\cX^j_{\rm{D}} := \cX^j_{\rm{D}}(\Om)$, and $\cY^j := \cY^j(\Om)$. 

Similarly, let us define the following spaces of~$\ZZ_\ell$-symmetric functions, with $\ell\geq3$:
\begin{align*}
\cX^j_\ell(\Om') & := \Big\{ w \in \cX^j(\Om') : w(R,\te) = w(R,-\te)\,, \ u(R,\te) = u\Big(R,\frac{2\pi}{\ell}\Big) \Big\}\,, \\
\cX^j_{{\rm{D}},\ell}(\Om') & :=  \big\{ w \in \cX^j_\ell(\Om') : w|_{\pd \Om'} = 0 \big\}\,, \\
\cY^j_\ell(\Om') & :=  \Big\{ w \in \cY^j(\Om') : w(R,\te) = w(R,-\te)\,, \ u(R,\te) = u\Big(R,\frac{2\pi}{\ell}\Big) \Big\}\,.
\end{align*}
We will omit the set notationally when it is the whole annulus~$\Om$.

\medbreak

Since it will be useful later on, we now prove the following bound, which in particular shows the validity of Lemma \ref{L.Liftings}. Here and in what follows, we use the standard notation $A \lesssim B$ to denote $A \leq C B$ for some harmless constant $ C$.


\begin{lemma} \label{L.LiftingsFS}
For any nonnegative integers $j_1,j_2, l,k$ with $j_1+j_2-k\geq1$, 
$$
\|b^{\sharp}\|_{L^2(\Om)}+ \|\rho^{k+l}\, \pd_\theta^{j_1} \pd^{j_2+l}_{R}\, b^{\sharp}\|_{L^2(\Om)} \leq C \|b\|_{H^{j_1+j_2-k-\frac12}(\TT)}\,.
$$
The function $B^{\flat}$ satisfies an analogous estimate.
\end{lemma}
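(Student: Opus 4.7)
The plan is to prove the estimate by a direct Fourier-side computation, exploiting the scale-invariance built into the definition of $b^{\sharp}$ in \eqref{E.TraceLifting}. The key observation is that the $n$-th summand in the trace-lifting formula is supported in a strip of radial width $\lesssim(1+|n|)^{-1}$, so that on this strip both $\rho(R)$ and $R-1$ are comparable to $(1+|n|)^{-1}$. This is precisely what converts powers of the weight into decay in frequency.

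More concretely, I would begin by applying $\pd_\theta^{j_1}\pd_R^{j_2+l}$ termwise to $b^\sharp$: the $\theta$-derivatives pull out a factor $(in)^{j_1}$ from the Fourier mode $e^{in\theta}$, while each $R$-derivative on $\widetilde\chi((1+|n|)(R-1))$ produces a factor of $(1+|n|)$. Since the functions $e^{in\theta}$ are orthogonal in $L^2(\TT)$, taking the $L^2(\Om)$-norm gives
\begin{equation*}
\|\rho^{k+l}\pd_\theta^{j_1}\pd_R^{j_2+l}b^\sharp\|_{L^2(\Om)}^2
\;\lesssim\;\sum_{n\in\ZZ}|b_n|^2\,|n|^{2j_1}(1+|n|)^{2(j_2+l)}\!\int_1^7\!\rho(R)^{2(k+l)}\bigl|\widetilde\chi^{(j_2+l)}((1+|n|)(R-1))\bigr|^2dR\,.
\end{equation*}

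Next I would estimate the radial integral. Because $\widetilde\chi^{(j_2+l)}((1+|n|)(R-1))$ is supported in $\{R-1\lesssim(1+|n|)^{-1}\}$ and $\rho(R)\le R-1$ near $R=1$, one has $\rho(R)^{2(k+l)}\lesssim(1+|n|)^{-2(k+l)}$ on this support; the change of variables $s=(1+|n|)(R-1)$ then produces an additional factor $(1+|n|)^{-1}$, yielding an integral bounded by $(1+|n|)^{-2(k+l)-1}$. Combining,
\begin{equation*}
\|\rho^{k+l}\pd_\theta^{j_1}\pd_R^{j_2+l}b^\sharp\|_{L^2(\Om)}^2\;\lesssim\;\sum_{n\in\ZZ}|b_n|^2(1+|n|)^{2(j_1+j_2-k)-1}\;=\;C\,\|b\|_{H^{j_1+j_2-k-\frac12}(\TT)}^2\,,
\end{equation*}
which is the desired bound. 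The plain $L^2(\Om)$ term is handled by the same argument with $j_1=j_2=k=l=0$, giving $\|b^\sharp\|_{L^2(\Om)}^2\lesssim\sum_n|b_n|^2(1+|n|)^{-1}\le\|b\|_{L^2(\TT)}^2$, and this is absorbed into the right-hand side as soon as $j_1+j_2-k-\frac12\ge 0$, which is guaranteed by the hypothesis $j_1+j_2-k\ge 1$. The estimate for $B^{\flat}$ is identical after the reflection $R\mapsto 8-R$, since near $R=7$ one still has $\rho(R)\sim 7-R$.

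I do not anticipate any serious obstacle here: the statement is essentially a precise bookkeeping of the fact that $\widetilde\chi((1+|n|)\,\cdot\,)$ concentrates at a scale $(1+|n|)^{-1}$, which is also the scale on which $\rho$ vanishes. The only mild subtlety is keeping track of the exponents of $(1+|n|)$ so that the weights $\rho^{k+l}$ and the derivatives $\pd_R^{j_2+l}$ combine to give exactly the trace Sobolev exponent $j_1+j_2-k-\frac12$, which is consistent with the standard trace/lifting heuristic on a one-dimensional boundary.
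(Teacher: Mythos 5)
Your proposal is correct and follows essentially the same route as the paper's proof: Parseval in the angular variable, the bound $\rho(R)\lesssim R-1$ on the support of the rescaled cutoff, and the change of variables $s=(1+|n|)(R-1)$ converting the weight and the derivatives of $\widetilde\chi$ into the factor $(1+|n|)^{2(j_1+j_2-k)-1}$. The only cosmetic difference is that you bound $\rho^{k+l}$ pointwise by $(1+|n|)^{-(k+l)}$ on the support before rescaling, whereas the paper keeps $(R-1)^{2(k+l)}$ inside the integral and absorbs it into the fixed finite integral $\int_0^\infty s^{2(k+l)}[\widetilde\chi^{(j_2+l)}(s)]^2\,ds$; the two bookkeepings are equivalent.
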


\begin{proof} 
We prove the result for $b^\sharp$; the case of $B^\flat$ is analogous. By Parseval's identity, 
\begin{align*}
& \frac{1}{2\pi} \int_{\Om} \rho^{2k+2l}\, [\pd_\theta^{j_1} \pd^{j_2+ l}_{\rho}\, b^{\sharp}(R,\te) ]^2 dRd\theta \\
&  \quad \lesssim  \sum_{n \in \ZZ} |b_n|^2 n^{2j_1} (1+|n|)^{2j_2+2l} \int_1^2 \rho^{2l+2k}  \big[\widetilde{\chi}^{(j_2+l)} \big( (1+|n|)(R-1)\big)\big]^2 dR \\
&  \quad \lesssim  \sum_{n \in \ZZ} |b_n|^2 n^{2j_1} (1+|n|)^{2j_2+2l} \int_1^2 (R-1)^{2l+2k}  \big[\widetilde{\chi}^{(j_2+l)} \big( (1+|n|)(R-1)\big)\big]^2 dR \\
& \quad =   \sum_{n \in \ZZ} |b_n|^2 n^{2j_1} (1+|n|)^{2j_2-2k-1} \int_0^\infty  s^{2l+2k} \big[\widetilde{\chi}^{(j_2+l)} (s)\big]^2 ds \\
& \quad \lesssim \sum_{n \in \ZZ} |b_n|^2 n^{2j_1} (1+|n|)^{2j_2-2k-1} \lesssim \|b\|_{H^{j_1+j_2-k-\frac12}(\TT)}\,.
\end{align*}
The estimate for $\|b^\sharp\|_{L^2(\Om)}$ follows similarly.
\end{proof}

Next, we present a Hardy inequality that will be crucial in what follows:

\begin{lemma} \label{L.Hardy}
For any $j \geq 0$,
\begin{equation}
\left\|\frac  w\rho\right \|_{H^j(\Om)} \lesssim  \|w\|_{H^j(\Om)}+ \|\pd_R w\|_{H^j(\Om)}
\end{equation}
provided that $w|_{\pd\Om}=0$. In particular, the multiplication operator $w\mapsto w/\rho$ maps $\cX^j_{\rm{D}}\to \cX^{j-1}$ for all $j\geq { 2}$. 
\end{lemma}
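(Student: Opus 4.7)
The plan is to localize the inequality near each boundary component of $\Om$ with a partition of unity and then to control all mixed derivatives at once through a one-line integral-representation trick. Concretely, I would fix a smooth partition of unity $\{\phi_-,\phi_c,\phi_+\}$ of $[1,7]$ with $\phi_-$ supported in $[1,3]$, $\phi_c$ supported in $[2,6]$, and $\phi_+$ supported in $[5,7]$. The middle contribution $\phi_c w/\rho$ is harmless because $\rho$ is bounded below on $\mathrm{supp}\,\phi_c$, so multiplication by $1/\rho$ acts continuously on every $H^k$. By symmetry it suffices to control $\phi_-w/\rho$; the factorization $\rho(R)=(R-1)\tau(R)$, with $\tau$ smooth and positive on $\mathrm{supp}\,\phi_-$, further reduces this to bounding $\phi_-w/(R-1)$ in $H^j$, since multiplication by $1/\tau$ and by a smooth cutoff preserves all $H^k$ norms.

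Writing $u:=\phi_-w$, which vanishes at $R=1$ in the trace sense and is supported for $R\leq 3$, the key observation is the representation
\[
\frac{u(R,\theta)}{R-1} \;=\; \int_0^1 (\partial_R u)(1+t(R-1),\theta)\,dt,
\]
which follows from the fundamental theorem of calculus. Differentiating under the integral sign, for any $(\alpha,\beta)$ with $\alpha+\beta\leq j$ one obtains
\[
\partial_\theta^\alpha \partial_R^\beta\!\Big(\frac{u}{R-1}\Big)(R,\theta) \;=\; \int_0^1 t^\beta (\partial_\theta^\alpha \partial_R^{\beta+1}u)(1+t(R-1),\theta)\,dt.
\]
Applying Minkowski's integral inequality in $L^2$ and changing variables $r=t(R-1)$ in the resulting scalar norm yields
\[
\big\|\partial_\theta^\alpha \partial_R^\beta (u/(R-1))\big\|_{L^2(\Om)} \;\leq\; \Big(\int_0^1 t^{\beta-\tfrac12}\,dt\Big)\,\|\partial_\theta^\alpha \partial_R^{\beta+1} u\|_{L^2(\Om)}.
\]
The scalar integral is finite for every nonnegative integer $\beta$, and summing over $\alpha+\beta\leq j$ controls $\|u/(R-1)\|_{H^j}$ by $\|u\|_{H^j}+\|\partial_R u\|_{H^j}\lesssim \|w\|_{H^j}+\|\partial_R w\|_{H^j}$. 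The analogous argument near $R=7$ completes the proof of the Hardy inequality.

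For the mapping property, given $w\in\cX^j_{\rm D}$ with $j\geq 2$, I would invoke the inequality at order $j-1$ to obtain $w/\rho\in H^{j-1}$. The algebraic identity
\[
\rho\,\partial_R(w/\rho) \;=\; \partial_R w \,-\, \rho'\cdot (w/\rho)
\]
then places $\rho\,\partial_R(w/\rho)$ in $H^{j-1}$, since $\partial_R w\in H^{j-1}$ by the $\cX^j$-regularity of $w$ and $\rho'$ is smooth on $\overline{\Om}$. Hence $w/\rho\in\cX^{j-1}$, as asserted.

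The main subtlety is that a brute-force Leibniz expansion of $\partial_R^j(w/\rho)$ generates terms such as $w/\rho^{j+1}$ that cannot possibly be absorbed by $j$ derivatives of $w$ under only the first-order vanishing $w|_{\partial\Om}=0$. The integral representation circumvents this obstruction by effectively dividing out a single power of $\rho$ in a non-local but $L^2$-safe way, and the convergence of the scalar integral $\int_0^1 t^{\beta-1/2}\,dt$ is precisely the arithmetic manifestation of this sharp balance between the boundary vanishing available and the gain in weight that one can afford.
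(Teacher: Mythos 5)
Your proof is correct, but it follows a genuinely different route from the paper. The paper's proof decomposes $w=\rho\,E+W$, where $E$ is a Taylor-type boundary correction built from the trace liftings of Lemma~\ref{L.Liftings}, chosen so that $\pd_R^kW$ vanishes on $\pd\Om$ for $0\leq k\leq j-1$; it then expands in Fourier modes in $\te$ and applies the higher-order weighted Hardy inequality of Theorem~\ref{T.Kufner43} to the singular terms $W_n^{(m)}/(R-1)^{1+j_1-m}$ that the Leibniz rule produces, which is exactly why the high-order vanishing of $W$ is needed. You avoid this entirely: after localizing and factoring out the smooth part of $\rho$, the representation
\[
\frac{u(R,\te)}{R-1}=\int_0^1(\pd_Ru)\big(1+t(R-1),\te\big)\,dt
\]
commutes nicely with $\pd_\te^\al\pd_R^\be$ (each $\pd_R$ only brings down a factor $t$ rather than a new negative power of $R-1$), and Minkowski plus the change of variables $s=1+t(R-1)$ costs only $t^{-1/2}$, so the scalar integral $\int_0^1t^{\be-1/2}\,dt$ converges for every $\be\geq0$; this bounds $\|u/(R-1)\|_{H^j}$ by $\|\pd_Ru\|_{H^j}$ using only the first-order vanishing actually available, and the mapping property $\cX^j_{\rm D}\to\cX^{j-1}$ follows from the identity $\rho\,\pd_R(w/\rho)=\pd_Rw-\rho'\,(w/\rho)$ exactly as you say. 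Your argument is more elementary and self-contained (no trace liftings, no Kufner theorem, no Fourier decomposition) and gives explicit constants; the paper's route has the advantage of reusing machinery (the sharp/flat liftings and the weighted one-dimensional Hardy inequalities) that is needed elsewhere in the paper anyway. The only point you gloss over is the justification of the representation and of differentiation under the integral for Sobolev (rather than smooth) functions with zero trace, but this is routine via Fubini and absolute continuity on lines, or by a density argument, and does not affect the estimate.
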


The proof of this result relies on a higher order Hardy-type inequality in one dimension that we state here for completeness. Here and in what follows, we will say that a function $\om$ defined on an interval $(a,b)\subset\RR$ is a {\em weight function}\/ if it is measurable and positive almost everywhere.

\begin{theorem} {\rm{(\hspace{-0.003cm}\cite[Theorem 4.3]{Kufner}).}} \label{T.Kufner43}
Let us fix two weight functions $\om_-,\om_+$ on~$(0,1)$. The inequality
\begin{equation}\label{E.Kufner43}
\int_0^1 |f(t)|^2 \omega_-(t) dt  \lesssim  \int_0^1 |f^{(k)}(t)|^2 \omega_+(t)  dt\,,
\end{equation}
holds for all functions $f$ such that $f(0) = \cdots = f^{(k-1)}(0)=0$ if and only if the weights functions satisfy
\begin{align*}
 \sup_{0 < t < 1}\left[ \left( \int_t^1 (\tau-t)^{2(k-1)} \omega_-(\tau) d\tau \right) \left( \int_0^t \frac{d\tau}{\omega_+(\tau)} \right) +  \left( \int_t^1 \omega_-(\tau) d\tau \right) \left( \int_0^t \frac{(t-\tau)^{2(k-1)}} {\omega_+(\tau)} d\tau \right)\right]<\infty\,.
\end{align*}
\end{theorem}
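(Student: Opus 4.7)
The plan is to reduce the higher-order weighted Hardy inequality to a statement about a weighted integral operator whose kernel is $(t-s)^{k-1}$, and then apply the classical Muckenhoupt--Bradley characterization of weighted Hardy inequalities after decomposing the kernel suitably.

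\emph{Step 1 (reduction to an integral operator).} Since $f(0)=f'(0)=\cdots=f^{(k-1)}(0)=0$, Taylor's formula with integral remainder yields
\[
f(t)=\frac{1}{(k-1)!}\int_0^t (t-s)^{k-1} f^{(k)}(s)\,ds\,.
\]
Setting $g:=f^{(k)}$, the inequality \eqref{E.Kufner43} is equivalent (up to the constant $(k-1)!$) to the boundedness of the operator
\[
(Tg)(t):=\int_0^t(t-s)^{k-1}g(s)\,ds\,,\qquad T:L^2((0,1),\om_+)\to L^2((0,1),\om_-)\,,
\]
since every $g\in L^2(\om_+)$ arises as $f^{(k)}$ for a unique admissible $f$, and conversely.

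\emph{Step 2 (kernel splitting).} The kernel $K(t,s):=(t-s)^{k-1}$ satisfies the Oinarov-type two-sided estimate
\[
K(t,s) \;\asymp\; (t-\tau)^{k-1}+(\tau-s)^{k-1} \qquad\text{whenever }\ 0\leq s\leq \tau \leq t\leq 1\,,
\]
which follows from $(t-s)=(t-\tau)+(\tau-s)$ and the elementary inequalities $(a+b)^{k-1}\leq 2^{k-1}(a^{k-1}+b^{k-1})$ and $a^{k-1}+b^{k-1}\leq 2(a+b)^{k-1}$ for $a,b\geq 0$. Using this to split $Tg = T_1 g + T_2 g$, where
\[
(T_1g)(t):=\int_0^t(t-\tau)^{k-1}\Bigl(\int_0^\tau \!|g|\,ds\Bigr)(???)\,,
\]
one is led to bound the two one-variable Hardy operators
\[
(H_1 g)(t):=\int_0^t g(s)\,ds\,,\qquad (H_2 g)(t):=\int_0^t(t-s)^{k-1}g(s)\,ds
\]
against suitable modified weights. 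Concretely, testing $Tg$ at level $t$ with the splitting point $\tau=t/2$ (or, more robustly, integrating $(t-\tau)^{k-1}$ over the appropriate range) separates the two contributions in the supremum: the term $(\tau-s)^{k-1}$ is absorbed into a Hardy operator against the weight $(\tau-t)^{2(k-1)}\om_-$, producing the first bracket, while the term $(t-\tau)^{k-1}$ produces an operator acting on $(t-s)^{k-1}g(s)$, giving the second bracket.

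\emph{Step 3 (Muckenhoupt--Bradley).} For the weighted Hardy operator $H_1$, the classical Muckenhoupt--Bradley theorem states that the $L^2(v)\to L^2(u)$ boundedness is equivalent to
\[
\sup_{0<t<1}\Bigl(\int_t^1 u\Bigr)\Bigl(\int_0^t v^{-1}\Bigr)<\infty\,.
\]
Applying this to the two pieces coming from Step 2 (with $u$ replaced by $(\cdot -t)^{2(k-1)}\om_-$ or $\om_-$, and $v$ replaced by $\om_+$ or $(t-\cdot)^{-2(k-1)}\om_+$ accordingly), one obtains sufficiency of the displayed supremum condition.

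\emph{Step 4 (necessity).} The two terms in the supremum are proved necessary by testing \eqref{E.Kufner43} on carefully chosen trial functions: for the first term, fix $t_0\in(0,1)$ and take $g$ supported in $(0,t_0)$ and essentially equal to $1/\om_+$ there; then $f(t)=(Tg)(t)$ is, for $t>t_0$, of order $(t-t_0)^{k-1}\int_0^{t_0}\om_+^{-1}$, and inserting this into the left-hand side of \eqref{E.Kufner43} and optimizing over $g$ yields the first bracket. For the second term, take $g$ concentrated near $t_0$ from the right and use a similar extremizing argument to recover the second bracket. Combining Steps 3 and 4 gives the equivalence claimed.

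The main obstacle is the kernel splitting in Step 2: one must verify that the two Hardy-type inequalities that result from the Oinarov decomposition are not only sufficient but exactly capture the full strength of \eqref{E.Kufner43}, which is why the two separate supremum terms (rather than their maximum over a single arrangement) appear in the final condition. The remaining steps are by now classical in the theory of weighted Hardy inequalities.
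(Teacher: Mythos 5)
First, note that the paper itself gives no proof of this statement: it is quoted from Kufner--Persson \cite[Theorem 4.3]{Kufner}, so your sketch can only be measured against the standard argument in the literature, which does follow the route you outline (Taylor reduction to the Riemann--Liouville operator $Tg(t)=\int_0^t(t-s)^{k-1}g(s)\,ds$, the Oinarov property of the kernel, two Muckenhoupt-type conditions, test functions for necessity). Your Step 1 is correct, and Step 4 is right in spirit, though the extremizer for the second bracket is not ``$g$ concentrated near $t_0$ from the right'': one takes $g(s)=(t_0-s)^{k-1}\omega_+(s)^{-1}\chi_{(0,t_0)}(s)$ (suitably truncated), which is spread over all of $(0,t_0)$; then $f(t)\geq c\int_0^{t_0}(t_0-s)^{2(k-1)}\omega_+(s)^{-1}\,ds$ for $t\geq t_0$, and inserting this into \eqref{E.Kufner43} yields the second condition.

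The genuine gap is in Steps 2--3, which is exactly where the content of the theorem lies. The displayed definition of $T_1$ contains ``(???)'', and the argument it is meant to summarize does not go through as described. Splitting at $\tau=t/2$ gives, for $g\geq0$, $Tg(t)\asymp t^{k-1}\int_0^{t/2}g(s)\,ds+\int_{t/2}^t(t-s)^{k-1}g(s)\,ds$. The first piece is an honest Hardy operator (up to the harmless factor $t^{k-1}$ and the dilation $t\mapsto t/2$) and is indeed controlled by the first bracket via Muckenhoupt--Bradley, because $(\tau-t)\asymp\tau$ when $\tau\geq2t$. But the second, near-diagonal piece has kernel $(t-s)^{k-1}\chi_{\{t/2<s<t\}}$, which is not of product (Hardy) type, so Muckenhoupt--Bradley does not apply to it; establishing its $L^2(\omega_+)\to L^2(\omega_-)$ boundedness from the two stated suprema is precisely the nontrivial half of the Oinarov--Stepanov characterization of Riemann--Liouville operators, and the known proofs use a blocking/discretization argument adapted to $t\mapsto\int_0^t\omega_+^{-1}$ (this is the Chapter 2 machinery that Kufner--Persson invoke when proving their Theorem 4.3). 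As written, your sufficiency direction is therefore asserted rather than proved; to close the gap you must either carry out that blocking argument or quote the Riemann--Liouville characterization explicitly instead of the classical first-order Hardy result.
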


\begin{proof}[Proof of Lemma \ref{L.Hardy}]
Let $w \in \cX^j_{\rm{D}}$, so that $w(1,\te) = w(7,\te)= 0$. Let us write this function as
$$
w =:  \rho \,  E  +  W \,,
$$
where $ E $ is defined using boundary data and the sharp and flat operations introduced in~\eqref{E.TraceLifting} as
\[
  E (R,\te):= \frac1{\rho(R)}\sum_{k=1}^{j-1} \frac1{k!} \Big[(R-1)^{k}\,(\pd_R^{k}w(1,\cdot))^{\sharp}\,(R,\te) + (R-7)^{k}\,(\pd_R^{k}w(7,\cdot))^{\flat}\,(R,\te)\Big]\,.
\]
This is a sort of Taylor expansion, without any order zero terms because $w|_{\pd\Om}=0$, and chosen so that
\begin{equation}\label{E.bdryW}
\pd_R^k W (1,\cdot)=\pd_R^k W (7,\cdot)=0\qquad \text{for } 0\leq  k\leq j-1.
\end{equation}
and so that the term~$E$ has good bounds in Sobolev spaces. Note that the sharp term in~$E$ is supported on $R\in[1,2]$, while the flat one is supported on~$[6,7]$. Furthermore, note that the functions $(R-1)/\rho$ and $({7-R})/\rho$ are smooth on $[1,2]$ and $[6,7]$, respectively. 

Therefore, arguing as in the proof of Lemma \ref{L.Liftings}, one can readily check that the ``sharp'' terms are bounded as
$$
\big\|\pd_\theta^{j_1}\pd_R^{j_2}\big[\rho^{-1}(R-1)^{k}\,(\pd_R^{k}w(1,\cdot))^{\sharp}\big]\big\|_{L^2(\Om)}\lesssim \|\pd_R^k w(1,\cdot)\|_{H^{j_1+j_2-k+\frac12}(\TT)}\,,
$$
and similarly the ``flat'' terms.
By the trace inequality, we then obtain
\begin{equation} \label{E.HardyConclusion1}
\| E \|_{H^j(\Om)} \lesssim  \|\pd_Rw\|_{H^j(\Om)} \,.
\end{equation}

Let us now estimate~$W$. With $j_1+j_2\leq j$, let us first write
\begin{multline*}
\int_{\Om} \left( \pd_R^{j_1} \pd_\te^{j_2}  \frac{W (R,\te) }{\rho}   \right)^2 dR d\te= \int_{(1,2)\times\TT} \left( \pd_R^{j_1} \pd_\te^{j_2}  \frac{W (R,\te) }{\rho}   \right)^2\\
 + \int_{(2,6)\times\TT} \left( \pd_R^{j_1} \pd_\te^{j_2}  \frac{W (R,\te) }{\rho}   \right)^2 + \int_{(6,7)\times\TT} \left( \pd_R^{j_1} \pd_\te^{j_2}  \frac{W (R,\te) }{\rho}   \right)^2\,.
\end{multline*}
The central region is immediate: as $\rho>0$ on $[2,6]$ and $j_1+j_2\leq j$,
\[
\int_{(2,6)\times\TT} \left( \pd_R^{j_1} \pd_\te^{j_2}  \frac{W (R,\te) }{\rho}   \right)^2 dR d\te \lesssim \|W\|_{H^{j}(\Om)}^2\,.
\]
The remaining two terms can be dealt with similarly, so let us consider the first one.

To use the boundary information information~\eqref{E.bdryW}, let us take the Fourier transform in the $\te$-variable, so that
$$
\frac{1}{\rho}\,  W (R,\te) = \sum_{n \in \ZZ} \frac{ W _n(R)}{\rho} e^{in\te} \quad \textup{ with } \quad  W _n(R):= \frac{1}{2\pi} \int_0^{2\pi}  W (R,\te) e^{-in\te} d \te\,.
$$
Then, we argue as in the proof of Lemma \ref{L.Liftings}: for all nonnegative integers with $j_1+j_2 \leq j$, we have 
\begin{equation} \label{E.Hardy1}
\begin{aligned}
& \int_{[1,2]\times\TT} \left( \pd_R^{j_1} \pd_\te^{j_2}  \frac{W (R,\te) }{\rho}   \right)^2 dR d\te \\
& \quad = 2\pi \sum_{n \in \ZZ} n^{2j_2} \int_1^2 \left| \sum_{m=0}^{j_1} \frac{j_1!}{m! (j_1-m)!} W _n^{(m)}(R)\,  \pd_{R}^{j_1-m} (\rho^{-1})  \right|^2 dR \\
& \quad \lesssim \sum_{n \in \ZZ} n^{2j_2} \sum_{m=0}^{j_1} \int_1^2 \frac{| W _n^{(m)}(R)|^2}{(R-1)^{2(1+j_1-m)}}  dR\,.
\end{aligned}
\end{equation}
Here we have used the Leibniz formula and the fact that $|\pd_R^k(\rho^{-1})|\lesssim (R-1)^{-k-1}$ for $R\in(1,2)$.

Thanks to~\eqref{E.bdryW}, we can now use Theorem \ref{T.Kufner43} with $\tilde R:= R-1\in(0,1)$,  $\omega_-(\tilde R) := \tilde R^{-2(1+j_1-m)}$ and $\omega_+(\tilde R):= 1$ to estimate
\begin{equation} \label{E.Hardy2}
\int_1^2 \frac{| W _n^{(m)}(R)|^2}{(R-1)^{2(1+j_1-m)}}  dR\lesssim \int_1^2 | W _n^{(j_1+1)}(R)|^2 dR \qquad \textup{ for all } 0 \leq m \leq j-1\,.
\end{equation}
Note that the implicit constant is independent of $n$. 
Then, combining \eqref{E.Hardy1} and \eqref{E.Hardy2}, we obtain
\begin{equation*}
\begin{aligned}
& \int_{[1,2]\times\TT} \left( \pd_R^{j_1} \pd_\te^{j_2}  \frac{W (R,\te)}{\rho}    \right)^2 dR d\te \\
& \quad \lesssim \sum_{n \in \ZZ} n^{2j_2} \int_1^2 | W _n^{(j_1+1)}(R)|^2 dR \lesssim \|\pd_R^{j_1} \pd_{\te}^{j_2} \pd_R  W  \|_{L^2(\bD)}^2\lesssim \|\pd_RW\|_{H^j(\Om)}^2\,.
\end{aligned} 
\end{equation*}

\noindent The integral over $[6,7]\times\TT$ is completely analogous. Putting the estimates together, the lemma follows.
\end{proof}

Finally, having Lemma \ref{L.Hardy} at hand, we prove the compactness of the embedding $\cX^j \hookrightarrow \cY^j$:

\begin{lemma} \label{L.CompactEmbedding}
For all $j \geq 2$, the embedding $\cX^j  \hookrightarrow \cY^j$ is compact. 
\end{lemma}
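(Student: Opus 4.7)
The argument reduces to observing that the embedding $\cX^j\hookrightarrow \cY^j$ factors through a compact Rellich embedding on the bounded domain $\Om=\{(R,\te)\in(1,7)\times\TT\}$. Specifically, my plan is to exhibit the chain
\[
\cX^j \hookrightarrow H^{j}(\Om) \hookrightarrow H^{j-1}(\Om) \hookrightarrow \cY^j,
\]
in which the first and third arrows are continuous and the middle arrow is compact; composition of a continuous map with a compact map is then compact.

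For the first inclusion, the definition of the $\cX^j$ norm immediately gives $\|w\|_{H^j(\Om)}\leq \|w\|_{\cX^j}$. The middle inclusion is the Rellich--Kondrachov compactness theorem on the bounded smooth domain $\Om$. For the third inclusion, given any $w\in H^{j-1}(\Om)$, the trivial decomposition $w=\rho\cdot 0+w$ with $0\in\cX^{j-2}$ and $w\in H^{j-1}(\Om)$ yields $w\in\cY^j$, and from the definition of the infimum norm on $\cY^j$,
\[
\|w\|_{\cY^j}\leq \|0\|_{\cX^{j-2}}+\|w\|_{H^{j-1}(\Om)}=\|w\|_{H^{j-1}(\Om)}.
\]

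To conclude, let $\{w_n\}\subset\cX^j$ be bounded. By the first embedding, $\{w_n\}$ is bounded in $H^{j}(\Om)$, hence by Rellich--Kondrachov there exists a subsequence (still denoted $\{w_n\}$) converging in $H^{j-1}(\Om)$. By the third embedding, this subsequence is Cauchy in $\cY^j$ because
\[
\|w_n-w_m\|_{\cY^j}\leq \|w_n-w_m\|_{H^{j-1}(\Om)}\to 0.
\]
This proves the compactness of $\cX^j\hookrightarrow \cY^j$.

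I do not expect any serious obstacle here: the entire content is the observation that the extra regularity encoded in the weighted derivative $\rho\,\pd_R w\in H^j$ is not needed for compactness, since dropping one derivative and using the decomposition $w=\rho\cdot 0 + w$ already lands inside $\cY^j$ with a dominating norm. The only mild check is that the trivial decomposition is admissible in the definition of $\cY^j$, which is immediate from $0\in\cX^{j-2}$.
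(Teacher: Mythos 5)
Your argument is correct: the three maps in your chain are exactly as you say, since $\|w\|_{H^j(\Om)}\leq\|w\|_{\cX^j}$ by definition, $H^j(\Om)\hookrightarrow H^{j-1}(\Om)$ is compact by Rellich--Kondrachov on the bounded domain $(1,7)\times\TT$, and the trivial decomposition $w=\rho\cdot 0+w$ gives $\|w\|_{\cY^j}\leq\|w\|_{H^{j-1}(\Om)}$, so compactness of $\cX^j\hookrightarrow\cY^j$ follows by composition. This is, however, shorter than the paper's proof, which only partially overlaps with yours: the paper also begins by noting that $\cX^j\hookrightarrow H^{j-1}(\Om)$ is compact because $H^j(\Om)\hookrightarrow H^{j-1}(\Om)$ is, but it then adds a second step showing that $\cX^j_{\rm{D}}\hookrightarrow\rho\,\cX^{j-2}$ is compact, using the Hardy inequality of Lemma~\ref{L.Hardy} (multiplication by $\rho^{-1}$ maps $\cX^j_{\rm{D}}$ into $\cX^{j-1}$, which embeds compactly into $\cX^{j-2}$), and concludes from the two facts together. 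Your observation that $H^{j-1}(\Om)$ already sits continuously inside $\cY^j$ via the zero weighted component makes that second step superfluous for the statement as given: the embedding factors entirely through the unweighted summand, and no Hardy-type control of $w/\rho$ is needed. What the paper's longer route buys is the finer structural fact that zero-trace elements of $\cX^j$ land compactly in the weighted summand $\rho\,\cX^{j-2}$ itself, which is genuinely about the weight; but for compactness of $\cX^j\hookrightarrow\cY^j$ your more economical argument is complete.
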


\begin{proof}
First, observe  that the embedding $\cX^j \hookrightarrow H^{j-1}(\Om)$ is obviously compact because so is $H^j(\Om) \hookrightarrow H^{j-1}(\Om)$. On the other hand, note that $\cX^j \cap \rho\, \cX^{j-2} \subset \cX^j_{\rm{D}}$ and multiplication by $\rho^{-1}$ maps $\cX^j_{\rm{D}}\to \cX^{j-1}$ by Lemma~\ref{L.Hardy}. Since the embedding $\cX^{j-2}\hookrightarrow  \cX^{j-1}$ is easily seen to be compact, we conclude that the embedding $\cX^j_{\rm{D}} \hookrightarrow \rho\, \cX^{j-2}$ is also compact. This implies that $\cX^j \hookrightarrow \cY^j$ is compact. 
\end{proof}

\section{The nonlinear map $\cG_a$ and its Fr\'echet derivative}
\label{S.Ga}

In this section we shall analyze the mapping properties of the map $\cG_a$ introduced in \eqref{E.Ga} and we shall compute  its Fr\'echet derivative, proving Lemmas~\ref{L.mapping} and \ref{L.DGa}. 

Having Lemmas \ref{L.Liftings} and \ref{L.Hardy} in hand, it is not hard to prove Lemma \ref{L.mapping}:
 
\begin{proof}[Proof of Lemma \ref{L.mapping}] (i) Let $(w,b,B) \in \widetilde{\cO}^j$ be fixed but arbitrary. We define
\begin{multline*}
 {\mathcal F}_1(w,b,B):=  \, \frac{1}{(\pd_R r)^2} \Big( \rho \pd_R^2 w + 2(m-1) \rho' \pd_R w + (m-1)(\rho \rho'' + (m-2) (\rho')^2 ) \frac{w}{\rho} \Big) \\
 + \frac{1}{r\pd_R r} \Big( \rho \pd_R w + (m-1) \rho' w \Big)-2 \frac{\pd_\te r}{r^2 \pd_R r} \Big( \rho \pd_R \pd_\te w + (m-1) \rho' \pd_\te w \Big) \\
 + \frac{(\pd_\te r)^2}{r^2 (\pd_R r)^2}  \Big( \rho \pd_R^2 w + 2(m-1) \rho' \pd_R w + (m-1)(\rho \rho'' + (m-2) (\rho')^2 ) \frac{w}{\rho} \Big) \,,
\end{multline*}
and
\begin{multline*}
 {\mathcal F}_2(w,b,B) :=  \rho \bigg[ - \frac{\pd_R^2 r}{(\pd_R r)^3} \Big( \pd_R w + (m-1) \rho' \frac{w}{\rho} \Big) + \frac{1}{r^2} \pd_\te^2 w \\
  \quad - \frac{\pd_R^2 r (\pd_\te r)^2 - 2 \pd_{R}r  \pd_{\te} r (\pd_{R}\pd_{\te}r)+(\pd_R r)^2 \pd_{\te}^2 r }{r^2(\pd_R r)^3} \Big( \pd_R w + (m-1) \rho' \frac{w}{\rho} \Big) \bigg]\,,
\end{multline*}
noting that
\[
\rho^{2-m} L_a^{b,B} [\widetilde{\psi}_a + \rho^{m-1} w ] = {\rho^{2-m}} L_a^{b,B}[\widetilde{\psi}_a] + {\mathcal F}_1(w,b,B) + {\mathcal F}_2(w,b,B)\,.
\]
Having  Lemma \ref{L.Hardy} at hand, and taking into account the definition of $\widetilde{\cO}^j$ and the fact that $H^{j-1}(\Om) \subset \cX^{j-2}$, it is easy to check that 
\[
\rho^{2-m} L_a^{b,B}[\widetilde{\psi}_a] + {\mathcal F}_1(w,b,B) + f_a(r,\ \widetilde{\psi}_a + \rho^{m-1}w ) \in  H^{j-1}(\Om)
\]
and that 
$$
{\mathcal F}_2(w,b,B) \in \rho\, \cX^{j-2}\,.
$$

(ii) Let $w \in \cX^j$ be fixed but arbitrary. First, note that
$$
\rho^{2-m} \pd_R^2 (\rho^{m-1} w) = \rho \pd_R^2 w + 2(m-1) \rho' \pd_R w + (m-1)\rho'' w + (m-1)(m-2) (\rho')^2\, \frac{w}{\rho}\,.
$$
Thus, by Lemma \ref{L.f'}, 
$$
\rho^{2-m} \Big[ \pd_R^2 (\rho^{m-1} w) +  f_a'(\Phi_{a,1}, \widetilde{\psi}_a) \rho^{m-1} w \Big]=  \rho \pd_R^2 w + 2(m-1) \rho' \pd_R w + \big((m-1)\rho'' + O(1)\big) w \,.
$$
Having this expansion  at hand, we set 
\begin{align*}
\widetilde{\cF}_1(w)&:= \rho^{2-m} \pd_R^2 (\rho^{m-1} w) + \rho f_a'(\Phi_{a,1}, \widetilde{\psi}_a)  w + \frac{\rho \pd_R w + (m-1) \rho' w}{R+a-4}  \,,\\
\widetilde{\cF}_2(w)&:=  \frac{\rho }{(R+a-4)^2} \pd_\te^2 w \,.
\end{align*}
Then
$$
\rho^{2-m} \big( \big( L_a^{0,0} + f_a'(\Phi_{a,1},\, \widetilde{\psi}_a)  \big) [ \rho^{m-1} w ] \big) = \widetilde{\cF}_1(w) + \widetilde{\cF}_2(w)\,,
$$
and it is easy to check that $\widetilde{\cF}_1(w) \in  H^{j-1}(\Om)$ and  $\widetilde{\cF}_2(w) \in \rho\, \cX^{j-2}$.
\end{proof}

\begin{remark} \label{R.cancellation}
In the proof of (ii), we are using that the term
$
(m-1)(m-2) w/\rho 
$
cancels out. This allows us to extend the definition of the operator from $\cX^j_{\rm{D}}$ to $\cX^j$. 
\end{remark}

\begin{proof}[Proof of Lemma \ref{L.DGa}]

Let us start by noting that the linear map $w\mapsto (b_w,B_w,\Theta_w)$ defined by~\eqref{E.wto} is continuous $\cO^j\mapsto\widetilde\cO^j$. Furthermore,
\[
v\mapsto \rho^{2-m}L_a^{b,B}(\rho^m v)
\]
is a second order differential operator whose coefficients depend smoothly on~$b^\sharp,B^\flat$ (provided they satisfy the smallness assumption included in the definition of~$\widetilde\cO^j$). It is then clear that 
\[
\cG^1(w):= \rho^{2-m}L_a^{b_w,B_w}(\rho^m\widetilde\Psi_a)
\]
is a $C^1$ map $\cO^j\to \cY^j$. 

To analyze the map $\cG^2(w):= \rho^{2-m}L_a^{b_w,B_w}(\rho^{m-1}\Theta_w)$, let us write it as
\[
\cG^2(w)=\rho L_a^{b_w,B_w}\Theta_w-[\rho^{2-m}L_a^{b_w,B_w},\rho^{m-1}]\Theta_w\,.
\]
In the commutator, the only dangerous term that appears is when the second order derivative $\pd_R^2$ appearing in $L_a^{b_w,B_w}$ hits $\rho^{m-1}$. This leads to a term of the form
$
{\Theta_w}\,\widehat\cG^2(b_w,B_w)/\rho,
$
where {$\widehat\cG^2(b_w,B_w)$} is harmless (and depends on~$w$ in a differentiable fashion). Since $\Theta_w\in\cX^j_{{\rm D}}$, this term is controlled by the Hardy inequality of Lemma~\ref{L.Hardy}, so $\cG^2:\cO^j\to \cY^j$ is also $C^1$.

Let us now consider 
\[
\cG^3(w):= \rho^{2-m}f_a (\Phi_{a,1}^{b_w,B_w},\ \widetilde{\psi}_a + \rho^{m-1} \Theta_w  )\,.
\]
If $\widehat\chi(R)$ is a smooth cutoff function which is identically zero in a neighborhood of~1 and~7, it is clear that {$w\mapsto  \widehat\chi\, \cG^3(w)$} is~$C^1$. To study what happens near, say, $R=1$, note that Lemma~\ref{L.f} ensures that for $R\in[1,2]$ one can write
\[
\cG^3(w)= \left(\widetilde{\Psi}_a +\frac{\Theta_w}\rho\right)^{1-\frac2m}g_-(\widetilde{\psi}_a + \rho^{m-1} \Theta_w)\,,
\]
where $g_-$ is smooth. In view of the smallness assumption of~$\cO^j$, it is clear that $\cG^3$ is also~$C^1$. Since $\cG_a=\cG^1+\cG^2+\cG^3$, we conclude that $\cG_a:\cO^j\to \cY^j$ is continuously differentiable.

Let us now prove the formula \eqref{E.DGa} for $D\cG_a(0)$. To this end, let us write $\Theta_w$ as
$
\Theta_w = w + \widetilde\Theta_w,
$
where
$$
\widetilde\Theta_w(R,\te) := \frac{1}{6} \Big(m\rho'(R)\, \widetilde{\Psi}_a(R) + \rho(R) \widetilde{\Psi}_a'(R) \Big) \Big[\Bsharp_w(R,\te) (R-1) + \bsharp_w(R,\te)(7-R) \Big]\,. 
$$
Then, by direct computations, we get that
\begin{equation} \label{E.DGa1}
	\begin{aligned}
	\frac{\rm d}{{\rm d}s} & \left(  \rho^{2-m} \Big[ L_a^{sb_w,sB_w} \big( \widetilde{\psi}_a + \rho^{m-1} s \Theta_w \big) + f_a \big(\Phi_{a,1}^{sb_w,s B_w}, \, \rho^m \, \widetilde{\Psi}_a + \rho^{m-1} s\Theta_w \big) \Big] \right) \bigg|_{s = 0}  \\
	& = \rho^{2-m} \bigg( L_a^{0,0} \big[\rho^{m-1} \big( w + \widetilde\Theta_w \big) \big] +  \frac{\rm d}{{\rm d}s} L_a^{sb_w, sB_w} \Big|_{s=0} \big[ \widetilde{\psi}_a \big] \\
	& \quad + \frac16 \partial_r f_a(\Phi_{a,1},\widetilde{\psi}_a) \big( b_w(7-\cdot) + B_w(\cdot-1) \big) + \rho^{m-1} f_a'(\Phi_{a,1}, \rho^m \, \widetilde{\Psi}_a ) ( w + \widetilde\Theta_w \big)  \bigg)\,.
\end{aligned}
\end{equation}

On the other hand, we write $\psi_a(r,\te) \equiv \psi_a(r)$ with some abuse of notation and define $\widetilde{\psi}_a^{b,B} := \psi_a \circ \Phi_a^{b,B}$, where we recall that $\psi_a \in C^\infty(\BOm_a)$ is the solution to \eqref{E.psi} in $\Om_a$ constructed in Lemma \ref{L.f}. Note that 
$$
L_a^{b,B}\, \widetilde{\psi}_a^{b,B} + f_a(\,|\cdot|,\widetilde{\psi}_a^{b,B} \circ (\Phi_a^{b,B})^{-1}) \circ \Phi_a^{b,B} = 0 \quad \textup{in } \Om\,.
$$
Thus, substituting $(b,B) = (sb_w, sB_w)$ and differentiating the resulting identity, we find 
\begin{equation*}
\begin{aligned}
0 & = 	\frac{\rm d}{{\rm d}s} L_a^{sb_w, sB_w} \Big|_{s=0} \big[ \widetilde{\psi}_a \big] + L_a^{0,0} \left[	\frac{\rm d}{{\rm d}s} \widetilde{\psi}_a^{sb_w, sB_w} \Big|_{s=0} \right] \\
& \quad  +  \frac16\,  \partial_r f_a(\Phi_{a,1},\widetilde{\psi}_a) \big( b_w(7-\cdot) + B_w(\cdot-1) \big) + f_a'(\Phi_{a,1}, \rho^m \, \widetilde{\Psi}_a ) \, \frac{\rm d}{{\rm d}s} \widetilde{\psi}_a^{sb_w,sB_w} \Big|_{s=0}\,.
\end{aligned}
\end{equation*}
Also, note that
$$
 \frac{\rm d}{{\rm d}s} \widetilde{\Psi}_a^{sb_w,sB_w} \Big|_{s=0} = \rho^{m-1}\, \widetilde\Theta_w\,.
$$
Hence,
\begin{equation} \label{E.DGa2}
\begin{aligned}
0 & = 	\rho^{2-m} \bigg(\frac{\rm d}{{\rm d}s} L_a^{sb_w, sB_w} \Big|_{s=0} \big[ \widetilde{\psi}_a \big] + L_a^{0,0} \left[\rho^{m-1}\, \widetilde\Theta_w\right] \\
& \quad  +  \frac16\,  \partial_r f_a(\Phi_{a,1},\widetilde{\psi}_a) \big( b_w(7-\cdot) + B_w(\cdot-1) \big) +  \rho^{m-1}\, f_a'(\Phi_{a,1}, \rho^m \, \widetilde{\Psi}_a ) \, \widetilde\Theta_w \bigg) \,.
\end{aligned}
\end{equation}
Since $L_a^{0,0}$ is a linear operator, \eqref{E.DGa}  immediately follows by combining \eqref{E.DGa1} and \eqref{E.DGa2}.
\end{proof}

\section{Fredholmness and invertibility properties of the linearized operator} \label{S.Regularity}

This section is devoted to the proof of Theorem \ref{T.regularity}. Using Lemma \ref{L.CompactEmbedding}, it is easy to show the compactness of the operator~$\cK_a$ defined in~\eqref{E.cKa}. The hardest part is to show that $\cT_a$ (introduced in~\eqref{E.cTa}) is an isomorphism. We split the proof into two parts. In the first part, we shall prove an easy existence and uniqueness result on low regularity spaces for the linear elliptic equation defined by the operator~$\cT_a$. In the second part, which is considerably harder, we prove sharp regularity estimates for the solutions. Theorem \ref{T.regularity} then follows. In particular, $D\cG_a(0)$ is a Fredholm operator of index 0. The proof of a key estimate (Lemma \ref{L.keyest}) is presented in Section~\ref{S.keyest}.
 
\begin{lemma}\label{L.compactness}
For all $j\geq2$, the operator $\cK_a:\cX^j \to \cY^j$ is compact.
\end{lemma}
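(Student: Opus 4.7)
The plan is to exploit the algebraic cancellation that is built into the very definition of $\cK_a$. By Lemma~\ref{L.f'}, the function $R \mapsto \rho(R)^2 f_a'(\Phi_{a,1}, \widetilde{\psi}_a)$ is smooth on $[1,7]$ and equals $-(m-1)(m-2) + O(\rho)$. Hence
\[
R \mapsto \rho(R)^2 f_a'(\Phi_{a,1}, \widetilde{\psi}_a) + (m-1)(m-2)
\]
is a smooth function on $[1,7]$ that vanishes on $\pd\Om$, so it can be written as $\rho(R)\, q(R)$ for some $q \in C^\infty([1,7])$. Dividing by $\rho$ and plugging this into the definition of $\cK_a$, one gets
\[
\cK_a w = \left[\rho f_a'(\Phi_{a,1}, \widetilde{\psi}_a) + \frac{(m-1)(m-2)}{\rho}\right] w = q(R)\, w,
\]
so $\cK_a$ is merely multiplication by a smooth radial function.

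Next I would check that multiplication by such a $q$ is a bounded operator $\cX^j \to \cX^j$. For any $w \in \cX^j$ one has $qw \in H^j(\Om)$ with
\[
\|qw\|_{H^j(\Om)} \lesssim \|q\|_{C^j([1,7])}\, \|w\|_{H^j(\Om)}
\]
by the usual product estimate. For the anisotropic part, the product rule gives
\[
\rho\, \pd_R(qw) = q(R)\,\bigl(\rho\, \pd_R w\bigr) + \rho\, q'(R)\, w,
\]
and since both $q$ and $\rho q'$ are smooth radial functions on $\BOm$, both terms lie in $H^j(\Om)$ with norms controlled by $\|w\|_{\cX^j}$. Thus $\|\cK_a w\|_{\cX^j} \lesssim \|w\|_{\cX^j}$.

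Finally I would factor $\cK_a$ through the continuous injection $\cX^j \hookrightarrow \cY^j$: the map $\cK_a$ is a composition $\cX^j \xrightarrow{q\,\cdot} \cX^j \hookrightarrow \cY^j$, where the first arrow is bounded by the previous step and the second arrow is compact by Lemma~\ref{L.CompactEmbedding}. Therefore $\cK_a : \cX^j \to \cY^j$ is compact.

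There is no real obstacle here: the whole point is the cancellation supplied by Lemma~\ref{L.f'}, which removes the otherwise critically singular $\rho^{-2}$ potential and turns $\cK_a$ into multiplication by a smooth function, after which the compact embedding $\cX^j \hookrightarrow \cY^j$ does the rest.
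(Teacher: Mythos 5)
Your proof is correct and follows essentially the same route as the paper: Lemma~\ref{L.f'} supplies the cancellation that turns the critically singular potential into a smooth radial multiplier, and compactness then comes from the embedding of Lemma~\ref{L.CompactEmbedding}. The only cosmetic difference is that the paper records $\cK_a w = K_1\,\rho\,\pd_R w + K_2\, w$ with smooth radial $K_1,K_2$ (thus also covering a first-order term, handled via $\rho\,\pd_R:\cX^j\to H^j(\Om)$ and the compact embedding $H^j(\Om)\hookrightarrow\cY^j$), whereas for $\cK_a$ as defined in~\eqref{E.cKa} this reduces exactly to your multiplication operator $q(R)\,w$, so the two arguments coincide.
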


\begin{proof}
By Lemma \ref{L.CompactEmbedding} we know that the embedding $\cX^j \hookrightarrow \cY^j$ is compact. Also, note that $\rho\, \pd_R$ maps $ \cX^j\to H^j(\Om)$. It is apparent that $H^j(\Om)$ is compactly embedded in~$\cY^j$, so $\rho\, \pd_R:\cX^j\to \cY^j$ is compact.

Now, observe that, by Lemma~\ref{L.f'}, the operator~$\cK_a$ is of the form 
$$
\cK_a w= K_1\,\rho\,\pd_Rw+ K_2\, w\,,
$$
where $K_1,K_2:\BOm\to\RR$ are smooth radial functions. The compactness of $\cK_a$ then follows.
\end{proof}

\subsection{Existence and uniqueness in a low regularity space}

Let us consider the equation 
\begin{equation}\label{E.TawF}
\cT_a w = F	
\end{equation}
which one can rewrite using~\eqref{E.cTa} as
\begin{equation}\label{E.eqwcTa}
	L_a^{0,0}(\rho^{m-1}w)-\frac{\m}{\rho^2}\rho^{m-1}w= \rho^{m-2}F\,.
\end{equation}
Let us recall that the differential operator $L_a^{0,0}$ was introduced in~\eqref{E.La00}. The following result is then elementary:

\begin{proposition}\label{P.lowreg}
	For each $F\in L^2(\Om)$, there exists a unique $w\in \rho^{1-m} H^1_0(\Om)$ such that $\cT_a w= F$. Furthermore,
	\[
	\|\rho^{m-1} w\|_{H^1(\Om)}\lesssim \|F\|_{L^2(\Om)}\,.
	\]
\end{proposition}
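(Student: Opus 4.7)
The plan is to recast the equation as a linear elliptic problem on $H^1_0(\Omega)$ via the substitution $v := \rho^{m-1}w$ and then solve it by a direct application of the Lax--Milgram theorem. The statement $w \in \rho^{1-m}H^1_0(\Omega)$ together with the quantitative estimate $\|\rho^{m-1}w\|_{H^1(\Omega)} \lesssim \|F\|_{L^2(\Omega)}$ are then literal translations, through this substitution, of existence, uniqueness and continuous dependence of $v \in H^1_0(\Omega)$.

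First, I would rewrite \eqref{E.eqwcTa} in terms of $v$ as
\[
L_a^{0,0}v - \frac{(m-1)(m-2)}{\rho^2}v = \rho^{m-2}F \quad \text{in } \Om,
\]
and recall from \eqref{E.La00} that $L_a^{0,0}$ is just the Euclidean Laplacian written in the polar coordinates $(R,\theta)$ adapted to the change of variables $r = R+a-4$. Multiplying by a test function $\varphi \in C^\infty_c(\Om)$ and the natural volume element $r\,dR\,d\theta$, and integrating by parts, yields the weak formulation
\[
\cB(v,\varphi) = -\int_\Om \rho^{m-2}\,F\,\varphi\,(R+a-4)\,dR\,d\theta,
\]
where the bilinear form $\cB:H^1_0(\Om)\times H^1_0(\Om)\to\RR$ is
\[
\cB(v,\varphi) := \int_\Om \left[\partial_R v\,\partial_R\varphi + \frac{1}{(R+a-4)^2}\partial_\theta v\,\partial_\theta\varphi + \frac{(m-1)(m-2)}{\rho^2}\,v\,\varphi\right](R+a-4)\,dR\,d\theta.
\]

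Second, I would verify the hypotheses of Lax--Milgram. Since $R+a-4 \in [a_-,a_+]$ is bounded above and below by positive constants, and since the Hardy-type inequality of Lemma~\ref{L.Hardy} applied to $v \in H^1_0(\Om)$ gives $\|v/\rho\|_{L^2(\Om)} \lesssim \|v\|_{H^1(\Om)}$, the form $\cB$ is continuous on $H^1_0(\Om)$. Coercivity is immediate from the Poincar\'e inequality on~$\Om$ together with the fact that the potential term $(m-1)(m-2)/\rho^2$ is nonnegative (here we use $m \geq j+2 \geq 6$, so $(m-1)(m-2)>0$). For the right-hand side, since $\rho$ is bounded on $\overline{\Om}$ and $m-2 \geq 0$, one has
\[
\left|\int_\Om \rho^{m-2}F\,\varphi\,(R+a-4)\,dR\,d\theta\right| \lesssim \|F\|_{L^2(\Om)}\,\|\varphi\|_{L^2(\Om)} \lesssim \|F\|_{L^2(\Om)}\,\|\varphi\|_{H^1(\Om)},
\]
so this defines a bounded linear functional on $H^1_0(\Om)$.

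Lax--Milgram then supplies a unique $v \in H^1_0(\Om)$ satisfying the weak formulation, together with $\|v\|_{H^1(\Om)} \lesssim \|F\|_{L^2(\Om)}$. Setting $w := \rho^{1-m}v$ gives the unique element of $\rho^{1-m}H^1_0(\Om)$ with $\cT_a w = F$ and the claimed estimate. There is really no serious obstacle: the only thing to check carefully is that the positivity of the singular potential, which comes from the sign $(m-1)(m-2)>0$, combines with the Hardy inequality to make the problem variational in the standard $H^1_0$ setting. All the heavy lifting needed later---namely, upgrading this $H^1_0$ solution to the sharp regularity in $\cX^j$ needed for Theorem~\ref{T.regularity}---is deferred to the subsequent subsection and Section~\ref{S.keyest}.
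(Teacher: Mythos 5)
Your proposal is correct and follows essentially the same route as the paper: substitute $v=\rho^{m-1}w\in H^1_0(\Om)$, pass to the weak formulation with the bilinear form containing the positive singular potential $(m-1)(m-2)/\rho^2$, use Hardy's inequality for boundedness and the positivity of the potential (plus Poincar\'e) for coercivity, and conclude by Lax--Milgram with the estimate $\|\rho^{m-1}w\|_{H^1(\Om)}\lesssim\|\rho^{m-2}F\|_{H^{-1}(\Om)}\lesssim\|F\|_{L^2(\Om)}$. The only cosmetic difference is that the paper gets coercivity directly from the potential term rather than invoking Poincar\'e, which changes nothing of substance.
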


\begin{proof}
	Let us write $W:= \rho^{m-1}w\in H^1_0(\Om)$, so that~\eqref{E.cTa} reads as
	\begin{equation}\label{E.eqcTa}
	L_a^{0,0}W-\frac{\m}{\rho^2}W= \rho^{m-2}F\,.
\end{equation}
	Noting that $(\erre) \, dRd\theta$ is a positive measure on~$\Om$ because $a\geq 4$ and $R\in (1,7)$, let us set	\begin{equation}\label{E.Q}
	\cB_0(U,V):=\int_{\Om} \left[ \pd_RU\, \pd_RV+ \frac{\pd_\theta U\,\pd_\theta V}{(\erre)^2}+\frac\m{\rho(R)^2}UV\right]\, (\erre) \, dRd\theta\,.
	\end{equation}
	This is the bilinear form associated with the equation~\eqref{E.eqcTa}, as can be seen from the fact that
\begin{align*}
	 \cB_0(U,V) = -\int_\Om \left[ L_a^{0,0}U - \frac{(m-1)(m-2)}{\rho^2} U \right] V\, (\erre)\, dR\,d\theta 
	\end{align*}
if $U,V\in H^1_0(\Om)\cap H^2(\Om)$.
	
Note that $\cB_0: H^1_0(\Om)\times H^1_0(\Om)\to \RR$ is well-defined by Hardy's inequality, and obviously 
$$
\cB_0 (U,U)\geq \|U\|_{H^1(\Om)}^2\,.
$$
By Lax--Milgram (or by the Riesz representation theorem, as the quadratic form is symmetric), we conclude that for each $F\in L^2(\Om)$ there exists a unique $W\in H^1_0(\Om)$ such that
\[
\cB_0(W,V)=\int_{\Om} \rho^{m-2}FV\, (\erre)\, dRd\theta\qquad \text{for all } V\in H^1_0(\Om)\,,
\]
which is moreover bounded as
\[
\|W\|_{H^1(\Om)}\lesssim \|\rho^{m-2}F\|_{H^{-1}(\Om)}\lesssim \|F\|_{L^2(\Om)}\,.
\]
The proposition then follows. 	
\end{proof}

Although this elementary result yields solution to the equation $\cT_a w =F$, it is not obvious a priori that $w\in \cX^j$ when $F\in \cY^j$, so we cannot conclude from this result that $\cT_a$ is an isomorphism between these spaces. What we do obtain from this result and from the fact that $\cX^j\subset \rho^{1-m}H^1_0(\Om)$ is that the kernel of $\cT_a$ is trivial:

\begin{corollary}\label{C.injective}
The map $\cT_a: \cX^j\to \cY^j$ is injective.
\end{corollary}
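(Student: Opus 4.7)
The proof is essentially immediate once one establishes the embedding $\cX^j\subset \rho^{1-m}H^1_0(\Om)$ flagged just before the statement. The plan is to argue by uniqueness: given $w\in\cX^j$ with $\cT_a w=0$, set $W:=\rho^{m-1}w$ and show $W\in H^1_0(\Om)$; then apply Proposition \ref{P.lowreg} with $F=0$ to conclude $w=0$.

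For the embedding, I would check three things in turn. First, since $j\geq 2$, Sobolev embedding in $\RR^2$ gives $H^j(\Om)\hookrightarrow C^0(\BOm)$, so $w$ is bounded, and likewise the relation $\rho\,\pd_R w\in H^j(\Om)$ implies $\rho\,\pd_R w\in L^\infty(\Om)$. Second, I compute
\begin{equation*}
\pd_R W = (m-1)\,\rho^{m-2}\rho'\,w + \rho^{m-2}\,(\rho\,\pd_R w)\,,\qquad \pd_\theta W = \rho^{m-1}\,\pd_\theta w\,.
\end{equation*}
Since $m\geq j+2\geq 6$, the smooth weight $\rho^{m-2}$ is bounded on $\BOm$, so both factors in each of these expressions lie in $L^\infty(\Om)\subset L^2(\Om)$; together with $W\in L^\infty(\Om)$ this gives $W\in H^1(\Om)$. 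Third, since $\rho$ vanishes on $\pd\Om$ and $w$ is continuous up to the boundary, $W|_{\pd\Om}=0$, so $W\in H^1_0(\Om)$.

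Now suppose $\cT_a w=0$ with $w\in\cX^j$. The equivalent form \eqref{E.eqwcTa} reads
\begin{equation*}
L_a^{0,0}W - \tfrac{(m-1)(m-2)}{\rho^2}W = 0\,,
\end{equation*}
which, testing against any $V\in H^1_0(\Om)$ and integrating by parts with respect to the measure $(R+a-4)\,dR\,d\theta$, yields $\cB_0(W,V)=0$ in the notation of \eqref{E.Q}. Taking $V=W\in H^1_0(\Om)$ (which is legitimate by the previous paragraph) and using $\cB_0(W,W)\geq\|W\|_{H^1(\Om)}^2$ forces $W\equiv 0$, hence $w\equiv 0$. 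Equivalently, one can simply invoke the uniqueness part of Proposition \ref{P.lowreg} applied to $F=0$. There is no real obstacle in this corollary; the only point to be slightly careful about is verifying that the prefactor $\rho^{m-1}$ kills the nonzero boundary values that elements of $\cX^j$ are allowed to have, which is where the assumption $m\geq j+2$ (together with $j\geq 2$) is used.
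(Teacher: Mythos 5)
Your proposal is correct and follows essentially the same route as the paper: the paper also deduces injectivity directly from the uniqueness statement of Proposition \ref{P.lowreg} together with the embedding $\cX^j\subset\rho^{1-m}H^1_0(\Om)$, which you simply verify in detail (the paper leaves it implicit). The only cosmetic remark is that the embedding really just needs $m$ large enough that $\rho^{m-2}$ is bounded and $\rho^{m-1}$ has vanishing trace, so the full strength of $m\geq j+2$ is not what is being used there.
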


\subsection{Regularity estimates}

Our objective now is to show that $w\in \cX^j$ whenever $F\in \cY^j$. Away from $\pd\Om$, the elliptic operator in~\eqref{E.eqwcTa} is uniformly elliptic, so  regularity considerations are elementary. Near the boundary, the situation is much more involved and requires further analysis.

To make this precise, for each small $\ep >0$, let us define the sets
\[
\Om_\ep^-:=(1,1+\ep)\times\TT\,,\qquad \Om_\ep^+:=(7-\ep,7)\times\TT\,,
\]
and well as the complement
\[
\Omg_\ep:=(1+\ep,7-\ep)\times\TT\,.
\]
Since $\ep/2<\rho<2$ in $\Omg_\ep$, an elementary elliptic estimate for~\eqref{E.eqwcTa} yields 
\begin{align*}
	\|w\|_{H^j(\Omg_\ep)}&\leq  C_\ep\|\rho^{m-2}F\|_{H^{j-2}(\Omg_{\ep/2})}+ C_\ep\|w\|_{H^1(\Omg_{\ep/2})} \\
	&\leq  C_\ep\|F\|_{H^{j-2}(\Omg_{\ep/2})}+ C_\ep\|\rho^{m-1}w\|_{H^1(\Omg_{\ep/2})}\,.
\end{align*}
The last quantity can be controlled in terms of $\|F\|_{L^2(\Om)}$ by Proposition~\ref{P.lowreg}, and thus we obtain the following we conclude
\begin{equation*}
	\|w\|_{H^j(\Omg_\ep)}\leq C_\ep \|F\|_{H^{j-2}(\Om)}\,.
\end{equation*}
Here and in what follows, we denote by $C_\ep$ constants (which may vary from line to line) which are not uniformly bounded as $\ep\to0$. Constants that are uniform in~$\ep$ are regarded as harmless, so we typically omit them using the symbol $\lesssim$ as before.

After differentiating~\eqref{E.eqwcTa} with respect to~$R$, an analogous argument shows that
\begin{equation*}
	\|\pd_Rw\|_{H^j(\Omg_\ep)}\leq C_\ep \|F\|_{H^{j-2}(\Om)}+ C_\ep \|\pd_RF\|_{H^{j-2}(\Om)}\,.
\end{equation*}
Since
\[
\|w\|_{\cX^j(\Omg_{\ep})}\lesssim \|w\|_{H^j(\Omg_{\ep})}+ \|\pd_Rw\|_{H^j(\Omg_{\ep})} \,,
\]
replacing $\ep$ by $\ep/2$ for later convenience, we thus arrive at the following estimate:

\begin{lemma}\label{L.Omgepbound}
For any small $\ep>0$,
\begin{equation*}
	\|w\|_{\cX^j(\Omg_{\ep/2})} \leq C_\ep \|F\|_{\cY^j}\,.
\end{equation*}
\end{lemma}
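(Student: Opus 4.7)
The estimate is essentially a compilation of the interior regularity computations already sketched in the paragraph preceding the statement, so my plan is to organize those into a clean three-step argument and verify the right-hand side really is bounded by the $\cY^j$-norm.

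First, I would exploit that on $\Omg_\ep$ the weight $\rho$ is smooth and bounded from below by $\ep/2$, so the operator on the left of \eqref{E.eqwcTa} is uniformly elliptic of second order with smooth coefficients on any slightly fatter shell. Rewriting the equation in terms of $W := \rho^{m-1} w$ as in \eqref{E.eqcTa} and applying a standard interior Sobolev regularity estimate on the pair $\Omg_\ep \subset \Omg_{\ep/2}$, I get
\[
\|w\|_{H^j(\Omg_\ep)} \leq C_\ep\bigl(\|F\|_{H^{j-2}(\Omg_{\ep/2})} + \|\rho^{m-1}w\|_{H^1(\Omg_{\ep/2})}\bigr).
\]
Proposition~\ref{P.lowreg} then absorbs the second term into $C_\ep\|F\|_{L^2(\Om)}$.

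Second, to control the anisotropic part $\rho\pd_R w$ of the $\cX^j$-norm, I would differentiate \eqref{E.eqwcTa} once in $R$. Since the coefficients of $L_a^{0,0}$ and of the singular potential $(m-1)(m-2)/\rho^2$ are smooth on $\Omg_{\ep/2}$, $\pd_R w$ satisfies an elliptic equation of the same type with right-hand side involving $\pd_R F$, $F$ and lower order terms in $w$. A second application of interior regularity yields
\[
\|\pd_R w\|_{H^j(\Omg_\ep)} \leq C_\ep\bigl(\|F\|_{H^{j-2}(\Om)} + \|\pd_R F\|_{H^{j-2}(\Om)}\bigr),
\]
again after absorbing the lower-order $H^1$-term by Proposition~\ref{P.lowreg}. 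Since $\rho \leq 2$ on $\Om$, multiplying by $\rho$ is harmless and I obtain the matching bound for $\|\rho\pd_R w\|_{H^j(\Omg_\ep)}$.

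Third, I would check that $\|F\|_{H^{j-2}(\Om)}+\|\pd_R F\|_{H^{j-2}(\Om)}+\|F\|_{L^2(\Om)} \lesssim \|F\|_{\cY^j}$. Writing any decomposition $F = \rho w_1 + w_2$ with $w_1 \in \cX^{j-2}$ and $w_2\in H^{j-1}(\Om)$, the first two norms above are bounded termwise: the only term that is not immediate is $\rho\,\pd_R w_1$ in $\pd_R F$, which is controlled precisely because the definition of $\cX^{j-2}$ includes $\|\rho\,\pd_R w_1\|_{H^{j-2}}$. Passing to the infimum over decompositions yields the estimate in terms of $\|F\|_{\cY^j}$. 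Replacing $\ep$ by $\ep/2$ in the nested-shell construction to match the statement finishes the proof.

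No genuine obstacle arises: the work is just routine interior elliptic regularity, enabled by the fact that $\rho$ stays strictly positive on $\Omg_{\ep/2}$, so the truly singular behaviour of $\cT_a$ is pushed entirely into the boundary strips $\Om_\ep^\pm$, which this lemma does not touch. The only point requiring a touch of care is the bookkeeping of the $\cY^j$-norm in the third step.
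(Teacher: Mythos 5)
Your argument is correct and follows essentially the same route as the paper: rewrite the equation as \eqref{E.eqwcTa}, use interior elliptic regularity on nested shells where $\rho$ is bounded below, absorb the low-order term via Proposition~\ref{P.lowreg}, and repeat after differentiating in $R$ to capture $\rho\,\pd_R w$. Your third step, bounding $\|F\|_{H^{j-2}(\Om)}+\|\pd_R F\|_{H^{j-2}(\Om)}$ by $\|F\|_{\cY^j}$ through an arbitrary decomposition $F=\rho w_1+w_2$, simply makes explicit a point the paper leaves implicit, and it is carried out correctly.
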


Hence, we only need to estimate $w$ on $\Om_{\ep/2}^\pm$ (which is of course where the problem is nontrivial). For concreteness, let us consider only the region $\Om_{\ep/2}^-$, since the case of $\Om_{\ep/2}^+$ is completely analogous. 

Consider the function
\begin{equation}\label{E.defwep}
w^\ep(R,\theta):=w(R,\theta)\, \chi_\ep(R)\,,
\end{equation}
where the smooth cutoff function is defined in terms of the function $\widetilde\chi$ that we introduced in~\eqref{E.chibar} as
\[ 
\chi_\ep(R):=\widetilde\chi((R-1)/\ep)\,.
\]
By construction, $w^\ep$ coincides with $w$ on $\Om_{\ep/2}^-$ and is supported in $\Om_\ep^-$. 

Now, note that 
\[
\rho^{2-m}L_a^{0,0}(\rho^{m-1}w^\ep)= \rho^{2-m}L_a^{0,0}(\rho^{m-1}w)\, \chi_\ep + F_\ep
\]
with
\[
F_\ep:= \rho^{2-m}\left[2\pd_R\chi_\ep \, \pd_R(\rho^{m-1}w) + \left(\pd_R^2\chi_\ep + \frac{\pd_R\chi_\ep}{\erre} \right)\rho^{m-1} w\, \right]\,.
\]
Since $F_\ep$ is supported on $\Om^-_{\ep}\backslash\Om^-_{\ep/2}$, it follows from Lemma \ref{L.Omgepbound} that
\begin{equation}\label{E.Fep}
\|F_\ep\|_{\cY^j}\leq \|F_\ep\|_{H^{j-1}(\Om)}\leq C_\ep  \|w_\ep\|_{H^j(\Om^-_{\ep}\backslash\Om^-_{\ep/2})}\leq C_\ep \|F\|_{\cY^j}\,.	
\end{equation}
Therefore, the function~$w^\ep$ satisfies the equation 
\begin{equation}\label{E.wepFep}
	\cT_a w^\ep= \cF_\ep\,,
\end{equation}
where the function $\cF_\ep:= F\chi_\ep +F_\ep$ is bounded as
\begin{equation}\label{E.Gbound}
\|\cF_\ep\|_{\cY^j}\leq C_\ep\|F\|_{\cY^j}\,.
\end{equation}

To analyze this equation, let us introduce a new radial variable~$z$ as
\[
R(z):= 1+ \ep z
\]
and denote by $v^\ep(z,\theta)$ and $G_\ep(z,\theta)$ the expression of $w^\ep$ and $\cF_\ep$ in the new variables, that is,
\begin{equation}\label{E.defvepGep}
v^\ep(z,\theta):= w^\ep(R(z),\theta)\,,\qquad G_\ep(z,\theta):= \cF_\ep(R(z),\theta)\,.
\end{equation}
It is clear that the support of $v^\ep$ and $G_\ep$ is contained in $[0,1]\times\TT$, although of course we can regard them as functions $(0,\infty)\times\TT\to\RR$. Denoting by $\tcT_a$ the expression of the differential operator $\cT_a$ in these coordinates, one has
\[
\tcT_av^\ep(z,\theta):=(\cT_a w^\ep)(R(z),\theta)\,.
\]

We need a more explicit formula for the differential operator $\tcT_a$. A straightforward computation shows that one can decompose
\[
\tcT_a v=\frac1\ep(\cL v  + \ep \cE v)\,,
\]
where\footnote{If one estimates $w$ on $\Om_{\ep/2}^+$ instead of on~$\Om_{\ep/2}^-$, setting $R(z):=7-\ep z$, one arrives at operators of the same form but with $a_+$ instead of $a_-$.} 
\begin{align*}
	\cL v&:=z\pd_z^2 v +\frac{\ep^2}{a_-^2}z\,\pd_\theta^2v +(2m-2)\pd_z v\,,\\
	\cE v&:=E_1(\ep z)\,z^2\pd_z^2 v +E_2(\ep z)\,{\ep^2}z^2\,\pd_\theta^2v +E_3(\ep z)\,z\pd_z v+ E_4(\ep z)v\,. 
\end{align*}
Here, $E_j$ are certain analytic functions on $[0,1]$ whose explicit expression we will not need, and we recall that $a_-:= a-3$. We omit the dependence of these operators on~$a$ for notational convenience.

\begin{remark}\label{R.cKa}
	For future reference, let us record here an analogous formula for the operator~$\cK_a$, defined in~\eqref{E.cKa}, namely
	\[
	\widetilde\cK_av^\ep(z,\theta):=(\cK_a w^\ep)(R(z),\theta)= E_5(\ep z)v
	\]
	for some analytic function on~$[0,1]$.
\end{remark}

The operator~$\cL$ is the scale-critical part of the operator, which controls the regularity properties of~$\cT_a$ near the endpoint, where it becomes degenerate. To derive sharp estimates, let us now consider the corresponding equation
\begin{equation}\label{E.LvG}
	\cL v=G\qquad \text{in } (0,1)\times\TT\,.
\end{equation}
To present the key a priori estimate we need, let us define the $\ep$-dependent norms
\begin{equation} \label{E.normsDisk}
\begin{aligned}
\|v\|_{\cX^j_\ep} & := \sum_{j_2=0}^{j-1} \sum_{j_1=0}^{j-j_2} \Big( \|\pd_z^{j_1} \pd_\te^{j_2} v\|_{L^2((0,1)\times \TT)} + \|z \pd_z^{j_1+1} \pd_\te^{j_2} v\|_{L^2((0,1)\times \TT)} \Big) \\ & \hspace{5.15cm} + \ep \Big( \|\pd_\te^j v\|_{L^2((0,1)\times \TT)} + \|z\pd_z \pd_\te^j v\|_{L^2((0,1)\times \TT)} \Big)\,, \\
\|G\|_{\cY^j_\ep} &:=\inf\Big\{ \|G_1\|_{H^{j-2}((0,1)\times \TT)} + \|z\pd_z G_1\|_{H^{j-2}((0,1)\times \TT)} + \|G_2\|_{H^{j-1}((0,1)\times \TT)}: G= \ep z\,G_1+G_2 \Big\}\,,
\end{aligned}
\end{equation}
For each fixed $\ep>0$, it is clear that these norms are equivalent to the $\ep$-independent norms that we have used so far, so in particular
\begin{equation}\label{E.equivnorm}
	\|w^\ep\|_{\cX^j}\leq C_\ep \|v^\ep\|_{\cX^j_\ep}\,,\qquad \|\cF_\ep\|_{\cY^j_\ep}\leq C_\ep\|\cF_\ep\|_{\cY^j(\Om)}\leq C_\ep\|F\|_{\cY^j}\,.
\end{equation}

We are now ready to state the basic estimate we need. Note that the implicit constants in the statements are independent of~$\ep$, and that they are uniform in $a_-$ in compact subsets of $[1,+\infty)$.

\begin{lemma}\label{L.keyest}
	Suppose that $G\in \cY^j_\ep$. There is a unique solution $v\in z^{1-m} H^1_0((0,1)\times\TT)$ to the equation~\eqref{E.LvG}, which is furthermore bounded as
	\[
	\|v\|_{\cX^j_\ep}\lesssim \|G\|_{\cY^j_\ep}\,.
	\]
\end{lemma}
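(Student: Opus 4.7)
The strategy has two distinct parts: first establish existence and uniqueness in a weak sense, then upgrade to the sharp anisotropic estimate.

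\emph{Step 1: Existence and uniqueness via a change of unknown.} I will set $W := z^{m-1} v$. A direct computation shows
\[
z^{m-2}\cL v \;=\; \partial_z^2 W + \tfrac{\ep^2}{a_-^2}\partial_\theta^2 W - \tfrac{(m-1)(m-2)}{z^2}\,W,
\]
so that $\cL v = G$ is equivalent to
\[
-\partial_z^2 W - \tfrac{\ep^2}{a_-^2}\partial_\theta^2 W + \tfrac{(m-1)(m-2)}{z^2}\,W \;=\; -z^{m-2}G\qquad \text{on } (0,1)\times\TT,
\]
with $W(0,\cdot)=W(1,\cdot)=0$. The associated bilinear form on $H^1_0((0,1)\times\TT)$ is coercive because of the classical one-dimensional Hardy inequality $\int z^{-2} W^2 \leq 4\int (\partial_z W)^2$, and the right-hand side $z^{m-2}G$ is easily seen to lie in $H^{-1}$ with norm controlled by $\|G\|_{\cY^j_\ep}$ since $G\in H^{j-1}\subset L^2$ and $m\geq 2$. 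Lax--Milgram then furnishes a unique $W\in H^1_0$, hence a unique $v=z^{1-m}W\in z^{1-m}H^1_0$, satisfying $\|W\|_{H^1}\lesssim \|G\|_{L^2}$. This is the exact analogue of Proposition~\ref{P.lowreg}.

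\emph{Step 2: Fourier decomposition in $\theta$ and sharp ODE estimates.} Writing $v=\sum_{n\in\ZZ} v_n(z)e^{in\theta}$, $G=\sum G_n(z)e^{in\theta}$, the equation decouples into the family of Bessel-type ODEs
\[
\cL_n v_n := z\,v_n'' + (2m-2)\,v_n' - \mu_n^2\, z\,v_n = G_n,\qquad \mu_n := \ep|n|/a_-,
\]
with $v_n(1)=0$. The indicial roots of $\cL_n$ at $z=0$ are $0$ and $3-2m$, and the admissible solution is the one corresponding to the root $0$ (equivalent to $W_n\in H^1_0$). For each fixed $n$, I will derive the sharp weighted estimate
\[
\sum_{j_1\leq j-1}\!\!\|z^{1/2}\partial_z^{j_1}v_n\|^2_{L^2_z} + \|z^{3/2}\partial_z^{j_1+1}v_n\|^2_{L^2_z} + \mu_n^2\bigl(\|v_n\|^2_{L^2_z}+\|z\partial_z v_n\|^2_{L^2_z}\bigr) \lesssim \|G_n\|_{\cY^j_{\ep,n}}^2,
\]
where the fiberwise norm $\|G_n\|_{\cY^j_{\ep,n}}$ is the natural one-dimensional analog of the $\cY^j_\ep$ norm (infimum over decompositions $G_n=\ep z\,G_{1,n}+G_{2,n}$). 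Multiplying by the appropriate power of $(1+|n|)^{2j_2}$ and summing via Parseval then yields the full $\cX^j_\ep$ bound.

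\emph{Step 3: Closing the radial estimate.} The fiber estimates are obtained by a standard energy method applied to $W_n = z^{m-1} v_n$: the coercivity in Step~1 is uniform in $n$, and yields the base bound $\|W_n\|_{H^1}^2 + \mu_n^2 \|W_n\|_{L^2}^2 \lesssim \|z^{m-2}G_n\|_{H^{-1}}^2$. Higher radial derivatives are then gained by differentiating $\cL_n v_n = G_n$ in $z$ and algebraically solving for $z\partial_z^{k+1}v_n$:
\[
z\,\partial_z^{k+1}v_n = \partial_z^{k-1}G_n - (2m-2)\partial_z^{k}v_n + \mu_n^2\partial_z^{k-1}(z\,v_n) - (k-1)\partial_z^{k-1}v_n,
\]
which lets me trade derivatives for factors of $z$ and right-hand side data. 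This is precisely the pairing built into the anisotropic norm $\cX^j_\ep$, where one extra $\partial_z$ is allowed only accompanied by a weight $z$. The decomposition $G = \ep z G_1 + G_2$ in $\cY^j_\ep$ is handled naturally: the $\ep z G_1$ part pairs with the $\mu_n^2 z v_n$ term, while $G_2$ pairs with the non-weighted derivatives of $v_n$.

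\emph{Main obstacle.} The most delicate step is showing that the constants in the fiber estimates are uniform in $n$ (equivalently in $\mu_n$), so that after Parseval no loss of frequency occurs. The two regimes $\mu_n\leq 1$ and $\mu_n\gg 1$ behave differently: in the first, the operator $\cL_n$ is a perturbation of $\cL_0$ and the bound comes from direct integration against the explicit Green's function; in the second, the mass term $\mu_n^2 z\,v_n$ provides the needed coercivity but interacts nontrivially with the singular indicial structure at $z=0$, and one must verify that no ``resonance'' with the indicial root at $3-2m$ causes a blow-up in the constant. Matching these two regimes in a unified weighted estimate is where the real work lies, and it is postponed to Section~\ref{S.keyest}.
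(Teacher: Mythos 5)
Your skeleton coincides with the paper's up to a point: existence and uniqueness via the substitution $W=z^{m-1}v$ and a coercive form (this is exactly the variation on Proposition~\ref{P.lowreg} used at the start of Section~\ref{S.keyest}), then Fourier decomposition in $\theta$ reducing to the Bessel-type ODEs $\cL_n v_n=G_n$ with parameter $\al_n=\ep|n|/a_-$, and a final Parseval summation. The problem is that the actual content of the lemma lies precisely in the step you do not carry out: the fiberwise estimates with constants \emph{uniform} in $\al_n$ and with the correct powers of $(1+\al_n)$ attached to each derivative. You acknowledge this ("where the real work lies") and postpone it to Section~\ref{S.keyest} --- but the lemma \emph{is} that section, so the proposal is circular at its core. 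Moreover, the mechanism you sketch would not produce these estimates: the algebraic recursion $z\,\pd_z^{k+1}v_n=\pd_z^{k-1}G_n-(2m-2)\pd_z^{k}v_n+\mu_n^2\pd_z^{k-1}(zv_n)-(k-1)\pd_z^{k}v_n$ (note the last term should carry $\pd_z^k v_n$, not $\pd_z^{k-1}v_n$) only converts \emph{unweighted} derivatives $\pd_z^k v_n$ into the weighted top-order term; it gives no way to bound the unweighted $\pd_z^{k}v_n$ themselves, and naive differentiation plus energy estimates for the degenerate operator loses powers of $\mu_n$ or of $z^{-1}$ near $z=0$. The paper's proof needs genuinely more structure: the data $G_n$ is split into a Taylor polynomial at $z=0$ localized at scale $(1+\al_n)^{-1}$ plus a remainder vanishing to high order; after the rescaling $t=\al_n z$ the solution is written explicitly through the modified Bessel solutions $h_1(t)=t^{3/2-m}I_{m-3/2}(t)$, $h_2(t)=t^{3/2-m}K_{m-3/2}(t)$; and the sharp $(1+\al_n)$-dependence (Lemma~\ref{L.keyLemmaODE}) is extracted via weighted Hardy inequalities (Theorems~\ref{T.Kufner} and~\ref{T.Kufner43}) with weights matched to the asymptotics of $h_1,h_2$ and to the order of vanishing of the remainder. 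The case $G=\ep zH$, which in your plan is dispatched with "pairs with the $\mu_n^2 zv_n$ term," requires its own family of estimates gaining an extra factor $(1+\al_n)^{-1}$ and involving $\|z\pd_z H\|$; none of this is recoverable from the energy argument you describe.

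A further, more local defect: the fiber bound you propose controls $\|z^{1/2}\pd_z^{j_1}v_n\|_{L^2}$ and $\|z^{3/2}\pd_z^{j_1+1}v_n\|_{L^2}$, but the norm $\|\cdot\|_{\cX^j_\ep}$ in~\eqref{E.normsDisk} requires the \emph{unweighted} derivatives $\pd_z^{j_1}\pd_\te^{j_2}v$ and the $z$-weighted (not $z^{3/2}$-weighted) top derivatives; so even if your fiber estimate were established, it would be too weak to sum to the stated conclusion. In short, the reduction and the low-regularity solvability are fine and agree with the paper, but the sharp uniform ODE estimates --- the heart of the lemma --- are missing, and the tools proposed would not suffice to prove them.
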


Assuming Lemma~\ref{L.keyest}, whose proof is postponed to Section \ref{S.keyest}, we are now ready to prove the main result of this section. Together with Lemma~\ref{L.compactness}, this proves Theorem~\ref{T.regularity}:

\begin{theorem}\label{T.iso}
	The operator $\cT_a : \cX^j\to\cY^j$ is an isomorphism.
\end{theorem}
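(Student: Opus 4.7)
The plan is to upgrade the weak solution produced by Proposition~\ref{P.lowreg} to a solution in $\cX^j$ by combining the interior estimate of Lemma~\ref{L.Omgepbound} with the sharp boundary estimate of Lemma~\ref{L.keyest}, treating $\ep\cE$ as a small perturbation of $\cL$. First, $\cY^j\hookrightarrow L^2(\Om)$ because $\rho$ is bounded and $H^{j-1}(\Om)\hookrightarrow L^2(\Om)$. Given $F\in\cY^j$, Proposition~\ref{P.lowreg} therefore produces a unique weak solution $w\in\rho^{1-m}H^1_0(\Om)$ to $\cT_aw=F$, and injectivity on $\cX^j$ is Corollary~\ref{C.injective} (using $\cX^j\subset\rho^{1-m}H^1_0(\Om)$ via Lemma~\ref{L.Hardy}). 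The theorem thus reduces to showing $w\in\cX^j$ with $\|w\|_{\cX^j}\lesssim\|F\|_{\cY^j}$; the open mapping theorem then concludes.

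In the interior, Lemma~\ref{L.Omgepbound} yields $\|w\|_{\cX^j(\Omg_{\ep/2})}\leq C_\ep\|F\|_{\cY^j}$. The substantive work concerns the two boundary collars, and by symmetry I focus on $\Om^-_{\ep/2}$. Following the setup already developed in the text, I cut off to $w^\ep:=w\chi_\ep$, so $\cT_aw^\ep=\cF_\ep$ with $\|\cF_\ep\|_{\cY^j}\leq C_\ep\|F\|_{\cY^j}$ by~\eqref{E.Gbound}, and rescale via $z=(R-1)/\ep$ to obtain $v^\ep$ on $(0,1)\times\TT$ solving $\tcT_a v^\ep=G_\ep$. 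The decomposition $\tcT_a=\ep^{-1}(\cL+\ep\cE)$ then gives
\[
\cL v^\ep = \ep G_\ep - \ep\cE v^\ep.
\]

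I expect the main obstacle to be the uniform continuity bound
\[
\|\cE v\|_{\cY^j_\ep} \lesssim \|v\|_{\cX^j_\ep},\qquad\text{with constant independent of }\ep.
\]
The plan is a term-by-term verification using the explicit form of $\cE$. The angular term $E_2(\ep z)\ep^2z^2\pd_\te^2v$ goes into the ``$\ep z\,G_1$'' slot of the $\cY^j_\ep$-norm by rewriting it as $\ep z\cdot[\ep z E_2\pd_\te^2v]$: the extra $\ep z$ factor together with the $\ep$-weighted $\pd_\te^j$ contributions built into $\|v\|_{\cX^j_\ep}$ exactly cover the critical angular derivatives $\pd_\te^{j}v$ and $z\pd_z\pd_\te^jv$. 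The remaining contributions $E_1z^2\pd_z^2v$, $E_3z\pd_zv$, and $E_4v$ are placed in $H^{j-1}((0,1)\times\TT)$; expanding via Leibniz, the weights $z^2,z,1$ they carry are precisely balanced by the weighted radial derivatives $\|z\pd_z^{k+1}\pd_\te^\ell v\|_{L^2}$ controlled by $\|v\|_{\cX^j_\ep}$. This is exactly the kind of bookkeeping that makes the anisotropic space $\cX^j_\ep$ compatible with the degenerate leading behavior of $\cL$, in the same spirit as the cancellation noted in Remark~\ref{R.cancellation}.

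Granted this estimate, Lemma~\ref{L.keyest} applied to $\cL v^\ep=\ep G_\ep-\ep\cE v^\ep$ gives
\[
\|v^\ep\|_{\cX^j_\ep} \lesssim \ep\|G_\ep\|_{\cY^j_\ep} + C\ep\|v^\ep\|_{\cX^j_\ep},
\]
so fixing $\ep>0$ once and for all with $C\ep<\tfrac12$ absorbs the last term. A standard regularization argument (approximating $F$ by smooth functions, applying the a priori estimate to the corresponding classical solutions, and passing to the limit) then shows that the weak solution $w$ from Proposition~\ref{P.lowreg} actually belongs to $\cX^j$ and satisfies the estimate. Translating back via~\eqref{E.equivnorm} and~\eqref{E.Gbound}, arguing symmetrically near $R=7$, and adding the interior bound, we conclude $\|w\|_{\cX^j}\lesssim\|F\|_{\cY^j}$. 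Combined with Corollary~\ref{C.injective}, this proves that $\cT_a:\cX^j\to\cY^j$ is an isomorphism.
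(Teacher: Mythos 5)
Your proposal is correct and follows essentially the same route as the paper: reduce to injectivity (Corollary~\ref{C.injective}) plus surjectivity via the weak solution of Proposition~\ref{P.lowreg}, control the interior by Lemma~\ref{L.Omgepbound}, and near the boundary cut off, rescale, and apply Lemma~\ref{L.keyest} together with the $\ep$-uniform bound $\|\cE v\|_{\cY^j_\ep}\lesssim\|v\|_{\cX^j_\ep}$ (the paper's Lemma~\ref{L.cE}, whose term-by-term proof you sketch in the same way), absorbing the $\ep\|v^\ep\|_{\cX^j_\ep}$ term for $\ep$ small. The only cosmetic difference is your explicit regularization step to justify finiteness of the norms, which the paper leaves implicit.
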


Indeed, this result is a fairly direct consequence of Lemma~\ref{L.keyest} and of the following estimate for the error, which is obtained essentially by inspection:

\begin{lemma}\label{L.cE}
	The operator $\cE:\cX^j_\ep \to\cY^j_\ep$ satisfies
	\[
	\|\cE v\|_{\cY^j_\ep}\lesssim \|v\|_{\cX^j_\ep}\,.
	\]
\end{lemma}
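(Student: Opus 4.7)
The plan is to estimate each of the four summands in
\[
\cE v = E_1(\ep z)\,z^2\pd_z^2v + E_2(\ep z)\,\ep^2 z^2\pd_\theta^2 v + E_3(\ep z)\,z\pd_zv + E_4(\ep z)\,v
\]
separately by exhibiting, for each summand, an explicit decomposition of the form $\ep z G_1 + G_2 \in \cY^j_\ep$ with $\|G_1\|_{H^{j-2}} + \|z\pd_z G_1\|_{H^{j-2}} + \|G_2\|_{H^{j-1}}\lesssim \|v\|_{\cX^j_\ep}$ uniformly in $\ep\in(0,1]$. Since each $E_i$ is analytic on $[0,1]$, every $z$-derivative of $E_i(\ep z)$ carries a factor of $\ep$ and is uniformly bounded. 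Consequently, Leibniz's formula reduces the analysis to controlling, for each summand and each pair $(k_1,k_2)$ of allowed indices, a single ``leading'' term in which all derivatives fall on $v$, the remaining Leibniz contributions involving strictly lower-order derivatives of $v$ multiplied by powers of $\ep$ or $z$ and being easier to bound.

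For the first, third and fourth summands I would take $G_1=0$ and place the whole summand into $G_2\in H^{j-1}$. The leading contribution to $\pd_z^{k_1}\pd_\theta^{k_2}(E_1 z^2\pd_z^2 v)$ with $k_1+k_2\leq j-1$ is $E_1(\ep z)z^2\pd_z^{k_1+2}\pd_\theta^{k_2}v$; using $z^2\leq z$ on $(0,1)$, this is dominated in $L^2$ by $\|z\pd_z^{(k_1+1)+1}\pd_\theta^{k_2}v\|_{L^2}$, which is controlled by $\|v\|_{\cX^j_\ep}$ since the required index constraint $(k_1+1)+k_2\leq j$ is exactly $k_1+k_2\leq j-1$. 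An identical argument handles the third summand, with leading term $E_3(\ep z)z\pd_z^{k_1+1}\pd_\theta^{k_2}v$. The fourth summand is immediate: multiplication by the smooth function $E_4(\ep z)$ (whose derivatives are uniformly bounded for $\ep\leq 1$) preserves $H^{j-1}$, and $\|v\|_{H^{j-1}}\lesssim\|v\|_{\cX^j_\ep}$ follows directly from the index range in the definition of $\cX^j_\ep$.

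The delicate term is the second, $E_2(\ep z)\ep^2 z^2\pd_\theta^2 v$. Placing it into $G_2$ would require controlling $\ep^2\pd_\theta^{j+1}v$, which is not supplied by $\|v\|_{\cX^j_\ep}$. I would instead exploit the factorization $\ep^2 z^2=(\ep z)(\ep z)$ and set
\[
G_1 := \ep z\,E_2(\ep z)\,\pd_\theta^2 v, \qquad G_2:=0,
\]
so that $\ep z G_1$ recovers the second summand. For $\|G_1\|_{H^{j-2}}$, the leading Leibniz term at indices $(k_1,k_2)$ with $k_1+k_2\leq j-2$ is $\ep z\,E_2(\ep z)\pd_z^{k_1}\pd_\theta^{k_2+2}v$. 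If $k_2+2\leq j-1$ the bound follows from the unweighted part of $\|v\|_{\cX^j_\ep}$ (since $k_1+(k_2+2)\leq j$), while the unique critical case $k_1=0$, $k_2=j-2$ yields $\ep z\pd_\theta^j v$, which is controlled \emph{exactly} by the anisotropic $\ep$-weighted contribution $\ep\|\pd_\theta^j v\|_{L^2}$ in $\|v\|_{\cX^j_\ep}$. For $\|z\pd_z G_1\|_{H^{j-2}}$, the new leading term is $\ep z^2\,E_2(\ep z)\pd_z\pd_\theta^2 v$, whose worst-case estimate is $\ep z\|z\pd_z\pd_\theta^j v\|_{L^2}$ (using $z^2\leq z$), matched precisely by the other $\ep$-weighted term $\ep\|z\pd_z\pd_\theta^j v\|_{L^2}$ in $\|v\|_{\cX^j_\ep}$.

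The main (modest) obstacle is bookkeeping: verifying that \emph{every} Leibniz expansion in the four terms produces derivatives of $v$ whose orders lie within the index range of $\|v\|_{\cX^j_\ep}$, and in particular confirming that the anisotropic $\ep$-weighted top-order $\theta$-derivatives appearing for the second summand match exactly with the two special contributions $\ep\|\pd_\theta^j v\|_{L^2}$ and $\ep\|z\pd_z\pd_\theta^j v\|_{L^2}$ included in the definition \eqref{E.normsDisk} of $\|v\|_{\cX^j_\ep}$. All other Leibniz contributions involve strictly lower-order derivatives of $v$ or gain additional powers of $\ep$, and are bounded by the same arguments. Summing the decompositions of the four summands yields the claim.
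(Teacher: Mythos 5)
Your proposal is correct and follows essentially the same route as the paper: the paper also decomposes $\cE$ as $\ep z\,\cE_1+\cE_2$, placing the angular term $\ep^2 z^2 E_2(\ep z)\pd_\theta^2 v$ in the $\ep z\,G_1$ slot (so that its top-order contributions are absorbed by the $\ep$-weighted terms $\ep\|\pd_\te^j v\|_{L^2}$ and $\ep\|z\pd_z\pd_\te^j v\|_{L^2}$ of $\|\cdot\|_{\cX^j_\ep}$) and the remaining summands $z^2\pd_z^2 v$, $z\pd_z v$, $v$ in $H^{j-1}$. The paper simply declares the resulting bounds ``straightforward to check,'' whereas you carry out the Leibniz and index bookkeeping explicitly, which is consistent and complete.
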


\begin{proof}
We decompose the operator $\cE$ as $\cE  =  \ep z \, \cE_1 + \cE_2$ with
$$
\cE_1 v:= \ep z \pd^2_\te\,  v \quad \textup{ and } \quad \cE_2 v:=  z^2 \pd_z^2 v + z \pd_z v  + v\,.
$$
Taking into account the definition of $\cX^j_\ep$, it is then straightforward to check that 
$$
\|\cE_1v\|_{H^{j-2}((0,1)\times \TT)} + \|z\pd_z \cE_1v\|_{H^{j-2}((0,1)\times \TT)}\lesssim \|v\|_{\cX^j_\ep} \quad \textup{ and } \quad \|\cE_2 v\|_{H^{j-1}((0,1) \times \TT)}\lesssim \|v\|_{\cX^j_\ep}\,,
$$
and so that
	\[
	\|\cE v\|_{\cY^j_\ep}\lesssim \|v\|_{\cX^j_\ep}\,.
	\]
The error estimate then follows.	
\end{proof}

\begin{proof}[Proof of Theorem~\ref{T.iso}]
Since $\cT_a: \cX^j \to \cY^j$ is a linear continuous operator, by the open mapping theorem, it suffices to show that it is a bijective map. Corollary~\ref{C.injective} ensures that it is injective, so we need to show it is onto. To this end, take any $F\in\cY^j$. Proposition~\ref{P.lowreg} ensures that there is a unique $w\in \rho^{1-m} H^1_0(\Om)$ satisfying $\cT_a w= F$. Furthermore, for any $\ep>0$, Lemma~\ref{L.Omgepbound} ensures 
	\[
	\|w-w_1^\ep-w_7^\ep\|_{\cX^j}\leq C_\ep\|F\|_{\cY^j}\,.
	\]
with $w_1^\ep:=w^\ep$ given by~\eqref{E.defwep} and $w_7^\ep$ defined analogously, namely
$$
w_7^\ep(R,\te):= w(R,\te) \widetilde{\chi}((7-R)/\ep)\,.
$$ 

Therefore, it only remains to show that there is some $\ep>0$, independent of~$F$, for which the a priori estimate
\begin{equation*} 
\|w_1^\ep\|_{\cX^j} + \|w_7^\ep\|_{\cX^j}\leq C_\ep\|F\|_{\cY^j}
\end{equation*}
holds. Let us prove that
\begin{equation}\label{E.wepfinal}
\|w_1^\ep\|_{\cX^j} \leq C_\ep\|F\|_{\cY^j}\,,
\end{equation}
as the bound
\begin{equation*}
\|w_7^\ep\|_{\cX^j} \leq C_\ep\|F\|_{\cY^j}\,,
\end{equation*}
is completely analogous.

Let us define~$v^\ep,G_\ep$ as in~\eqref{E.defvepGep}. Since $v^\ep\in z^{1-m} H^1_0((0,1)\times\TT)$ and 
	\begin{equation}\label{E.cLvepG}
	\cL v^\ep+\ep \cE v^\ep =\ep G_\ep\,,		
	\end{equation}
	Lemma~\ref{L.keyest} ensures that
	\[
	\|v^\ep\|_{\cX^j_\ep}\lesssim \|\cL v^\ep\|_{\cY^j_\ep}\leq \ep \| G_\ep \|_{\cY^j_\ep} + \ep \|\cE v^\ep \|_{\cY^j_\ep} \lesssim  \ep \| G_\ep \|_{\cY^j_\ep} + \ep \| v^\ep \|_{\cX^j_\ep} \,.
	\]
	To pass to the last inequality we have used Lemma~\ref{L.cE}. Since the implicit constant is independent of~$\ep$, we can take $\ep$ small enough (but independent of~$F$) such that
	\[
	\|v^\ep\|_{\cX^j_\ep}\lesssim \ep  \| G_\ep \|_{\cY^j_\ep}\,.
	\]
The bound~\eqref{E.wepfinal} then follows from \eqref{E.normsDisk} and ~\eqref{E.equivnorm}.
\end{proof}

\begin{proof}[Proof of Theorem \ref{T.regularity}]
The result immediately follows from Lemma \ref{L.compactness} and Theorem \ref{T.iso}.
\end{proof}

A straightforward consequence of our analysis of the operator~$\cT_a$ is an analogous regularity result for the full linearized operator $D\cG_a(0)$. While the regularity theory carries over verbatim to this setting, one should note that the existence and uniqueness part certain does not, as we will crucially use in the bifurcation argument later on. 

\begin{proposition}\label{P.regDGa}
Let $w\in \rho^{1-m} H^1_0(\Om)$ satisfy the equation
\[
D\cG_a(0)w = F
\]
for some $F\in \cY^j$. Then $w\in\cX^j$ and moreover
$$
\|w\|_{\cX^j} \lesssim \|F\|_{\cY^j} + \|\rho^{m-1} w\|_{H^1(\Om)}\,.
$$
\end{proposition}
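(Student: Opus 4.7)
The plan is to essentially mimic the proof of Theorem~\ref{T.iso}, with two modifications: first, existence/uniqueness are already given, so we only need regularity; second, since we do not have an a priori $L^2$-type bound on $w$ coming from Proposition~\ref{P.lowreg}, the low-regularity term $\|\rho^{m-1}w\|_{H^1(\Om)}$ must be carried throughout. The operator $D\cG_a(0) = \cT_a+\cK_a$ differs from $\cT_a$ only by the compact operator $\cK_a = K_1\,\rho\,\pd_R + K_2$ with smooth radial coefficients (cf. Lemma~\ref{L.compactness}), so near the boundary $\cK_a$ contributes only a perturbative term of order~$\ep$ in the rescaled picture, as recorded in Remark~\ref{R.cKa}.

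First I would split $w = w_1^\ep + w_7^\ep + (w - w_1^\ep - w_7^\ep)$ using the cutoffs $\chi_\ep$ near $R=1$ and its analog near $R=7$, as in~\eqref{E.defwep}. On the interior region $\Omg_{\ep/2}$ the equation $D\cG_a(0)w = F$ is uniformly elliptic with smooth coefficients (since $\rho$ is bounded away from zero there), and standard elliptic regularity yields
\[
\|w\|_{\cX^j(\Omg_{\ep/2})}\leq C_\ep\bigl(\|F\|_{\cY^j}+\|\rho^{m-1}w\|_{H^1(\Om)}\bigr),
\]
which is the analog of Lemma~\ref{L.Omgepbound}. The only difference with that lemma is that the $H^1$-control of $\rho^{m-1}w$ is now an input rather than a consequence of Proposition~\ref{P.lowreg}.

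Next I would control $w_1^\ep$ (and by symmetry $w_7^\ep$) as in the proof of Theorem~\ref{T.iso}. The function $w_1^\ep$ satisfies $\cT_a w_1^\ep + \cK_a w_1^\ep = F\chi_\ep + F_\ep + \cK_a w_1^\ep - \chi_\ep\cK_a w$, where the last two terms cancel up to commutator-type errors supported on $\Om^-_{\ep}\setminus\Om^-_{\ep/2}$; those commutator pieces are bounded in $\cY^j$ by $C_\ep\|w\|_{\cX^j(\Omg_{\ep/2})}$, hence by the interior estimate above. Passing to the rescaled variable $R = 1+\ep z$ and invoking Remark~\ref{R.cKa}, the transformed equation reads
\[
\cL v^\ep + \ep\,\cE v^\ep + \ep\,E_5(\ep z)\,v^\ep = \ep\,G_\ep,
\]
where $\|G_\ep\|_{\cY^j_\ep}\leq C_\ep(\|F\|_{\cY^j}+\|\rho^{m-1}w\|_{H^1(\Om)})$. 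Applying Lemma~\ref{L.keyest}, Lemma~\ref{L.cE}, and the trivial bound $\|\ep E_5(\ep z)v^\ep\|_{\cY^j_\ep}\lesssim \|v^\ep\|_{\cX^j_\ep}$ obtained by treating $E_5(\ep z)v^\ep$ as the $H^{j-1}$ component of the $\cY^j_\ep$ decomposition, I get
\[
\|v^\ep\|_{\cX^j_\ep}\lesssim \ep\,\|G_\ep\|_{\cY^j_\ep} + \ep\,\|v^\ep\|_{\cX^j_\ep}.
\]
Choosing $\ep>0$ small enough (independently of $w,F$) absorbs the last term, and translating back via~\eqref{E.equivnorm} yields $\|w_1^\ep\|_{\cX^j}\leq C_\ep(\|F\|_{\cY^j}+\|\rho^{m-1}w\|_{H^1(\Om)})$. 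Combining with the symmetric bound for $w_7^\ep$ and the interior estimate gives the claim.

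The only slightly delicate point is checking that the extra potential term $\cK_a$, when passed through the rescaling $R = 1+\ep z$, really behaves as an $\ep$-small perturbation of $\cL$ in the norm $\cY^j_\ep$; this is precisely the content of Remark~\ref{R.cKa}, so no essentially new estimate is needed. Everything else is bookkeeping around the already-proved sharp estimate of Lemma~\ref{L.keyest}.
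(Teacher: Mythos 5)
Your proposal is correct and follows essentially the same route as the paper's proof: an interior elliptic estimate in which the term $\|\rho^{m-1}w\|_{H^1(\Om)}$ is kept as an input (since Proposition~\ref{P.lowreg} is unavailable), a cutoff near each boundary circle with the cutoff errors absorbed into the right-hand side, and then the rescaling $R=1+\ep z$ together with Remark~\ref{R.cKa}, Lemma~\ref{L.keyest} and Lemma~\ref{L.cE}, absorbing the $O(\ep)$ perturbation for $\ep$ small. The only cosmetic difference is that you track the $\cK_a$–cutoff commutator explicitly, whereas the paper simply notes that $D\cG_a(0)w^\ep=F_\ep$ with the same type of bound; this changes nothing of substance.
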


\begin{proof}
Let us start by recalling that $D\cG_a(0)= \cT_a+\cK_a$ by~\eqref{E.cTa}-\eqref{E.cKa} and that $\cK_a w$ is the multiplication of~$w$ by a smooth function. In particular, just as in the case of Lemma~\ref{L.Omgepbound}, standard elliptic regularity estimates show that
	\[
\|w\|_{\cX^j(\Omg_{\ep/2})} \leq C_\ep \|F\|_{\cY^j}+ C_\ep\| \rho^{m-1} w\|_{H^1(\Om)}\,,
	\]
the only difference being that in this case one must keep the second summand on the right hand side because one cannot control it using an analog of Proposition~\ref{P.lowreg}. 

As in~\eqref{E.defwep}, we introduce a cutoff, and the resulting function satisfies the equation
\[
D\cG_a(0)w^\ep = F_\ep
\]
for some function $F_\ep$ bounded as
\[
\|F_\ep\|_{\cY^j}\leq C_\ep  \|F\|_{\cY^j}+ C_\ep \| \rho^{m-1} v\|_{H^1(\Om)}\,.
\]	
Let us now rescale the radial variable, defining $v^\ep$, $G_\ep$ as in~\eqref{E.defvepGep}. By Remark~\ref{R.cKa}, we have the equation
\[
\frac1\ep \cL v^\ep + \cE v^\ep + \widetilde\cK_a v^\ep = G_\ep
\]
on $(0,1)\times\TT$, 
where $\widetilde\cK_a$ satisfies the same bounds as~$\cE$. Therefore,
arguing as in~\eqref{E.cLvepG}, one finds that 
\[
\|v^\ep\|_{\cX^j_\ep}\lesssim \ep \|G_\ep\|_{\cY^j_\ep}\,.
\]
Thus, we arrive at the a priori estimate
\[
\|w^\ep\|_{\cX^j}\leq C_\ep \|F\|_{\cY^j}+ C_\ep \| \rho^{m-1} v\|_{H^1(\Om)}\,,
\]
which ensures that $w \in \cX^j$.
\end{proof}

\section{Analysis of the operator $\cL$}
\label{S.keyest}

In this section we shall prove the key estimate for the equation $\cL v= G$ presented in Lemma~\ref{L.keyest}. For clarity, we will divide the proof into two parts. In the first one we shall  estimate the solution to certain ODE which depends on a parameter~$\al\geq0$. The point of these estimates is that one needs to capture the sharp dependence of the constants on the parameter~$\al$. In the second part of this section, we shall see that this ODE arises from the PDE $\cL v =G$ after taking the Fourier transform in the angular variable, with the parameter~$\al$ being essentially the Fourier frequency multiplied by the small scale parameter~$\ep$ introduced in Lemma~\ref{L.keyest}. Sharp ODE bounds then capture the interplay between large frequencies and the effect of the singularity on the boundary of the annular domain~$\Om$, and translate into sharp estimates for~$\cL$.

\subsection{Auxiliary ODE estimates} \label{S.keyestODE}

A straightforward variation on Proposition~\ref{P.lowreg} using the quadratic form
\[
\cQ_0(U):= \int_0^1\left[ (\pd_z U)^2 +\left( \al^2+\frac\m{z^2}  \right)U^2 \right]dz
\]
shows that for each $f\in L^2((0,1))$ (or, more generally, in $z^{2-m} H^{-1}((0,1))$), and each $\al\geq0$ there exists a unique solution $\vp\in z^{1-m} H^1_0((0,1))$ to the ODE
\begin{equation} \label{E.ODEvp}
z\pd_z^2 \vp + (2m-2) \pd_z \vp - \al^2 z\vp = f\,.
\end{equation}
Our objective is to derive a priori estimates for~$\vp$ with the sharp dependence on the parameter~$\al$, under the assumption that $f \in H^\ell((0,1))$ for some integer $\ell\geq0$.

We shall start off by decomposing $f \in H^\ell((0,1))$ as
$$
f(z) =: \sum_{k=0}^{\ell-1} \frac{z^k}{k!}f^{(k)}(0) \widetilde{\chi}((1+\al)z) + \cR_f(z)\,,
$$
where the cutoff function $\widetilde{\chi}$ was defined in \eqref{E.chibar}. Obviously, if $\ell = 0$, the first term is absent and we do not need to decompose~$f$. With this expression for~$f$, we can write~$\vp$ as
\begin{equation*}
\vp := \sum_{k=0}^{\ell-1} \frac{1}{k!} \vp_k + \vp_\cR\,,
\end{equation*}
in terms of the only solutions $\vp_k,\vp_\cR \in z^{1-m} H_0^1((0,1))$  to the equations
\begin{align} \label{E.ODEvpk}
z \pd_z^2 \vp_k +(2m-2) \pd_z \vp_k - \al^2 z \vp_k &= z^{k} f^{(k)}(0) \widetilde{\chi}((1+\al)z)\,,  \\
\label{E.ODEvpR}
z \pd_z^2 \vp_\cR +(2m-2) \pd_z \vp_\cR - \al^2 z \vp_\cR &= \cR_f\,.
\end{align}

A particular case that we are particularly interested in is when $f(z) := z g(z)$ with $g \in H^{\ell}((0,1))$. If we now plug in the decomposition
$$
g(z) =: \sum_{k=0}^{\ell-1} \frac{z^k}{k!}g^{(k)}(0) \widetilde{\chi}((1+\al)z) + \cR_g(z)\,,
$$
one then has
\begin{equation*}
\vp := \sum_{k=0}^{\ell-1} \frac{1}{k!} \tvp_k + \tvp_\cR
\end{equation*}
in terms of the only solutions $\tvp_k,\tvp_\cR \in z^{1-m}H_0^1((0,1))$ to the equations
\begin{align} \label{E.ODEvpkg}
z \pd_z^2 \tvp_k +(2m-2) \pd_z \tvp_k - \al^2 z \tvp_k &= z^{k+1} g^{(k)}(0) \widetilde{\chi}((1+\al)z)\,,  \\
z \pd_z^2 \tvp_\cR +(2m-2) \pd_z \tvp_\cR - \al^2 z \tvp_\cR &= z\cR_g\,. \label{E.ODEvpRg}
\end{align}

Our main estimates are the following:

\begin{lemma} \label{L.keyLemmaODE}
Suppose that $f,g \in H^{\ell}((0,1))$ and $z g \in H^{\ell+1}((0,1))$ with $\ell\geq0$ and fix some $\alpha \geq 0$. The functions $\vp_k,\vp_\cR$ defined by~\eqref{E.ODEvpk}-\eqref{E.ODEvpR} satisfy the $L^2$--estimates
\begin{itemize}
\item[(i.a)] $\|\vp_k\|_{L^2((0,1))} \lesssim (1+\al)^{-k-\frac32} \, |f^{(k)}(0)|\,,$ 
\item[(i.b)] $\|\vp_\cR\|_{L^2((0,1))} + \|z \vp_\cR'\|_{L^2((0,1))}  \lesssim (1+\al)^{-1} \|\cR_f\|_{L^2((0,1))}\,,$
\end{itemize}
and the $H^{\ell+2}$--estimates
\begin{itemize}
\item[(ii.a)]   $\|\vp_k^{(\ell+1)}\|_{L^2((0,1))}+ \|z\vp_k^{(\ell+2)}\|_{L^2((0,1))} \lesssim (1+\al)^{\ell-k-\frac12}\, | f^{(k)}(0)|  \quad \textup{for all } 0 \leq k \leq \ell-1\,,$ 
\item[(ii.b)] $ \displaystyle\|\vp_\cR^{(\ell+1)}\|_{L^2((0,1))} + \|z \vp_\cR^{(\ell+2)}\|_{L^2((0,1))} \lesssim  \sum_{\nu=0}^{\ell} (1+\al)^{\ell-\nu}\, \|\cR_f^{(\nu)}\|_{L^2((0,1))}\,.$ 
\end{itemize}
Similarly, the functions $\tvp_k,\tvp_\cR$ defined by~\eqref{E.ODEvpkg}-\eqref{E.ODEvpRg} satisfy the $L^2$--estimates

\begin{itemize}
\item[(iii.a)] $\|\tvp_k\|_{L^2((0,1))} \lesssim (1+\al)^{-k-\frac52} \, |g^{(k)}(0)|\,,$ 
\item[(iii.b)] $\|\tvp_\cR\|_{L^2((0,1))} + \|z\tvp_\cR'\|_{L^2((0,1))} \lesssim (1+\al)^{-2}\, \|\cR_g\|_{L^2((0,1))}\,,$
\end{itemize}
and the $H^{\ell+3}$--estimates
\begin{itemize}
\item[(iv.a)]  $\|\tvp_k^{(\ell+2)}\|_{L^2((0,1))} + \|z\tvp_k^{(\ell+3)}\|_{L^2((0,1))} \lesssim (1+\al)^{\ell-k-\frac12} \,|g^{(k)}(0)|  \quad \textup{for all } 0 \leq k \leq \ell-1\,,$ 
\item[(iv.b)] $\displaystyle \|\tvp_\cR^{(\ell+2)}\|_{L^2((0,1))} + \|z  \tvp_\cR^{(\ell+3)}\|_{L^2((0,1))} \lesssim \sum_{\nu=0}^{\ell} (1+\al)^{\ell-\nu} \Big( \|\cR_g^{(\nu)}\|_{L^2((0,1))} + \|z\cR_g^{(\nu+1)} \|_{L^2((0,1))} \Big)\,.$
\end{itemize}
All the implicit constants are independent of $\al\in[0,\infty)$.
\end{lemma}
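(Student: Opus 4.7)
My plan is to reformulate the ODE as a Schr\"odinger-type equation, derive basic $L^2$ bounds from the quadratic form $\cQ_0$, extract the sharp localized $L^2$ estimates via the rescaling $y:=(1+\alpha)z$, and bootstrap higher regularity directly from the ODE.

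The substitution $U:=z^{m-1}\vp$ turns~\eqref{E.ODEvp} into
\[
-U''+\bigl(\alpha^2+(m-1)(m-2)/z^2\bigr)U = -z^{m-2}f\quad\text{on }(0,1),
\]
with $U(0)=U(1)=0$, the former condition being enforced by the regular indicial root $\vp\in z^{1-m}H^1_0((0,1))$. Lax--Milgram on $\cQ_0$ furnishes existence and uniqueness of $\vp_k$, $\vp_\cR$, $\tvp_k$ and $\tvp_\cR$. Testing $\cQ_0$ against $U$ and exploiting its $(1+\alpha)^2$-coercivity (produced by $\alpha^2$ together with the $(m-1)(m-2)/z^2$ potential) yields (i.b) and (iii.b) after passing from $U$ back to $\vp$ via a Hardy-type inequality and the identity $z\vp' = z^{2-m}(U'-(m-1)z^{m-2}\vp)$; the extra $(1+\alpha)^{-1}$ in (iii.b) comes from the additional factor of $z$ present in the source $z\cR_g$.

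For (i.a) and (iii.a) the right-hand sides are localized at scale $1/(1+\alpha)$, which suggests rescaling $y:=(1+\alpha)z$. After normalizing by the appropriate power of $(1+\alpha)$, one obtains a rescaled profile $\widetilde{\psi}$ on $(0,1+\alpha)$ satisfying
\[
y\widetilde{\psi}''+(2m-2)\widetilde{\psi}'-\beta^2 y\widetilde{\psi} = y^k\widetilde{\chi}(y)\quad\bigl(\text{resp.\ }y^{k+1}\widetilde{\chi}(y)\bigr),
\]
with $\beta^2:=\alpha^2/(1+\alpha)^2\in[0,1]$ bounded independently of $\alpha$ and source supported in $(0,1)$. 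The analogous quadratic form of the rescaled operator delivers $\|\widetilde{\psi}\|_{L^2((0,1+\alpha))}\lesssim 1$ uniformly in $\alpha$, and undoing the change of variables gives $\vp_k(z)=(1+\alpha)^{-k-1}f^{(k)}(0)\,\widetilde{\psi}((1+\alpha)z)$ and $\tvp_k(z)=(1+\alpha)^{-k-2}g^{(k)}(0)\,\widetilde{\psi}((1+\alpha)z)$; the measure Jacobian $(1+\alpha)^{-1/2}$ then yields the announced $(1+\alpha)^{-k-3/2}$ and $(1+\alpha)^{-k-5/2}$.

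The higher-regularity bounds (ii) and (iv) follow by bootstrap from the ODE. Rearranging it as $z\vp''=f-(2m-2)\vp'+\alpha^2 z\vp$ and applying $\pd_z^\ell$ gives
\[
z\vp^{(\ell+2)} = f^{(\ell)} - (2m-2+\ell)\vp^{(\ell+1)} + \alpha^2 z\vp^{(\ell)} + \alpha^2 \ell\,\vp^{(\ell-1)},
\]
which inductively controls $\vp^{(\ell+1)}$ and $z\vp^{(\ell+2)}$ in $L^2$ from derivatives of the source, $\alpha^2$ multiples of lower derivatives, and previously established $L^2$ bounds. Running this bootstrap on the rescaled solution (where standard elliptic regularity gives $\widetilde{\psi}\in H^{\ell+2}$ uniformly in $\alpha$, and scaling back adds a factor $(1+\alpha)^{\ell+1}$) yields the $(1+\alpha)^{\ell-k-1/2}$ in (ii.a)/(iv.a); the gradation $\sum_\nu(1+\alpha)^{\ell-\nu}\|\cR_f^{(\nu)}\|_{L^2}$ in (ii.b)/(iv.b) reflects that each iteration of the $\alpha^2 z\vp$ term trades a factor $(1+\alpha)^2$ for one extra derivative of the remainder. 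The main obstacle is tracking this $\alpha$ dependence sharply: a naive differentiation mixes the top derivative with an extra $\alpha^2$, so careful organization is required to preserve the $z$-weight on the highest derivative (which encodes the $z=0$ degeneracy) and to exchange $\alpha^2$ costs for source-differentiation savings in exactly the right proportion. Moreover, $\cQ_0$ naturally controls $z^{m-1}\vp$ rather than $\vp$ itself, so passing to the unweighted $L^2$ norms that appear in the statement requires repeated use of Hardy inequalities combined with the vanishing of $U$ at $z=0$ inherited from the regular indicial root.
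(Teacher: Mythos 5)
Your outline names the right difficulties but does not overcome them, and two of its steps fail as written. The central gap is the passage from weighted to unweighted norms: the quadratic form $\cQ_0$ applied to $U:=z^{m-1}\vp$ controls only $\|U'\|_{L^2}$, $\|U/z\|_{L^2}$ and $\al\|U\|_{L^2}$, i.e. norms of $z^{m-1}\vp$, whereas the lemma asserts bounds on the unweighted norms of $\vp$ itself. Since $U$ vanishes to order $m-1$ at $z=0$, recovering $\|\vp\|_{L^2}=\|U/z^{m-1}\|_{L^2}$ by Hardy would require $L^2$ control of roughly $m-1$ derivatives of $U$, which the energy estimate does not provide; so "passing from $U$ back to $\vp$ via a Hardy-type inequality" is not available at that stage, and (i.b), (iii.b) do not follow. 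The same issue undermines your rescaling argument for (i.a)/(iii.a): the rescaled quadratic form again only controls $y^{m-1}\widetilde{\psi}$ near the degenerate endpoint, and the claimed uniform bound $\|\widetilde{\psi}\|_{L^2((0,1+\al))}\lesssim 1$ additionally needs decay of the solution on a growing interval, which an energy identity with $\beta^2\le 1$ does not give. This is precisely why the paper abandons energy methods after Proposition~\ref{P.lowreg} (which is only a weighted bound) and instead uses the explicit variation-of-parameters representation built from the modified Bessel solutions~\eqref{E.SolBesselHomogeneous}, whose pointwise bounds~\eqref{E.tildeh} encode both boundedness of the regular solution at $t=0$ and exponential decay at infinity.

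The bootstrap you propose for (ii)/(iv) also loses a power of $\al$: estimating the term $\al^2 z\vp^{(\ell)}$ by $\al^2$ times the inductively known bound $\|\vp_\cR^{(\ell)}\|_{L^2}\lesssim\sum_{\nu\le\ell-1}(1+\al)^{\ell-1-\nu}\|\cR_f^{(\nu)}\|_{L^2}$ yields $\sum_{\nu}(1+\al)^{\ell+1-\nu}\|\cR_f^{(\nu)}\|_{L^2}$, one full factor of $(1+\al)$ worse than (ii.b); saying that "careful organization is required" identifies the obstacle without supplying the mechanism that removes it. In the paper the sharp powers come from the inverse operator itself: differentiating the representation formula (Lemma~\ref{L.pointwise}, where the Wronskian cancellation produces the local terms $W_f(t)/t$) and then applying the weighted Hardy inequalities of Theorems~\ref{T.Kufner} and~\ref{T.Kufner43}, with weights manufactured from $\widetilde h_1$ and $\widetilde h_{2,\nu}$, gives each derivative bound directly with the stated $\al$-dependence instead of by iterating the equation. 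Also note that "standard elliptic regularity" for the rescaled profile is not standard at $y=0$, where the operator degenerates. To turn your sketch into a proof you would need either this Green's-function-plus-weighted-Hardy machinery or an equally quantitative substitute (e.g. barriers giving pointwise boundedness and exponential decay, combined with an induction that genuinely preserves the $\al$-count).
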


To prove this lemma, we will need the following classical Hardy inequality. We recall that by a weight function, we refer to a function that is measurable and positive almost everywhere.

\begin{theorem}{\rm{(\hspace{-0.003cm}\cite[Theorem 1.1]{Kufner}).}} \label{T.Kufner}
Let $-\infty \leq a < b \leq + \infty$ and let $\om_+,\om_-$ be weight functions on the interval~$(a,b)$. The inequality
	\begin{equation} \label{E.Kufner}
\int_a^b \left( \int_a^t f(\tau) d\tau \right)^2 \omega_-(t) dt  \leq \mathcal{C}  \int_a^b f(t)^2 \omega_+(t) dt \,,
	\end{equation}
holds for all functions $f \geq 0$ if and only if
\begin{equation} \label{E.C'}
	\cC':= \sup_{a<t<b} \left(\int_a^t \omega_+(\tau)^{-1} d\tau\right)^{-1} \left(\int_a^t \omega_-(\tau) \left( \int_a^\tau \omega_+(s)^{-1} ds \right)^{2} d\tau \right)  <\infty\,.
\end{equation}
Moreover, the constant $\mathcal{C} $ in \eqref{E.Kufner} satisfies $\cC' \leq \mathcal{C} \leq 4\cC'$.
\end{theorem}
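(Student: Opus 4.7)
Throughout I would set $H(t) := \int_a^t \omega_+(\tau)^{-1} d\tau$, which is the natural primitive controlling the problem. The proof splits into necessity (showing $\mathcal{C}' \leq \mathcal{C}$) and sufficiency (showing $\mathcal{C} \leq 4\mathcal{C}'$ whenever $\mathcal{C}' < \infty$). For necessity, the plan is to test the assumed Hardy inequality against the explicit one-parameter family
$$f_{t_0}(\tau) := \omega_+(\tau)^{-1}\chi_{(a,t_0)}(\tau), \qquad t_0\in(a,b).$$
A direct computation gives $\int_a^b f_{t_0}^2 \omega_+\,d\tau = H(t_0)$ on the right, while on the left, since $\int_a^t f_{t_0}\,d\tau = H(\min(t,t_0))$, one gets the lower bound $\int_a^{t_0} H(t)^2 \omega_-(t)\,dt$. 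The Hardy inequality then forces $H(t_0)^{-1}\int_a^{t_0} H(t)^2 \omega_-(t)\,dt \leq \mathcal{C}$, and taking the supremum over $t_0\in(a,b)$ yields $\mathcal{C}' \leq \mathcal{C}$.

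For sufficiency, I would apply a weighted Cauchy--Schwarz with an equalizing weight of the form $H^{\alpha/2}$ ($\alpha\in(0,1)$ to be optimized). Writing
$$\Bigl(\int_a^t f\,d\tau\Bigr)^2 = \Bigl(\int_a^t (f\omega_+^{1/2} H^{\alpha/2})(\omega_+^{-1/2}H^{-\alpha/2})\,d\tau\Bigr)^2,$$
Cauchy--Schwarz produces the second factor $\int_a^t \omega_+^{-1} H^{-\alpha}\,d\tau = (1-\alpha)^{-1} H(t)^{1-\alpha}$, computed explicitly because its integrand is $H'(\tau)H(\tau)^{-\alpha}$. Multiplying by $\omega_-(t)$, integrating over $(a,b)$, and interchanging integration order via Fubini yields
$$\int_a^b\Bigl(\int_a^t f\,d\tau\Bigr)^2 \omega_-(t)\,dt \leq (1-\alpha)^{-1}\int_a^b f(\tau)^2 \omega_+(\tau) H(\tau)^\alpha \Phi(\tau)\,d\tau,$$
where $\Phi(\tau) := \int_\tau^b \omega_-(t) H(t)^{1-\alpha}\,dt$. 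The remaining task is the pointwise estimate $H(\tau)^\alpha \Phi(\tau) \leq M\mathcal{C}'$. To this end, set $J(t) := \int_a^t \omega_-(s) H(s)^2\,ds$, so that $J'(t) = \omega_-(t)H(t)^2$ and, by the very definition of $\mathcal{C}'$, $J(t)\leq\mathcal{C}' H(t)$ for every $t$. Writing $\omega_-(t)H(t)^{1-\alpha} = J'(t)\,H(t)^{-1-\alpha}$ and integrating by parts on $(\tau,b)$ gives
$$\Phi(\tau) = J(t)H(t)^{-1-\alpha}\Big|_\tau^b + (1+\alpha)\int_\tau^b J(t) H(t)^{-2-\alpha}\omega_+(t)^{-1}\,dt,$$
and substituting $J\leq\mathcal{C}'H$ together with the elementary identity $\int_\tau^b H^{-1-\alpha} H'\,dt = \alpha^{-1}(H(\tau)^{-\alpha}-H(b)^{-\alpha})$ yields $\Phi(\tau) \leq \frac{1+\alpha}{\alpha}\mathcal{C}' H(\tau)^{-\alpha}$. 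Combining estimates produces the Hardy inequality with constant $\mathcal{C}\leq\frac{1+\alpha}{\alpha(1-\alpha)}\mathcal{C}'$, valid for every $\alpha\in(0,1)$.

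\textbf{The main obstacle} I expect is the sharpening to the announced constant $4\mathcal{C}'$. Optimizing the exponent yields only $\mathcal{C}\leq(3+2\sqrt{2})\mathcal{C}'\approx 5.83\,\mathcal{C}'$, which falls short of $4$. Improving this requires either a duality argument reducing to the conjugate Hardy inequality and then minimizing over both sides, or a dyadic decomposition adapted to the level sets $\{t : H(t) \in [2^k,2^{k+1})\}$ in which the weighted Cauchy--Schwarz is applied dyadically and the resulting geometric series is summed. In either approach, one must handle the boundary behavior at $t=b$ with care: the boundary term $J(t)H(t)^{-1-\alpha}$ at $t=b$ must be shown to vanish (or at least be absorbed), which follows from $J/H$ being bounded together with the possibility that $H(b)=+\infty$. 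Finally, a standard density/truncation argument handles the degenerate cases where $H$ or $\Phi$ are infinite on parts of $(a,b)$, reducing the general statement to functions~$f$ with compact support away from the endpoints.
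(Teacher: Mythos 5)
The paper offers no proof of this statement: Theorem~\ref{T.Kufner} is quoted, with attribution, from Kufner--Persson and is used purely as a black box in the proof of Lemma~\ref{L.keyLemmaODE} (together with its dual form, Remark~\ref{R.Kufner}), always through estimates with unspecified constants. So there is no internal argument to compare yours with, and I can only judge the proposal on its own terms. Both halves of your plan are sound as far as they go. The necessity test with $f_{t_0}=\omega_+^{-1}\chi_{(a,t_0)}$ gives $\int_a^b f_{t_0}^2\omega_+=H(t_0)$ on the right and the lower bound $\int_a^{t_0}H^2\omega_-$ on the left, hence $\mathcal{C}'\le\mathcal{C}$ (modulo the routine truncation when $H(t_0)=\infty$, which you mention). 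The sufficiency argument --- Cauchy--Schwarz against $H^{\alpha/2}$, Fubini, then integration by parts of $\Phi$ using $J\le\mathcal{C}'H$ and absorbing the boundary term at $b$ --- is the standard ``power weight'' proof of this Tomaselli-type characterization, and your bookkeeping is correct, including the final constant $\frac{1+\alpha}{\alpha(1-\alpha)}$ with minimum $3+2\sqrt2\approx5.83$ at $\alpha=\sqrt2-1$. This does establish the full equivalence, which is all the present paper ever uses.

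The genuine gap is the one you flag yourself: the clause $\mathcal{C}\le 4\mathcal{C}'$ is part of the statement, and your argument stops at $(3+2\sqrt2)\mathcal{C}'$. Neither proposed repair is convincing as described. Duality (Remark~\ref{R.Kufner}) converts the inequality into the backward Hardy inequality, whose analogous constant is built from $\int_t^b\omega_-$ and $\omega_+^{-1}$ and is a genuinely different quantity from $\mathcal{C}'$, so ``minimizing over both sides'' does not manifestly return $4\mathcal{C}'$; and a dyadic decomposition along level sets of $H$ typically yields worse absolute constants rather than the sharp $4$. Let me also warn against the tempting detour through the Muckenhoupt constant $B=\sup_t H(t)\int_t^b\omega_-$, for which the $\alpha=\tfrac12$ version of your computation does give the factor $4$: that route would require $B\le\mathcal{C}'$, which fails in general (on $(0,\infty)$ with $\omega_+\equiv1$ and $\omega_-(s)=s^{-3}$ for $s>1$, negligible on $(0,1)$, one finds $B=\tfrac12$ while $\mathcal{C}'=\tfrac1e$). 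In short: you have a correct proof of the equivalence with a worse constant; the ``moreover'' part of the cited theorem remains unproven and requires the sharper argument of the reference (or, as the paper does, simply citing it).
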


\begin{remark} \label{R.Kufner}
By duality, \eqref{E.Kufner} is equivalent to the estimate
	\[
	\int_a^b \left( \int_t^b g(\tau)d\tau \right)^{2} \omega_+(t)^{-1} dt \leq \mathcal{C}  \int_a^b g(t)^{2} \omega_-(t)^{-1} dt \,,
	\]
for all $g \geq 0$, with the same constant $\mathcal{C}$. In the proof we shall also use this dual version of Hardy's inequality.
\end{remark}

\begin{proof}[Proof of Lemma \ref{L.keyLemmaODE}]
In the estimates, we need to consider the cases $\al\in[0,1]$ and $\al>1$ separately. Let us start by assuming $\al>1$, since the former case (which follows using similar arguments) is simpler in the sense that there is no need to track the dependence on the large parameter~$\al$. We also assume that $\ell\geq1$, as the case $\ell=0$ is completely analogous but does not require bounds for the functions $\vp_k,\tvp_k$.

To capture the effect on the large parameter~$\al$, we introduce the change of variables $t := \al z$, and write the functions in this variable as 
\[
u_k(t) := \vp_k(z(t))\,,\qquad u_W(t) := \vp_\cR(z(t))\,,\qquad W_f(t) := \cR_f(z(t))\,.
\]
Thus $u_k,u_W\in t^{1-m} H^1_0((0,\alpha))$ satisfy
\begin{align} \label{E.ODEuk}
t \pd_t^2 u_k(t) +(2m-2) \pd_t u_k(t) - t u_k(t) &= \al^{-k-1}t^k f^{(k)}(0) \widetilde{\chi}\Big(\Big(1+\frac{1}{\al}\Big)\,t\Big) \,, \\
\label{E.ODEuR}
t \pd_t^2 u_W(t)+(2m-2) \pd_t u_W(t) - t u_W(t) &= \al^{-1}\, W_f(t)\,.
\end{align}
Since the right hand sides are compactly supported in $[0,\al)$, without loss of generality we can consider these ODEs in the whole half-line $(0,\infty)$ instead of just in~$(0,\alpha)$. 

Note that two linearly independent solutions for the Bessel-type homogeneous equation 
$$
t \pd_t^2 h +(2m-2) \pd_t h - t h=0\qquad \text{in }(0,\infty)\,,
$$
are\begin{equation} \label{E.SolBesselHomogeneous}
h_1(t) := t^{\frac{3}{2}-m} I_{m-\frac32}(t) \quad \textup{ and } \quad h_2(t) := t^{\frac32-m} K_{m-\frac32}(t) \,, 
\end{equation}
where $I_m$ and $K_m$ denote respectively the modified Bessel functions of the first and second kind and of order~$m$. 
These solutions have been chosen so that $h_1(t)$ tends to a nonzero constant at~0 and grows exponentially fast for $t\gg1$, while $h_2(t)$ diverges like $t^{3-2m}$ at 0 but tends to zero exponentially fast at infinity. More precisely, for each~$\nu\geq0$ one has
\[
|h_1^{(\nu)}(t)|\lesssim \widetilde h_1(t)\,,\qquad |h_2^{(\nu)}(t)|\lesssim \widetilde h_{2,\nu}(t)
\]
with
\begin{equation}\label{E.tildeh}
\widetilde h_1(t):= \mathbbm 1_{(0,1)}(t) + t^{1-m} e^{t-1}\mathbbm 1_{(1,\infty)}(t)\,,\qquad  \widetilde h_{2,\nu}(t):= t^{3-2m-\nu}\mathbbm 1_{(0,1)}(t) + t^{1-m} e^{1-t}\mathbbm 1_{(1,\infty)}(t)\,.
\end{equation}
Furthermore, $h_1(t)$ and $t^{2m-3} h_2(t)$ (respectively, $t^{m-1} e^{-t} h_1(t)$ and $t^{m-1} e^{t} h_2(t)$) are smooth functions of~$t\in[0,\infty)$ (respectively, of $t^{-1}\in[0,\infty)$) which do not vanish at the closed endpoint.

From the variation of parameters formula and the asymptotic behavior of~$h_j$, we infer that the functions $u_k,u_W$ admit the representation formulas
\begin{align} \label{E.Repuk}
u_k(t) &= - \al^{-k-1} f^{(k)}(0) \bigg(  h_2(t) \int_0^t s^{2m+k-3} h_1(s)\widetilde{\chi}\Big(\Big(1 +\frac{1}{\al}\Big)s\Big) \,ds \\ & \qquad\qquad\qquad\qquad\qquad\qquad\qquad\qquad+ h_1(t) \int_t^{\infty} s^{2m+k-3} h_2(s) \widetilde{\chi}\Big(\Big(1+\frac{1}{\al}\Big)s\Big) \,ds \bigg)\,,\notag\\
u_W(t) &= -\frac{1}{\al} \left( h_2(t) \int_0^t s^{2m-3} h_1(s)   W_f(s) \,ds  + h_1(t) \int_t^{\infty} s^{2m-3} h_2(s)  W_f(s) \,ds \right)\,.\label{E.RepuW}
\end{align}

Let us  start proving (i.a) and (ii.a) using this representation formula. We start with~$u_k$, which only depends on~$f$ in an extremely simple way. Indeed,
\begin{equation} \label{E.uk-Ukf}
u_{k}(t) =  - \al^{-k-1} f^{(k)}(0)  \, U_{k}(t)\,,
\end{equation}
where the smooth function~$U_k$ is independent of~$f$. Using the bound $|h_j|\leq\widetilde h_j$ and the fact that $\widetilde{\chi}(r) = 0$ for all $r \geq 1$ in the above representation formula, one easily sees that
\begin{equation} \label{Uk-bound}
\int_{0}^{\infty} (1+t^2) (U_{k}^{(p)}(t))^2 dt \lesssim 1
\end{equation}
for any nonnegative integer~$p$. 
Thus, we arrive at the $L^2$-estimate
\begin{align*}
\|\vp_k\|_{L^2((0,1))}^2 = \frac{1}{\al} \|u_k\|_{L^2((0,\al))}^2 = \frac{|f^{(k)}(0)|^2}{\al^{2k+3}} \|U_k\|_{L^2((0,\al))}^2 \leq \frac{|f^{(k)}(0)|^2}{\al^{2k+3}} \|U_k\|_{L^2(\RR_+)}^2 \lesssim \frac{|f^{(k)}(0)|^2}{\al^{2k+3}}\,,
\end{align*}
and and the $H^{\ell+1}$-estimate
\begin{align*}
\|\vp_k^{(\ell+1)}\|_{L^2((0,1))}^2 & = \al^{2(\ell+1)-1}\|u_k^{(\ell+1)}\|_{L^2((0,\al))}^2 \\
& = \al^{2(\ell-k)-1}|f^{(k)}(0)|^2 \|U_k^{(\ell+1)}\|_{L^2((0,\al))}^2 \lesssim \al^{2(\ell-k)-1}|f^{(k)}(0)|^2\,.
\end{align*} 
Similarly, we infer that that
\begin{align*}
\|z \vp_k^{(\ell+2)} \|_{L^2((0,1))}^2 = \al^{2(\ell+1)-1} \|z u_k^{(\ell+2)}\|_{L^2((0,\al))}^2  \lesssim \al^{2(\ell-k)-1}|f^{(k)}(0)|^2\,.
\end{align*}
Combining these three estimates we get (i.a) and (ii.a). 

The proofs of (iii.a) and (iv.a) for~$\tvp_k$ are similar. Indeed, setting 
\[
\tu_k(t) := \tvp_k(z(t))
\]
and arguing as above, we see that
\begin{equation*}
\tu_{k}(t) =  - \al^{-k-2} g^{(k)}(0)  \, \widetilde{U}_{k}(t)\,,
\end{equation*}
with 
\begin{equation*}
\int_{0}^{\infty} (1+t^2) (\widetilde U_{k}^{(p)}(t))^2 dt \lesssim 1
\end{equation*}
for each integer $p\geq0$. Hence (iii.a) and (iv.a) follows using the same reasoning.

We now deal with the regularity of $\vp_\cR$. We start by proving (i.b) and (ii.b). The starting point is a formula for the $p$-th derivative of~$u_W$. Thanks to the representation formula~\eqref{E.RepuW}, fairly direct computations yield the following pointwise estimate:

\begin{lemma}\label{L.pointwise}
	For each integer $p\geq0$, there exists a constant $C > 0$ independent of~$\al$ such that
\begin{equation} \label{E.lDerivativeuRW}
\begin{aligned}
& \al \, |u_{W}^{(p)}(t)| \leq \left|\,  h_2^{(p)}(t) \int_0^t s^{2m-3} h_1(s) \, W_f(s) \,ds\, \right| \\
& \quad + \left|\, h_1^{(p)}(t) \int_t^{\infty} s^{2m-3} h_2(s) \, W_f(s) \,ds \, \right| + C\, p (p-1)
\sum_{\nu = 0}^{p-2} \left(\frac{1}{t^{\,p-1-\nu}}+\frac1t\right) |W_f^{(\nu)}(t)|\,.
\end{aligned}
\end{equation}
\end{lemma}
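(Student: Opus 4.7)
The plan is to differentiate the representation formula~\eqref{E.RepuW} using the Leibniz rule, exploit a fundamental cancellation between the two pieces, and then iterate, controlling the extra terms via the ODE satisfied by $h_1,h_2$. Writing $v:=h_2 A+h_1 B$ with $A(t):=\int_0^t s^{2m-3}h_1(s)W_f(s)\,ds$ and $B(t):=\int_t^\infty s^{2m-3}h_2(s)W_f(s)\,ds$, so that $-\al u_W=v$, I would first record that $A'(t)=t^{2m-3}h_1(t)W_f(t)$ and $B'(t)=-t^{2m-3}h_2(t)W_f(t)$, whence
\[
h_2 A'+h_1 B'=0.
\]
This is the cancellation that makes the estimate possible: $v'=h_2' A+h_1' B$, with no direct contribution from $W_f$.

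I would then prove by induction on $p$ the identity $v^{(p)}=h_2^{(p)}A+h_1^{(p)}B+U_p$, where the remainder satisfies the recursion
\[
U_{p+1}=U_p'+t^{2m-3}W_f(t)\,W_p(t),\qquad W_p(t):=h_2^{(p)}(t)h_1(t)-h_1^{(p)}(t)h_2(t),
\]
with $U_0=U_1=0$. The bases $p=0,1$ are exactly the cancellation above. To bound the right-hand side at each step I need a sharp estimate on $W_p$, which one obtains from the fact that $h_1,h_2$ satisfy the same second-order ODE $t h''+(2m-2)h'-th=0$. Iterating the relation $h''=-\frac{2m-2}{t}h'+h$, for $p\geq2$ each $h^{(p)}$ can be written as $\al_p(t)h+\be_p(t)h'$ with $\al_p,\be_p$ determined by the recursions $\al_{p+1}=\al_p'+\be_p$, $\be_{p+1}=\al_p+\be_p'-\tfrac{2m-2}{t}\be_p$, $\al_2=1$, $\be_2=-(2m-2)/t$. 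Substituting gives $W_p=\be_p(t)\bigl[h_1 h_2'-h_1'h_2\bigr]$, and by Abel's identity applied to the ODE $h_1 h_2'-h_1'h_2=C_0 t^{2-2m}$. A routine induction on the recursion shows that $\al_p,\be_p$ are polynomials in $1/t$ of degree at most $p-1$, so $|\be_p(t)|\lesssim t^{-(p-1)}+1$, and therefore $t^{2m-3}|W_p(t)|\lesssim t^{-(p-1)}+t^{-1}$ for all $p\geq 1$.

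To close the loop, I would strengthen the inductive hypothesis to the structural statement that $U_p=\sum_{\nu=0}^{p-2}\gamma_{p,\nu}(t)\,W_f^{(\nu)}(t)$, where each $\gamma_{p,\nu}$ is a polynomial in $1/t$ of highest degree $p-1-\nu$ and lowest degree $1$. Differentiating this expression produces coefficients of $W_f^{(\nu)}$ of highest degree $p-\nu$ and of $W_f^{(\nu+1)}$ of the same form, and adding the new contribution $t^{2m-3}W_f W_p$ supplies a new $\nu=0$ term with exactly the required degree. The combinatorial coefficients are controlled by a factor $p(p-1)$ accumulated over the iterations. The main obstacle I anticipate is keeping track of the $t^{-1}$ tail in $\gamma_{p,\nu}$ that inevitably appears when one differentiates a $1/t$ monomial (yielding $-1/t^2$ and so feeding into the higher-order pole of the next $\gamma$); this is why the lemma must tolerate the lower-order term $t^{-1}$ alongside the principal singularity $t^{-(p-1-\nu)}$. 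One last check is the regime $t\gg1$, where the exponential behavior of $h_1,h_2$ is irrelevant to $W_p$ (since this quantity is a purely polynomial function of $1/t$ via Abel) and the bound $t^{-(p-1-\nu)}+t^{-1}\lesssim 1$ is trivial.
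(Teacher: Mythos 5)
Your proposal is correct, and it reaches exactly the inequality \eqref{E.lDerivativeuRW}, but the way you control the key coefficients differs from the paper. Both arguments start from \eqref{E.RepuW} and exploit the cancellation $h_2A'+h_1B'=0$; the paper then differentiates repeatedly and is led to general mixed Wronskians $h_1^{(\mu)}h_2^{(\nu)}-h_1^{(\nu)}h_2^{(\mu)}$, which it bounds using the asymptotics \eqref{E.tildeh} of the modified Bessel functions and their derivatives (plus the exact Wronskian only at the second derivative, which produces the $W_f/t$ term). You instead set up the remainder recursion $U_{p+1}=U_p'+t^{2m-3}W_fW_p$, in which only the Wronskians $W_p=h_2^{(p)}h_1-h_1^{(p)}h_2$ with one index equal to zero appear, and you compute these \emph{exactly}: writing $h^{(p)}=\al_p h+\be_p h'$ via the ODE and invoking Abel's identity gives $W_p=\be_p\,(h_2'h_1-h_1'h_2)$ with $\be_p$ a polynomial in $1/t$. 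This is more algebraic and self-contained (no derivative asymptotics of $I_{m-3/2},K_{m-3/2}$ are needed for this lemma), at the cost of the structural induction on the coefficients $\gamma_{p,\nu}$; the paper's route reuses asymptotic bounds it needs elsewhere anyway. One small slip: since $\deg\be_p\leq p-1$, the correct bound is $t^{2m-3}|W_p|=|\be_p|/t\lesssim t^{-p}+t^{-1}$, not $t^{-(p-1)}+t^{-1}$ as you wrote; this is harmless, because $t^{-p}+t^{-1}$ is precisely the size allowed for the new $\nu=0$ coefficient $\gamma_{p+1,0}$ in your induction, so the final estimate is unaffected.
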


\begin{proof}
For $p=0,1$, the result is immediate consequence of~\eqref{E.RepuW}. Note that the factor $p(p-1)$ accounts for the fact that the third term in the right hand side of~\eqref{E.lDerivativeuRW} does not appear in these estimates.

Differentiating~\eqref{E.RepuW} twice, one obtains the formula
\begin{align*}
	-\al u_W''(t)&=  h_2''(t) \int_0^t s^{2m-3} h_1(s)   W_f(s) \,ds \\
	& \quad\, + h_1''(t) \int_t^{\infty} s^{2m-3} h_2(s)  W_f(s) \,ds + (h_2'h_1-h_1'h_2)t^{2m-3}W_f(t)\\
	&=h_2''(t) \int_0^t s^{2m-3} h_1(s)   W_f(s) \,ds  + h_1''(t) \int_t^{\infty} s^{2m-3} h_2(s)  W_f(s) \,ds - \frac{W_f(t)}{t}\,.
\end{align*}
To pass to the last line, we have used that the Wronskian of the two solutions is $h_2'(t)h_1(t)-h_1'(t)h_2(t)= -t^{2-2m}$. This yields the formula in the statement in the case $p=2$.

For $p\geq2$, the result follows by differentiating this formula and noting that, for any nonnegative integers $\mu,\nu$, the following pointwise estimate holds
\[
t^{2m-3}\left|h_1^{(\mu)}(t)h_2^{(\nu)}(t)-h_1^{(\nu)}(t)h_2^{(\mu)}(t)\right|\lesssim\begin{cases}
	t^{-\mu-\nu} &\text{if } t<1\,,\\
	t^{-1}&\text{if } t>1\,.
\end{cases}
\]
by the asymptotic formulas for $h_j$ that we established above. Of course, the commutator is zero if $\mu=\nu$.
\end{proof}

The $L^2$ and $H^{\ell+2}$ estimates for~$\vp_\cR$ follow from~\eqref{E.lDerivativeuRW} using suitable Hardy-type estimates. For conciseness, we shall only present the proof of the $H^{\ell+2}$-estimate in detail, as the simpler $L^2$-estimate
$$
\|\vp_\cR\|_{L^2((0,1))} +  \|z \vp_\cR'\|_{L^2((0,1))} \lesssim \frac{1}{\al} \|\cR_f\|_{L^2((0,1))}\,,
$$
follows using the same argument.

Let us start with an estimate for~$\vp_\cR^{(\ell+1)}$. Combining \eqref{E.RepuW} with \eqref{E.lDerivativeuRW}, we get
\begin{align*}
& \|\vp_\cR^{(\ell+1)}\|_{L^2((0,1))}^2 = \al^{2(\ell+1)-1} \|u_W^{(\ell+1)}\|_{L^2((0,\al))}^2 \\
& \quad  \lesssim \al^{2\ell-1} \bigg[ \int_0^\al \left| h_2^{(\ell+1)}(t) \int_0^t s^{2m-3} h_1(s) \, W_f(s) \,ds \right|^2 dt \\
& \hspace{2cm} + \int_0^\al \left|  h_1^{(\ell+1)}(t) \int_t^{\infty} s^{2m-3} h_2(s) \, W_f(s) \,ds \, \right|^2 dt \\
& \hspace{2cm} + \ell(\ell+1) \sum_{\nu = 0}^{\ell-1} \bigg( \int_0^\al  \frac{|W_f^{(\nu)}(t)|^2}{t^{2(\,\ell-\nu)}} dt + \int_0^\al \frac{|W_f^{(\nu)}(t)|^2}{t^2} dt \bigg) \bigg]\,.
\end{align*}
Now let us estimate each term of the right hand side separately. First, observe that
$$
\al^{2\ell-1} \int_0^\al  \frac{|W_f^{(\nu)}(t)|^2}{t^{2(\,\ell-\nu)}} dt  = \int_0^1 \frac{|\cR_f^{(\nu)}(z)|^2}{z^{2(\ell-\nu)}} dz\,.
$$
Moreover, observe that $\cR_f^{(p)}(0)=0$ for all $0 \leq p \leq \ell-1$. Thus, by Theorem \ref{T.Kufner43} applied with $\omega_{-}(z) := z^{-2(\ell-\nu)}$ and $\omega_+(z) := 1$, it follows that
$$
\int_0^1 \frac{|\cR_f^{(\nu)}(z)|^2}{z^{2(\ell-\nu)}} dz \lesssim \int_0^1 |\cR_f^{(\ell)}(z)|^2 dz\,, \quad \textup{ for all } 0 \leq \nu \leq \ell-1\,.
$$
Hence, we have that
\begin{equation} \label{E.ODEr1}
 \ell(\ell+1)\,  \al^{2\ell-1}  \sum_{\nu = 0}^{\ell-1} \int_0^\al  \frac{|W_f^{(\nu)}(t)|^2}{t^{2(\,\ell-\nu)}} dt \lesssim \int_0^1 |\cR_f^{(\ell)}(z)|^2 dz\,.
\end{equation}

Likewise, if follows that
$$
\al^{2\ell-1} \int_0^\al \frac{|W_f^{(\nu)}(t)|^2}{t^2} = \al^{2(\ell-\nu-1)} \int_0^1 \frac{|\cR_f^{(\nu)}(z)|^2}{z^2}dz\,.
$$
Moreover, by Theorem \ref{T.Kufner43} applied with $\omega_-(z) := z^{-2}$ and $\omega_+(z) := 1$ (i.e. by the classical Hardy inequality),  we get that
$$
 \int_0^1 \frac{|\cR_f^{(\nu)}(z)|^2}{z^2}dz \lesssim \int_0^1 |\cR_f^{(\nu+1)}(z)|^2 dz\,.
$$
Thus, we infer that
\begin{equation} \label{E.ODEr1bis}
 \ell(\ell+1)\, \al^{2\ell-1}  \sum_{\nu = 0}^{\ell-1} \int_0^\al  \frac{|W_f^{(\nu)}(t)|^2}{t^{2}} dt \lesssim \sum_{\nu=1}^{\ell} \al^{2(\ell-\nu)}  \int_0^1 |\cR_f^{(\nu)}(z)|^2 dz
\end{equation}

Next, to estimate the first term, we apply Theorem \ref{T.Kufner} with $\omega_{-}(t) := (\widetilde h_{2,\ell+1}(t))^2$ and $\omega_+(t) := t^{6-4m}\widetilde h_1(t)^{-2}(1+t^{-2\ell})$, where we recall that $\widetilde h_1$ and $\widetilde{h}_{2,\ell+1}$ were introduced in~\eqref{E.tildeh}. Indeed, taking into account the asymptotics
$$
\begin{aligned}
& \omega_-(t) \sim t^{6-4m-2(\ell+1)}\,, \quad & \omega_+(t) \sim t^{-2\ell+6-4m}\,, \qquad &  \textup{as } t \to 0^+\,, \\
& \omega_-(t) \sim t^{2-2m} e^{-2t}\,, \quad & \omega_+(t) \sim t^{4-2m} e^{-2t}\,, \qquad & \textup{as } t \to \infty\,,
\end{aligned}
$$
one can check that
\begin{align*}
\left(\int_0^t \omega_+(\tau)^{-1} d\tau\right)^{-1} \left(\int_0^t \omega_-(\tau) \left( \int_0^\tau \omega_+(s)^{-1} ds \right)^{2} d\tau \right) & \lesssim 1\,, \quad \textup{as } t \to 0^+\,, \\
\left(\int_0^t \omega_+(\tau)^{-1} d\tau\right)^{-1} \left(\int_0^t \omega_-(\tau) \left( \int_0^\tau \omega_+(s)^{-1} ds \right)^{2} d\tau \right) & \lesssim \frac{1}{t^2}\,, \quad \textup{as } t \to \infty\,.
\end{align*} 
Note that in the last estimate we are using that, for $\tau \gg 1$,
$$
\int_1^\tau s^{\al} e^{\beta s} ds \sim \tau^{\al} e^{\beta \tau}\,, \quad \textup{ for all } \al, \beta \in \RR\,.
$$
Thus, we infer that \eqref{E.C'} holds, and so that
\begin{equation} \label{E.ODEr2}
\begin{aligned}
\alpha^{2\ell-1}\int_0^\al & \left| h_2^{(\ell+1)}(t) \int_0^t s^{2m-3} h_1(s) \, W_f(s) \,ds \right|^2 dt \\
& \lesssim  \al^{2\ell-1} \int_0^\al \left(\frac{h_1(t)}{\widetilde h_1(t)}\right)^2 \left( \frac{|W_f(t)|^2}{t^{2\ell}} + |W_f(t)|^2 \right) dt \\
& \lesssim \al^{2\ell-1} \int_0^\al  \left( \frac{|W_f(t)|^2}{t^{2\ell}} + |W_f(t)|^2 \right)  dt =  \int_0^1  \frac{|\cR_f(z)|^2}{z^{2\ell}} dz + \al^{2\ell} \int_0^1 |\cR_f(z)|^2 dz \,.
\end{aligned}
\end{equation}

Finally, applying now the dual Hardy inequality in Remark \ref{R.Kufner} with $\omega_-(t) := t^{4m-6}\, \widetilde h_{2,0}(t)^2$ and $\omega_+(t) := (\widetilde h_1(t))^{-2}$ we estimate the second term. Observe that we now have the asymptotics
$$
\begin{aligned}
& \omega_-(t) \sim 1\,, \quad && \omega_+(t) \sim 1\,, \qquad &  \textup{as } t \to 0^+\,, \\
& \omega_-(t) \sim t^{2m-4} e^{-2t}\,, \quad && \omega_+(t) \sim t^{2m-2} e^{-2t}\,, \qquad & \textup{as } t \to \infty\,,
\end{aligned}
$$
and thus
\begin{align*}
\left(\int_0^t \omega_+(\tau)^{-1} d\tau\right)^{-1} \left(\int_0^t \omega_-(\tau) \left( \int_0^\tau \omega_+(s)^{-1} ds \right)^{2} d\tau \right) & \lesssim t^2\,, \quad \textup{as } t \to 0^+\,, \\
\left(\int_0^t \omega_+(\tau)^{-1} d\tau\right)^{-1} \left(\int_0^t \omega_-(\tau) \left( \int_0^\tau \omega_+(s)^{-1} ds \right)^{2} d\tau \right) & \lesssim \frac{1}{t^2}\,, \quad \textup{as } t \to \infty\,.
\end{align*} 
Hence, we get that
\begin{equation} \label{E.ODEr3}
\al^{2\ell-1} \int_0^{\al} \left| h_1^{(\ell+1)}(t) \int_t^\infty s^{2m-3} h_2(s) W_f(s) ds \right|^2 dt \lesssim \al^{2\ell} \int_0^1 |\cR_f(z)|^2 dz\,.
\end{equation}
Combining \eqref{E.ODEr1}--\eqref{E.ODEr3}, we thus find
\begin{equation} \label{E.ODEr4}
\|\vp_\cR^{(\ell+1)}\|_{L^2((0,1))} \lesssim \sum_{\nu = 0}^{\ell} \al^{\ell-\nu}\, \|\cR_f^{(\nu)}\|_{L^2((0,1))}\,.
\end{equation}

Let us now estimate $z \vp_\cR^{(\ell+2)}$. By \eqref{E.lDerivativeuRW},  
\begin{align*}
& \|z \vp_\cR^{(\ell+2)}\|_{L^2((0,1))}^2 = \al^{2(\ell+1)-1} \|t u_W^{(\ell+2)}\|_{L^2((0,\al))}^2 \\
& \quad  \lesssim \al^{2\ell-1} \bigg[ \int_0^\al \left|t h_2^{(\ell+2)}(t) \int_0^t s^{2m-3} h_1(s) \, W_f(s) \,ds \right|^2 dt \\
& \hspace{2cm} + \int_0^\al \left|  t h_1^{(\ell+2)}(t) \int_t^{\infty} s^{2m-3} h_2(s) \, W_f(s) \,ds \, \right|^2 dt  \\
& \hspace{2cm} + (\ell+1)(\ell+2) \sum_{\nu = 0}^{\ell} \bigg( \int_0^\al  \frac{|W_f^{(\nu)}(t)|^2}{t^{2(\,\ell-\nu)}} dt + \int_0^\al |W_f^{(\nu)}(t)|^2 dt \bigg) \bigg]\,.
\end{align*}
Hence, we can argue exactly as in the proof of~\eqref{E.ODEr4} to estimate each term of the right hand side, thereby showing that
\begin{equation} \label{E.ODEr5}
\|z\vp_\cR^{(\ell+2)}\|_{L^2((0,1))} \lesssim \sum_{\nu=0}^{\ell} \al^{\ell-\nu }\, \|\cR_f^{(\nu)}\|_{L^2((0,1))}\,.
\end{equation}
Combining \eqref{E.ODEr4} and \eqref{E.ODEr5} we obtain (ii.b).

To prove the estimates (iii.b) and (iv.b), one argues just as in the case of (i.b) and (ii.b), discussed above in detail, after replacing $f$ by $zg$. In fact, writing
\[
\widetilde u_W(t) := \tvp_\cR(z(t))\,,\qquad W_g(t) := \cR_g(z(t))\,,
\]
and using that $z(t) \cR_g(z(t))= \frac t \al W_g(t)$, one can write a representation formula for $\widetilde{u}_W$:
\begin{equation} \label{E.RepFormulauWg}
\widetilde u_W(t) = -\frac{1}{\al^2} \left( h_2(t) \int_0^t s^{2m-2} h_1(s)   W_g(s) \,ds + h_1(t) \int_t^{\infty} s^{2m-2} h_2(s)  W_g(s) \,ds \right)\,. 
\end{equation}
Lemma \ref{L.pointwise} thus yields the pointwise bound
\begin{equation*}  
\begin{aligned}
\al^2 \, & |\widetilde{u}_{W}^{(p)}(t)| \leq \left|\,  h_2^{(p)}(t) \int_0^t s^{2m-2} h_1(s) \, W_g(s) \,ds\, \right| + \left|\, h_1^{(p)}(t) \int_t^{\infty} s^{2m-2} h_2(s) \, W_g(s) \,ds \, \right| \\
& + C\, p (p-1) \bigg(
\sum_{\nu = 0}^{p-2} \Big( \frac{1}{t^{p-2-\nu}}+1 \Big) |W_g^{(\nu)}(t)| + \sum_{\nu=0}^{p-3} \frac{1}{t} \, |W_g^{(\nu)}(t)| + \sum_{\nu=1}^{p-2} \sum_{\mu=0}^{\nu-1} \frac{1}{t^{p-1-\nu}} |W_g^{(\mu)}(t)| \bigg)\,.
\end{aligned}
\end{equation*}
so one can argue essentially as in the proof of (i.b) and (ii.b) to obtain the bounds (iii.b) and (iv.b).
\end{proof} 

\subsection{PDE estimates}

We shall next show how we can use Lemma~\ref{L.keyLemmaODE} to derive the key regularity estimates for the only solution $v \in z^{1-m}H_0^1((0,1)\times \TT)$ to the equation
\begin{equation}\label{E.LvGbis}
	\cL v = G\,,
\end{equation}
which we stated as Lemma~\ref{L.keyest} in the previos section. In fact, in the following lemma we state a somewhat more detailed set of a priori estimates; Lemma \ref{L.keyest} stems from them simply by keeping track of the factors of~$\ep$ in the definition of the norms (Equation~\eqref{E.normsDisk}).

Equivalently, in terms of the Fourier components
\begin{equation} \label{E.FourierVn}
v_n(z):=\frac1{2\pi}\int_0^{2\pi} v(z,\theta)\, e^{-in\theta}\,d\theta\,, \qquad G_n(z):=\frac1{2\pi}\int_0^{2\pi} G(z,\theta)\, e^{-in\theta}\,d\theta\,,
\end{equation}
one has the system of ODEs
\begin{equation}\label{E.LnvG}
	\cL_n v_n=G_n\qquad \text{in } (0,1)\,,
\end{equation}
with
\begin{equation*}
	\cL_n u:= z\pd_z^2 u -\frac{\ep^2n^2}{a_-^2}z\,u +(2m-2)\pd_z u\,.
\end{equation*}

\begin{lemma} \label{L.keyestReformulated}
For any integer $j\geq1$, the unique solution $v \in z^{1-m}H_0^1((0,1)\times \TT)$  to \eqref{E.LvG} is bounded as
\begin{align} \label{E.keyestR1}
& \sum_{j_2=0}^{j-1} \sum_{j_1=0}^{j-j_2} \Big( \|\pd_z^{j_1} \pd_\te^{j_2} v \|_{L^2((0,1) \times \TT)} + \|z \pd_z^{j_1+1} \pd_\te^{j_2} v \|_{L^2((0,1) \times \TT)} \Big)  \lesssim \|G\|_{H^{j-1}((0,1) \times \TT)} \,, \\ \label{E.keyestR2}
& \|\pd_\te^{j} v\|_{L^2((0,1) \times \TT)} + \|z\pd_z \pd_\te^j v\|_{L^2((0,1) \times \TT)}   \lesssim \frac{1}{\ep}\, \|\pd_\te^{j-1} G\|_{L^2((0,1) \times \TT)} \,.
\end{align}
Additionally, when $G(z) = \ep z H(z)$,  and $j \geq 2$, one has 
\begin{align} \label{E.keyestR3}
& \sum_{j_2=0}^{j-1} \sum_{j_1=0}^{j-j_2} \Big( \|\pd_z^{j_1} \pd_\te^{j_2} v \|_{L^2((0,1) \times \TT)}  + \|z \pd_z^{j_1+1} \pd_\te^{j_2} v \|_{L^2((0,1) \times \TT)} \Big) \\ \nonumber
  & \hspace{4.8cm} \lesssim \|H\|_{H^{j-2}((0,1) \times \TT)} + \|z \pd_z H\|_{H^{j-2}((0,1) \times \TT)} \,, \\ \label{E.keyestR4}
& \|\pd_\te^{j} v\|_{L^2((0,1) \times \TT)} + \|z\pd_z \pd_\te^j v\|_{L^2((0,1) \times \TT)}  \lesssim \frac{1}{\ep}\, \|\pd_\te^{j-2} H\|_{L^2((0,1) \times \TT)}\,.
\end{align}
The implicit constants are independent of $\ep$, and the dependence on $a_-$ is uniform in compact subsets of $[1,+\infty)$.
\end{lemma}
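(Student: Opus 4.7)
The plan is to reduce everything to the ODE estimates of Lemma \ref{L.keyLemmaODE} via Fourier decomposition in the angular variable, as already suggested by \eqref{E.FourierVn}--\eqref{E.LnvG}. Writing $v=\sum_n v_n(z)e^{in\theta}$ and $G=\sum_n G_n(z)e^{in\theta}$, the PDE $\cL v=G$ decouples into the family $\cL_n v_n=G_n$, where $\cL_n u=zu''+(2m-2)u'-\alpha_n^2 zu$ with $\alpha_n:=\ep|n|/a_-$. This is exactly the ODE of Lemma~\ref{L.keyLemmaODE} with parameter $\alpha=\alpha_n$ and source $f=G_n$ (respectively $f=\ep zH_n$ when $G=\ep z H$). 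By Parseval's identity, each PDE norm in \eqref{E.keyestR1}--\eqref{E.keyestR4} becomes a weighted $\ell^2_n$-sum of one-dimensional norms of $v_n$, so the problem reduces to checking that, after multiplication by the Parseval weight $n^{2j_2}$ coming from $\partial_\theta^{j_2}$, the ODE bounds sum up to the claimed Sobolev norm on the right-hand side.

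The most transparent cases are \eqref{E.keyestR2} and \eqref{E.keyestR4}. For the former, I apply Lemma \ref{L.keyLemmaODE}(i.b) with $\ell=0$ (no Taylor decomposition, so $\varphi_{\cR,n}=v_n$ and $\cR_{G_n}=G_n$), obtaining
\[
\|v_n\|_{L^2}^2+\|zv_n'\|_{L^2}^2\lesssim (1+\alpha_n)^{-2}\|G_n\|_{L^2}^2\,.
\]
The uniform bound $n^{2j}(1+\alpha_n)^{-2}\lesssim \ep^{-2}n^{2j-2}$ (trivial for $|n|\leq 1/\ep$ and using $\alpha_n\sim \ep n$ for $|n|\geq 1/\ep$) then yields \eqref{E.keyestR2} after summation in $n$. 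The analog for \eqref{E.keyestR4} follows from Lemma \ref{L.keyLemmaODE}(iii.b) applied with $g=\ep H_n$ (so $\cR_g=\ep H_n$ when $\ell=0$), together with the uniform bound $n^{2j}\ep^2(1+\alpha_n)^{-4}\lesssim \ep^{-2}n^{2j-4}$.

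For the full Sobolev estimate \eqref{E.keyestR1} I take $\ell=j-1$ in Lemma \ref{L.keyLemmaODE} and decompose $v_n=\sum_{k=0}^{j-2}\tfrac{1}{k!}\varphi_{k,n}+\varphi_{\cR,n}$. Items (i.a)--(ii.b) give sharp bounds at the $L^2$ level and at the top orders $H^{j}$--$H^{j+1}$; a one-dimensional Gagliardo--Nirenberg interpolation then produces intermediate-order estimates of the form
\[
\|v_n^{(j_1)}\|_{L^2}+\|zv_n^{(j_1+1)}\|_{L^2}\lesssim \sum_{k=0}^{j-2}(1+\alpha_n)^{j_1-k-3/2}|G_n^{(k)}(0)|+\sum_{\nu=0}^{j-1}(1+\alpha_n)^{j_1-1-\nu}\|\cR_{G_n}^{(\nu)}\|_{L^2}
\]
for $0\leq j_1\leq j$. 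Multiplying by $n^{2j_2}$ with $j_1+j_2\leq j$, controlling the trace values by the 1D inequality $|G_n^{(k)}(0)|^2\lesssim \|G_n^{(k)}\|_{L^2}^2+\|G_n^{(k+1)}\|_{L^2}^2$, and summing over $n$ using $(\ep n)^{2p}\leq n^{2p}$ for $p\geq 0$, the right-hand side collapses to $\|G\|_{H^{j-1}}^2$: the sign of the exponent of $(1+\alpha_n)$ dictates whether the factor is absorbed by angular derivatives (when positive, via $\alpha_n\sim\ep n$) or by radial derivatives (when nonpositive, via $(1+\alpha_n)\leq 1+\text{const}$). Estimate \eqref{E.keyestR3} is proved identically, with $\ell=j-2$, the $zg$-type items (iii.a)--(iv.b), and $g=\ep H_n$.

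The main technical obstacle is the careful bookkeeping of the $(1+\alpha_n)$ exponents across the Taylor-remainder decomposition. The scaling $\widetilde{\chi}((1+\alpha)z)$ of the cutoff chosen in Lemma \ref{L.keyLemmaODE} is exactly what makes the constants in (i)--(iv) uniform in $\alpha_n$, and this uniformity is what allows the Parseval reassembly to produce clean Sobolev norms on the right without spurious powers of $\ep$ or $n$. Once the exponents are verified to match in each of the four cases, the rest of the argument is routine.
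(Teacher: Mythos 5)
Your overall skeleton is the same as the paper's: Fourier decomposition in $\te$ reduces $\cL v=G$ to the family $\cL_n v_n=G_n$ with $\al_n=\ep|n|/a_-$, the ODE bounds of Lemma~\ref{L.keyLemmaODE} are applied mode by mode, and Parseval reassembles the estimates; your treatment of \eqref{E.keyestR2} and \eqref{E.keyestR4} (items (i.b), (iii.b) with $\ell=0$ plus the elementary bound $n^{2j}(1+\al_n)^{-2}\lesssim \ep^{-2}n^{2j-2}$, etc.) is exactly the paper's. The two places where you deviate are where the gaps lie. First, for the intermediate radial orders in \eqref{E.keyestR1} you fix a single decomposition with $\ell=j-1$ and invoke Gagliardo--Nirenberg interpolation, but the inequality you display for the remainder piece is not what interpolation gives: interpolating (i.b) against (ii.b) yields exponents $(1+\al_n)^{j_1-1-\frac{j_1}{j}\nu}$ together with product norms $\|\cR_{G_n}^{(\nu)}\|^{j_1/j}\|\cR_{G_n}\|^{1-j_1/j}$, not $(1+\al_n)^{j_1-1-\nu}\|\cR_{G_n}^{(\nu)}\|$. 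One can still make the $n$-summation close by splitting the weight and applying Young's inequality (the exponents cancel exactly because $\theta=j_1/j$), but as written the step is asserted rather than proved. The paper avoids this entirely by re-applying Lemma~\ref{L.keyLemmaODE} with $\ell:=j_1-1$ (resp.\ $\ell:=j_1-2$ for the $zg$ case) separately for each radial order $j_1\geq2$, which produces the exponents $j_1-k-\frac32$ and $j_1-1-\nu$ directly, with the remainder only involving derivatives up to order $j_1-1$.

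Second, and more seriously, your trace step loses half a derivative and, as stated, does not give the claimed right-hand side. For the Taylor contributions the weight after Parseval is $(1+|n|)^{2(j_1+j_2-k)-3}\leq(1+|n|)^{2(j-k)-3}$, and if you control $|G_n^{(k)}(0)|^2$ by $\|G_n^{(k)}\|_{L^2}^2+\|G_n^{(k+1)}\|_{L^2}^2$ the piece with $k+1$ radial derivatives carries the angular weight $j-k-\frac32$, so the sum requires $G\in H^{j-\frac12}$, not $H^{j-1}$. The correct bookkeeping, which is what the paper does, is to keep the half-integer trace space: $\sum_n(1+|n|)^{2(j_1+j_2-k)-3}|G_n^{(k)}(0)|^2=\|\pd_z^kG(0,\cdot)\|_{H^{j_1+j_2-k-\frac32}(\TT)}^2\lesssim\|G\|_{H^{j-1}((0,1)\times\TT)}^2$, i.e.\ one must use the sharp (multiplicative) one-dimensional trace inequality or the fractional trace theorem rather than the crude $H^1$ bound. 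Since landing exactly in $H^{j-1}$ (and, in the $\ep z H$ case, in $H^{j-2}$ norms of $H$) is precisely the endpoint content of the lemma, this is a genuine gap in the part you describe as routine, though it is fixable by the substitution just indicated.
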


\begin{proof} Let us divide the proof into two steps for the sake of clarity. \medbreak

\noindent \textbf{Step 1: Reduction to ODE estimates.} Taking the Fourier transform in the $\te$-variable, let us write the function $G \in H^{j-1}((0,1)\times \TT)$ as
$$
G(z,\te) = \sum_{n \in \ZZ} G_n(z) e^{in\te}\,, \qquad G_n(z) := \frac{1}{2\pi} \int_0^{2\pi} G(z,\te) e^{-in\te} d\te  \,.
$$
The unique solution~$v\in z^{1-m}H_0^1((0,1)\times \TT)$ to~\eqref{E.LvGbis} is therefore
\begin{equation}\label{E.seriesv}
	v(z,\te) = \sum_{n \in \ZZ} v_n(z) e^{in\te} \,,
\end{equation}
where each Fourier component is the only solution $v_n\in z^{1-m}H_0^1((0,1))$ to the ODE
\begin{equation} \label{E.LnvnGn}
\cL_n v_n = G_n \qquad \textup{in } (0,1)\,,
\end{equation}
where
\begin{equation*}
	\cL_n u:= z\pd_z^2 u  +(2m-2)\pd_z u-\al_n^2 z\,u\,.
\end{equation*}
Note that this is the ordinary differential operator~\eqref{E.ODEvp} that we studied in Subsection \ref{S.keyestODE}, and that the parameter
\[
\al_n:=  \frac{\ep |n|}{ a_-}\in[0,\infty)\,,
\]
now depends on the Fourier frequency~$n$ and on the scale~$\ep$. 

Given an integer $j_1 \geq 1$, for each Fourier mode we argue as in  Subsection~\ref{S.keyestODE}, decomposing $G_n$ as
$$
G_n(z) =: \sum_{k=0}^{j_1-2} \frac{z^k}{k!}G_n^{(k)}(0) \widetilde{\chi}((1+\al_n)z) + \cR_{G_n}(z)\,.
$$
Thus the function $v_n \in z^{1-m}H_0^1((0,1))$ can be similarly written as 
\begin{equation} \label{E.vnDecomposition}
v_n = \sum_{k=0}^{j_1-2} \frac{1}{k!} \, v_{k,n} + v_{\cR,n}\,,
\end{equation}
where $v_{k,n},v_{\cR,n} \in z^{1-m} H_0^1((0,1))$ are the only solutions to the ODEs
\begin{align} \label{E.ODEvkn}
\cL_n v_{k,n} &= z^{k}G_n^{(k)}(0) \widetilde{\chi}((1+\al_n)z) \,,\\
\label{E.ODEvRn}
\cL_n v_{\cR,n} &= \cR_{G_n}
\end{align}
in $(0,1)$. Note that the index~$k$ will eventually range from~0 to~$j_1-2$.

The case where $G(z)= \ep z H(z)$ with $H,z\pd_zH \in H^{j-2}((0,1) \times \TT)$ is handled the same way. We write~$H$ as a Fourier series
 $$
 H(z,\te) = \sum_{n \in \ZZ} H_n(z) e^{in\te}\,,\qquad H_n(z) :=  \frac{1}{2\pi} \int_0^{2\pi} H(z,\te) e^{-in\te} d\te\,,
 $$
and note that the Fourier coefficients in~\eqref{E.seriesv} can be similarly written as~\eqref{E.vnDecomposition}, where now $\tv_{k,n}, \tv_{\cR,n} \in z^{1-m} H_0^1((0,1))$ are the only solutions to
\begin{align} \label{E.ODEvknHn}
\cL_n \tv_{k,n} &= \ep z^{k+1}H_n^{(k)}(0) \widetilde{\chi}((1+\al_n)z) \,,\\
\label{E.ODEvRnHn}
\cL_n \tv_{\cR,n} &= \ep z \cR_{H_n}\,.
\end{align}
Here, $\cR_{H_n}$ is defined as
$$
H_n(z) =: \sum_{k=0}^{j_1-2} \frac{z^k}{k!}H_n^{(k)}(0) \widetilde{\chi}((1+\al_n)z) + \cR_{H_n}(z)\,,
$$
and
$$
v_n = \sum_{k=0}^{j_1-2} \frac{1}{k!} \tv_{k,n} + \tv_{\cR,n}\,.
$$
 
\noindent \textbf{Step 2: Sum over Fourier modes.} 
Lemma \ref{L.keyestReformulated} now follows from Lemma \ref{L.keyLemmaODE} (with $\al_n := \ep |n|/ a_-$) using the Parseval identity.

First of all, observe that Lemma \ref{L.keyLemmaODE} (i.b) applied with $\ell:=0$ implies that
\begin{equation} \label{E.keyestRC1}
\begin{aligned}
& \|v\|_{L^2((0,1)\times \TT)}^2 + \|z\pd_z v\|_{L^2((0,1)\times \TT)}^2  \\ 
& \quad = 2\pi \sum_{n \in \ZZ} \Big( \|v_n\|_{L^2((0,1))}^2 + \|z \pd_z v_n\|_{L^2((0,1))}^2 \Big) \lesssim  2\pi \sum_{n \in \ZZ} \|G_n\|_{L^2((0,1))}^2 = \|G\|_{L^2((0,1)\times \TT)}^2\,,
\end{aligned}
\end{equation}
and that, for all $j \geq 1$,
$$
\begin{aligned}
& \|\pd_\te^{j}v\|_{L^2((0,1)\times \TT)}^2 + \|z \pd_z \pd_\te^{j}v\|_{L^2((0,1)\times \TT)}^2 \\[0.3cm]
& \quad  = 2\pi \sum_{n \in \ZZ} n^{2j} \Big( \|v_n\|_{L^2((0,1))}^2 + \|z \pd_z v_n\|_{L^2((0,1))}^2 \Big) \\
& \quad \lesssim  \frac{2\pi}{\ep^2} \sum_{n \in \ZZ}n^{2(j-1)} \|G_n\|_{L^2((0,1))}^2 = \frac{1}{\ep^2}\, \|\pd_\te^{j-1}G\|_{L^2((0,1)\times \TT)}^2\,.
\end{aligned}
$$
Likewise, Lemma \ref{L.keyLemmaODE} (iii.b) applied with $\ell :=0$ implies that, for all $j \geq 2$,
$$
\|\pd_\te^{j}v\|_{L^2((0,1)\times \TT)}^2 + \|z \pd_z \pd_\te^{j}v\|_{L^2((0,1)\times \TT)}^2 \lesssim \frac{1}{\ep^2}\, \|\pd_\te^{j-2} H\|_{L^2((0,1) \times \TT)}^2\,.
$$

Once we have proved \eqref{E.keyestR1} and \eqref{E.keyestR3}, we focus on the more involved estimates \eqref{E.keyestR2} and \eqref{E.keyestR4}. Applying Lemma \ref{L.keyLemmaODE} (ii.b) with $\ell := 0$, we obtain that, for all nonnegative integer $j_2$ with $j_2 \leq j-1$
\begin{equation} \label{E.keyestRC2}
\begin{aligned}
& \|\pd_z \pd_\te^{j_2} v\|_{L^2((0,1)\times \TT)}^2 + \|z\pd_z^2 \pd_z^{j_2} v\|_{L^2((0,1)\times \TT)}^2 \\[0.3cm]
& \quad = \sum_{n \in \ZZ} n^{2j_2} \Big( \|\pd_z v_n\|_{L^2((0,1))}^2 + \|z \pd_z^2 v_n\|_{L^2((0,1))}^2 \Big)\\
& \quad \lesssim 2\pi \sum_{n \in \ZZ} n^{2j_2} \|G_n\|_{L^2((0,1))}^2 = \|\pd_\te^{j_2} G\|_{L^2((0,1)\times \TT)}^2\,.
\end{aligned}
\end{equation}
Likewise, Lemma \ref{L.keyLemmaODE} (iv.b) applied $\ell := 0$ implies that, for all nonnegative integer $j_2$ with $j_2 \leq j-1$, it follows that
\begin{equation} \label{E.keyestRC4}
\|\pd_z^2 \pd_\te^{j_2} v \|_{L^2((0,1)\times \TT)}^2 + \|z \pd_z^3 \pd_\te^{j_2} v\|_{L^2((0,1)\times \TT)}^2 \lesssim \|\pd_\te^{j_2} H\|_{L^2((0,1)\times \TT)}^2 + \|z \pd_z \pd_\te^{j_2} H\|_{L^2((0,1)\times \TT)}^2\,.
\end{equation}
 
We finally deal with the case where $j_1$ and $j_2$ are nonnegative integers with $j_1 \geq 2$  and $0 \leq j_2 \leq j-j_1$. By direct computations, we get  
\begin{equation} \label{E.keyestRCTranslation}
\begin{aligned}
& \|\pd_z^{j_1} \pd_\te^{j_2} v\|_{L^2((0,1)\times \TT)}^2 + \|z\pd_z^{j_1+1} \pd_\te^{j_2} v \|_{L^2((0,1)\times \TT)}^2 \\[0.3cm]
& \quad = 2\pi \sum_{n \in \ZZ} n^{2j_2}  \Big( \|v_n^{(j_1)}\|_{L^2((0,1))}^2 + \|z v_n^{(j_1+1)}\|_{L^2((0,1))}^2 \Big) \\
& \quad \lesssim \sum_{n \in \ZZ} n^{2j_2} \bigg( \sum_{k=0}^{j_1-2} \frac{1}{k!} \Big( \|v_{k,n}^{(j_1)}\|_{L^2((0,1))}^2 + \|z v_{k,n}^{(j_1+1)}\|_{L^2((0,1))}^2 \Big) \\
& \hspace{7cm}  + \|v_{\cR,n}^{(j_1)}\|_{L^2((0,1))}^2 + \|z v_{\cR,n}^{(j_1+1)}\|_{L^2((0,1))}^2 \bigg)\,. 
\end{aligned}
\end{equation}
On one hand, by Lemma \ref{L.keyLemmaODE} (ii.a) applied with $\ell := j_1-1$, we infer that 
\begin{align*}
& \sum_{n \in \ZZ} n^{2j_2} \bigg( \sum_{k=0}^{j_1-2} \frac{1}{k!} \Big( \|v_{k,n}^{(j_1)}\|_{L^2((0,1))}^2 + \|z v_{k,n}^{(j_1+1)}\|_{L^2((0,1))}^2 \Big) \\
& \quad  \lesssim \sum_{k=0}^{j_1-2} \frac{1}{k!} (1+n^2)^{j_1-1+j_2-k-\frac12} |G_n^{(k)}(0)|^2 \lesssim \|\pd_z^k G(0,\cdot)\|_{H^{j_1+j_2-k-\frac32}(\TT)}^2\,.
\end{align*}
On the other hand, by Lemma \ref{L.keyLemmaODE} (ii.b) applied with $\ell := j_1-1$, we get that 
\begin{equation*}
\begin{aligned}
\sum_{n \in \ZZ} n^{2j_2} \big( \|v_{\cR,n}^{(j_1)}\|_{L^2((0,1))}^2 + \|z v_{\cR,n}^{(j_1+1)}\|_{L^2((0,1))}^2 \big) \lesssim \sum_{n \in \ZZ} n^{2j_2} \sum_{\nu=0}^{j_1-1} \bigg( (1+n^2)^{j_1-1-\nu}  \|\cR_{G,n}^{(\nu)}\|_{L^2((0,1))}^2 \bigg)
\end{aligned}
\end{equation*}
Thus, it follows that
\begin{equation} \label{E.keyestRC3}
\|\pd_z^{j_1} \pd_\te^{j_2} v\|_{L^2((0,1)\times \TT)} + \|z\pd_z^{j_1+1} \pd_\te^{j_2} v \|_{L^2((0,1)\times \TT)} \lesssim \|G\|_{H^{j-1}((0,1) \times \TT)}\,.
\end{equation}
The bound \eqref{E.keyestR1} then follows from \eqref{E.keyestRC1}, \eqref{E.keyestRC2} and \eqref{E.keyestRC3}.

Let us now assume that $j_1 \geq 3$. On one hand, by Lemma \ref{L.keyLemmaODE} (iv.a) applied with $\ell := j_1-2$, we get that 
\begin{align*}
& \sum_{n \in \ZZ} n^{2j_2} \bigg( \sum_{k=0}^{j_1-2} \frac{1}{k!} \Big( \|\tv_{k,n}^{(j_1)}\|_{L^2((0,1))}^2 + \|z \tv_{k,n}^{(j_1+1)}\|_{L^2((0,1))}^2 \Big) \\
& \quad  \lesssim \sum_{k=0}^{j_1-2} \frac{1}{k!} (1+n^2)^{j_1-2+j_2-k-\frac12} |H_n^{(k)}(0)|^2 \lesssim \|\pd_z^k H(0,\cdot)\|_{H^{j_1+j_2-k-\frac52}(\TT)}^2\,.
\end{align*}
On the other hand, by Lemma \ref{L.keyLemmaODE} (iv.b) applied with $\ell := j_1-2$, we get  
\begin{align*}
& \sum_{n \in \ZZ} n^{2j_2} \Big( \|\tv_{\cR,n}^{(j_1)}\|_{L^2((0,1))}^2 + \|z \tv_{\cR,n}^{(j_1+1)}\|_{L^2((0,1))}^2 \Big) \\
& \qquad \lesssim \sum_{n \in \ZZ} n^{2j_2} \sum_{\nu=0}^{j_1-2} \bigg( (1+n^2)^{j_1-2-\nu} \Big( \|\cR_{H,n}^{(\nu)}\|_{L^2((0,1))}^2  + \|z \cR_{H,n}^{(\nu+1)}\|_{L^2((0,1))}^2 \Big) \bigg) \,.
\end{align*}
Thus, substituting into \eqref{E.keyestRCTranslation} with $v_{k,n}$ replaced by $\tv_{k,n}$, and $v_{\cR,n}$ replaced by $\tv_{\cR,n}$, we conclude that
\begin{equation} \label{E.keyestRC5}
\|\pd_z^{j_1} \pd_\te^{j_2} v\|_{L^2((0,1) \times \TT)} + \|z \pd_z^{j_1+1} \pd_\te^{j_2}\|_{L^2((0,1) \times \TT)} \lesssim \|H\|_{H^{j-2}((0,1) \times \TT)} + \|z \pd_z H\|_{H^{j-2}((0,1) \times \TT)}\,.
\end{equation}
Combining \eqref{E.keyestRC1}, \eqref{E.keyestRC4} and \eqref{E.keyestRC5}, we thus obtain \eqref{E.keyestR3}. The lemma then follows.
\end{proof}

\section{Spectral properties of $\fL_a$}
\label{S.bifurcation}

In this section we study the spectral properties of the operator $\fL_a$, depending on the parameters $a$ and $\ell$. Specifically, we prove Propositions \ref{eigen-crossing} and \ref{P.nonradial}. These results show a phenomenon of crossing of eigenvalues, which is essential for our bifurcation result.

The proof of Proposition \ref{eigen-crossing} will follow from several lemmas. Let us recall the positive symmetric bilinear form $\cB: H_0^1(\Om_a) \times H_0^1(\Om_a) \to \RR$ associated to the linear operator $\fL_a$, namely
\begin{align*}
	\cB(v,w)& :=  -\int_{\Omega} \big( L_a^{0,0} v + f_a'(\Phi_{a,1}, \widetilde{\psi}_a) v \big) w\, (R+a-4)\, d R \, d \theta \\
	& =  \int_{\Om} \left[ \pd_R v\, \pd_R w+ \frac{\pd_\theta v\,\pd_\theta w}{(\erre)^2} -  f_a'(\Phi_{a,1}, \widetilde{\psi}_a) v  w \right](\erre) \, dR \, d\theta\,.
\end{align*}
Also, we recall that there exists a sequence of eigenvalues of~$-\fL_a$ with finite multiplicity, which we denote by $\lambda_k \equiv \lambda_k(a)$, such that
$$ 
\lambda_1 < \lambda_2 \leq \lambda_3 \leq \cdots 
$$
Of course, $\la_k(a)$ tends to infinity as $k \to \infty$, and depends continuously on $a$.

As the potential is radial, the radial eigenfunctions of this operator, which define an orthonormal basis of $L^2((1,7),(\erre)\, dR)$, can be equivalently obtained from the symmetric bilinear form $ \cB^{\rm rad}: H_0^1(1,7))\times H_0^1(1,7))\to\RR$ given by
\[
\cB^{\rm rad}(v,w):=\int_{1}^7 \left[ \pd_R v\, \pd_R w -  f_a'(\Phi_{a,1}, \widetilde{\psi}_a) v  w \right](\erre) \, dR\,.
\]
The corresponding quadratic form is $\cQ^{\rm rad}(v):=\cB^{\rm rad}(v,v)$.

We denote the eigenvalues corresponding to the radial eigenfunctions by 
$$ 
\lambda_1^{\rm rad}(a) < \lambda_2^{\rm rad}(a) < \cdots < \lambda_k^{\rm rad}(a) < \cdots 
$$
When there is no risk of confusion, we simply write $\lambda_k^{\rm rad}\equiv\lambda_k^{\rm rad}(a)$. It is standard that $\lambda_k^{\rm rad}(a)\to\infty$ as $k\to\infty$, and that $\lambda_k^{\rm rad}(a)\neq \lambda_j^{\rm rad}(a)$ whenever $k\neq j$. To see this, note that, the eigenvalue equation for a radial eigenfunction $\phi$ of~$\fL_a$ (which, by Proposition~\ref{P.regDGa}, is in $\rho^{m-1} \cX^j$) reduces to the ODE. The classical argument showing that radial Dirichlet eigenvalues have multiplicity~1 then follows from  Frobenius' theory for ODEs.

We start the proof of Proposition \ref{eigen-crossing} with the following lemma:

\begin{lemma} \label{l1} For all $a \geq 4$, $\lambda_1^{\rm rad}(a)< \la_2^{\rm rad}(a)<0$.
\end{lemma}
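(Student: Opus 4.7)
The plan is to exhibit a two-dimensional subspace of $H^1_0((1,7))$ on which the radial quadratic form $\cQ^{\rm rad}(v) := \cB^{\rm rad}(v,v)$ is negative definite. By the min-max characterization of eigenvalues this will yield $\lambda_2^{\rm rad}(a) < 0$, while the strict inequality $\lambda_1^{\rm rad}(a) < \lambda_2^{\rm rad}(a)$ is already granted by the Sturm--Liouville simplicity of radial eigenvalues recorded just before the lemma.

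The starting point is the derivative $\widetilde{\psi}_a'$. Differentiating the radial ODE $L_a^{0,0}\widetilde{\psi}_a + f_a(\Phi_{a,1},\widetilde{\psi}_a) = 0$ in~$R$ and observing that
\begin{equation*}
\partial_r f_a(\Phi_{a,1}(R), \widetilde{\psi}_a(R)) = \chi'(R-4)\bigl[f_-(\widetilde{\psi}_a) - f_+(\widetilde{\psi}_a)\bigr]
\end{equation*}
vanishes identically---because $\chi'(\cdot-4)$ is supported in $R\in(3,5)$, on which $\widetilde{\psi}_a = \tau_a\circ\Phi_{a,1}\geq 3$ by the construction in Lemma~\ref{L.f2}, and $f_-(t)=f_+(t)=1$ for $t\geq 3$---one arrives at the clean equation
\begin{equation*}
\fL_a\widetilde{\psi}_a' = \frac{\widetilde{\psi}_a'}{(R+a-4)^2}\,.
\end{equation*}
Since $m\geq 2$, $\widetilde{\psi}_a'$ vanishes at both $R=1$ and $R=7$, so $\widetilde{\psi}_a'\in H^1_0((1,7))$; moreover, the moving-plane symmetry used in Lemma~\ref{L.f2} implies that $\widetilde{\psi}_a$ has a unique interior maximum at some $R_a^*\in(1,7)$, where $\widetilde{\psi}_a'$ changes sign.

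I would then split $\widetilde{\psi}_a'$ along this nodal point and take
\begin{equation*}
v_1 := \widetilde{\psi}_a'\,\mathbbm{1}_{(1,R_a^*)}\,, \qquad v_2 := \widetilde{\psi}_a'\,\mathbbm{1}_{(R_a^*,7)}\,.
\end{equation*}
Both $v_1,v_2$ lie in $H^1_0((1,7))$, are linearly independent, and have disjoint supports, so $\cB^{\rm rad}(v_1,v_2)=0$. Integrating by parts on each nodal interval against $\widetilde{\psi}_a'(R+a-4)$---all boundary contributions vanish because $\widetilde{\psi}_a'$ does at $R=1$, $R=R_a^*$ and $R=7$---and applying the displayed equation,
\begin{equation*}
\cQ^{\rm rad}(v_i) = -\int_{\supp(v_i)} \frac{(\widetilde{\psi}_a')^2}{R+a-4}\,dR < 0 \qquad (i=1,2).
\end{equation*}
Any nontrivial $v = \alpha v_1 + \beta v_2$ therefore satisfies $\cQ^{\rm rad}(v) = \alpha^2\cQ^{\rm rad}(v_1) + \beta^2\cQ^{\rm rad}(v_2) < 0$, and the min-max principle delivers $\lambda_2^{\rm rad}(a)<0$.

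The main delicate point is the identical vanishing of $\partial_r f_a$ along the trajectory $(\Phi_{a,1}(R),\widetilde{\psi}_a(R))$: this is what makes $\widetilde{\psi}_a'$ solve a clean ODE and is essential to the explicit computation of $\cQ^{\rm rad}(v_i)$. It relies on the careful design of the nonlinearity in Lemma~\ref{L.f2}---that $f_-$ and $f_+$ are engineered to coincide (and equal $1$) on the range of $\widetilde{\psi}_a$ within the transition zone $R\in(3,5)$. Were this cancellation to fail, $\widetilde{\psi}_a'$ would pick up an inhomogeneous contribution from $\partial_r f_a$, the two-dimensional negative subspace would no longer split cleanly, and one would be forced to control a cross term like $\cB^{\rm rad}(\widetilde{\psi}_a,\widetilde{\psi}_a')$, a considerably more delicate task.
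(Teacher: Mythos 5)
Your proof is correct and follows essentially the same route as the paper: differentiate the radial equation (using that $\partial_r f_a$ vanishes along the solution because $f_\pm\equiv 1$ on the range of $\psi_a$ in the transition zone), truncate the derivative at the unique interior critical point to get two disjointly supported test functions, and compute $\cQ^{\rm rad}$ on each by integration by parts to obtain a two-dimensional negative subspace, with $\lambda_1^{\rm rad}<\lambda_2^{\rm rad}$ coming from the simplicity of radial eigenvalues. The only cosmetic difference is that you work in the $R$-variable on $(1,7)$ while the paper composes with $\Phi_a$ and works on $(a_-,a_+)$.
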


\begin{proof} By composing with the diffeomorphism $\Phi_a$, it suffices to show that the quadratic form 
	\begin{equation}
	\tilde{\cQ}(v):= \int_{a_-}^{a^+} \left [ (\pd_r v)^2-f_a'(r, \psi_a) v^2  \right ] r\, dr\,,
	\end{equation}
is negative definite on a two dimensional subspace of $H_0^1((a_-, a_+))$. 	Recall that $\psi_a$ solves the equation
$$ 
\pd_{r}^2 \psi_a + \frac{\pd_r\psi_a}{r} +f_a(r, \psi_a(r))=0 \quad \textup{ in } (a_-,a_+)\,.
$$
Moreover, taking the derivative with respect to $r$, we get that $\xi(r) :=  \pd_r \psi_a(r)$ solves
$$
\pd_{r}^2\xi + \frac{\pd_r \xi}{r} - \frac{\xi}{r^2} +f_a'(r, \psi_a(r))\xi=0 \quad \textup{ in } (a_-,a_+)\,.
$$
It is worth stressing that, in the above expression, we are using that $\pd_rf_a(r, \psi_a(r)) \equiv 0$ in $(a_-,a_+)$.

Next, we recall that $m_a$ is the unique maximum of the function $\psi_a$, and define
$$
\xi_1(r):= \left\{
\begin{aligned}
& \pd_r \psi_a(r)\,, \ && r \in [a_-,m_a)\,,\\
& 0 && r \in [m_a, a_+)\,,
\end{aligned}
\right.
\quad
\textup{ and }
\quad 
\xi_2(r):= \left\{
\begin{aligned}
& 0\,, \ && r \in [a_-,m_a]\,,\\
& \pd_r \psi_a(r) && r \in (m_a, a_+]\,.
\end{aligned}
\right.
$$	
Clearly, $\xi_1$, $\xi_2 \in H_0^1((a_-, a_+))$ and
$$
\begin{aligned}
\tilde{\cQ}(\xi_1) & := \int_{a_-}^{m_a} \left [ (\pd_r \xi_1)^2\, - f_a'(r, \psi_a) \xi_1^2  \right ] r\, dr= \int_{a_-}^{m_a} \left( r (\pd_r \xi_1)^2\, +  r \xi_1 \pd_{r}^2 \xi_1  + \xi_1 \pd_{r} \xi_1   - \frac{\xi_1^2}{r} \right) dr  \\
& =\int_{a_-}^{m_a} \left( \pd_r(r \xi_1 \pd_r \xi_1) -\frac{\xi_1^2}{r} \right) dr  = -\int_{a_-}^{m_a} \frac{\xi_1^2}{r} \, dr<0\,.
\end{aligned}
$$
Analogously, it follows that
	$$ \tilde{\cQ}(\xi_2) <0\,.$$

Since $\xi_1$ and $\xi_2$ have disjoint support, we conclude that
		$ \tilde{\cQ}(\xi)<0\,, $ for all $\xi \in {\rm span} \{\xi_1, \ \xi_2\} \setminus \{0\}\,.$
This implies that $\lambda_2^{\rm rad}(a)<0$ and concludes the proof of the lemma.
\end{proof}

We are now interested in the behavior of the eigenvalues $\lambda_k^{\rm rad}$ as $a \to +\infty$. On this purpose, let us recall (see Lemma \ref{L.f2}) that the limit function $\overline{\psi}$ is a solution to the limit problem 
$$ 
\opsi'' + \of(\opsi) =0  \quad \textup{ in } (1,7)\,.
$$
We denote the bilinear and quadratic forms associated with the linearized operator, and defined on functions belonging to $ H_0^1((1,7))$ by
$$
\overline{\cB}(v,w) := \int_1^7 \left( v'(R) w'(R) - \of'(\opsi(R)) v(R)w(R) \, \right) dR\,, \qquad \overline{\cQ}(v):= \overline{\cB}(v,v)\,.
$$
Just as in the case of the radial eigenvalues of~$\fL_a$, it is standard that there is an orthonormal basis of $L^2((1,7))$ consisting of eigenfunctions, whose eigenvalues we label as
$$ 
\overline{\la}_1 < \overline{\la}_2 < \cdots < \overline{\la}_k < \cdots
$$
Furthemore, the eigenvalues tend to infinity and are simple: $\overline{\la}_j\neq \overline{\la}_k$ for $j\neq k$.

We have the following important convergence result:

\begin{lemma} \label{l2} As $a \to \infty$, the following limits hold true:
	$$ \la_1^{\rm rad}(a) \to \overline{\la}_1<0, \qquad  \la_2^{\rm rad}(a) \to \overline{\la}_2=0, \qquad \la_3^{\rm rad}(a) \to \overline{\la}_3>0 \,. $$
\end{lemma}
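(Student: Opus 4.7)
The plan is in three parts: identify the three smallest eigenvalues of the limit operator on $(1,7)$ explicitly, establish convergence of the translated bilinear forms as $a \to \infty$, and conclude via the min-max principle.

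The most substantive step is to show $\overline\la_2 = 0$ by exhibiting a zero eigenfunction. Differentiating the ODE $\opsi'' + \of(\opsi) = 0$ gives $\xi'' + \of'(\opsi)\xi = 0$ for $\xi := \opsi'$. By the explicit form in \eqref{psibar}, $\opsi(R)$ equals $(R-1)^m$ near $R=1$ and $(7-R)^m$ near $R=7$, so $\xi$ vanishes to order $m-1\geq 5$ at the endpoints and lies in $H_0^1((1,7))$, making it an eigenfunction of $\overline{\cB}$ with eigenvalue $0$. The key observation is that both $\tilde\rho$ and $\overline\tau$ in \eqref{psibar} are symmetric about $R=4$, so $\opsi$ is symmetric, $\xi$ is antisymmetric, and $\xi$ has a unique interior zero at $R=4$. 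The associated Schr\"odinger-type operator $-\partial_R^2 - \of'(\opsi)$ has critically singular potential behaving like $(m-1)(m-2)\rho^{-2}$ at each endpoint, with indicial roots $m-1$ and $2-m$; only the first is in $L^2$, so the endpoints are in the limit-point case and there is a unique self-adjoint realization, with simple discrete spectrum to which Sturm's oscillation theorem applies. Since $\xi$ has exactly one interior zero it must be the second eigenfunction, so $\overline\la_2 = 0$, and simplicity then forces $\overline\la_1 < 0 < \overline\la_3$.

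For the spectral convergence, I would translate via $R := r-(a-4)$ and divide by $a$ (which leaves eigenvalues invariant), obtaining
\[
\tilde\cB_a(v,w) = \int_1^7 \bigl[v'w' - f_a'(\Phi_{a,1},\widetilde\psi_a)\,vw\bigr]\tfrac{R+a-4}{a}\,dR, \qquad \langle v,w\rangle_a = \int_1^7 vw\,\tfrac{R+a-4}{a}\,dR,
\]
on $H_0^1((1,7))$. The weight $(R+a-4)/a$ tends to $1$ uniformly on $[1,7]$, and Lemma~\ref{L.f2}(iv) supplies the pointwise decomposition $f_a'(\Phi_{a,1},\widetilde\psi_a) - \of'(\opsi) = \pm(m-1)/(a_\pm \rho) + s_a$ with $\|s_a\|_{L^\infty}\to 0$ and $a_\pm \to \infty$. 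The singular correction would be controlled by the one-dimensional Hardy inequality $\int_1^7 v^2/\rho\,dR \lesssim \|v\|_{H_0^1}^2$ on Dirichlet functions, giving an $O(1/a_\pm)$ bound on $|\int vw/(a_\pm\rho)\,dR|$ in the $H_0^1 \to H^{-1}$ operator norm; the weight error $\int f_a'vw\,[(R-4)/a]\,dR$ is handled analogously using the boundedness of $\rho^2 f_a'$ (Lemma~\ref{L.f'}) together with the stronger Hardy bound $\|v/\rho\|_{L^2} \lesssim \|v\|_{H_0^1}$. Hence $\tilde\cB_a \to \overline{\cB}$ and $\langle\cdot,\cdot\rangle_a \to \langle\cdot,\cdot\rangle_{L^2}$ in operator norm on $H_0^1 \times H_0^1$, and the Courant--Fischer min-max characterization forces $\la_k^{\rm rad}(a) \to \overline\la_k$ for every $k$, yielding in particular the three claimed limits with the stated signs.

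The main obstacle is that the correction $(m-1)/(a_\pm\rho)$ from Lemma~\ref{L.f2}(iv) is only $O(1/a)$ pointwise away from the boundary but becomes unbounded as $\rho \to 0$. Controlling it uniformly in the operator norm, together with the analogous weight error, relies crucially on the sharp Hardy inequalities on $(1,7)$ for Dirichlet functions; without them, the pointwise decomposition of Lemma~\ref{L.f2}(iv) would not translate into the needed spectral convergence.
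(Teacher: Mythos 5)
Your proposal is correct and follows essentially the same route as the paper: convergence of the rescaled quadratic forms via Lemma~\ref{L.f2}(iv) together with the Hardy inequality, then the min--max characterization, and identification of $\overline{\la}_2=0$ through the zero mode $\opsi'$, which changes sign exactly once so that simplicity yields $\overline{\la}_1<0<\overline{\la}_3$. The only differences are presentational: you make explicit the operator-norm/Hardy estimates and the Sturm-oscillation justification that the paper states more tersely.
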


\begin{proof}
	
	We first observe that, for any $v\in H^1_0((1,7))$,
	\begin{align*}
		\cQ^{\rm rad}(v) & =  \int_1^7 \left[ (\pd_R v)^2 - f_a'(R+a-4, \psi_a(R+a-4)) v^2  \right](\erre) \, dR  \\ & = a \left[ \int_1^7 \left[ (\pd_R v)^2- f_a'(R+a-4, \psi_a(R+a-4)) v^2 \right] \, dR + o(1) \| v \|_{H^1(1,7)}^2\right].
	\end{align*}
Moreover, by Lemma \ref{L.f2} (iv), we have that 

\begin{align*} 
& \left | \int_1^7 \left[ (\pd_R v)^2- f_a'(R+a-4, \psi_a(R+a-4)) v^2 \right] \, dR - \int_1^7 \left[ (\pd_R v)^2 - \of'(\opsi(R)) v^2 \right] \, dR \, \right | \\
& \quad =   \bigg| \int_1^7 \Big (- f_a'(R+a-4, \psi_a(R+a-4))+ \of'(\psi_a(R+a-4)) \Big ) v^2 \, dR \\
& \qquad\ + \int_1^7 \Big( - \of'(\psi_a(R+a-4)) + \of'(\opsi(R)) \Big )  v^2 \, dR\, \bigg|  \\ 
& \quad \leq  \frac{m-1}{a}\int_1^7 \frac{v^2}{\rho} \, dR + o(1)\, \| v \|_{H^1(1,7)}^2 =   o(1) \| v \|_{H^1((1,7))}^2\,, \quad \textup{ as } a \to +\infty\,, 
\end{align*}
and so that
$$
\cQ^{\rm rad}(v) = a \, \big[\,\overline{\cQ}(v) +  o(1)\|v\|_{H^1((1,7))}^2\,\big]  \quad \textup{ as } a \to +\infty\,.
$$
On the other hand, it is easy to see that 
$$ 
 \int_1^7 v(R)^2 (R+a-4)  \, dR  = a\,[1+o(1)]  \int_1^7 v(R)^2  \, dR   \quad \textup{ as } a \to +\infty\,.
$$
This implies the convergence of the corresponding Rayleigh quotients, in the sense that
$$
\frac{\cQ^{\rm rad}(v)}{\|v\|^2_{L^2((1,7),\,(R+a-4)  \, dR)}}\, \xrightarrow[]{a \to +\infty}\, \frac{\overline{\cQ}(v)}{\ \|v\|^2_{L^2((1,7))}}\,, \quad  \mbox{ for any fixed radial function } v\,.
$$
	
	By the min-max characterization of the eigenvalues we infer that 
	 $\la_k^{\rm rad} \to \overline{\la}_k$, as $a \to + \infty$, for any fixed $k$. Thus, to conclude the proof we only need to verify that $\overline{\la}_2=0$. On that purpose, observe that $\opsi'\in H_0^1((1,7))$ is a solution of the linearized problem
$$ \xi'' + \of'(\opsi)\xi =0\,.$$
Hence, $\opsi'$ is an eigenfunction with associated eigenvalue $0$. Since $\opsi'$ changes sign exactly once, we conclude that $\overline{\la}_2=0$.
\end{proof}

As we can see in the previous result, the first radial eigenvalue is negative and remains bounded away from $0$ as $a \to + \infty$. This is the reason for the appearance of nonradial degeneracies, as we shall see. Our analysis has to be precise enough to avoid interferences with the second radial eigenvalue, which is converging to $0$ from below. In what follows we consider general eigenvalues with $\ell$-symmetry of the operator $-\fL_a$, as defined in \eqref{symmetry} and below. In next lemma we fix the values of $a_0$ and $\ell$ which will be used throughout this section.

\begin{lemma} \label{l3} Let $c := \frac{1}{3} \min \{-\overline{\la}_1, \overline{\la}_3 \}$. According to Lemma \ref{l2} we take $a_0 \geq 4$ so that   $|\lambda_1^{\rm rad} -\overline{\lambda}_1|<c$ and $|\lambda_3^{\rm rad} -\overline{\lambda}_3|<c$.
	Then, there exists $\ell \in \NN$ such that $\lambda_3^{\ell}(a_0)>0$.
\end{lemma}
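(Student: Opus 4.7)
The plan is to diagonalize the $\ell$-symmetric eigenvalue problem into a family of weighted one-dimensional Sturm--Liouville problems labeled by the angular frequency, and then to exploit the positive centrifugal term to push the first eigenvalues of all nonzero angular modes above zero by choosing $\ell$ sufficiently large.

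Concretely, I would Fourier-decompose functions in $\cX^j_\ell$ as $v(R,\te) = \sum_{k\geq 0} v_k(R) \cos(k\ell\te)$, using that elements of $\cX^j_\ell$ are even in $\te$ and $(2\pi/\ell)$-periodic. Since the potential $f_a'(\Phi_{a,1},\widetilde\psi_a)$ depends only on $R$, the operator $-\fL_a$ commutes with translations and reflections in $\te$, and therefore preserves each Fourier mode. On the subspace spanned by $\cos(k\ell\te)$ the eigenvalue equation reduces to the weighted Sturm--Liouville problem $L_{k\ell}\phi = \la\phi$ on $(1,7)$ with Dirichlet conditions, where
$$
L_{k\ell}\phi := -\phi'' - \frac{\phi'}{R+a-4} + \frac{(k\ell)^2}{(R+a-4)^2}\phi - f_a'(\Phi_{a,1},\widetilde\psi_a)\phi.
$$
Writing the corresponding eigenvalues as $\mu_1^{k\ell}(a) < \mu_2^{k\ell}(a) < \cdots$, one has $\mu_j^{0}(a) = \la_j^{\rm rad}(a)$, and the full list of $\ell$-symmetric eigenvalues of $-\fL_a$, counted with multiplicity, is $\bigcup_{k\geq 0}\{\mu_j^{k\ell}(a)\}_{j\geq 1}$.

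The central estimate is a lower bound for $\mu_1^{k\ell}(a_0)$ when $k\geq 1$. The quadratic form associated with $L_{k\ell}$ differs from $\cB^{\rm rad}$ only by the positive centrifugal contribution $\int_1^7 (k\ell)^2 (R+a_0-4)^{-2}\phi^2 (R+a_0-4)\,dR$, and since $(R+a_0-4)^{-2}\geq (a_0+3)^{-2}$ on $(1,7)$, the min--max characterization yields
$$
\mu_1^{k\ell}(a_0) \;\geq\; \la_1^{\rm rad}(a_0) \;+\; \frac{(k\ell)^2}{(a_0+3)^2}\qquad \text{for every } k\geq 1.
$$
The selection of $a_0$ forces $\la_1^{\rm rad}(a_0) < -2c$, so by taking $\ell$ so large that $\ell^2 > -\la_1^{\rm rad}(a_0)(a_0+3)^2$ one secures $\mu_1^{k\ell}(a_0) > 0$ for every $k\geq 1$.

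Fixing such an $\ell$, the hypotheses on $a_0$ give $\la_3^{\rm rad}(a_0) > \overline\la_3 - c \geq 2c > 0$, while Lemma \ref{l1} yields $\la_1^{\rm rad}(a_0), \la_2^{\rm rad}(a_0) < 0$. Consequently the only nonpositive $\ell$-symmetric eigenvalues of $-\fL_{a_0}$ are $\la_1^{\rm rad}(a_0)$ and $\la_2^{\rm rad}(a_0)$, and the third one in increasing order satisfies
$$
\la_3^\ell(a_0) = \min\{\la_3^{\rm rad}(a_0),\,\mu_1^\ell(a_0)\} > 0,
$$
as claimed. The only mild subtlety will be to justify the mode-by-mode reduction in the presence of the singular potential, but this is routine: by Lemma \ref{L.f'} the $\rho^{-2}$ part of $-f_a'$ carries the \emph{positive} sign $(m-1)(m-2)$, so the Dirichlet quadratic form on $H_0^1(\Om)$ is well defined via the Hardy inequality and splits cleanly across Fourier modes.
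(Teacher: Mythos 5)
Your proposal is correct and takes essentially the same route as the paper: decompose the $\ell$-symmetric problem into angular Fourier modes $\cos(k\ell\theta)$, use the centrifugal term together with the min--max comparison against $\lambda_1^{\rm rad}(a_0)$ to make every $k\geq 1$ mode positive for $\ell$ large, and handle the radial modes via $\lambda_3^{\rm rad}(a_0)>\overline{\lambda}_3-c\geq 2c>0$. Your uniform lower bound $(R+a_0-4)^{-2}\geq (a_0+3)^{-2}$ is in fact slightly more careful than the constant appearing in the paper's choice of $\ell$, but the argument is the same.
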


\begin{proof}
	If $\lambda_3^\ell(a_0) =\lambda_3^{\rm rad}(a_0)$ then it is positive and we are done. If not, the corresponding eigenfunction $\phi_3$ can be written in Fourier as
	$$ 
	\phi_3(R,\te) = \sum_{k \in \NN} \varphi_k(R) \cos ( k \ell\theta)\,.
	$$
Hence, it suffices to show that for all functions of the form $v(R, \theta) = \varphi(R) \cos ( k \ell \theta)$ the quadratic form $\cQ(v)$ is strictly positive for $\ell \in \NN$ sufficiently large. It turns out that, for $v$ as above,
\begin{align*}
		\cQ(v) & =  \int_1^7 \int_0^{2\pi} \left[ (\pd_R v)^2 +\frac{(\pd_\theta v)^2}{(\erre)^2} - f_a'(R+a-4, \psi_a(R+a-4)) v^2  \right](\erre) \, dR \, d\theta  \\ & =  \pi \int_1^7 \left[ (\pd_R \varphi)^2 + \frac{\ell^2k^2 }{(\erre)^2} \varphi^2 - f_a'(R+a-4, \psi_a(R+a-4)) \varphi^2 \right] (\erre)\, dR.
	\end{align*}
Thus, if we choose $\ell \in \NN$ such that $\frac{\ell^2 }{(a_0-3)^2} + \overline{\lambda}_1  -2c >0$, we conclude that, for $v$ as above,
		\begin{align*}
		\cQ(v) & =  \pi \int_1^7 \left[ (\pd_R \varphi)^2 + \frac{\ell^2k^2 }{(\erre)^2} \varphi^2 - f_a'(R+a-4, \psi_a(R+a-4)) \varphi^2 \right](\erre) \, dR \\ & \geq \pi \int_1^7 \left[ (\pd_R \varphi)^2 + (c-\la_1^{\rm rad}) \varphi^2 - f_a'(R+a-4, \psi_a(R+a-4)) \varphi^2 \right] (\erre) \, dR \\ & \geq \pi c \int_1^7 \varphi^2 \, (\erre) \, dR >0\,.\end{align*}
The lemma then follows.
\end{proof}

In the next lemma we show that $\lambda_3^\ell(a)$ becomes negative if $a > a_0$ is large enough.

\begin{lemma} Let $a_0$, $\ell$ and $c$ as in Lemma \ref{l3}. Then, there exists $a_2>a_0$ such that $\lambda_3^\ell(a_2)<0$.
\end{lemma}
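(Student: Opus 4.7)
The plan is to use the min-max characterization of eigenvalues: to establish $\lambda_3^\ell(a_2)<0$, it suffices to exhibit, for some $a_2>a_0$, a three-dimensional subspace of the $\ZZ_\ell$-symmetric functions in $H_0^1(\Om)$ on which the quadratic form $\cQ(v):=\cB(v,v)$ (pulled back to the fixed annulus $\Om$ through $\Phi_a$) is negative definite.

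The first two directions will be radial.  Pulling back via $\Phi_{a,1}$ the compactly supported functions $\xi_1,\xi_2$ built in the proof of Lemma~\ref{l1}, I obtain two radial (hence $\ZZ_\ell$-symmetric) test functions $\tilde\xi_1,\tilde\xi_2\in H^1_{0,\ell}(\Om)$ with disjoint supports, for which Lemma~\ref{l1} gives $\cQ(\tilde\xi_i)<0$ for every $a\geq 4$. For the third direction I take a genuinely $\ZZ_\ell$-symmetric test function
\[
v(R,\theta):=\phi(R)\cos(\ell\theta),
\]
where $\phi\in H_0^1((1,7))$ is the first eigenfunction of the limiting operator $\overline{\fL}$, so that $\overline{\cQ}(\phi)=\overline{\lambda}_1\|\phi\|_{L^2}^2$ with $\overline{\lambda}_1<0$. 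Orthogonality in $\cB$ of the three test functions is immediate: the cross terms $\cB(\tilde\xi_i,v)$ vanish because $\int_0^{2\pi}\cos(\ell\theta)\,d\theta=0$ and $\tilde\xi_i$ has no $\theta$-dependence, while $\cB(\tilde\xi_1,\tilde\xi_2)=0$ because the supports are disjoint. Consequently, on the span of the three test functions $\cQ$ decouples into the sum of three diagonal contributions, so negative definiteness reduces to showing that each is strictly negative.

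It remains to show that $\cQ(v)<0$ when $a$ is large enough. A direct computation gives
\[
\cQ(v)=\pi\,\cQ^{\mathrm{rad}}(\phi)+\pi\ell^2\int_1^7\frac{\phi(R)^2}{R+a-4}\,dR\,.
\]
Repeating the asymptotic argument of Lemma~\ref{l2} (which relies on the expansion of $f_a'(r,\psi_a(r))$ from Lemma~\ref{L.f2}(iv)), the first summand equals $\pi a\,\overline{\lambda}_1\|\phi\|_{L^2}^2[1+o(1)]$ as $a\to\infty$, whereas the angular term is bounded by $\pi\ell^2(a-3)^{-1}\|\phi\|_{L^2}^2=O(1/a)$. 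Since $\overline{\lambda}_1<0$, the radial contribution dominates and $\cQ(v)<0$ for all sufficiently large $a$; choosing any such $a_2>a_0$ then completes the proof via min-max.

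The only real obstacle is to ensure that all three test functions give negative contributions at the \emph{same} value of~$a$: this is not a difficulty here because $\ell$ and $\phi$ are fixed (independently of~$a$) at the outset, so the positive angular penalty $\ell^2/(R+a-4)^2$, which prevented $\lambda_3^\ell(a_0)$ from being negative in Lemma~\ref{l3}, becomes subdominant as $a\to\infty$ compared with the linearly growing negative contribution $\pi a\,\overline{\lambda}_1\|\phi\|_{L^2}^2$ produced by the bulk of $\cQ^{\mathrm{rad}}(\phi)$.
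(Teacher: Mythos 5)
Your proposal is correct: a negative\-/definite three\-/dimensional $\ZZ_\ell$\-/symmetric subspace plus Courant--Fischer does give $\lambda_3^\ell(a_2)<0$, the cross terms vanish as you say, and the estimate $\cQ\big(\phi\cos(\ell\theta)\big)=\pi\,\cQ^{\mathrm{rad}}(\phi)+\pi\ell^2\int_1^7\frac{\phi^2}{R+a-4}\,dR$ with $\cQ^{\mathrm{rad}}(\phi)=a\big[\overline{\cQ}(\phi)+o(1)\big]$ and the angular term of size $O(\ell^2/a)$ is sound. The mechanism is the same as in the paper — for large $a$ the angular penalty $\ell^2/(R+a-4)^2$ is beaten by the negativity coming from $\overline{\lambda}_1$ — but the implementation differs. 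The paper takes as test function $\phi_1(R)\cos(\ell\theta)$ with $\phi_1$ the \emph{first radial eigenfunction at the parameter $a_2$ itself}; this makes the radial part of the Rayleigh quotient exactly $\lambda_1^{\mathrm{rad}}(a_2)$, and the orthogonality of $v$ to the first two radial eigenfunctions (which are negative directions by Lemma~\ref{l1}) replaces your explicit span of $\tilde\xi_1,\tilde\xi_2$; the threshold for $a_2$ is then quantitative, namely $\frac{\ell^2}{(a_2-3)^2}+\overline{\lambda}_1+2c<0$, and no asymptotics beyond the constant $c$ fixed in Lemma~\ref{l3} are needed. You instead use a fixed, $a$\-/independent profile (the first eigenfunction of the limit problem) and re-invoke the convergence argument of Lemma~\ref{l2}, together with the truncated functions from Lemma~\ref{l1} for the first two negative directions. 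Both routes are valid; the paper's choice buys an explicit admissible $a_2$ and a one-line evaluation of the radial term, while yours avoids working with the $a$\-/dependent eigenfunction at the cost of redoing the $a\to\infty$ asymptotics (and of checking the easy linear independence and cross-term cancellations explicitly).
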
 

\begin{proof}
Let $a_2>a_0$ be such that $ \frac{\ell^2}{(a_2-3)^2} +\bar{\la}_1 + 2c < 0$, and let $\phi_1$ be the eigenfunction associated to $\la_1(a_2)= \la_1^{\rm rad}(a_2)$. Then, we choose and fix the test function
$$
v(R, \theta) := \phi_1(R) \cos (\ell \theta)\,.
$$
By construction, $v$ is $L^2$-orthogonal to the two radial eigenfunctions $\phi_1$, $\phi_2$ associated to the eigenvalues $\la_1^{\rm rad}(a_2)$, $\la_2^{\rm rad}(a_2)$. Hence, it suffices to show that $\cQ(v) <0$. Arguing as in the proof of  Lemma \ref{l3}, we then conclude that	
\begin{align*}
		\cQ(v) & =  \pi \int_1^7 \left[ (\pd_R \phi_1)^2 + \frac{\ell^2 }{(\erre)^2} \phi_1^2 - f_a'(R+a-4, \psi_a(R+a-4)) \phi_1^2 \right](\erre) \, dR \\ & \leq \pi \int_1^7 \left[ (\pd_R \phi_1)^2 + (-c-\la_1^{\rm rad}) \phi_1^2 - f_a'(R+a-4, \psi_a(R+a-4)) \phi_1^2 \right] (\erre) \, dR \\ & = - \pi c \int_1^7 \phi_1^2 \, (\erre) \, dR <0\,,
\end{align*}
and the lemma follows.
\end{proof}

We can now prove Proposition \ref{eigen-crossing}.

\begin{proof}[Proof of Proposition \ref{eigen-crossing}]
	
	We fix $a_0$, $c$, $\ell$ and $a_2$ as in the previous lemmas and take $\ep_0>0$ such that $\ep_0 < \min \{ c, - \la^{\rm rad}_2(a)\}$ for all $a \in [a_0, a_2]$. This can be ensured thanks to Lemma \ref{l1}. Then, we set
$$
A:= \big\{ a \in \RR, \ a \geq a_0: \ \lambda_3^\ell(a) <0\,\big\}\,.
$$
Clearly, $a_2 \in A$, which is then a non empty set, and so we can define 
$$
\alpha := \inf A\,.
$$
By definition, $\lambda_3^\ell(\alpha)=0$ and $\lambda_3^\ell(a) \geq 0$ for all $a \in [a_0,\alpha]$.
	
Now, given $\ep \in (0, \ep_0)$, we can take $a_1>\alpha$ sufficiently close to $\alpha$ so that
	\begin{enumerate}
		\item[(a)] $\lambda_3^\ell(a_1)<0$.
		\item[(b)] $\la_3^\ell(a) > -\ep$ for all $a \in [a_0, a_1]$.
	\end{enumerate}
	
We have then proved assertions (i) and (ii) of Proposition \ref{eigen-crossing}. We now turn our attention to (iii). Observe that, by our choice of $\ep_0$ and $c$ in Lemma \ref{l3}, we have $\lambda_3^{\rm rad}(a)>0$ for any $a \in [a_0,a_1]$. Hence, if $\lambda_3^\ell(a) \leq 0$, it corresponds to a nonradial eigenfunction $\phi_3$. Being $\phi_3$ the first nonradial eigenfunction, it must be written as $\phi_3(R, \theta) = \phi(R) \cos (\ell \theta)$, where $\phi(R)$ is the eigenfunction corresponding to the first eigenvalue (which is equal to $0$) associated to the bilineal form $\cB^{\ell}:H_0^1((1,7)) \times H_0^1((1,7))  \to \RR$, given by
\begin{align*}
	\cB^{\ell}(v,w)& :=  \int_{1}^7 \left[ \pd_R v\, \pd_R w+ \frac{\ell^2 }{(\erre)^2} v w -  f_a'(\Phi_{a,1}, \widetilde{\psi}_a) v  w \right](\erre) \, dR\,.
\end{align*}

Again, this is a one-dimensional problem and its eigenfunction $\phi$ must be unique. We thus conclude that $\lambda_3^\ell(a)$ is simple. Also, the function $\phi$ must be positive, since it corresponds to the first eigenvalue. On the other hand, by Proposition \ref{P.regDGa}, we know that $\phi_3 = \rho^{m-1} w$ for some $w \in \cX^j$. Moreover, $w$ is continuous and cannot vanish on the boundary of $\Omega$. This implies the desired estimate $\phi \geq C \rho^{m-1}$ for some $C>0$.

We finally show the validity of  Proposition \ref{eigen-crossing} (iv). We first prove that $\la_4^\ell(a)>0$ for any $a \in [a_0, \alpha]$. Recall that for any $a \in [a_0, \alpha]$, $\la_3^\ell(a) \geq 0$. The claims follows immediately if $\la_3^\ell(a)>0$. Instead, if $\la_3^\ell(a)=0,$ we conclude by its simplicity, which has been proved above, and the strict inequality holds. It suffices now to take $a_1$ closer to $\alpha$, if necessary, so that (iv) is satisfied, and the proof is concluded.
\end{proof}

\bigbreak

We conclude this section with the proof of Proposition \ref{P.nonradial}.

\begin{proof}[Proof of Proposition \ref{P.nonradial}]
Let $(w_n)_{n=1}^{\infty} \subset \cX_\ell^j$ be as in \eqref{E.sequencewn}. First of all, observe that, for all $n \in \NN$,
\begin{equation} \label{E.meanvaluewn}
\begin{aligned}
0 & = \cG_{a_n}(w_n)\\
& = \cG_{a_n}(0) + \int_0^1 D\cG_{a_n}(tw_n) w_n\, dt  = D\cG_{a_n}(0) w_n + \int_0^1 \big( D\cG_{a_n}(tw_n) - D\cG_{a_n}(0) \big) w_n \, dt\,. 
\end{aligned}
\end{equation}
Moreover, by Lemma \ref{L.DGa}, we have that
$$
\left\|\int_0^1 \big( D\cG_{a_n}(tw_n) - D\cG_{a_n}(0) \big) w_n \, dt\, \right\|_{\cY^j} = o(\|w_n\|_{\cX^j})\,, \quad \textup{as } n \to \infty\,.
$$
Hence, by Proposition \ref{P.regDGa}, it follows that, for all $n \in \NN$ sufficiently large,
\begin{equation} \label{E.equivalencewn}
\|w_n\|_{\cY^j} \lesssim \|w_n\|_{\cX^j} \lesssim \|\rho^{m-1} w_n\|_{H^1(\Om)} \lesssim \|w_n\|_{\cY^j}\,.
\end{equation}

Now, we set $u_n := w_n/\|w_n\|_{\cX^j}$ for all $n \in \NN$. Since $(u_n)_{n=1}^\infty$ is a bounded sequence, up to a subsequence if necessary, we have
$$
u_n \rightharpoonup u_0 \quad \textup{in } \cX^j\,, \quad \textup{and} \quad u_n \to u_0 \quad \textup{in } \cY^j\,,
$$
for some $u_0 \in \cX^j$. Note that the strong convergence in $\cY^j$ follows from the compact embedding proved in Lemma \ref{L.CompactEmbedding}. Furthermore, taking into account \eqref{E.equivalencewn}, we infer that $u_0 \not \equiv 0$. 

On the other hand, arguing as in \eqref{E.meanvaluewn}, and using the continuous dependence of $D\cG_{a}(0)$ with respect to $a$, and the convergence $a_n \to a^*$, we get that
$$
-D\cG_{a^*}(0) u_0 = 0 = \lambda_3^{\ell}(a^*) \rho\, u_0\,.
$$
Hence, we conclude that $\rho^{m-1} u_0 \equiv \phi_3$, with $\phi_3$ as in Proposition \ref{eigen-crossing}, and so that $u_0 \not \equiv 0$ on $\pd \Om$. Even more, we infer that $u_0(R,\te) = \widetilde{\phi}(R) \cos(\ell\te)$ for some radial function $\widetilde{\phi}$ with $\inf_{1 < R < 7}\, \widetilde{\phi}(R) > 0$. This allows us to conclude that, for all $n \in \NN$ sufficiently large, the functions $b_{w_n}$ and $B_{w_n}$ given in \eqref{E.wto} with $w := w_n$ are nonconstant. 
\end{proof}





\section*{Acknowledgements}
This work has received funding from the European Research Council (ERC) under the European Union's Horizon 2020 research and innovation programme through the grant agreement~862342 (A.E.). A.E. is also supported by the grant PID2022-136795NB-I00 of the Spanish Science Agency and the ICMAT--Severo Ochoa grant CEX2019-000904-S. A.J.F. is partially funded by Proyecto de Consolidación Investigadora 2022, CNS2022-135640, MICINN. D.R. has been supported by:
the Grant PID2021-122122NB-I00 of the MICIN/AEI, the \emph{IMAG-Maria de Maeztu} Excellence Grant CEX2020-001105-M funded by MICIN/AEI, and the Research Group FQM-116 funded by J. Andaluc\'\i a.

\appendix

\section{Smooth compactly supported solutions given by \\ elliptic equations with autonomous nonlinearities are locally radial}
\label{A.nogo}

Our objective in this short appendix is to show that one cannot obtain non-radial smooth stationary Euler flows with compact support using the usual formulation in terms of (autonomous) semilinear elliptic equations.

To see this, suppose that $v\in C^1(\RR^2)$ is a classical solution of the stationary Euler equations~\eqref{E.Euler} on~$\RR^2$, and that there exists a region with $C^1$~boundary $\Om\subset\RR^2$ such that
\begin{equation}\label{E.vA}
v=\begin{cases}
	\nabla^\perp \bar\psi &\text{in }\Om,\\
	0 &\text{in }\RR^2\backslash\Om.
\end{cases}
\end{equation}
Here $\bar\psi\in C^2(\BOm)$ is a solution of a semilinear elliptic equation of the form
\begin{equation}\label{E.semiA}
\Delta \psi + f(\psi)=0
\end{equation}
in the domain~$\Om$, for some continuous function $f\in C(\RR)$. We assume that $\Om$ is bounded, and that $\pd\Om$ consists of $J\geq1$ connected components~$\Ga_j$.

The first observation is that one can equivalently assume that $v$ is globally given by the perpendicular gradient of a solution of the semilinear equation~\eqref{E.semiA}:

\begin{proposition}\label{P.support}
The vector field~\eqref{E.vA} can be equivalently written as $v=\nabla^\perp\psi$, where $\psi\in C^2(\RR^2)$ satisfies Equation~\eqref{E.semiA} in all of~$\RR^2$.
\end{proposition}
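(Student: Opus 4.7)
The plan is to exploit the regularity $v\in C^1(\RR^2)$ at $\partial\Omega$ to force both $\bar\psi$ and its derivatives to vanish on each $\Gamma_j$, with the boundary values themselves being zeros of $f$, and then to extend $\bar\psi$ by these constants to a global $C^2$ solution of \eqref{E.semiA}.

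First I would show that $\bar\psi|_{\Gamma_j}\equiv c_j$ for some constant~$c_j$: since $v$ is continuous on~$\RR^2$ and vanishes outside~$\Omega$, it must also vanish on~$\partial\Omega$, so $\nabla\bar\psi=0$ on each connected~$\Gamma_j$, which forces $\bar\psi$ to be constant there. Next I would argue that $f(c_j)=0$ for each~$j$: the $C^1$ hypothesis on~$v$ together with $v\equiv 0$ outside~$\Omega$ forces $Dv=0$ on~$\partial\Omega$ by matching the exterior value, and expressing $Dv$ in terms of the Hessian of~$\bar\psi=\bar\psi(x_1,x_2)$ yields $D^2\bar\psi=0$ on each~$\Gamma_j$. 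In particular $\Delta\bar\psi|_{\Gamma_j}=0$, and from \eqref{E.semiA} evaluated on $\Gamma_j$, $f(c_j)=-\Delta\bar\psi|_{\Gamma_j}=0$.

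With these two ingredients in hand, I would define
\[
\psi(x) := \begin{cases}\bar\psi(x) & \text{if } x\in\Omega,\\ c_j & \text{if } x\in U_j,\end{cases}
\]
where $U_j$ denotes the connected component of $\RR^2\setminus\Omega$ whose boundary is~$\Gamma_j$. Continuity of~$\psi$ across each~$\Gamma_j$ then follows from the first step; continuity of $\nabla\psi$ (zero on both sides) and of $D^2\psi$ (zero on both sides, by the second step) gives $\psi\in C^2(\RR^2)$; and the semilinear equation~\eqref{E.semiA} holds globally, since it holds in~$\Omega$ by hypothesis, while on each $U_j$ we have $\Delta\psi=0$ and $f(\psi)=f(c_j)=0$. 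Evidently $v=\nabla^\perp\psi$, so this produces the desired global stream function.

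The main obstacle is the topological step underlying the definition of~$\psi$, namely that each component of $\RR^2\setminus\Omega$ can be naturally assigned to a unique~$\Gamma_j$. For $\Omega$ connected this is immediate from the classical fact that a bounded planar domain with $C^1$ boundary is homeomorphic to a sphere with $J$ disjoint disks removed, so that $\RR^2\setminus\Omega$ has exactly $J$ components, each bounded by precisely one~$\Gamma_j$ (one unbounded exterior and $J-1$ bounded ``holes''). If $\Omega$ has several components, a single component of $\RR^2\setminus\Omega$ may share boundary curves with different components of~$\Omega$, and one must first shift $\bar\psi$ by an additive constant on each component of~$\Omega$ so that the boundary values along each shared exterior component agree; this reduces the problem to the connected case, and since the shift does not alter $v=\nabla^\perp\bar\psi$ it is consistent with the construction.
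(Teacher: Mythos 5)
Your first three steps coincide with the paper's own proof: from $v\in C^1(\RR^2)$ and $v\equiv 0$ outside $\Om$ you get $\nabla\bar\psi=0$ and $\nabla^2\bar\psi=0$ on $\pd\Om$, hence $\bar\psi|_{\Gamma_j}=c_j$ and, by continuity of $\Delta\bar\psi$ and $f$ up to the boundary, $f(c_j)=0$; then you extend by the constants $c_j$ on the complementary components and check that $\Delta\psi$ and $f(\psi)$ both vanish there. This is exactly the argument in the paper, which likewise assigns to each connected component of $\RR^2\setminus\overline\Om$ the constant coming from its boundary curve (the paper simply relabels so that $\pd\Om_j=\Gamma_j$, i.e.\ it works in the situation where each complementary component is bounded by a single $\Gamma_j$, which is the case when $\Om$ is connected, as you correctly note).

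The part of your proposal that does not work is the proposed reduction of the disconnected case by adding constants on the components of $\Om$. If you replace $\bar\psi$ by $\bar\psi+k$ on some component, the new function no longer satisfies \eqref{E.semiA}: it satisfies $\Delta u+f(u-k)=0$, i.e.\ the equation with the translated nonlinearity $f(\cdot-k)$. So after the shift the glued function $\psi$ would obey different autonomous equations on different components of its support, and there is in general no single continuous nonlinearity for which \eqref{E.semiA} holds globally (the values of $-\Delta\psi$ on a common level set coming from two differently shifted components need not agree). The fact that the shift leaves $v=\nabla^\perp\bar\psi$ unchanged is not enough; what must be preserved is the semilinear equation itself. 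In the genuinely problematic configuration — a complementary component whose boundary meets several $\Gamma_j$ with distinct constants $c_j$ — your construction therefore does not produce a solution of \eqref{E.semiA}, and one would instead have to argue that the constants on the boundary curves of any given complementary component coincide (or restrict, as the paper implicitly does, to the situation where each complementary component is bounded by exactly one $\Gamma_j$). Apart from this flawed extra step, the argument is the paper's.
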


\begin{proof}
Since $v\in C^1(\RR^2)$, it follows that $\nabla\bar\psi|_{\pd\Om}=0$ and $\nabla^2\bar\psi|_{\pd\Om}=0$, which in particular implies that $\Delta\bar\psi|_{\pd\Om}=0$, and that there exist constants $c_j$ such that $\bar\psi|_{\Gamma_j}=c_j$. Here, $\{\Gamma_j\}_{j=1}^J$ denote the connected components of $\pd \Om$. By Equation~\eqref{E.semiA} and the continuity of~$f$, one must then have $f(c_j)=0$ for all $1 \leq j \leq J$. Let us then denote by $\Om_1,\ldots, \Om_J$ the different connected components of $\RR^2 \setminus \overline{\Om}$, relabeling the boundary components if necessary so that $\pd \Om_j = \Gamma_j$, and define the $C^2$ function
	\begin{equation*} 
\psi:=\begin{cases}
	 \bar\psi &\text{in }\Om\,,\\
	c_j &\text{in }\Om_j \text{ for each } 1\leq j\leq J\,.
\end{cases}
\end{equation*}
The functions $\Delta\psi$ and $f(\psi)$ are continuous on~$\RR^2$, identically zero in~$\RR^2\backslash\overline{\Om}$, and coincide with $\Delta\bar\psi$ and $f(\bar\psi)$, respectively, in~$\Om$, where they satisfy~\eqref{E.semiA} by hypothesis. This immediately implies that~$\psi$ is a solution to~\eqref{E.semiA} in the whole of~$\RR^2$ as claimed.
\end{proof}

\begin{theorem}\label{T.nogoA}
Let $\psi\in {C^3(\RR^2)}$ satisfy a semilinear equation of the form~\eqref{E.semiA} in~$\RR^2$ with $f\in C(\RR)$. If $\nabla\psi$ is compactly supported, then $\psi$ is locally radial. In other words, define: $$D = \{ x \in \RR^2: \ \nabla \psi(x) \neq 0\}.$$ Then,
$$ 
D = \bigcup_{i \in I} A_i\,,
$$
where $I$ is a countable set and where $\{A_i\}_{i\in I}$ are disjoint annuli or disks. Moreover, $\psi$ is radially symmetric in each set~$A_i$.
\end{theorem}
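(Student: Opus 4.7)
The proof splits into three stages, which I outline below.

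\textbf{Stage 1 (setup and boundary structure).} Since $\nabla\psi$ is continuous and compactly supported, $D=\{\nabla\psi\neq 0\}$ is open and bounded, hence a disjoint union of at most countably many bounded open connected components $A_i$. Fix one such component $A$. On $\partial A\subset D^{c}$, $\nabla\psi=0$, and integrating $\nabla\psi$ along paths shows that $\psi$ is constant on each connected component of $\partial A$. In $A$ itself, $\psi$ has no critical points, so its extrema over $\overline A$ are attained on $\partial A$. A useful bonus observation: since $\Delta\psi=-f(\psi)\in C^{1}(A)$ and $\nabla\psi\neq 0$ on $A$, one can locally invert $\psi$ and conclude that $f$ is automatically of class $C^{1}$ on the range $\psi(A)$, even though globally $f$ is only continuous.

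\textbf{Stage 2 (topology of each component).} My plan is to show that $\partial A$ has exactly two connected components, so that $A$ is topologically an annulus (or a punctured disk if one boundary component degenerates to a point). The case of a single component $\Gamma$ with $\psi\equiv c$ on $\Gamma$ is ruled out by writing the equation as a linear one, $\Delta\psi+\tilde c(x)(\psi-c)=0$, with the continuous coefficient $\tilde c(x):=(f(\psi(x))-f(c))/(\psi(x)-c)$ (and value $f'(c)$ on the level $\psi=c$, using the regularity from Stage 1), and then applying the strong maximum principle: $\psi\equiv c$ on $\overline A$, contradicting $\nabla\psi\neq 0$. For three or more components, at least one boundary curve $\Gamma_j$ carries $\psi=c_j$ strictly between $\min_{\overline A}\psi$ and $\max_{\overline A}\psi$; one constructs subdomains of $A$ lying on either side of the level set $\{\psi=c_j\}$ near $\Gamma_j$ and applies Hopf's boundary point lemma to the same linearized equation, which forces $\partial_\nu\psi\neq 0$ on $\Gamma_j$, contradicting $\nabla\psi=0$ there.

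\textbf{Stage 3 (radial symmetry).} On the annular component $A$, $\psi$ satisfies the overdetermined problem
\[
\Delta\psi+f(\psi)=0\text{ in }A,\qquad \psi|_{\Gamma_j}=c_j,\quad \partial_\nu\psi|_{\Gamma_j}=0,\quad j=1,2.
\]
This is a Serrin/Reichel-type setup: Dirichlet plus homogeneous Neumann data on both sides of an annular domain. By the method of moving planes (Alexandrov reflection), which applies since $f$ is $C^1$ on the range of $\psi$ by Stage 1 and the double-vanishing $\nabla\psi=0$ on $\partial A$ supplies the starting configuration for reflections from every direction, one concludes that $A$ is a round annulus $\{r_1<|x-x_0|<r_2\}$ and $\psi$ is radial about $x_0$. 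Taking the union over components yields the statement.

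\textbf{Main obstacle.} The hardest step will be Stage 2, namely ruling out $\geq 3$ boundary components without assuming any sign or monotonicity of $\psi$; one must carefully pair each intermediate boundary curve with an interior region where Hopf's lemma applies to a suitable linearized operator, which is made possible by the $C^{1}$-regularity of $f$ on $\psi(A)$ deduced in Stage 1. Once the annular geometry is secured, the symmetry conclusion of Stage 3 is standard, although formulating the overdetermined rigidity statement in exactly the form required (arbitrary continuous $f$, pair of interior level values $c_1,c_2$ not prescribed a priori) may require invoking a slightly non-standard version of Reichel's theorem, or replacing moving planes by a Weinberger-type $P$-function argument based on $P=|\nabla\psi|^{2}+2F(\psi)$.
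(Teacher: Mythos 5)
Your Stage 1 observation is correct but does not deliver what Stages 2 and 3 need, and this is where the proposal breaks down. Inverting $\psi$ along a gradient trajectory gives $f\in C^1$ only near values attained at points where $\nabla\psi\neq 0$, i.e.\ on the open interior of the range of $\psi|_A$; it gives no Lipschitz control at the critical values $c_j$ attained on $\partial A$, which are exactly the values at which your linearization is performed. This is not a removable technicality: for the solutions this theorem is designed to cover, $f$ behaves like $-m(m-1)|t-c_j|^{1-\frac2m}$ near the boundary value (this is precisely the structure exploited in the rest of this paper), so the coefficient $\tilde c(x)=(f(\psi)-f(c_j))/(\psi-c_j)$ blows up like $\dist(x,\partial A)^{-2}$, and Hopf's boundary point lemma, as well as the maximum-principle comparison that powers moving planes, are simply not available there. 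Worse, the conclusion you want Hopf to produce is false in this regime: compactly supported radial solutions vanish to order $m\geq 3$ together with their gradients at their boundary circles, so $\partial_\nu\psi=0$ at a boundary component is perfectly compatible with the equation when $f$ is merely continuous. In addition, $\partial A$ is just the boundary of $\{\nabla\psi\neq0\}$ and carries no a priori regularity, so the interior-ball condition for Hopf and the boundary smoothness required by Serrin/Reichel arguments are missing; the rewriting $\Delta\psi+\tilde c\,(\psi-c)=0$ also presupposes $f(c)=0$, which is not automatic for interior boundary components. Finally, Stage 3 invokes an overdetermined rigidity theorem for annuli with only continuous $f$: the known results (Reichel, Sirakov) require locally Lipschitz nonlinearities and smooth boundaries, and the $P$-function fallback does not rescue this, since for general $f$ one computes $\Delta P=2\big(|D^2\psi|^2-(\Delta\psi)^2\big)$, which has no sign. (By contrast, your exclusion of a single boundary component needs no PDE input at all: the extrema of $\psi$ over $\overline A$ are attained on $\partial A$, where $\psi\equiv c$, forcing $\psi\equiv c$.)

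The paper's proof avoids all of these obstructions by a different route: after normalizing so that $\psi$ has compact support and $f(0)=0$, it treats the signed case directly with Brock's local radial symmetry theorem (continuous Steiner symmetrization), which requires only continuity of $f$ and no regularity of the level sets, and whose conclusion is exactly the decomposition of $\{\nabla\psi\neq0\}$ into annuli on which $\psi$ is radial. For sign-changing $\psi$, the gradient-flow inversion (your Stage 1 trick, but applied at a zero of $\psi$ where $\nabla\psi\neq0$, so that the value $0$ is non-critical) makes $f$ of class $C^1$ near $0$ and yields, via unique continuation, that $\nabla\psi=0$ on $\{\psi=0\}$; then $\psi_\pm=\max\{\pm\psi,0\}$ are $C^1$ nonnegative solutions and Brock applies to each. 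So the local regularity of $f$ at non-critical values is indeed the right observation, but it is used to legitimize the splitting $\psi=\psi_+-\psi_-$, not to upgrade $f$ at the critical values, which is impossible. To salvage your plan you would need a symmetry mechanism tolerant of non-Lipschitz $f$ and rough boundaries, and that is exactly what the rearrangement argument supplies.
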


\begin{proof} Since $\nabla \psi$ has compact support, we have that $\psi$ is constant outside a bounded set. By adding a constant if necessary, we can assume that $\psi$ itself has compact support. Let us also point out that $f(0)=0$. We now consider separately two cases.

\medbreak \noindent \textit{Case 1: $\psi$ does not change sign.} The case where $\psi \geq 0$ immediately follows from \cite{brock}. Obviously so does the case $\psi \leq 0$, simply by considering the function $-\psi$. 

\medbreak \noindent \textit{Case 2: $\psi$ changes sign.} We start by proving that $\nabla \psi= 0$ on the zero set $\psi^{-1}(0)$. For this, we argue by contradiction, and assume that $\psi(p)=0$ and $\nabla \psi(p) \neq 0$ for some $p \in \RR^2$. Then, we follow the ideas of \cite{Hamel1, Hamel2}: for some $\delta >0$, let $ \sigma: (-\delta, \delta) \to \RR^2$ be the solution to the ODE
$$ 
\left \{ \begin{aligned}  \sigma'(t) & = \nabla \psi (\sigma(t))\,, \quad t \in (-\delta, \delta)\,, \\ \sigma(0)& =p\,.  \end{aligned}  \right.
$$
As $\nabla \psi(p) \neq 0$, we have that $g'(0)>0$, where $g= \psi \circ \sigma$. By taking $\delta>0$ smaller if necessary and suitable $\ep_i >0$, we have that $g: (-\delta, \delta) \to (-\ep_1, \ep_2 )$ is a diffeomorphism. In this way, we can write $f$ as
$$f|_{(-\ep_1, \ep_2)}= - \Delta \psi \circ \sigma \circ g^{-1}.$$
Since $\psi$ is $C^3$ in $\Omega$, then $f|_{(-\ep_1, \ep_2)}$ is a $C^1$ function. But this ensures that the problem $\Delta \psi + f(\psi)=0$ has a unique continuation property on the zero level set, which implies that $\psi\equiv 0$, which is a contradiction. Hence, we conclude that $\nabla \psi(x) = 0$ at every point satisfying $\psi(x) = 0$. 

Next, let us define 
$$\psi_+(x) := \max \{ \psi(x), 0\} = \left \{\begin{aligned} & \psi(x) && \text{if }x \in \Omega_+\,, \\ & 0  && \text{if }x \notin \Omega_+\,, \end{aligned} \right.
$$
where $\Omega_+ :=\{x \in \Omega: \ \psi(x)>0\}$. Clearly, $\psi=0$ on $\partial \Omega_+$, so it follows that $\nabla \psi=0$ on $\partial \Omega_+$. This implies that $\psi_+ \in C^1(\RR^2)$, and then it follows that it is a classical nonnegative solution to \eqref{E.vA}. As in \textit{Case 1}, we then conclude that $\psi_+$ is locally radial. 

The same argument can be applied to $\psi_- (x):= \max \{-\psi(x), 0\}$, so the theorem follows.
\end{proof}

\bibliographystyle{amsplain}

\begin{thebibliography}{99}\frenchspacing

\bibitem{brock}{F. Brock, }{Continuous rearrangement and symmetry of solutions of elliptic problems, }{Proc. Indian Acad. Sci. Math. Sci. 110 (2000), no. 2, 157-204.}

\bibitem{YaoCarrillo}
J. Carrillo, S. Hittmeir, B. Volzone and Y. Yao, Nonlinear aggregation-diffusion equations: radial symmetry and long time asymptotics, Invent. Math 218 (2019), 889--977.
%

\bibitem{Chof}
A. Choffrut, L. Sz\'{e}kelyhidi, Weak Solutions to the Stationary Incompressible Euler Equations, SIAM J. Math. Anal. 46 (2014), 4060--4074.


\bibitem{CZ}
M. Coti Zelati, T. M. Elgindi, and K. Widmayer, Stationary structures near the Kolmogorov and Poiseuille flows in the 2D Euler equations, Arch. Ration. Mech. Anal. 247 (2023), paper n.12, 37pp.

\bibitem{ConstantinDrivas}
P. Constantin, T. D. Drivas, and D. Ginsberg, Flexibility and rigidity in steady fluid motion.
Comm. Math. Phys. 385 (2021), 521--563.


\bibitem{CV}
P. Constantin, J. La, V. Vicol, Remarks on a paper by Gavrilov:
Grad--Shafranov equations, steady solutions of the three dimensional
incompressible Euler equations with compactly supported velocities,
and applications, Geom. Funct. Anal. 29 (2019), 1773--1793.
%

\bibitem{EFRS} A. Enciso, A. J. Fern\'andez, D. Ruiz and P. Sicbaldi, A Schiffer-type problem for annuli with applications to stationary planar Euler flows, arXiv:2309.07977.


\bibitem{Fall-Minlend-Weth-main} M. M. Fall, I. A. Minlend and T. Weth, The Schiffer problem on the cylinder and on the 2-sphere, J. Eur. Math. Soc., in press.

\bibitem{Fraenkel}
L. E. Fraenkel. {\em An introduction to maximum principles and symmetry in elliptic problems}, Cambridge University Press, Cambridge, 2000.

\bibitem{Gavrilov}
A.V. Gavrilov,
A steady Euler flow with compact support,
Geom. Funct. Anal. 29 (2019), 190--197.


\bibitem{gnn}{B. Gidas, W.-M. Ni and L. Nirenberg, }{Symmetry and related properties via the maximum principle, }{Comm.
	Math. Phys. 68 (1979), 209-243.}

\bibitem{JGS}
J. G\'omez-Serrano, J. Park and J. Shi, Existence of non-trivial non-concentrated compactly supported stationary solutions of the 2D Euler equation with finite energy, Mem. Amer. Math. Soc., in press.

\bibitem{gomez} J. G\'omez-Serrano, J. Park, J. Shi and Y. Yao, Symmetry in stationary and uniformly-rotating solutions of active scalar equations, Duke Math. J. 170 no. 13 (2021), 2957--3038.

\bibitem{gomez22}
J. G\'omez-Serrano, J. Park, J. Shi and Y. Yao, Remarks on stationary and uniformly-rotating vortex sheets: Flexibility results, Phil. Trans. R. Soc. A. 380, no. 2226 (2022), 20210045.

\bibitem{gomez21}
J. G\'omez-Serrano, J. Park, J. Shi and Y. Yao, Remarks on stationary and uniformly-rotating vortex sheets: Rigidity results, Comm. Math. Phys. 386 (2021), 1845--879.
%

\bibitem{GrahamLee} C. R. Graham and J. M. Lee, Einstein metrics with prescribed conformal infinity on the ball, Adv. Math. 87 (1991), 168--225.


\bibitem{Gui} C. Gui, C. Xie and H. Xu, On a classification of steady solutions to two-dimensional Euler equations, arXiv:2405.15327.

\bibitem{Hamel1}
 F. Hamel and N. Nadirashvili, Shear flows of an ideal fluid and elliptic equations in unbounded domains.
Comm. Pure Appl. Math 70 (2017), 590--608.

\bibitem{Hamel2}
F. Hamel and N. Nadirashvili, A Liouville theorem for the Euler equations in the plane. Arch. Ration. Mech.
Anal. 233 (2019), 599--642.


\bibitem{Hamel}
F. Hamel and N. Nadirashvili, Circular flows for the Euler equations in two-dimensional annular domains, and related free boundary problems, J. Europ. Math. Soc. 25 (2023), 323--368.


\bibitem{Kielhofer} H. Kielh\"ofer, {\em Bifurcation Theory: An Introduction with Applications to PDEs,} Springer, Berlin, 2004.

\bibitem{Koch}
G. Koch, N. Nadirashvili, G. A. Seregin, and V. Sverak. Liouville theorems for the Navier--Stokes equations and applications. Acta Math. 203 (2009), 83--105.

\bibitem{Kufner} A. Kufner and L.-E. Persson, {\em Weighted inequalities of Hardy type,} World Scientific Publishing Co., River Edge, NJ, 2003.

\bibitem{Lee}
J. M. Lee, Fredholm operators and Einstein metrics on conformally compact manifolds, Mem. Amer. Math. Soc. 183 (2006) vi+83 pp.

\bibitem{Shv}
X. Luo and R. Shvydkoy, 2D homogeneous solutions to the Euler equation. Comm. Partial Differ. Equ. 40 (2015), 1666--1687.
\bibitem{Mazzeo}
R. Mazzeo, 
Elliptic theory of differential edge operators I,
Comm. Partial Differ. Equ. 16 (1991), 1615--1664.

\bibitem{Musso}
M. Musso, F. Pacard and J. Wei, Finite-energy sign-changing solutions with dihedral symmetry for the stationary nonlinear Schr\"odinger equation, J. Eur. Math. Soc. 14 (2012), 1923--1953.
%
%
\bibitem{RRS20} A. Ros, D. Ruiz and P. Sicbaldi, Solutions to overdetermined elliptic problems in nontrivial exterior domains, J. European Math. Soc. 22 (2020), 253-281.


\bibitem{R} D. Ruiz, Symmetry Results for Compactly Supported Steady Solutions of the 2D Euler Equations, Arch. Ration. Mech. Anal. 247 no. 3 (2023), paper n. 40, 25 pp.


\bibitem{Shara}{V. A. Sharafutdinov, }{Two-dimensional Gavrilov flows, }{Siberian Electronic Mathematical Reports, Vol. 21, No. 1, pp. 247-258 (2024).}

 \bibitem{Yau} S.--T. Yau, Problem section, Seminars on Differential Geometry, Ann. of Math. Stud. 102 (1982), 669--706.
 
\end{thebibliography}

\end{document}